\theoremstyle{plain}
\newtheorem{theorem}{Theorem}
\newtheorem{lemma}[theorem]{Lemma}
\newtheorem{proposition}[theorem]{Proposition}
\newtheorem{corollary}[theorem]{Corollary}
\theoremstyle{definition}
\newtheorem{definition}[theorem]{Definition}
\theoremstyle{remark}
\newtheorem{remark}[theorem]{Remark}
\newcommand{\dom}{\mathrm{dom}}
\newcommand{\nbv}{\mathrm{NBV}}
\newcommand{\sbv}{\mathrm{SBV}}
\newcommand{\bv}{\mathrm{BV}}
\newcommand{\im}{\mathrm{im}}
\newcommand{\lip}{\mathrm{Lip}}
\newcommand{\gr}{\mathrm{graph}}
\title
[Existence of spectral submanifolds in time delay systems] 
{Existence of spectral submanifolds in time delay systems}
\author{Gergely Buza}
\address{Institute for Mechanical Systems, ETH Zürich, Leonhardstrasse 21, 8092 Zurich, Switzerland}
\email{buzag@ethz.ch}
\author{George Haller}
\address{Institute for Mechanical Systems, ETH Zürich, Leonhardstrasse 21, 8092 Zurich, Switzerland}
\email{georgehaller@ethz.ch}
\begin{document}

\begin{abstract}
    Spectral submanifolds (SSMs) are invariant manifolds of a dynamical system, defined by the property of being tangent to a spectral subspace of the linearized dynamics at a steady state.
    We show existence, along with certain desirable properties such as smoothness, attractivity and conditional uniqueness, of SSMs associated to a large class of spectral subspaces in time delay systems.
    Building on these results, we generalize the criteria for existence of inertial manifolds -- defined as globally exponentially attracting Lipschitz invariant manifolds of finite dimension -- 
    and show that they need not have dimension equal to that of the physical configuration, in contrast to previous accounts.
    We then demonstrate the applicability of these results on a few simple examples.
\end{abstract}

\maketitle

\section{Introduction}

The potential existence of lower-dimensional representations of evolutionary systems is central to many application-oriented subdisciplines of mathematical physics.
Perhaps the most natural way to achieve dimensional reduction is by 
restricting attention to lower-dimensional attracting
invariant manifolds. 
Our core interest here is to uncover a certain class of such invariant manifolds in the phase space of delay differential equations (DDEs), with the objective of characterizing their internal asymptotic dynamics.
Delay  equations are abundant in the applied sciences, with examples ranging from epidemic theory \cite{diekmann2000mathematical}, population dynamics \cite{cushing2013integrodifferential} and other areas of mathematical biology \cite{glass2021nonlinear,milton2009time,zhang2018saturation} to a large variety of fields on the engineering side \cite{insperger2011semi}, including control theory \cite{richard2003time,habib2022bistability}, machine tool vibrations \cite{molnar2017analysis} and traffic dynamics \cite{orosz2010traffic}.

The evolution in a delay equation with locally Lipschitz continuous right-hand side can be characterized by a  semiflow $\{\varphi_t\}_{t \geq 0}$ acting on an open subset of the space of continuous functions, $X =C([-h,0];\mathbb{R}^n)$.
Here $n$ is the physical dimension of the space on which the delay equation is posed and $h$ is the maximal delay.
The invariant manifolds of interest to us are  spectral submanifolds (or SSMs), i.e., manifolds that act as nonlinear continuations of dominant eigenspaces of a linearized delay equation (see \cite{haller2016nonlinear} and \cite{haller2025modeling}). 
This is a local concept based at a stationary state $u_0 \in X$ (i.e., $\varphi_t(u_0) = u_0 $ for all $t \geq 0$)\footnote{More general invariant sets are of course permitted, but in this work we focus on the case of a fixed point.} of the phase space, i.e.\ SSMs need only be constructed on a neighbourhood of $u_0$.
We may suppose without loss of generality that $u_0=0$ (see Section~\ref{sect:linearization}).
Suppose the dynamics, linearized about $0$, are driven by a generator $A$. 
An SSM associates to a spectral subset $\Sigma \subset \sigma(A)$ a manifold $W^\Sigma$, that is locally invariant under $\varphi_t$ and tangent to the spectral subspace $\im (P_\Sigma)$ at $0$.\footnote{Here, $P_\Sigma$ is the spectral projection associated to $\Sigma$, assuming that it is well defined.}
The overarching objective of SSMs is to subdivide the dynamics (preserving its nonlinear structure) according to asymptotic properties of trajectories in the vicinity of the stationary state, akin to spectral projections for linear systems. 
Early versions of this concept date back to the stable, unstable and center manifold theorems
\cite{liapounoff1907probleme,hirsch1970stable,kelley1966stable}, 
where the spectral subsets are confined to either a half-space  of the complex plane or the imaginary axis.
In practical applications, especially to dissipative systems, it is useful to maintain generality and consider arbitrary spectral subsets.
Of main interest are of course pseudo-unstable and slow manifolds characterized by the rightmost part of the spectrum \cite{irwin1980new,chen1997invariant}, 
since they correspond 
to the asymptotically prevalent part of the dynamics.
There exist, however, scenarios in which an alternative choice of $\Sigma$ proves more beneficial (e.g., the buckling beam example of Haller et al.~\cite{haller2023nonlinear}).

The theoretical foundations of SSMs were laid by the parameterization method of Cabre et al.~\cite{Cabre2003a,cabre2003parameterization2,cabre2005parameterization3,haro2006parameterization,haro2006parameterizationb} (with assumptions optimalized for the stable case).
The classical linearization results of Sternberg \cite{sternberg1957local,sternberg1958structure} allow for more general hyperbolic equilibria but under more stringent nonresonance assumptions \cite{haller2025modeling}. 
In recent years, the evolution of SSM theory, as well as its equation- and data-driven implementations, were motivated by contemporary problems of increasing physical complexity.
Specifically, soft robotics inspired an extension of the theory to temporally aperiodically forced systems \cite{haller2024nonlinear,kaundinya2025data}; 
the identification of edge states in fluid dynamics \cite{kaszas2022dynamics} and the aforementioned beam buckling example motivated the introduction of fractional and mixed-mode SSMs \cite{haller2023nonlinear};
the precise identification of nonlinearities in
tabletop
gravity experiments prompted the development of oblique projection to SSMs along their stable foliation \cite{bettini2025data}  (see also \cite{szalai2020invariant} on general results on such foliations);
nonlinear structural vibration problems motivated the development of SSM theory for random dynamical systems (\cite{xu2025nonlinear}).
Advancements on the practical side include a more direct equation-driven formulation for finite element models \cite{jain2022compute,vizzaccaro2022high,opreni2023high},  and a data-driven approach for constructing SSMs \cite{cenedese2022data,axaas2023fast}, enabling treatment of large-scale problems. 
All these advances along with further ones are discussed in the book of Haller \cite{haller2025modeling}.

Given the prevalence of time delay systems in applied sciences, it is pressing to extend the theory to encompass such systems.
A pathway in this direction has already been initiated by the recent work of Szaksz et al.~\cite{szaksz2024reduction,szaksz2025spectral}, who 
demonstrate, importantly, that the setting of delay equations permit the SSM reduction to be performed in the function space (serving as the phase space) itself 
without the need for a priori discretization. 
Here, we prove existence and regularity results in order to provide the rigorous foundations facilitating such a reduction.

The idea to seek finite-dimensional approximations of delay equations is by no means novel: 
several  distinct  techniques have been developed over the years.
Numerical methods aimed at discretizing the full equation are surveyed in the work of Bellen and Zennaro \cite{bellen2013numerical}, with a particularly popular example being the semi-discretization method of Insperger and Stépán \cite{insperger2002semi,insperger2011semi}; a survey with specific emphasis on how (and whether) dynamical qualities of the full, infinite-dimensional system are preserved under such discretizations was conducted by Garay \cite{garay2005brief}.
Works on model reduction thus far have typically focused on linear systems,
achieving lower dimensionality by means of an optimal choice of linear projection that best resembles the original dynamics \cite{michiels2011krylov,beattie2009interpolatory}.
An alternative approach permitting a certain class of nonlinearities is proposed by van de Vouw et al.~\cite{van2015model}.
Techniques aimed at invariant manifold computations have focused on
the center and unstable manifolds for the most part \cite{krauskopf2003computing,green2004one,sahai2009numerical,scarciotti2015model} (see also \cite{farkas2001unstable,farkas2002small} for an early initiative providing rigorous theoretical results for the convergence of such approximations even for  center-unstable manifolds).
These works compute the invariant manifolds after having discretized the equation, as opposed to the aforementioned approach of Szaksz et al.~\cite{szaksz2025spectral} and that of Groothedde and Mireles-James \cite{groothedde2017parameterization}, the latter being the first instance in which the parameterization method was applied to invariant manifold computation in delay equations. 
Also notable are the works of He and de la Llave \cite{he2017construction,he2016construction} that extract quasiperiodic solutions of DDEs  with state-dependent delay  via the parameterization method. Subsequent work by Yang et al.~\cite{yang2021parameterization} focuses on limit cycles but also extracts their slow stable manifolds.

On the theoretical side of the delay equations literature, stable ($ \Sigma = \{ \lambda \in \sigma(A) \; | \; \mathrm{Re } \, \lambda < 0\}$), unstable ($\Sigma = \{\lambda \in \sigma(A) \; | \; \mathrm{Re } \, \lambda > 0\}$) and center ($ \Sigma =\mathrm{i} \mathbb{R} \cap \sigma(A)$)  manifolds were already subject of discussion in the monograph of Diekmann et al.~\cite{diekmann2012delay} (see also the earlier references \cite{hale1964neighborhood} and \cite{diekmann1984invariant}).
Subsequent theoretical advancements prioritized encompassing a larger variety of problems rather than types of manifolds (i.e., $\Sigma$ remained confined to the same subsets).
Particular attention has been given to the notoriously difficult problem of state-dependent delays \cite{hartung2006functional} and center(-stable) manifolds thereof \cite{krisztin2006c1,qesmi2009center}.
Moreover, center manifold theory has been extended to cover the case of infinite delays \cite{matsunaga2015center} and that of impulsive DDEs \cite{church2018smooth}.
In contrast,
Example 5.2 of
Chen et al.~\cite{chen1997invariant} considers global pseudo-unstable manifolds ($ \Sigma = \{\lambda \in \sigma(A) \; | \; \mathrm{Re } \, \lambda \geq \gamma\}$ for some $\gamma \in \mathbb{R}$) of neutral functional differential equations (thus including DDEs) in the functional setting of Hale and Lunel \cite{hale2013introduction}\footnote{The setup described in \cite{chen1997invariant} is akin to our Remark~\ref{remark:IM_with_F} -- hence a subsequent localization is needed to dispose of the global Lipschitz assumption and obtain tangency results, at the expense of constraining the domain. 
}. 
The localization of this approach, which we carry out here to obtain SSMs, is more natural to perform in the setting known as sun-star calculus due to Diekmann et al.~\cite{diekmann2012delay}, utilizing perturbation theory of dual semigroups.
The idea for this first came about in the sequence of papers
\cite{diekmann1987perturbed,clement1987perturbation,clement1988perturbation,clement1989perturbation,clement2020perturbation,diekmann2023perturbation}.

A related concept, an inertial manifold (IM) 
is defined as a globally exponentially attracting, finite dimensional, invariant Lipschitz manifold \cite{foias1988inertial,constantin2012integral,TITI1990}. 
These properties render IMs the most desirable invariant manifolds for model reduction purposes.
Their existence, however, is generally difficult (if not impossible) to prove. 
The term inertial manifold was initially coined by Foias et al.~\cite{foias1988inertial} in the context of parabolic PDEs; the question of their existence in the two-dimensional Navier-Stokes equations, however, has remained an open problem since then.
In all classical proofs, the existence of IMs hinges on the ratio of the global Lipschitz constant of the nonlinearity and the spectral gap being sufficiently small. 
In Navier-Stokes flows, this ratio cannot be controlled in an arbitrary fashion by moving down the sequence of eigenvalues of the Laplacian due to the presence of a spatial derivative in the nonlinearity. (This is unlike in  reaction-diffusion equations, for which this procedure yields IMs \cite{robinson2001}).

In contrast, existence of IMs in the small-delay regime of DDEs has been observed as early as the 60s   in a sequence of works by Ryabov \cite{ryabov1960application,ryabov1960application2,ryabov1961application,ryabov1963certain} (see also \cite{driver1968ryabov,driver1976linear,chicone2003inertial}), who termed trajectories on IMs special solutions.
Their method of proof is focused on extracting such trajectories directly, hence circumventing any mention of spectral gaps or even invariant manifolds, but it has since been shown that these trajectories actually form a $C^1$ manifold \cite{chicone2003inertial}.
Recently, these results were extended to neutral functional differential equations \cite{chen2022smooth} with improved smoothness (see also \cite{anikushin2023frequency}).
All of these works assume a small delay $h$, and the dimension of the inertial manifold is always equal to the dimension of the physical space $n$.

In this work, we show that small delays are not a necessity for inertial manifolds to exist. 
We reformulate the problem using the results of Chen et al.~\cite{chen1997invariant} to the familiar smallness condition on the ratio the global Lipschitz constant of the nonlinearity to the spectral gap.
This also has the geometric advantage of identifying the IM as an extension of an SSM;
that is, we obtain its tangency to a spectral subspace at a fixed point under appropriate conditions.
In particular, we demonstrate, through examples as well, that there exist inertial manifolds in delay equations with dimensions differing from $n$.
We then show explicitly how shrinking the delay is a suitable way to obtain arbitrarily large spectral gaps, and hence arbitrary smoothness and rate of attraction towards the IM, thus recovering the results of \cite{chen2022smooth} for the case of DDEs.
Roughly speaking, a spectral gap obtained in this way arises between eigenvalues of the system in the limit $h \to 0$ and the infinitely many new ones emanating from $-\infty$ due to the addition of the delay.
In this case, therefore, the IM has to be of dimension $n$, which is sometimes called the case of negligible delays \cite{kurzweil2006small}.

\subsection{Overview}

The ideal setting for the purposes herein is the sun-star calculus of Diekmann et al.~\cite{diekmann2012delay}, since it provides a convenient way to 
linearize about a fixed point using variation of constants formulae.
We recollect the basics of this theory in Sections \ref{sect:DDEs} and \ref{sect:linearization}.

As already noted, the convention in the delay equations literature is to take 
as phase space $X := C\big([-h,0];\mathbb{R}^n \big)$, the Banach space of continuous functions equipped with the supremum norm $| \cdot|_X :=| \cdot |_{\infty}$, for $h>0$ (the maximal delay) and $n \in \mathbb{N}$.
Let $f : O \to \mathbb{R}^n$ be a locally Lipschitz continuous function on the open set $O \subset X$.
We consider autonomous DDEs of the form
\begin{subequations} \label{eq:DDE_classical} 
\begin{align}
&\dot{x}(t) = f(x_t),  \label{eq:DDE_classical1} \\ 
& x_0 = u,  \label{eq:DDE_classical2}
\end{align}
\end{subequations}
where 
$x_t : \theta \mapsto x(t+\theta) \in X$ denotes the state variable and $u \in O$ the initial condition.
A solution to \eqref{eq:DDE_classical} is a function $x: [-h,t^*) \to \mathbb{R}^n$, $t^* \in (0,\infty) \cup \{\infty\}$, 
which satisfies \eqref{eq:DDE_classical2},
is continuously differentiable on $(0,t^*)$ such that \eqref{eq:DDE_classical1} holds; and which moreover possesses a right derivative equal to $f(x_0)$ at $0$.
Classical results assert (see, for example, \cite{hale2013introduction,diekmann2012delay}) that under the local Lipschitz assumption on $f$ there exists a unique solution to \eqref{eq:DDE_classical} for each initial condition $u \in O$.
Let us denote by $\{\varphi_t\}_{t \geq 0}$ the semiflow \eqref{eq:DDE_classical} generates.
Regularity properties of $\varphi$ will be discussed more thoroughly in Section~\ref{sect:semiflow} (see Proposition~\ref{prop:semiflow}).

The main results for a DDE of the form \eqref{eq:DDE_classical} are formulated in Section~\ref{sect:statement}.
Here, we give a brief overview of what they comprise. 
As in the Introduction, let us suppose that a stationary state of the semiflow $\varphi$ is attained at $0 \in X$, i.e., $f(0) = 0$.
Let $A$ denote the generator of the semigroup $t \mapsto D \varphi_t(0)$, and let $\Sigma \subset \sigma(A)$ be a bounded spectral subset with spectral projection $P_\Sigma$. 
For these preliminary statements we shall also assume that $f$ is $C^k$ with $k \geq 1$, although it is not strictly necessary (see the footnote to assumption \ref{A1}).

\begin{theorem} \label{thm:intro}
    Suppose that the spectral subset $\Sigma$ is of the form $ \Sigma = \{\lambda \in \sigma(A) \; | \; \mathrm{Re } \, \lambda \geq \gamma\}$ for some $\gamma \in \mathbb{R}$.
    Then, there exists a neighbourhood $U$ of the stationary state $0 \in X$ and a locally invariant (under the semiflow $\varphi$) $C^\ell$ manifold $W^\Sigma \subset U$ (an SSM) tangent to the spectral subspace $ \im(P_\Sigma)$ at $0$.
    Here the integer $1 \leq \ell \leq k$ is determined by the spectral gap condition 
    \begin{displaymath}
        \sup\mathrm{Re} \, \big( \sigma(A) \setminus \Sigma \big) <\ell \inf \mathrm{Re} \, \Sigma.
    \end{displaymath}
    Moreover, $W^\Sigma$ attracts trajectories that remain in $U$ at an exponential rate of $\gamma$, synchronized along the leaves of a $C^0$ foliation.
\end{theorem}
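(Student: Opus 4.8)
The plan is to obtain $W^\Sigma$ as the localization of a \emph{global} invariant manifold produced by a Lyapunov--Perron fixed-point argument, performed in the sun-star formalism so that the (linearized) nonlinearity enters as an admissible weak-$*$ perturbation rather than as a bounded perturbation on $X$; this is essentially the scheme of Chen et al.~\cite{chen1997invariant} transplanted to the sun-star setting of \cite{diekmann2012delay} and then cut off. First I would recast \eqref{eq:DDE_classical} in abstract integral form: writing $f(x_t) = Lx_t + R(x_t)$ with $L = Df(0)$, the semiflow obeys a variation-of-constants equation
\[
  \varphi_t(u) = T_0(t)u + \int_0^t T_0^{\odot *}(t-s)\, G\big(\varphi_s(u)\big)\,ds,
\]
where $T_0(t) = D\varphi_t(0)$ is the linearized semigroup, $A$ its generator, $G : O \to X^{\odot *}$ the sun-star representation of $R$ (with $G(0)=0$, $DG(0)=0$, $G\in C^k$), and the integral a weak-$*$ integral valued in $X$. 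Next, since $\Sigma = \{\lambda \in \sigma(A) : \mathrm{Re}\,\lambda \ge \gamma\}$ consists of finitely many eigenvalues (the spectrum of a retarded DDE is discrete with only finitely many points in any right half-plane), $P_\Sigma$ is a well-defined finite-rank projection and splits $X = X_\Sigma \oplus X_c$ into $T_0$-invariant pieces with $X_\Sigma = \im(P_\Sigma)$ finite-dimensional; setting $\beta := \inf\mathrm{Re}\,\Sigma$ and $\alpha := \sup\mathrm{Re}\big(\sigma(A)\setminus\Sigma\big) < \gamma \le \beta$, the restriction of $T_0$ to $X_\Sigma$ extends to a group with $\|T_0(t)|_{X_\Sigma}\| \lesssim e^{\beta' t}$ for $t \le 0$ and every $\beta' < \beta$, while $\|T_0(t)|_{X_c}\| \lesssim e^{\alpha' t}$ for $t \ge 0$ and every $\alpha' > \alpha$ (using eventual compactness of $T_0$), and these bounds pass to the $\odot *$-extensions together with the projections. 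Finally I would truncate: with a smooth bump built on a smoothed surrogate of the non-differentiable sup-norm, define $G_\delta$ to agree with $G$ on $B(0,\delta)$, vanish outside $B(0,2\delta)$, and be globally Lipschitz with $\lip(G_\delta) =: \kappa(\delta) \to 0$ as $\delta \to 0^+$; write $\tilde\varphi$ for the resulting globally defined semiflow.

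For the manifold, fix $\eta \in (\alpha,\gamma)$ and, on the Banach space $BC_\eta^- := \{ v \in C((-\infty,0];X) : \sup_{t\le0} e^{-\eta t}\|v(t)\| < \infty \}$, consider for each $\xi \in X_\Sigma$ the operator
\[
  (\mathcal G_\xi v)(t) = T_0(t)\xi + \int_0^t T_0^{\odot *}(t-s)P_\Sigma^{\odot *} G_\delta(v(s))\,ds + \int_{-\infty}^t T_0^{\odot *}(t-s)P_c^{\odot *} G_\delta(v(s))\,ds .
\]
The exponential estimates make both integrals converge in $BC_\eta^-$ and give $\lip(\mathcal G_\xi) \lesssim \kappa(\delta)/\min(\beta-\eta,\eta-\alpha)$, so for $\delta$ small $\mathcal G_\xi$ is a uniform contraction; its fixed point $v^*(\cdot;\xi)$ is a full backward orbit of $\tilde\varphi$, and $W := \{ v^*(0;\xi) : \xi \in X_\Sigma \}$ is the graph of a Lipschitz map $h : X_\Sigma \to X_c$ with $h(0) = 0$, globally invariant under $\tilde\varphi$. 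Smoothness comes from the fiber-contraction method: differentiating the fixed-point identity $j$ times yields a linear fixed-point equation for the $j$-th derivative posed in a space of histories growing like $e^{j\eta' t}$, and the associated contraction estimate closes precisely when the growth $\sim j\eta'$ is still absorbed by the backward bound on $X_\Sigma$ and the forward bound on $X_c$ — optimizing over weights, exactly when $\alpha < j\beta$, i.e.\ when $\sup\mathrm{Re}(\sigma(A)\setminus\Sigma) < \ell\,\inf\mathrm{Re}\,\Sigma$ for $j = \ell \le k$. Since $DG_\delta(0) = 0$, differentiating at $\xi = 0$ gives $Dh(0) = 0$, so $W$ is tangent to $\im(P_\Sigma)$ at $0$. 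Taking $U := B(0,\delta)$ and $W^\Sigma := W \cap U$, the properties of local invariance, $C^\ell$-regularity and tangency are inherited because $\tilde\varphi = \varphi$ on $U$, and conditional uniqueness among $C^1$ locally invariant manifolds tangent to $\im(P_\Sigma)$ (with the prescribed backward growth) follows from uniqueness of the fixed point.

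The attraction and foliation claims I would establish by a second, forward-time Lyapunov--Perron argument yielding an invariant $C^0$ foliation of $U$ with leaves modeled on $X_c$: through each $p \in W^\Sigma$ passes a leaf, and every forward orbit $\varphi_t(u)$ that stays in $U$ lies on the leaf through a unique $p(u) \in W^\Sigma$, with $\|\varphi_t(u) - \varphi_t(p(u))\| \lesssim e^{\gamma' t}$ for each $\gamma' \in (\alpha,\gamma)$ — this is the synchronization along leaves, and the leaf-through-a-point map is only $C^0$ in general, as in \cite{chen1997invariant}. The step I expect to be the main obstacle is the smoothness bootstrap: one must pin down the weighted history spaces on which the successive derivatives of $\mathcal G_\xi$ act and verify the requisite smoothing properties of the $\odot *$-convolution (that $\int_0^{\cdot} T_0^{\odot *}(\cdot-s)\,\phi(s)\,ds$ lands in $X$ with the stated bounds and depends differentiably on parameters) so that the threshold $\ell$ emerges sharply rather than with loss; the non-smoothness of $|\cdot|_X$ entering the cutoff, and the behaviour of $W^\Sigma$ and of the foliation leaves near $\partial U$, are further technical — though essentially routine — points.
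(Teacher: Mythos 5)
Your overall architecture — variation of constants in the sun-star setting, spectral dichotomy for the half-plane subset $\Sigma$, a cutoff to globalize, a Lyapunov--Perron contraction on exponentially weighted history spaces $BC_\eta^-$, a fiber-contraction bootstrap for $C^\ell$ smoothness under $\sup\mathrm{Re}\,\Sigma' < \ell \inf\mathrm{Re}\,\Sigma$, and a forward-time argument for the attracting $C^0$ foliation — matches the paper's proof closely. The one structural difference is that the paper does not re-derive the existence, Lipschitz graph property, foliation and synchronized decay from scratch: it verifies the hypotheses (H.1)--(H.4) of Theorem~1.1 of \cite{chen1997invariant} for the time-one map of the cut-off semiflow and imports those conclusions wholesale, reserving the continuous-time Lyapunov--Perron machinery (on the scale of spaces $BC^{-\eta}(\mathbb{R}^{\leq 0};X)$, via Lemma~IX.6.7 of \cite{diekmann2012delay}) only for the smoothness and tangency claims. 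Your all-in-one continuous-time route is legitimate and self-contained, at the cost of reproving the foliation half of \cite{chen1997invariant}.

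There is, however, one point you dismiss as ``essentially routine'' that is in fact the place where a naive version of your argument breaks: the cutoff. The space $X = C([-h,0];\mathbb{R}^n)$ admits \emph{no} Fr\'echet differentiable bump function at all (it is separable with non-separable dual; cf.\ Theorem~24 of \cite{FRY2002}), so there is no ``smoothed surrogate of the sup-norm'' on $X$ and your $G_\delta$ cannot be taken both globally Lipschitz-small and $C^k$ on all of $X$. Without smoothness of $G_\delta$ somewhere containing the manifold, the fiber-contraction bootstrap cannot even produce $C^1$, since every derivative of the fixed-point equation requires differentiating the substitution operator $v \mapsto G_\delta \circ v$ along orbits on $W$. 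The paper's fix is to cut off the two spectral components separately, $R_\rho(u) = \chi_\rho(|P_\Sigma u|_X)\,\chi_\rho(|P_{\Sigma'}u|_X)\,R(u)$: the first factor is smooth because $X_\Sigma$ is finite dimensional, and the product is $C^k$ on the cone-like open set $S = \{|P_{\Sigma'}u|_X < \max\{\rho, 2|P_\Sigma u|_X\}\}$ where the second factor is either locally constant or evaluated where the first vanishes. One then checks (via control of $\lip(\phi_\rho)$) that the invariant graph lies inside $S$, so all derivatives in the bootstrap are only ever taken at points where $R_\rho$ is smooth. Your proof needs this (or an equivalent device) inserted; as written, the $C^\ell$ claim does not follow.
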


The precise version of this result is Theorem~\ref{thm:main1}.
We remark that the $C^1$ version of Theorem~\ref{thm:intro} is to be expected to hold based on \cite{chen1997invariant} and \cite{diekmann2012delay}. 
In Section~\ref{sect:proofmain1}, we nevertheless carry out the proof in detail.
As required in the proof, we also show the smoothness of the semiflow in the sun-star setting in Appendix~\ref{sect:semiflow_smoothness}, which has apparently been unavailable in the literature.

The following theorem is a less direct consequence of existing results:

\begin{theorem} \label{thm:intro2}
    Suppose  $ \Sigma \subset \{\lambda \in \sigma(A) \; | \; \mathrm{Re } \, \lambda < 0\}$ and that $\Sigma$ satisfies the additional non-resonance conditions specified in \ref{A4}.
    Set $\gamma : = \inf \mathrm{Re} \, \Sigma$ and choose an integer $1 \leq \ell \leq k$ such that 
    \begin{displaymath}
        \sup \{ \lambda \in \sigma(A) \; | \; \mathrm{Re} \, \lambda < \gamma \} < \ell \gamma.
    \end{displaymath}
    Then, there exists a neighbourhood $U$ of $0 \in X$ and a locally invariant (under the semiflow $\varphi$) $C^\ell$ manifold $W^\Sigma \subset U$ (an SSM) tangent to the spectral subspace $ \im(P_\Sigma)$ at $0$.
\end{theorem}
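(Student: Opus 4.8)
The plan is to realize $W^\Sigma$ as an invariant submanifold sitting inside a pseudo-unstable SSM of the kind already furnished by Theorem~\ref{thm:main1}, and then to finish by a finite-dimensional argument. First I would set $\gamma := \inf \mathrm{Re}\,\Sigma$ (so $\gamma < 0$) and enlarge $\Sigma$ to $\Sigma' := \{\lambda \in \sigma(A) \mid \mathrm{Re}\,\lambda \geq \gamma\}$. Since $A$ generates a delay semigroup, $\sigma(A)$ meets every right half-plane in finitely many eigenvalues, so $\Sigma'$ is a bounded spectral subset; it contains $\Sigma$, and some eigenvalue of $\Sigma$ has real part $\gamma$, so $\inf\mathrm{Re}\,\Sigma' = \gamma$ and the hypothesis $\sup\{\lambda \in \sigma(A) \mid \mathrm{Re}\,\lambda < \gamma\} < \ell\gamma$ is exactly the spectral gap condition $\sup\mathrm{Re}\big(\sigma(A)\setminus\Sigma'\big) < \ell\inf\mathrm{Re}\,\Sigma'$ required by Theorem~\ref{thm:main1}. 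Applying that theorem to $\Sigma'$ yields a neighbourhood $U'$ of $0$ and a locally invariant $C^\ell$ SSM $W^{\Sigma'}\subset U'$ tangent to $\im(P_{\Sigma'})$ at $0$ (and, incidentally, attracting at rate $\gamma$ -- this is the ``inertial'' manifold alluded to in the introduction).

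Next I would pass to the dynamics reduced to $W^{\Sigma'}$. Writing $W^{\Sigma'}$ as a graph over a neighbourhood of $0$ in the finite-dimensional subspace $\im(P_{\Sigma'})$, as it comes out of the Lyapunov--Perron construction behind Theorem~\ref{thm:main1}, the restriction of $\varphi$ to $W^{\Sigma'}$ read in this graph chart is the flow of a vector field $g$ on a neighbourhood of $0$ in $\im(P_{\Sigma'})$, with $g(0)=0$ and $Dg(0)$ similar to $A|_{\im(P_{\Sigma'})}$; thus $\sigma(Dg(0)) = \Sigma'$ and the spectral subspace of $Dg(0)$ belonging to $\Sigma$ is $\im(P_\Sigma) \subset \im(P_{\Sigma'})$. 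It is important here to use the ambient linear coordinates on $\im(P_{\Sigma'})$ rather than an abstract $C^\ell$ chart: since the graph map is $C^\ell$ and the nonlinearity of the DDE is $C^k$, the composition defining $g$ is genuinely $C^\ell$ and not merely $C^{\ell-1}$. I would isolate this regularity statement as a short lemma.

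I would then apply the finite-dimensional SSM existence result -- the parameterization method \cite{Cabre2003a,cabre2003parameterization2,cabre2005parameterization3}, in the mixed-mode form used in \cite{haller2023nonlinear} -- to the $C^\ell$ system $\dot u = g(u)$ and the spectral subspace $\im(P_\Sigma)$: the non-resonance conditions \ref{A4}, rephrased for the linearization $Dg(0)$ with spectrum $\Sigma' \supset \Sigma$, together with the spectral gap already in force, are precisely the hypotheses under which this result yields a $C^\ell$ invariant manifold $W^\Sigma \subset W^{\Sigma'}$ tangent to $\im(P_\Sigma)$ at $0$. Local invariance of $W^\Sigma$ under $\varphi$ then follows by composing its local invariance inside $W^{\Sigma'}$ with the local invariance of $W^{\Sigma'}$ in $X$; tangency follows from $T_0 W^\Sigma = \im(P_\Sigma)$ computed inside $W^{\Sigma'}$ together with $T_0 W^{\Sigma'} = \im(P_{\Sigma'})$; and the neighbourhood $U$ is obtained by thinning $U'$ to a tube around the relevant portion of $W^{\Sigma'}$.

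The step I expect to be the main obstacle is the last one: one has to check that the non-resonance and gap hypotheses \ref{A4}, stated for $\Sigma$ relative to $\sigma(A)$, translate into the exact form demanded by the finite-dimensional theorem for $\Sigma$ relative to $\sigma(Dg(0)) = \Sigma'$ -- in particular that, once the external spectrum $\Sigma'\setminus\Sigma$ (which need not be ``slower'' than $\Sigma$) is accounted for, the admissible smoothness class is still $C^\ell$ rather than something lower, and that no resonance between $\Sigma$ and $\Sigma'\setminus\Sigma$ obstructs the graph parameterization. Verifying this, together with the regularity of $g$ noted above, is where the genuine work lies; the remainder is bookkeeping.
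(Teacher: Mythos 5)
Your first step coincides with the paper's: enlarge $\Sigma$ to $\Omega=\{\lambda\in\sigma(A)\mid \mathrm{Re}\,\lambda\geq\gamma\}$ and invoke Theorem~\ref{thm:main1} to get a $C^\ell$ pseudo-unstable manifold. The divergence, and the gap, is in how you cut this down to a manifold tangent to $X_\Sigma$. The step you yourself flag as "the main obstacle" is not bookkeeping; it is the entire content of the theorem, and the hypotheses \ref{A4} are not in the form your plan needs. The non-resonance conditions in \ref{A4} are phrased in terms of $\widetilde{\Sigma}$ (the stable eigenvalues \emph{outside} $\Sigma$ with real part $\geq\gamma$) and $\widetilde{\Sigma}'$, with the order $r$ tied to the gap between $\widetilde{\Sigma}$ and the full stable spectrum; these are precisely the hypotheses of the spectral-\emph{foliation} theorem (Theorem 2.6 of \cite{buza2025smooth}) over the base $X_{\widetilde{\Sigma}}$, not of a parameterization of a manifold tangent to $X_\Sigma$. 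For your route you would need a finite-dimensional mixed-mode SSM theorem for a system whose linear part has spectrum $\Omega$ (which may contain center and unstable eigenvalues), whose target subset $\Sigma$ can be interleaved in real parts with $\widetilde{\Sigma}$, and whose data is only $C^\ell$ (indeed the reduced vector field is only $C^{\ell-1}$; one must work with the time-one map as the paper does for Theorem~\ref{thm:main3}). Neither Theorem 1.1/1.2 of \cite{Cabre2003a} (which treats the one-sided, non-interleaved configurations) nor the mixed-mode results of \cite{haller2023nonlinear} (which lean on Sternberg linearization and hence on much stronger regularity and nonresonance than \ref{A4} provides) applies off the shelf, and you give no argument that \ref{A4} implies whatever conditions such a theorem would demand.

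What the paper does instead is dual to your plan: it first restricts the semiflow to the stable manifold $W^s$, intersects $W^\Omega$ transversally with $W^s$ to get a finite-dimensional $C^\ell$ manifold $W^{\Omega,s}$ tangent to $X_\Omega\cap X_s$, and then constructs on $X_s$ a $C^k$ invariant foliation over $X_{\widetilde{\Sigma}}$, i.e.\ a submersion $\pi$ semiconjugating the restricted dynamics to a semiflow on $X_{\widetilde{\Sigma}}$. The manifold $W^\Sigma$ is obtained as the preimage $\big(\pi\circ\widetilde{P}_s|_{W^{\Omega,s}}\big)^{-1}(0)$ — the leaf through the origin intersected with $W^{\Omega,s}$ — whose tangent space at $0$ is the kernel of $D\pi(0)\widetilde{P}_s$ restricted to $X_\Omega\cap X_s$, namely $X_\Sigma$. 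This "cut out by a submersion" construction is what converts the interleaved, mixed-mode problem into one where the nonresonance conditions concern only $\widetilde{\Sigma}$, which is exactly what \ref{A4} supplies. To make your proposal work you would either have to prove the missing finite-dimensional mixed-mode result under \ref{A4}, or replace \ref{A4} by the nonresonance conditions appropriate to a direct parameterization of $X_\Sigma$ inside $\Omega$ — at which point you would be proving a different theorem.
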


For the precise statement, see Theorem~\ref{thm:main2}. 
The proof constructs the desired manifolds by first constructing the spectral foliation \cite{szalai2020invariant,buza2025smooth} tangent to the spectral subspace corresponding to the spectral subset $\Sigma \cup \{ \lambda \in \sigma(A) \; | \; \mathrm{Re} \, \lambda < \inf \mathrm{Re} \, \Sigma \}$ within the stable manifold of $0 \in X$;
then intersecting the leaf passing through the origin with a manifold produced in Theorem~\ref{thm:intro}.
A result analogous to Theorem~\ref{thm:intro2} holds for  $ \Sigma \subset \{\lambda \in \sigma(A) \; | \; \mathrm{Re } \, \lambda > 0\}$ as well,
whose proof follows from an application of the parameterization method \cite{Cabre2003a} to the unstable manifold under the assumptions listed in Theorem~\ref{thm:main3}.

Our results on inertial manifolds are described in Section~\ref{sect:IM_statement}.
The main advancements were described in the final paragraph of the Introduction; we give a summary statement below.

\begin{theorem}
    Suppose that $f$, the right hand-side of the DDE \eqref{eq:DDE_classical}, is globally Lipschitz, and hence the semiflow $\varphi$ is globally defined.
    Set $L := \lip(f-Df(0))$.
    Suppose that there exists a large enough spectral gap $\nu$ (made precise in Theorem~\ref{thm:IM} and Remark~\ref{remark:IM_altassumption}) along the negative real axis such that
    \begin{equation}
        \frac{L}{\nu} < C
        \label{eq:introcond}
    \end{equation}
    for some  fixed  constant $C > 0$ (see \eqref{eq:nu_cond_remarked}). 
    Then there exists an inertial manifold $W^\Sigma$ tangent to $\im(P_\Sigma)$ at $0 \in X$ (uniquely characterized by the Lyapunov exponents of its trajectories, see \eqref{eq:negative_semiorbit_1}-\eqref{eq:negative_semiorbit_2}), where $\Sigma$ is defined as the subset of $\sigma(A)$ to the right of this spectral gap.
    The manifold $W^\Sigma$ attracts trajectories along a globally defined $C^0$ foliation at an exponential rate faster than $\inf \mathrm{Re} \, \Sigma$.
\end{theorem}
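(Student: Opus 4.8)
The plan is to exploit the global Lipschitz hypothesis so as to bypass the localization step that the SSM constructions of Theorems~\ref{thm:intro}--\ref{thm:intro2} require, working instead directly with the abstract evolution equation $\dot u = Au + R(u)$ in the sun-star calculus of Section~\ref{sect:linearization}, in the spirit of Remark~\ref{remark:IM_with_F} and of Chen et al.~\cite{chen1997invariant}. Here $R := f - Df(0)$ satisfies $R(0)=0$, $DR(0)=0$ and $\lip(R)=L$ globally, and the variation-of-constants formula reads
\begin{equation*}
u(t) = T_0(t)u(0) + \int_0^t T_0^{\odot\ast}(t-s)\, R(u(s))\,ds ,
\end{equation*}
with $\{T_0(t)\}_{t\geq 0}$ the semigroup generated by $A$, the nonlinearity interpreted (via the canonical embedding) as taking values in the sun-star dual $X^{\odot\ast}$, and the integral understood weak-$\ast$. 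The spectral gap $\nu$ along the negative real axis splits $\sigma(A)$ into the part $\Sigma$ to the right of the gap and the remainder $\Sigma' = \sigma(A)\setminus\Sigma$ lying at distance at least $\nu$ to its left; since the spectrum of $A$ is discrete with only finitely many eigenvalues in any right half-plane, $\Sigma$ is finite, $P_\Sigma$ is a bounded finite-rank projection, and $X = X_\Sigma \oplus X_{\Sigma'}$ carries an exponential dichotomy: $\lVert T_0(t)P_\Sigma\rVert \leq M e^{\alpha' t}$ for $t\leq 0$ and $\lVert T_0^{\odot\ast}(t)(I-P_\Sigma)^{\odot\ast}\rVert \leq M e^{\beta' t}$ for $t\geq 0$, with $\beta' < \alpha'$ chosen close to $\sup\mathrm{Re}\,\Sigma'$ and $\inf\mathrm{Re}\,\Sigma$ respectively and $\alpha'-\beta'$ bounded below in terms of $\nu$ (the dependence of $M$ on $\nu$ being harmless, and tracked explicitly to produce \eqref{eq:nu_cond_remarked}).

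I would then characterize $W^\Sigma$ as the set of initial states admitting a complete orbit $u:\mathbb{R}\to X$ with $\sup_{t\leq 0} e^{-\alpha t}\lVert u(t)\rVert < \infty$ for some (equivalently, any) $\alpha \in (\beta',\alpha')$; such orbits are exactly the fixed points of the Lyapunov--Perron operator
\begin{multline*}
(\Gamma_\xi u)(t) = T_0(t)\xi + \int_0^t T_0^{\odot\ast}(t-s)\,P_\Sigma^{\odot\ast} R(u(s))\,ds \\
{}+ \int_{-\infty}^{t} T_0^{\odot\ast}(t-s)\,(I-P_\Sigma)^{\odot\ast} R(u(s))\,ds , \qquad \xi \in X_\Sigma ,
\end{multline*}
on the Banach space of continuous curves normed by $\lVert u\rVert_\alpha := \sup_{t\leq 0} e^{-\alpha t}\lVert u(t)\rVert$. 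The dichotomy bounds show that $\Gamma_\xi$ maps this space into itself and is a uniform contraction once $L$ times a combination of $M(\alpha'-\alpha)^{-1}$ and $M(\alpha-\beta')^{-1}$ is less than one; bounding that combination by a constant multiple of $\nu^{-1}$ yields the threshold $C$ in \eqref{eq:introcond}. The fixed point $u_\xi$ is Lipschitz in $\xi$, and $\psi(\xi) := (I-P_\Sigma)u_\xi(0)$ exhibits $W^\Sigma = \{\xi + \psi(\xi) : \xi \in X_\Sigma\}$ as a globally Lipschitz graph over the finite-dimensional space $\im(P_\Sigma)$; invariance under $\varphi$ follows from the translation covariance of the fixed-point equation, and the growth characterization gives the conditional uniqueness recorded in \eqref{eq:negative_semiorbit_1}--\eqref{eq:negative_semiorbit_2}.

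For tangency I would upgrade the uniform contraction to one depending $C^1$ on $\xi$ (possible since $f$, hence $R$, is $C^1$), so that $u_\xi$ is differentiable at $\xi = 0$; combined with $R = o(\lVert\cdot\rVert)$ near $0$ this gives $\psi(0)=0$ and $D\psi(0)=0$, i.e. tangency to $\im(P_\Sigma)$ at the origin, and locally identifies $W^\Sigma$ with the SSM of Theorem~\ref{thm:main1}. Global exponential attraction along a synchronizing foliation is obtained by constructing, by the same method in the same setting, the invariant foliation complementary to $W^\Sigma$: the leaf through $u_0 \in X$ is the fixed point of the analogous operator that pins the $X_{\Sigma'}$-component at time $0$ and demands forward decay at rate $\beta'$, leaves are Lipschitz graphs over translates of $X_{\Sigma'}$, every orbit lies in some leaf, each leaf meets $W^\Sigma$ in exactly one point, and two states on the same leaf have forward orbits satisfying $\lVert\varphi_t(u_0)-\varphi_t(v_0)\rVert = O(e^{\beta'' t})$ as $t\to\infty$ for any $\beta'' \in (\beta', \alpha)$; this gives attraction to $W^\Sigma$ at any rate strictly slower than $\inf\mathrm{Re}\,\Sigma$, while continuity (not smoothness) of the leaf as a function of its base point yields the $C^0$ foliation, cf.~\cite{szalai2020invariant}. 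The main obstacle is the bookkeeping imposed by the sun-star calculus --- the perturbation takes values in $X^{\odot\ast}$ rather than in $X$, so every variation-of-constants integral must be handled weak-$\ast$ and the smoothing properties of $T_0^{\odot\ast}$ on the embedded copy of $X$ invoked --- compounded by the need to keep the dichotomy constants explicit enough to isolate the universal constant $C$; by contrast the contraction estimate, the differentiability at the fixed point, and the foliation construction are routine adaptations of the finite-gap theory.
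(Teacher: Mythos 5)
Your route is sound but genuinely different from the paper's. The paper proves this result (Theorem~\ref{thm:IM}, Section~\ref{sect:IM}) by \emph{discretizing in time}: it forms the time-$1/\nu$ map $\varphi_{1/\nu}=T(1/\nu)+N(1/\nu,\cdot)$, bounds $\lip(N(1/\nu,\cdot))$ in terms of $\lip(R)$ via the variation-of-constants formula (Lemma~\ref{lemma:nonlinearity_sf} with the cutoff removed), and then verifies hypotheses (H.1)--(H.4) of the discrete-time theorem of Chen--Hale--Tan (Theorem~\ref{thm:CHT}) via the gap condition \eqref{eq:H4Lipcond}; the invariant manifold, the negative-semiorbit characterization \eqref{eq:negative_semiorbit_1}--\eqref{eq:negative_semiorbit_2}, \emph{and} the attracting $C^0$ foliation are all imported wholesale from that reference. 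You instead run the continuous-time Lyapunov--Perron scheme directly (your operator $\Gamma_\xi$ is essentially the paper's $\mathcal{F}_\rho$ from \eqref{eq:Frho} with the cutoff removed, which is legitimate precisely because $R$ is globally Lipschitz), and you propose to build the foliation by the dual forward-time fixed-point problem rather than quoting it. What the paper's approach buys is that the foliation and the ``each leaf meets the manifold exactly once'' bookkeeping come for free from the citation; what yours buys is a self-contained argument in which the constant $C$ in \eqref{eq:introcond} is traced directly to the dichotomy constants, without the detour through $e^{\omega/\nu}-1$ factors.

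One step is understated. You claim the contraction ``depends $C^1$ on $\xi$'' so that $D\psi(0)=0$ follows; but on the weighted spaces $BC^{-\alpha}(\mathbb{R}^{\leq 0};X)$ with $\alpha\leq 0$ (the inertial-manifold regime) the substitution operator $u\mapsto R\circ u$ is \emph{not} $C^1$ into the same space --- only into a strictly weaker-weighted one (Lemma~\ref{lemma:compositionBC}\ref{item_comp2}) --- which is exactly why the paper needs the scale-of-Banach-spaces machinery of Appendix~\ref{sect:mfdsmoothness} (Lemma~\ref{lemma:6.7}) to differentiate the fixed point and deduce tangency (Corollary~\ref{corollary:tangency}). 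Your fallback of identifying $W^\Sigma$ locally with the SSM of Theorem~\ref{thm:main1} via the common negative-semiorbit characterization does rescue tangency without that machinery, so the gap is repairable, but as written the ``routine'' $C^1$ upgrade would fail. Similarly, the single-point intersection of each leaf with $W^\Sigma$ is not automatic: it needs the product of the two Lipschitz constants (graph map and leaf map) to be small, which is again supplied by \eqref{eq:introcond} but should be said.
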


The corresponding precise statement is Theorem~\ref{thm:IM}.
The smoothness and rate of attraction of the  inertial manifold can be improved by means of decreasing the ratio in \eqref{eq:introcond}.
Corollary~\ref{corollary:smalldelays} then asserts that one way to exhibit a small ratio in \eqref{eq:introcond} is via shrinking the delay.

In Section~\ref{sect:examples}, we demonstrate how these results can be applied through three examples.
In Section~\ref{sect:example1}, a nonlinear version of the Cushing equation \cite{cushing1977time} (a particularly simple case of distributed delay) is treated, where we construct inertial manifolds based solely on the spectral gap condition at a moderate delay $h = 1$, thereby establishing IMs with dimension not equal to that of the physical system.
Section~\ref{sect:example2} provides an even simpler example which showcases how an arbitrarily large spectral gap can be achieved by means of shrinking the delay $h$.
The final example (Section~\ref{sect:szaksz}) revisits the equation treated by Szaksz et al.~\cite{szaksz2025spectral}, and shows how inertial manifolds can be obtained in the absence of global control over the nonlinearity, if a bound on the size of the global attractor is known.

The remainder of the paper consists of the proofs of the main theorems in Sections \ref{sect:proofmain1}, \ref{sect:proofmain2} and \ref{sect:IM} with smoothness results deferred to appendices. 
In particular, smoothness of the semiflow is discussed in Appendix~\ref{sect:semiflow_smoothness}; whereas smoothness of invariant manifolds is obtained in Appendix~\ref{sect:mfdsmoothness} (akin to Chapter IX of \cite{diekmann2012delay}).
Some preliminaries are recalled in Appendix~\ref{sect:prelim}.

\section{Delay equations}
\label{sect:DDEs}

In this section we recollect the basics of DDE theory following the book of Diekmann et al.~\cite{diekmann2012delay}.

\subsection{DDE as an evolution equation}

From the perspective of dynamical systems, the most favorable way to handle
\eqref{eq:DDE_classical} is through the lens of the evolution equation it generates on $X$.
To achieve this rigorously in a unified fashion for all DDEs of the form \eqref{eq:DDE_classical}, we follow the approach set out in \cite{diekmann2012delay}  utilizing perturbation theory of dual semigroups.

We begin by considering the simplified problem
\begin{subequations}\label{eq:shift_eq}
\begin{align}
&\dot{x}(t) = 0,  \label{eq:shift_eq1} \\ 
& x_0 = u, \qquad u \in X,  
\end{align}
\end{subequations}
which generates a strongly continuous semigroup\footnote{Appendix~\ref{sect:SGprelim} summarizes the concepts we use here from the theory of semigroups.} $t \mapsto T_0(t)$ called the shift semigroup on $X$, given explicitly as
\begin{equation}
    [T_0(t)u](\theta) = \begin{cases}
        u(t+\theta), \qquad & \text{if } -h \leq t + \theta \leq 0, \\
        u(0), & \text{if } t + \theta \geq 0.
    \end{cases}
    \label{eq:T_0(t)}
\end{equation}
Its generator is a closed densely defined operator $A_0 : \dom(A_0) \to X$, $\dom(A_0) \subset X$, given by
\begin{equation}
    \dom(A_0) = \Big\{ u \in X \; \big| \; \dot{u} \in X, \; \dot{u}(0) = 0 \Big\}, \qquad A_0u = \dot{u}.
    \label{eq:A_0}
\end{equation}
The peculiar detail about \eqref{eq:A_0} is that the update rule \eqref{eq:shift_eq1} only manifests through the definition of the domain $\dom(A_0)$, which would therefore have to be altered for each DDE \eqref{eq:DDE_classical1} on a case-by-case basis.
To circumvent the technical complications this would induce (e.g., in nonlinear problems, in particular when linearizing about a fixed point), \cite{diekmann2012delay} proceeds by exploiting the principles of duality.
The dual space of $X = C\big([-h,0];\mathbb{R}^n \big)$ is 
isomorphic to the space of \textit{normalized bounded variations}\footnote{
For the relevant definitions, see Appendix~\ref{sect:BV}.
The reader might be more used to the interpretation of $X^*$ through the modern version of Riesz's representation theorem,
which assigns to each $x^* \in X^*$ a signed finite $\mathbb{R}^n$-valued Borel measure $\mu$ on $[-h,0]$.
Historically, the literature concerning delay equations has adopted the convention to declare $\zeta: [0,h] \to \mathbb{R}^n$ via $\zeta(t)=\mu([-t,0])$, which constitutes an isomorphic correspondence between $X^*$ and $\nbv\big([0,h];\mathbb{R}^n\big)$. (In fact, Riesz's original result is posed in terms of $\bv$ functions \cite{Riesz1909Sur}.) 
},
$X^*\cong \nbv\big([0,h];\mathbb{R}^n\big)$, via the pairing
\begin{equation}
    \langle \zeta,u \rangle_{X^*,X} = \int_0^h d\zeta(t) u(-t), \qquad \zeta \in \nbv\big([0,h];\mathbb{R}^n\big).
    \label{eq:pairing}
\end{equation}
The integral in \eqref{eq:pairing} is to be interpreted in the Riemann-Stieltjes sense (see Appendix~\ref{sect:BV}), or equivalently, through the measure $\mu$ corresponding to $\zeta$.

The dual semigroup $t \mapsto T_0^*(t):=T_0(t)^*$ need not be strongly continuous; but a theorem of Phillips \cite{phillips1955adjoint,hille1996functional}
asserts that there exists a closed linear subspace $X^{\odot}$ of $X^*$ such that $T_0^*(t)X^{\odot} \subset X^{\odot}$ and $T_0^\odot(t):=T_0^*(t)|_{X^\odot}$ is strongly continuous; let us denote its generator by $A_0^\odot$.
The space $X^{\odot}$ is called the sun subspace of $X$.
Explicitly, $X^\odot = \overline{\dom(A_0^*)}$, where the closure is taken in the norm topology of $X^*$ (see also \eqref{eq:dual_domain}, Appendix~\ref{sect:SGprelim}).
Furthermore, we have\footnote{Here $\lambda$ refers to the Lebesgue measure.}
\begin{equation}
    X^\odot  \cong \mathbb{R}^n \times L^1([0,h],\lambda; \mathbb{R}^n), \qquad | (c,v) |_{X^\odot} = |c| + |v|_{L^1}. 
    \label{eq:Xodot}
\end{equation}
Here $c \in \mathbb{R}^n$ represents the value at $0$ through the isomorphism $ (c,v) \mapsto w$, $w(t) = c + \int_0^t v$ mapping $ \mathbb{R}^n \times L^1([0,h],\lambda; \mathbb{R}^n)$ to $X^\odot$.
We refer to Theorem II.5.2, \cite{diekmann2012delay}, for the details.
It is perhaps more illuminating to interpret \eqref{eq:Xodot} in terms of measures: $X^{\odot }$ consists of Borel measures $\mu$ that can be written as a sum $\mu = \nu + c\delta_0$ for $c \in \mathbb{R}^n$ and $\nu$ absolutely continuous with respect to the Lebesgue measure $\lambda$, $\nu \ll \lambda$.

We may perform the same procedure once more, now for the strongly continuous semigroup $T_0^\odot$ on $X^\odot$.
The dual space of $X^\odot$ can be characterized, using the representation \eqref{eq:Xodot}, as
\begin{equation}
    X^{\odot *} = \mathbb{R}^n \times L^\infty ([-h,0],\lambda; \mathbb{R}^n), \qquad | (b,w) |_{X^{\odot*} } = \sup\{|b|,|w|_{L^\infty}\}
    \label{eq:Xodotstar}
\end{equation}
with norm induced by the pairing 
\begin{displaymath}
    \langle (b,w),(c,v) \rangle_{X^{\odot *},X^{\odot }} = \langle b,c \rangle_{\mathbb{R}^n} + \int_0^h w(-\theta) v(\theta) \,  d \lambda (\theta).
\end{displaymath}
Seeking the sun subspace of \eqref{eq:Xodotstar} on which $T_0^{\odot *}$ is strongly continuous, one arrives at $X^{\odot \odot} = \overline{\mathrm{dom}(A_0^{\odot *})} = \imath(X)$ (Theorem II.5.5 of \cite{diekmann2012delay}), where $\imath : X \xhookrightarrow{} X^{\odot *}$ is the continuous embedding $u \mapsto (u(0),u)$ defined by the pairing between $X$ and $X^\odot$ (in the future, we shall occasionally identify $X$ with its image under $\imath$).
We remark that it is precisely the form of this embedding 
that permits the transition of the update rule \eqref{eq:DDE_classical1} from the domain into the evolution equation below in \eqref{eq:evolution}.
The property $X = X^{\odot \odot}$ is called $\odot$-reflexivity, which can also be equivalently characterized by weak compactness of the resolvent $(\lambda \, \mathrm{id} - A_0)^{-1}$ \cite{de1989characterization}.

We may now write \eqref{eq:DDE_classical} as an evolution equation, exploiting the extra flexibility provided by the enlarged space $X^{\odot *}$. 
We  separate \eqref{eq:DDE_classical} into two parts: one given by the shift semigroup $T_0$ from \eqref{eq:shift_eq} and one by the update rule in \eqref{eq:DDE_classical1}.
This allows one to  view all delay equations of the form \eqref{eq:DDE_classical} as finite rank perturbations of the shift semigroup.
Specifically, the canonical basis $(e_1,\ldots,e_n)$ of $\mathbb{R}^n$ determines $n$ elements
\begin{displaymath}
    r_i^{\odot *} = (e_i,0) \in X^{\odot *}, \qquad 1\leq i \leq n,
\end{displaymath}
which can be used to construct the finite rank nonlinear map $O \to X^{\odot *}$,
\begin{equation}
    F(u) = \sum_{i=1}^n \langle f(u),e_i\rangle_{\mathbb{R}^n} \, r_i^{\odot *}.
    \label{eq:F}
\end{equation}
We arrive at the formal\footnote{Equation \eqref{eq:evolution} is formal because \eqref{eq:evolution1} is only satisfied by solutions of \eqref{eq:DDE_classical} for $t > 0$, for general $u \in O$. If $u \in C^1([-h,0];\mathbb{R}^n)\cap O$, then \eqref{eq:evolution1} is satisfied for all $t \geq 0$, with $d/dt$ interpreted as a right derivative at $t = 0$. 
We shall hence mostly work with \eqref{eq:variationofconstants}, which corresponds to solutions of \eqref{eq:DDE_classical}.}
evolution equation 
\begin{subequations}\label{eq:evolution} 
    \begin{align}
        \imath \frac{d}{dt} x_t &= A_0^{\odot *} \imath x_t + F(x_t),  \label{eq:evolution1}\\
        x_0 &= u \in O,
    \end{align}
\end{subequations}
where, explicitly,
\begin{gather*}
    \dom(A_0^{\odot *}) = \{ (c,u) \; | \; u \text{ is Lipschitz with } u(0) = c \}, \\
    A_0^{\odot *}(c,u) =(0,\dot{u}),
\end{gather*}
where we recall that Lipschitz functions admit derivatives in the $L^\infty$ sense.
Provided $f$ is locally Lipschitz (which implies, through \eqref{eq:F}, that $F$ is locally Lipschitz),
 the variation of constants formula
\begin{equation}
    x_t = T_0(t) u + \imath^{-1} \int_{0}^t T_0^{\odot *} (t-s) F(x_s) \, ds, \qquad u \in O, \; t  \geq 0,
    \label{eq:variationofconstants}
\end{equation}
provides solutions which are in one-to-one correspondence with solutions of \eqref{eq:DDE_classical}, by Proposition VII.6.1 of \cite{diekmann2012delay}.
The integral in \eqref{eq:variationofconstants} is to be interpreted in the weak* sense (see Appendix~\ref{sect:weakstar_int} for details).

\subsection{Properties of semiflows generated by delay equations}
\label{sect:semiflow}

Delay equations, through \eqref{eq:variationofconstants}, generate semiflows -- for the general definitions and notational conventions, we refer the reader to Appendix~\ref{sect:SGprelim}; for notions of smoothness, to Appendix~\ref{sect:notions_of_smoothness}.
We gather the properties of such semiflows in the following proposition.

\begin{proposition}[Results of Sections VII.3, VII.4 and VII.6, \cite{diekmann2012delay}] \label{prop:semiflow}
    Suppose $f$ is locally Lipschitz on $O$.
    Then, there exists a unique maximal semiflow $\varphi:\mathcal{D}^\varphi \to O$ defined on the open domain $\mathcal{D}^\varphi \subset [0,\infty) \times O$ associated to the maps $\varphi_t:u \mapsto x_t$ given by \eqref{eq:variationofconstants}.
    Moreover, the semiflow $\varphi$ satisfies the following:
    \begin{enumerate}[label=\upshape{(\roman*)}]
        \item \label{sfitem:1} $\varphi$ is jointly continuous on $\mathcal{D}^\varphi$. 
        \item \label{sfitem:2} If $f$ is $C^k$, then for each $t \geq 0$ fixed, $\varphi_t : \mathcal{D}^\varphi_t \to O$ is $C^k$. 
        \item If $f$ is $C^1$, then $\varphi$ is jointly $C^1$ on $\{(t,u) \in \mathcal{D}^\varphi \; | \; t > h\}$. 
        \item \label{sfitem:4} If $f$ is bounded, $t \geq h$ and $B \subset O$ is a bounded set, then $\varphi\big( [h,t] \times (B \cup \mathcal{D}^\varphi_t) \big)$ is precompact in $X$. 
    \end{enumerate}
\end{proposition}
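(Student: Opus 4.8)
The plan is to read every assertion off the variation-of-constants formula \eqref{eq:variationofconstants}, treated as a fixed-point equation. Fix $u \in O$ and a closed ball $\bar B(u,\rho) \subset O$, and for $\varepsilon > 0$ consider, on the closed subset of $C\big([0,\varepsilon];X\big)$ consisting of curves staying in $\bar B(u,\rho)$, the map
\[
  \Psi(x)(t) \;=\; T_0(t)u \;+\; \imath^{-1}\!\int_0^t T_0^{\odot *}(t-s)\,F(x_s)\,ds .
\]
Two facts make this a contraction for $\varepsilon$ small: first, the weak-$*$ integral obeys an estimate $\big|\imath^{-1}\int_0^t T_0^{\odot *}(t-s)\,g(s)\,ds\big|_X \le M\int_0^t |g(s)|_{X^{\odot *}}\,ds$ valid for $t \in [0,\varepsilon]$, the point being that $\odot$-reflexivity $X^{\odot\odot}=X$ guarantees that the integral lands back in $X$; second, $F$ is locally Lipschitz, which it inherits from $f$ via \eqref{eq:F} because $F$ takes values in the fixed $n$-dimensional subspace $\mathrm{span}\{r_1^{\odot *},\dots,r_n^{\odot *}\}$. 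Banach's fixed-point theorem then produces a unique local solution; concatenating local solutions over their intervals of existence gives the maximal semiflow $\varphi$ with domain $\mathcal D^\varphi$. Applying the same estimate to $\Psi(x)-\Psi(\tilde x)$ and invoking Gronwall's inequality yields local Lipschitz dependence of $x_t$ on $(t,u)$, which simultaneously shows $\mathcal D^\varphi$ is open and gives the joint continuity \ref{sfitem:1}.

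For \ref{sfitem:2} I would differentiate the fixed-point equation in $u$: formally $D\varphi_t(u)$ solves the linear equation obtained by replacing $F$ with $DF(x_s)$, which again has the form of a variation-of-constants equation and is solved by a contraction whose data depend Lipschitz-continuously on $u$. The uniform (fiber-)contraction principle then gives $\varphi_t \in C^1$ in $u$, and induction on the order -- differentiating the equation satisfied by the previous derivative -- upgrades this to $C^k$ when $f \in C^k$; fixing $t$ and composing finitely many such local flow maps gives \ref{sfitem:2}. This is the step I expect to be the main obstacle: $F$ takes values in the larger space $X^{\odot *}$ on which $T_0^{\odot *}$ is not strongly continuous, so one must justify differentiating the weak-$*$ convolution integral and verify that the substitution operator $x_\cdot \mapsto F(x_\cdot)$ is genuinely $C^k$ between the relevant function spaces -- exactly where the sun-star perturbation calculus does the work, and the reason the detailed argument is relegated to Appendix~\ref{sect:semiflow_smoothness}.

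The remaining two items exploit the smoothing built into \eqref{eq:T_0(t)}. For \ref{sfitem:4}, suppose $f$, hence $F$, is bounded. For $t \ge h$ the shift part $T_0(t)u$ equals the constant function $u(0)$ by \eqref{eq:T_0(t)}, so $T_0\big([h,t]\times B\big)$ lies in a bounded subset of the $n$-dimensional space of constant functions; and the integral term, read in the $\theta$-variable, is a function whose $\theta$-derivative is bounded in norm by $\sup|f|$, so these integrals form a bounded, equicontinuous family. Arzel\`a--Ascoli then makes the union precompact in $X = C\big([-h,0];\mathbb R^n\big)$. For item (iii), note that a solution $x$ of \eqref{eq:DDE_classical} is $C^1$ on $(0,t^*)$ because $\dot x(t)=f(x_t)$ is continuous there, so for $t>h$ the segment $x_t$ lies in $C^1\big([-h,0];\mathbb R^n\big)$; a short computation then shows $t\mapsto x_t$ is differentiable into $X$ with derivative $t\mapsto \dot x_t$, which is continuous on $\{t>h\}$. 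Combining this time-differentiability with the continuity in $(t,u)$ of the spatial derivative from \ref{sfitem:2} (again via the Gronwall estimates applied to the derivative equation) yields joint $C^1$ on $\{(t,u)\in\mathcal D^\varphi \mid t>h\}$, which is item (iii).
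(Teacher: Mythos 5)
Your proposal is essentially a reconstruction of the standard arguments, and it is sound in outline; but note that the paper does not actually prove most of this proposition -- it cites Sections VII.3, VII.4 and VII.6 of \cite{diekmann2012delay} for existence, uniqueness, items \ref{sfitem:1}, (iii), \ref{sfitem:4} and the $C^1$ case of \ref{sfitem:2}, and only supplies its own proof for \ref{sfitem:2} with $k\geq 2$, deferred to Appendix~\ref{sect:semiflow_smoothness}. For that one part your route genuinely differs from the paper's. You propose differentiating the fixed-point equation in $u$ and running a fiber-contraction induction on the order of the derivative, which is the classical approach but forces you to verify at each inductive step that the substitution operator $x_\cdot \mapsto F(x_\cdot)$ (and its derivatives) is smooth between the relevant path spaces -- precisely the obstacle you flag and defer. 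The paper instead works on the path space $C([0,t];O)$ directly: it defines $\mathcal{F}(u,y)(s) = y(s) - T(s)u - \imath^{-1}\int_0^s T^{\odot*}(s-\tau)R(y(\tau))\,d\tau$, obtains its $C^k$ smoothness in one stroke from Irwin's theorem on composition operators (Theorem~\ref{thm:irwin}), checks that $D_2\mathcal{F}$ is an isomorphism under the smallness condition \eqref{eq:smoothnesspropcondition}, and applies the Implicit Function Theorem; global-in-$t$ smoothness then follows by composing finitely many such local maps, as you also suggest. The paper's route buys, essentially for free, the joint continuity of $(s,u)\mapsto D^j\varphi_s(u)[v_1,\ldots,v_j]$ (the second assertion of Proposition~\ref{prop:sf_smoothness}), which is needed later for Theorem~\ref{thm:main2}; your route would require an extra Gronwall-type argument at the level of each derivative to recover this. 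Your treatments of \ref{sfitem:4} (Arzel\`a--Ascoli on the bounded, equi-Lipschitz integral term for $t\geq h$) and of item (iii) (time-differentiability of $t\mapsto x_t$ for $t>h$ combined with the spatial derivative) match the standard arguments in the cited reference.
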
 

\begin{proof}
    This follows from results of Sections VII.3, VII.4 and VII.6, \cite{diekmann2012delay}, with the exception of \ref{sfitem:2} for $k \geq 2$. 
    The proof of the $C^k$ case, $k \geq 2$, is given in Appendix~\ref{sect:semiflow_smoothness} (in the perturbed setting, described in Section~\ref{sect:linearization}).
\end{proof}

\section{Linearization about a stationary state}
\label{sect:linearization}

Suppose $w \in O$ is a stationary point of \eqref{eq:DDE_classical}, i.e., $\varphi_t(w) =0$ for all $t \geq 0$, which, through the lens of \eqref{eq:evolution}, implies $A_0^{\odot * }\imath w + F(w) = 0$.
Without loss of generality, by relabeling $w$ to $0 \in X$, adjusting $O$ and $\varphi$ accordingly, we may assume that the stationary state is attained at $0$ and that $O$ is a neighbourhood of $0$ (Corollary VII.5.3, \cite{diekmann2012delay}). 
In this setting, we have $F(0) = 0$.
Assume $f$ is $C^1$, which implies $F$ is also $C^1$.
Therefore, 
\begin{equation}
    F(u) =  DF(0) u +R(u), \qquad u \in O,
    \label{eq:Fexpansion}
\end{equation}
where $R:O \to X^{\odot*}$ is a $C^1$ map satisfying $R(v) = o(|v|_X)$ as $|v|_X \to 0$.
In particular, $R(0) = 0$ and $DR(0) =0$.

Hence, the formal evolution equation \eqref{eq:evolution} reads
\begin{align*}
    \imath \frac{d}{dt} x_t &= \big( A_0^{\odot *} \imath + DF(0) \big) x_t + R(x_t),\\
    x_0 &= u \in O.
\end{align*}
The linear part on $X$, given by
\begin{subequations} \label{eq:A}
    \begin{gather}
    \dom(A) = \left\{ u \in \imath^{-1} \big( \dom(A_0^{\odot *}) \big)  \; \big| \;  A_0^{\odot *} \imath u + DF(0) u \in \imath (X) \right\}, \\
    A u= \imath^{-1}\big( A_0^{\odot *} \imath u + DF(0)u \big),
\end{gather}
\end{subequations}
generates a strongly continuous semigroup on $X$, given explicitly as (Theorem III.2.4 and Corollary III.2.13, \cite{diekmann2012delay})
\begin{equation}
    T (t) u= T_0(t)u + \imath^{-1} \int_0^t T^{\odot *}_0 (t-s) DF(0) T(s)u \, ds, \qquad u \in X, \; t \geq 0.
    \label{eq:T(t)}
\end{equation}
Note that $X$ is also $\odot$-reflexive with respect to $T$ and that the $\odot$ and $\odot*$ spaces with respect to $T$ remain the same as those with respect to $T_0$ (this is a consequence of $DF(0)$ being a bounded perturbation).
The full semiflow, as in \eqref{eq:variationofconstants}, can then be given as a nonlinear perturbation of \eqref{eq:T(t)} (Proposition VII.5.4, \cite{diekmann2012delay}),
\begin{equation}
    \varphi_t(u) = T(t) u + \imath^{-1} \int_0^t T^{\odot *} (t-s) R \circ \varphi_s(u) \, ds,  \qquad (t,u) \in \mathcal{D}^\varphi.
    \label{eq:semiflow_pert}
\end{equation}
Note that $T(t) = D\varphi_t(0)$ for all $t \geq 0$ (see also Proposition VII.4.5, \cite{diekmann2012delay}).

\subsection{Spectrum of \texorpdfstring{$A$}{A}}
\label{sect:spectrum}

Since our original equation \eqref{eq:DDE_classical} was posed on the real space $\mathbb{R}^n$ for physical reasons, the operator $A : \dom(A) \to X$ acts on a real Banach space $X = C([-h,0];\mathbb{R}^n)$ -- its spectrum is hence defined via the device of complexification (see Section III.7 of \cite{diekmann2012delay}).
This is done by declaring a complex structure on $X_\mathbb{C} : =X \otimes_\mathbb{R} \mathbb{C}$ via $b(u \otimes c) = u \otimes (bc)$ for $b \in \mathbb{C}$, then equipping $X_\mathbb{C}$ with an admissible norm induced by $| \cdot |_X$ (Definition III.7.5 of \cite{diekmann2012delay}).
The resulting space, termed the complexification $X_\mathbb{C}$ of $X$, can be identified with $X \oplus \mathrm{i} X$, 
on which one may consider complexifications of operators $A$ originally acting on $X$ via
$A_\mathbb{C}(u+\mathrm{i} v) := Au + \mathrm{i} A v$, with $\dom(A_\mathbb{C}) = \dom(A)\oplus \mathrm{i} \, \dom (A)$.
Since all admissible norms are equivalent, this gives rise to a well-defined notion of spectrum via $\sigma(A;X) : = \sigma\big(A_\mathbb{C}; X_\mathbb{C}\big)$.
Note, in particular, that $X_\mathbb{C} \cong C([-h,0];\mathbb{C}^n)$ in our example.
We remark also that $A_\mathbb{C}$ generates $\big(T(t)\big)_\mathbb{C}$, which will be used implicitly in what follows.

The spectrum of delay equations is easiest to characterize via the eventual compactness of the linearization semigroup $T(t)$.  
This property can be inferred as a consequence of Proposition~\ref{prop:semiflow}, applied to the linearized equation 
\begin{subequations}\label{eq:linearized_formaleq}
    \begin{align}
    &\dot{x}(t) = Df(0)x_t,   \\ 
    & x_0 = u \in X.  
    \end{align}
\end{subequations}
Indeed, item \ref{sfitem:4} implies that $T(t)$, as defined in \eqref{eq:T(t)}, is a compact operator for each $t \geq h$ (recall that $Df(0) \in \mathcal{L}(X,\mathbb{R}^n)$). 
The spectral theory of eventually compact semigroups then implies the following.

\begin{theorem}[Theorem B.IV.2.1, \cite{arendt1986one}] \label{thm:spectrum}
    Suppose $A$ is the generator of an eventually compact semigroup $T(t)$ on a Banach space $X$.
    The spectrum of $A$ consists of a countable set, possibly empty or finite,
    of eigenvalues (i.e., $\sigma(A;X) = \sigma_p(A;X) $) of finite algebraic multiplicity. 
    The set $\{ \lambda \in \sigma(A;X) \; | \; \mathrm{Re} \, \lambda \geq \gamma \}$ is finite for all $\gamma \in \mathbb{R}$.
    Moreover, the following spectral mapping property holds
    \begin{equation}
        \sigma\big(T(t);X\big)\setminus \{ 0\} = e^{t \sigma(A;X)}.
        \label{eq:spectralmapping}
    \end{equation}
\end{theorem}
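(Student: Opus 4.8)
The plan is to reduce the statement to two classical facts: the Riesz--Schauder theory of compact operators and the spectral mapping theorem for eventually norm-continuous semigroups (indeed, the result itself is Theorem B.IV.2.1 of \cite{arendt1986one}, which one may also simply cite). First I would record that eventual compactness propagates: if $T(t_0)$ is compact, then for every $t \geq t_0$ the operator $T(t) = T(t - t_0)T(t_0)$ is a bounded operator composed with a compact one, hence compact, and in particular $T$ is eventually norm-continuous. Fixing any $t \geq t_0$, the Riesz--Schauder theorem then gives that $\sigma\big(T(t);X\big) \setminus \{0\}$ consists of isolated eigenvalues of finite algebraic multiplicity whose only possible accumulation point is $0$; so it is at most countable, and $\{\mu \in \sigma(T(t);X) : |\mu| \geq \varepsilon\}$ is finite for every $\varepsilon > 0$.

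The next step is to invoke the spectral mapping identity $\sigma\big(T(t);X\big) \setminus \{0\} = e^{t\sigma(A;X)}$, valid for eventually norm-continuous semigroups (see the monograph of Engel and Nagel, or Chapter B.IV of \cite{arendt1986one}). The inclusion $e^{t\sigma(A;X)} \subseteq \sigma\big(T(t);X\big)$ holds for any $C_0$-semigroup and is immediate from the identity
\[
    (\lambda - A)\int_0^t e^{\lambda(t-s)}T(s)\,ds \;=\; e^{\lambda t} - T(t) \;=\; \left(\int_0^t e^{\lambda(t-s)}T(s)\,ds\right)(\lambda - A),
\]
since $e^{\lambda t} \in \rho(T(t))$ would then yield both a left and a right inverse of $\lambda - A$. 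The reverse inclusion is the part I expect to be the real obstacle: given $\mu \in \sigma(T(t);X)\setminus\{0\}$ and $\lambda_0$ with $e^{\lambda_0 t} = \mu$, one has to show that the vertical fibre $\{\lambda_0 + 2\pi\mathrm{i}k/t : k \in \mathbb{Z}\}$ meets $\sigma(A)$; the argument uses that eventual norm-continuity forces the resolvents $R(\lambda_0 + 2\pi\mathrm{i}k/t, A)$ to exist and decay for large $|k|$, so that the standard series expression for $R(\mu, T(t))$ converges --- contradicting $\mu \in \sigma(T(t);X)$ --- unless the fibre hits $\sigma(A)$.

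Granting the spectral mapping theorem, the stated conclusions are then bookkeeping. Countability of $\sigma(A)$: for fixed $t \geq t_0$, $\sigma(A) \subseteq \{\lambda : e^{\lambda t} \in \sigma(T(t);X)\setminus\{0\}\}$ is a countable union, over the countably many $\mu \in \sigma(T(t);X)\setminus\{0\}$, of the countable fibres $\{\lambda : e^{\lambda t} = \mu\}$. For $\lambda \in \sigma(A)$, the point $e^{\lambda t}$ is an isolated eigenvalue of the compact operator $T(t)$, so its Riesz projection $P$ has finite-dimensional range $Y$, invariant under $A$ and all $T(s)$; applying the (trivial) finite-dimensional spectral mapping theorem to $A|_Y$ shows that $\{\nu \in \sigma(A) : e^{\nu t} = e^{\lambda t}\} \subseteq \sigma(A|_Y)$ is finite, consists of eigenvalues, and contains $\lambda$ with total algebraic multiplicity at most $\dim Y$ --- hence $\sigma(A) = \sigma_p(A)$ with finite multiplicities. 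Finally, for $\lambda \in \sigma(A)$ with $\mathrm{Re}\,\lambda \geq \gamma$ one has $|e^{\lambda t_0}| = e^{t_0\mathrm{Re}\,\lambda} \in [e^{t_0\gamma}, e^{t_0\omega_0}]$, where $\omega_0$ is the growth bound of $T$ (using $\sup\mathrm{Re}\,\sigma(A) \leq \omega_0$), so $e^{\lambda t_0}$ lies in the finite set $\{\mu \in \sigma(T(t_0);X) : e^{t_0\gamma} \leq |\mu| \leq e^{t_0\omega_0}\}$; since each of these finitely many values has finite preimage in $\sigma(A)$ by the previous step, $\{\lambda \in \sigma(A;X) : \mathrm{Re}\,\lambda \geq \gamma\}$ is finite.

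In summary, everything is routine once the nontrivial inclusion $\sigma(T(t);X)\setminus\{0\} \subseteq e^{t\sigma(A;X)}$ is in hand; that --- the spectral mapping theorem for eventually norm-continuous semigroups --- is where the work lies, the Riesz--Schauder input and the passage through the exponential being standard.
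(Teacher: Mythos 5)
Your proposal is correct in substance but takes a genuinely different route from the paper's. The paper simply cites Theorem B.IV.2.1 of \cite{arendt1986one} for everything except the spectral mapping identity \eqref{eq:spectralmapping}, and proves that identity by an argument tailored to compactness: the point-spectrum spectral mapping theorem $\sigma_p(T(t))\setminus\{0\} = e^{t\sigma_p(A)}$ holds for \emph{every} strongly continuous semigroup (and has an elementary proof), so it suffices to show $\sigma(T(t))\setminus\{0\} = \sigma_p(T(t))\setminus\{0\}$; this follows from the polynomial spectral mapping theorem via $\sigma(T(t))^n\setminus\{0\} = \sigma(T(nt))\setminus\{0\} = \sigma_p(T(nt))\setminus\{0\} = \sigma_p(T(t))^n\setminus\{0\}$ for $n$ large enough that $T(nt)$ is compact. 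You instead invoke the full spectral mapping theorem for eventually norm-continuous semigroups as the engine and then derive all the remaining assertions (countability, $\sigma(A)=\sigma_p(A)$, finite multiplicities, finiteness in right half-planes) from Riesz--Schauder plus fibre-counting. What each approach buys: yours is more self-contained at the level of the full statement but outsources the genuinely hard inclusion $\sigma(T(t))\setminus\{0\}\subseteq e^{t\sigma(A)}$ to the resolvent-decay argument along vertical fibres, which you only sketch; the paper's route sidesteps that argument entirely by exploiting compactness to upgrade spectrum to point spectrum, at the cost of citing \cite{arendt1986one} for the structural facts you re-derive. One small point in your bookkeeping: the inclusion $\{\nu\in\sigma(A): e^{\nu t}=e^{\lambda t}\}\subseteq\sigma(A\vert_Y)$ does not follow from the finite-dimensional spectral mapping theorem on $Y$ alone; you also need that $\nu\notin\sigma(A\vert_{\ker P})$, which one gets by applying the spectral mapping theorem to the restricted semigroup on $\ker P$ (still eventually compact), whose time-$t$ spectrum omits $e^{\lambda t}$. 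This is easily repaired but should be said.
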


\begin{proof}
    This is just a restatement of Theorem B.IV.2.1, \cite{arendt1986one}, apart from the last statement \eqref{eq:spectralmapping}, which can be inferred from Theorem IV.2.2, \cite{diekmann2012delay}, but we give a proof here as well. 
    The spectral mapping property \eqref{eq:spectralmapping} is a consequence of $\sigma(A;X) = \sigma_p(A;X)$ and the spectral mapping property of point spectra for strongly continuous semigroups (IV.3.7, \cite{engel2000one}), which states
    \begin{displaymath}
        \sigma_p\big(T(t);X\big)\setminus \{ 0\} = e^{t \sigma_p(A;X)}.
    \end{displaymath}
    To conclude \eqref{eq:spectralmapping}, we observe, through eventual compactness of $T$ and the formula $T(t)^n = T(tn)$ (for $n \in \mathbb{N}$ large enough so that $T(nt)$ is compact) that $\sigma(T(t))^n \setminus \{0\}=\sigma(T(t)^n)\setminus\{0\}  = \sigma_p(T(t)^n) \setminus\{0\}  = \sigma_p(T(t))^n \setminus\{0\}$; hence $\sigma(T(t)) \setminus \{0\} = \sigma_p(T(t)) \setminus \{0\}$.
\end{proof}

The following lemma recalls the existence of spectral projections.

\begin{lemma}[Spectral projections] \label{lemma:proj}
    Maintaining the assumptions of Theorem~\ref{thm:spectrum},
    let $\Sigma  $ denote a bounded subset of $\sigma(A;X)$; denote by $\Sigma' = \sigma(A;X) \setminus \Sigma$.
    There exists a projection $P_\Sigma$ on $X_\mathbb{C}$ satisfying the following:\footnote{
    Here and elsewhere, $\imath_\Sigma : X_\Sigma \xhookrightarrow{} X$ denotes the inclusion and $\widetilde{P}_\Sigma : X \to X_\Sigma$ the projection with restricted range.
    We also denote by $P_{\Sigma'} = \mathrm{id}_X - P_\Sigma$.
    }
    \begin{enumerate}[label=\upshape{(\roman*)}]
        \item \label{projlemma_item1} $X_\mathbb{C}$ admits a splitting $X_\mathbb{C} = X_\Sigma \oplus X_{\Sigma'}$ of $A_\mathbb{C}$-invariant closed linear subspaces $X_\Sigma = \im (P_\Sigma)$ and $X_{\Sigma'} = \ker(P_\Sigma)$, with $X_\Sigma$ finite dimensional. 
        \item The operator $A_\Sigma : = \widetilde{P}_\Sigma A_\mathbb{C} \imath_\Sigma:X_\Sigma \to X_\Sigma$ is bounded, its spectrum is $\sigma(A_\Sigma ; X_\Sigma) = \Sigma$.
        \item \label{projlemma_item3}  The unbounded operator $A_{\Sigma'} := \widetilde{P}_{\Sigma'} A_\mathbb{C}|_{X_{\Sigma'} \cap \dom(A_\mathbb{C})} : X_{\Sigma'}\cap \dom(A_\mathbb{C}) \to X_{\Sigma'}$ has spectrum $\sigma(A_{\Sigma'} ; X_{\Sigma'}) = \Sigma'$.
        \item \label{projlemma_item4}  If $\Sigma$ is invariant under complex conjugation, then $P_\Sigma$ preserves the real subspace, i.e., $P_\Sigma X \subset X$, and all points above can be reformulated in terms of '$X$' and '$A$' instead of '$X_\mathbb{C}$' and '$A_\mathbb{C}$'.
    \end{enumerate}
\end{lemma}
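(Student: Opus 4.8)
The plan is to invoke the standard Riesz–Dunford functional calculus for the closed operator $A_\mathbb{C}$, exploiting the fact (from Theorem~\ref{thm:spectrum}) that $\sigma(A_\mathbb{C};X_\mathbb{C})$ consists of isolated eigenvalues of finite algebraic multiplicity and that only finitely many of them lie in any right half-plane $\{\mathrm{Re}\,\lambda \geq \gamma\}$. Since $\Sigma$ is bounded and closed (being a subset of a set with no finite accumulation points to the right, and $\Sigma'$ likewise closed because $\Sigma$ consists of isolated points), one may choose a finite union $\Gamma$ of rectifiable Jordan curves in the resolvent set that encloses $\Sigma$ and is disjoint from $\Sigma'$ — here boundedness of $\Sigma$ together with discreteness of the spectrum is what guarantees such a contour exists. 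Define
\begin{displaymath}
    P_\Sigma := \frac{1}{2\pi \mathrm{i}} \oint_\Gamma (\lambda\,\mathrm{id} - A_\mathbb{C})^{-1} \, d\lambda.
\end{displaymath}

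First I would record the classical facts about this Riesz projection (see e.g.\ Kato, or Section III.6.4 of \cite{engel2000one}): $P_\Sigma$ is a bounded projection commuting with $A_\mathbb{C}$ and with the resolvent, so $X_\Sigma := \im(P_\Sigma)$ and $X_{\Sigma'} := \ker(P_\Sigma)$ are closed $A_\mathbb{C}$-invariant subspaces giving the topological direct sum $X_\mathbb{C} = X_\Sigma \oplus X_{\Sigma'}$. Finite dimensionality of $X_\Sigma$ follows because $\Sigma$ is a finite set of eigenvalues each of finite algebraic multiplicity (finiteness of $\Sigma$ itself comes from Theorem~\ref{thm:spectrum}: $\Sigma$ is bounded, hence contained in some $\{\mathrm{Re}\,\lambda \geq \gamma\}\cap\{|\lambda|\le M\}$, which is finite), and $X_\Sigma$ is the sum of the corresponding generalized eigenspaces. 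For \ref{projlemma_item1}–\ref{projlemma_item3} the spectral decomposition theorem of the Riesz–Dunford calculus gives directly that $A_\mathbb{C}$ restricted to $X_\Sigma$ has spectrum exactly $\Sigma$ and is bounded (finite-dimensional domain), while on $X_{\Sigma'}$ — where it remains closed and densely defined with the induced graph structure — it has spectrum $\Sigma'$; one also checks $\dom(A_\mathbb{C}) = X_\Sigma \oplus \big(X_{\Sigma'}\cap\dom(A_\mathbb{C})\big)$ so that $A_{\Sigma'}$ is well defined as stated.

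For \ref{projlemma_item4}, the point is that complexification intertwines with complex conjugation: writing $\overline{u+\mathrm{i}v} := u - \mathrm{i}v$ on $X_\mathbb{C}$, one has $A_\mathbb{C}\overline{z} = \overline{A_\mathbb{C} z}$ because $A$ is a real operator, hence $(\overline\lambda\,\mathrm{id}-A_\mathbb{C})^{-1}\overline z = \overline{(\lambda\,\mathrm{id}-A_\mathbb{C})^{-1} z}$. If $\Sigma$ is conjugation-invariant one may choose the contour $\Gamma$ to be conjugation-symmetric; substituting $\lambda \mapsto \overline\lambda$ in the Cauchy integral and using the reality of the resolvent identity yields $P_\Sigma \overline z = \overline{P_\Sigma z}$, i.e.\ $P_\Sigma$ commutes with conjugation, which is equivalent to $P_\Sigma X \subset X$. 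One then restricts everything to the real part: $P_\Sigma|_X$ is a real bounded projection, $X = (X_\Sigma\cap X)\oplus(X_{\Sigma'}\cap X)$, and the spectral statements transfer because $\sigma(A;X) := \sigma(A_\mathbb{C};X_\mathbb{C})$ by definition. I do not expect any serious obstacle here; the only mild subtlety is bookkeeping the graph-norm structure on $X_{\Sigma'}$ so that \ref{projlemma_item3} is correctly phrased for the unbounded part, and verifying the contour can be chosen symmetric — both of which are routine once discreteness and boundedness of $\Sigma$ are in hand.
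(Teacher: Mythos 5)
Your proposal is correct and follows essentially the same route as the paper: points \ref{projlemma_item1}--\ref{projlemma_item3} are obtained from the holomorphic (Riesz--Dunford) functional calculus, with finite dimensionality of $X_\Sigma$ deduced exactly as you do from Theorem~\ref{thm:spectrum} (boundedness of $\Sigma$ plus discreteness forces $\Sigma$ to be a finite set of eigenvalues of finite algebraic multiplicity). The only difference is in \ref{projlemma_item4}, where the paper cites Corollary IV.2.19 of \cite{diekmann2012delay} for a single conjugate pair and sums the projections $P_\Sigma=\sum_{\lambda\in\Sigma}P_{\{\lambda\}}$, whereas you give the (equally valid, and self-contained) direct argument via a conjugation-symmetric contour and the reality of the resolvent.
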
 

\begin{proof}
    Points \ref{projlemma_item1}-\ref{projlemma_item3} follow directly from holomorphic functional calculus, see e.g.\ Theorem 6.19, \cite{buhler2018functional}.
    Note that in order to conclude finite dimensionality of $X_\Sigma$, we used the first two assertions of Theorem~\ref{thm:spectrum}, in particular, that $\Sigma$ is hence a finite set consisting of eigenvalues of finite algebraic multiplicity.
    
    Point \ref{projlemma_item4} follows from Corollary IV.2.19 of \cite{diekmann2012delay}, which states the result only for a single pair of complex conjugate  eigenvalues, but trivially extends to the present case in view of the observation above, which implies $P_\Sigma = \sum_{\lambda \in \Sigma}P_{\{\lambda\}}$.
\end{proof}

 Since our treatment of delay equations assumes $A$ is real, its eigenvalues appear in complex conjugate pairs, i.e., $\sigma(A;X)= \sigma_p(A;X)$ is invariant under conjugation. 
 Hence, we can always select $\Sigma$ in a way such that $P_\Sigma$ preserves the real subspace by replacing $\Sigma$ with $\Sigma \cup \mathrm{conj}(\Sigma)$.
 In the future, we shall restrict our attention to projections as such.
 
 We shall also assume that $\Sigma' = \sigma(A;X) \setminus \Sigma \neq \emptyset$ for almost the entirety of this paper, with the exception of Remark~\ref{remark:IM_with_F} concerning inertial manifolds.\footnote{To extend the results herein to the case when $\Sigma' = \emptyset$, one should simply interpret $\beta$ in what follows as $-\infty$ and $-\beta+\varepsilon_2$ as some (large) negative number (specifically in Lemma~\ref{lemma:dichotomy}).
 See also Remark~\ref{remark:IM_with_F}.
 The assumption is made here mostly for notational convenience.}
 We remark that this condition does not automatically hold because $\sigma(A;X)$ may be finite in Theorem~\ref{thm:spectrum}.
 However, the only interesting example for which the condition fails is the following in Remark~\ref{remark:A0proj}. 
 It will only be used for Remark~\ref{remark:IM_with_F}, whose purpose is to simplify some assumptions for inertial manifolds.

\begin{remark} \label{remark:A0proj}
    Consider the operator $A_0 : \dom(A_0) \to X$ given in \eqref{eq:A_0}.
    Knowing that $\sigma(A_0;X) = \sigma_p(A_0;X)$ by Theorem~\ref{thm:spectrum}, one can deduce from the eigenvalue equation
    \begin{displaymath}
        A_0 u = \dot{u} = \lambda u, \qquad u \in \dom(A_0),
    \end{displaymath}
    that $\sigma(A_0;X) = \{0\}$ (recall that $\dot{u}(0) = 0$ for $u \in \dom(A_0)$). 
    Moreover, the eigenvalue $0$ has algebraic and geometric multiplicity equal to $n$; the eigenspace $X_{\{0\}} \subset X$ is spanned by basis elements of $\mathbb{R}^n$ interpreted as constant functions over $[-h,0]$. 
    The associated projection $ P_{\{0\}}$ can be identified as follows.
    Since spectral projections commute with the operator that defines them, we must have
    \begin{displaymath}
        P_{\{0\}} A_0 = A_0 P_{\{0\}} , \qquad \text{on } \dom(A_0),
    \end{displaymath}
    which implies that $P_{\{0\}} \dot{u} = 0$ for all $u \in \dom(A_0)$.
    Hence, $P_{\{0\}} u =u(0)$. 
\end{remark}

 Next, we consider the exponential dichotomy that $P_\Sigma$ produces.
 Denote the \textit{growth bound} of a strongly continuous semigroup $t \mapsto T(t)$ with generator $A$ by  
\begin{equation}
    \omega(A) := \inf \left\{ \omega \in \mathbb{R} \; \Big| \; \exists M > 0 \text{ such that } \Vert T(t) \Vert_{\mathcal{L}(X)} \leq M e^{\omega t}, \; \forall t \geq 0 \right\}.
    \label{eq:growthbound}
\end{equation}
Note that this is well defined: $\omega(A) < \infty$ for strongly continuous semigroups.
Recall that for all $t > 0$ we have (see, e.g.,  \cite{van1996asymptotic}, Proposition 1.2.2)
\begin{equation}
    \omega(A) = \frac{\log r_\sigma(T(t))}{t},
    \label{eq:growthbound_spectralrad}
\end{equation}
where $r_\sigma(T(t)) = \sup_{\lambda \in \sigma(T(t))} |\lambda|$ denotes the spectral radius.

 \begin{lemma}[Exponential dichotomy] \label{lemma:dichotomy} 
     Assume the setting of Theorem~\ref{thm:spectrum}, and let $\Sigma$ and $P_\Sigma$ be as in Lemma~\ref{lemma:proj}, with $\Sigma$ invariant under complex conjugation and $\Sigma' \neq \emptyset$.
     Set $\alpha = \inf\{ \mathrm{Re} \, \lambda \; | \; \lambda \in \Sigma \}$ and $\beta = \sup\{ \mathrm{Re} \, \lambda \; | \; \lambda \in  \Sigma' \}$.
     Then, for any $\varepsilon_1,\varepsilon_2>0$, there exists $K_1,K_2 > 0$ such that
     \begin{subequations}\label{eq:dichotomy}
     \begin{align}
         \Vert T(t) P_\Sigma \Vert_{\mathcal{L}(X)} &\leq K_1 e^{(\alpha -\varepsilon_1)t} \Vert P_{\Sigma} \Vert_{\mathcal{L}(X)}, \qquad &t \leq 0,  \label{eq:dichotomy1}\\ 
         \Vert T(t)P_{\Sigma'} \Vert_{\mathcal{L}(X)} &\leq K_2 e^{(\beta+\varepsilon_2) t} \Vert P_{\Sigma'}\Vert_{\mathcal{L}(X)}, \qquad &t \geq 0. \label{eq:dichotomy2}
     \end{align}    
     \end{subequations}
 \end{lemma}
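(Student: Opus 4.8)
The plan is to treat the two inequalities separately, using the decomposition $X_\mathbb{C} = X_\Sigma \oplus X_{\Sigma'}$ from Lemma~\ref{lemma:proj} together with the spectral mapping property \eqref{eq:spectralmapping}. For \eqref{eq:dichotomy1}, I would first observe that since $X_\Sigma$ is finite dimensional and $A_\Sigma$ is bounded with $\sigma(A_\Sigma; X_\Sigma) = \Sigma$, the semigroup $T(t)|_{X_\Sigma}$ extends to a group $e^{tA_\Sigma}$ on $X_\Sigma$ defined for all $t \in \mathbb{R}$; by invertibility of $e^{tA_\Sigma}$ on the finite-dimensional space $X_\Sigma$ and the fact that $T(t)P_\Sigma = \imath_\Sigma e^{tA_\Sigma}\widetilde P_\Sigma$ for $t \geq 0$, one extends $T(t)P_\Sigma$ to $t < 0$. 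Then, because $A_\Sigma$ acts on a finite-dimensional space, the standard estimate for matrix exponentials gives $\Vert e^{tA_\Sigma}\Vert \leq K_1 e^{(\alpha - \varepsilon_1)t}$ for $t \leq 0$: indeed for $t \leq 0$ one has $e^{tA_\Sigma} = e^{-|t|A_\Sigma}$, and since $\sigma(-A_\Sigma) = -\Sigma$ has spectral abscissa $-\alpha$, the growth bound formula \eqref{eq:growthbound_spectralrad} (applied to the generator $-A_\Sigma$, or simply the Jordan form) yields $\Vert e^{-sA_\Sigma}\Vert \leq K_1 e^{(-\alpha + \varepsilon_1)s}$ for $s \geq 0$. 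Composing with $\imath_\Sigma$ and $\widetilde P_\Sigma$, and absorbing $\Vert\imath_\Sigma\Vert \Vert\widetilde P_\Sigma\Vert / \Vert P_\Sigma\Vert$ into the constant $K_1$ (which is harmless since $\Vert P_\Sigma\Vert \geq 1$), gives \eqref{eq:dichotomy1}.

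For \eqref{eq:dichotomy2}, I would work on the complementary subspace $X_{\Sigma'}$ with the (unbounded) generator $A_{\Sigma'}$ whose spectrum is $\Sigma'$. The semigroup $T(t)P_{\Sigma'} = \imath_{\Sigma'} e^{tA_{\Sigma'}} \widetilde P_{\Sigma'}$ for $t \geq 0$, where $e^{tA_{\Sigma'}}$ is strongly continuous on $X_{\Sigma'}$. The key point is that the restricted semigroup $e^{tA_{\Sigma'}}$ is again eventually compact (it is a restriction of the eventually compact $T(t)$ to the invariant closed subspace $X_{\Sigma'}$, and restrictions of compact operators to invariant subspaces are compact), so by Theorem~\ref{thm:spectrum} its spectral mapping property holds: $\sigma(e^{tA_{\Sigma'}}) \setminus \{0\} = e^{t\Sigma'}$. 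Since $\beta = \sup\{\mathrm{Re}\,\lambda : \lambda \in \Sigma'\} < \infty$ (this supremum is finite because $\{\lambda \in \sigma(A;X) : \mathrm{Re}\,\lambda \geq \gamma\}$ is finite for every $\gamma$, so $\Sigma'$ has only finitely many points in any right half-plane and its real parts are bounded above), the spectral radius of $e^{tA_{\Sigma'}}$ equals $e^{\beta t}$. By the growth-bound formula \eqref{eq:growthbound_spectralrad}, $\omega(A_{\Sigma'}) = \beta$, and hence for any $\varepsilon_2 > 0$ there is $K_2' > 0$ with $\Vert e^{tA_{\Sigma'}}\Vert_{\mathcal{L}(X_{\Sigma'})} \leq K_2' e^{(\beta + \varepsilon_2)t}$ for all $t \geq 0$. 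Composing with $\imath_{\Sigma'}$ and $\widetilde P_{\Sigma'}$ and absorbing the norms of these maps (again using $\Vert P_{\Sigma'}\Vert \geq 1$) into $K_2$ yields \eqref{eq:dichotomy2}.

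The main obstacle I anticipate is the bookkeeping around the complexification and the real-subspace reduction: one must be careful that $\omega$, $\alpha$, $\beta$, and the spectral mapping statements are all interpreted on $X_\mathbb{C}$ (or consistently on the real $X$ when $\Sigma$ is conjugation-invariant, via Lemma~\ref{lemma:proj}\ref{projlemma_item4}), and that the growth bound $\omega(A_{\Sigma'})$ on the complemented space genuinely equals $\beta$ rather than merely being bounded by it — this is precisely where eventual compactness of the restriction $e^{tA_{\Sigma'}}$ is essential, since for a general strongly continuous semigroup the spectral bound can be strictly less than the growth bound. A minor technical point is justifying that $e^{tA_{\Sigma'}}$ inherits eventual compactness and strong continuity from $T(t)$; this follows because $X_{\Sigma'}$ is a closed $T(t)$-invariant subspace complemented by the finite-dimensional $X_\Sigma$, so the restriction is well-defined, strongly continuous, and its iterates are compact for $t \geq h$. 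Everything else — the matrix-exponential estimate on the finite-dimensional piece, the extension to negative times there, and the absorption of the norms of $\imath_\Sigma, \widetilde P_\Sigma$ into the constants — is routine.
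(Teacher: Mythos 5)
Your proposal is correct and follows essentially the same route as the paper: the first bound via the norm-continuous group generated by the bounded $A_\Sigma$ on the finite-dimensional $X_\Sigma$ together with the growth-bound formula \eqref{eq:growthbound_spectralrad} applied to $-A_\Sigma$, and the second via the growth bound of the restricted semigroup on $X_{\Sigma'}$. The paper dismisses the second inequality with ``similar considerations,'' and your elaboration — that the restriction inherits eventual compactness, so the spectral mapping property forces $\omega(A_{\Sigma'})=\beta$ rather than merely bounding it — is exactly the point that makes that step work.
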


 \begin{proof} 
    It is a consequence of \eqref{eq:generator_sg} that $T(t)$ can be factored into two semigroups according to the decomposition $X = X_\Sigma \oplus X_{\Sigma'}$.\footnote{The subspaces are equipped with the norm inherited from $X$; in this case, $\Vert\widetilde{P}_\Sigma \Vert_{\mathcal{L}(X,X_\Sigma)} = \Vert{P}_\Sigma \Vert_{\mathcal{L}(X)}$.}
    In particular, $A_\Sigma$ generates $\widetilde{P}_\Sigma T(t) \imath_\Sigma : X_\Sigma \to X_\Sigma$; similarly for $A_{\Sigma'}$.
    Since $A_\Sigma$ is bounded, $t \mapsto  \widetilde{P}_\Sigma T(t) \imath_\Sigma$ is norm continuous, hence forming a group (Lemma 7.17, \cite{buhler2018functional}). 
    In particular, $-A_\Sigma$ generates the semigroup $\{ \widetilde{P}_\Sigma T(-t) \imath_\Sigma \}_{t \geq 0}$; hence, by \eqref{eq:growthbound_spectralrad},
    \begin{align*}
        \omega(-A_\Sigma) &= \frac{1}{t}\log r_\sigma( \widetilde{P}_\Sigma T(-t) \imath_\Sigma )  \\
        &= \frac{1}{t} \sup_{\lambda \in \Sigma} \log \left| e^{-t \lambda} \right| \\
        & = -\alpha.
    \end{align*}
    By definition \eqref{eq:growthbound}, for any $\varepsilon_1>0$, there exists $K_1 > 0$ such that
    \begin{displaymath}
        \Vert \widetilde{P}_\Sigma T(-t) \imath_\Sigma \Vert_{\mathcal{L}(X_\Sigma)} \leq K_1 e^{(-\alpha +\varepsilon_1)t}, \qquad t \geq 0.
    \end{displaymath}
    Therefore,
    \begin{align*}
        \Vert T(t) P_\Sigma \Vert_{\mathcal{L}(X)}  &= \Vert \widetilde{P}_\Sigma T(t) \imath_\Sigma \widetilde{P}_\Sigma \Vert_{\mathcal{L}(X)} \\
        &\leq \Vert \widetilde{P}_\Sigma T(-t) \imath_\Sigma  \Vert_{\mathcal{L}(X_\Sigma)}  \Vert{P}_\Sigma \Vert_{\mathcal{L}(X)} \\
        &\leq K_1 e^{(\alpha-\varepsilon_1) t} \Vert{P}_\Sigma \Vert_{\mathcal{L}(X)}, \qquad t \leq 0.
    \end{align*}

    Similar considerations lead to \eqref{eq:dichotomy2}.
 \end{proof}

\subsubsection{Characteristic equation}
\label{sect:characteristiceq}

The spectrum of $A$ is usually computed via the \textit{characteristic equation} associated to the linear system \eqref{eq:linearized_formaleq}.
To derive the characteristic equation, one first 
complexifies \eqref{eq:linearized_formaleq} so that $Df(0) \in \mathcal{L}(X_\mathbb{C},\mathbb{C}^n)$, 
then transforms it into a more manageable form via the Riesz representation theorem, treating $Df(0)$ as $n$ linear functionals on $X_\mathbb{C}$.
In particular, there exists a unique $\zeta \in \nbv([0,h];\mathbb{C}^{n \times n})$ for which
\begin{equation}
    Df(0) u = \int_0^h d\zeta(\theta) u(-\theta) , \qquad u \in X_\mathbb{C},
    \label{eq:RieszDf0}
\end{equation}
where the integral is interpreted in the Riemann-Stieltjes sense (see Appendix~\ref{sect:BV}).

The \textit{characteristic matrix} is defined by
\begin{equation}
    \Delta (z) = z \, \mathrm{id}_{\mathbb{C}^{n \times n}} - \int_0^h e^{-zt} d\zeta(t), \qquad z \in \mathbb{C}, 
    \label{eq:characteristic_matrix}
\end{equation}
the corresponding characteristic equation reads 
\begin{equation}
    \det(\Delta (z)) = 0.
    \label{eq:characteristic_eq}
\end{equation}
Equation \eqref{eq:characteristic_eq} is usually obtained from \eqref{eq:linearized_formaleq} and \eqref{eq:RieszDf0} via a Laplace transform (see Section I.3, \cite{diekmann2012delay}), or by direct substitution of the model solution $c e^{z t}$, $c \in \mathbb{C}^n$.

It is almost immediate from the latter approach that roots of \eqref{eq:characteristic_eq} are contained in $\sigma_p(A)$; one only needs to check that the eigenfunction thus produced, $w : \theta \mapsto c e^{\lambda \theta}$ on $X_\mathbb{C}$ for which $\Delta(\lambda) c = 0$, is contained in $\dom(A_\mathbb{C})$, i.e., $(Df(0)w,\dot{w} )\in \imath(X_\mathbb{C})$.
Thus, we need only verify $Df(0)w = \dot{w}(0)$, which follows from the observation
\begin{displaymath}
    \dot{w}(0) - Df(0)w = \Delta(\lambda) c. 
\end{displaymath}
The reverse inclusion requires a careful assessment of the resolvent operator of $A$
(see Theorem IV.3.1, \cite{diekmann2012delay}).
The overall statement, 
given in Corollary IV.3.3, \cite{diekmann1987perturbed},
is 
\begin{equation}
    \left\{ \lambda \in \mathbb{C} \; \big| \; \det(\Delta(\lambda)) = 0 \right\} = \sigma_p(A;X) = \sigma(A;X).
    \label{eq:charact_equal_spect}
\end{equation}

Due to their significance in determining the asymptotic behaviour of linear systems,
characteristic equations of functional differential equations have received widespread attention throughout the years \cite{stepan1989retarded,michiels2007stability,cooke1963differential}.
In the special case of delay differential equations, the asymptotic stability of linear systems is completely determined by the roots of \eqref{eq:characteristic_eq}, which is a simple consequence of Lemma~\ref{lemma:dichotomy} and \eqref{eq:charact_equal_spect}.
For functional equations of arbitrary type, spectral stability is a necessary but generally not sufficient condition \cite{stepan1989retarded}.
We remark also that, while the asymptotic behaviour is completely characterized by the spectrum in the linear delayed case, there might exist \textit{small solutions} (defined as solutions decaying faster than $e^{at}$ for all $a \in \mathbb{R}$) that are not captured by spectral expansions (see Chapter V, \cite{diekmann2012delay}).

\subsubsection{Asymptotics of the spectrum, barriers to \texorpdfstring{$C^k$}{Ck} smoothness of pseudo-unstable manifolds} 
\label{sect:spectrum_asymp}

For pseudo-unstable manifolds of interest herein, $C^k$ smoothness, $k \geq 2$, of the manifolds hinges on spectral gap conditions of the form
\begin{equation}
    \beta< k \, \alpha < 0, \qquad \alpha = \inf_{\lambda \in \Sigma} \mathrm{Re} \, \lambda, \qquad \beta = \sup_{\lambda \in {\Sigma'}} \mathrm{Re} \, \lambda,
    \label{eq:spectralgap}
\end{equation}
where $\Sigma \subset \sigma(A)$ is a bounded subset of the spectrum and $\Sigma' = \sigma(A) \setminus \Sigma$, as in the statement of Lemma~\ref{lemma:dichotomy}.
In this subsection, we show that \eqref{eq:spectralgap} is violated whenever $|\alpha|$ is large enough, i.e., for all but finitely many choices of $\Sigma$, under the mild assumption $\zeta \in \sbv$ (for the relevant definition, see Appendix~\ref{sect:BV}).

For this, let us recall two results about the roots of \eqref{eq:characteristic_eq}.

\begin{proposition}[Theorem V.7.11, \cite{diekmann2012delay}] \label{prop:spectrum1}
    Suppose $\zeta \in \sbv$.
    There exist constants $C,C_0,C_1 > 0$ such that all roots $\lambda \in \mathbb{C}$ of \eqref{eq:characteristic_eq} with $|\lambda| > C$ must be contained in the strip
    \begin{displaymath}
        \left\{ z \in \mathbb{C}   \; \big| \;  C_0 e^{-h \mathrm{Re} \, z} \leq |z| \leq C_1  e^{-h \mathrm{Re} \, z}   \right\}. 
    \end{displaymath}
\end{proposition}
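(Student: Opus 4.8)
The plan is to reduce everything to a matrix estimate and to isolate the behaviour of $\zeta$ at the maximal delay. Write $\Delta(z) = z\,\mathrm{id} - M(z)$ with $M(z) := \int_0^h e^{-zt}\,d\zeta(t)$, so that $\lambda$ is a root of \eqref{eq:characteristic_eq} exactly when $\lambda$ is an eigenvalue of the matrix $M(\lambda)$, i.e.\ $\lambda c = M(\lambda) c$ for some unit vector $c \in \mathbb{C}^n$. The first step is a crude a priori bound: since $|e^{-zt}| = e^{-t\,\mathrm{Re}\,z} \le \max\{1, e^{-h\,\mathrm{Re}\,z}\}$ for $t \in [0,h]$, the Riemann--Stieltjes estimate yields $\|M(z)\| \le \max\{1, e^{-h\,\mathrm{Re}\,z}\}\,\mathrm{Var}_{[0,h]}(\zeta)$, hence $|\lambda| \le \max\{1, e^{-h\,\mathrm{Re}\,\lambda}\}\,\mathrm{Var}_{[0,h]}(\zeta)$ for every root $\lambda$. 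Consequently, as soon as $|\lambda| > \mathrm{Var}_{[0,h]}(\zeta)$ we must have $\mathrm{Re}\,\lambda < 0$ and $|\lambda| \le e^{-h\,\mathrm{Re}\,\lambda}\,\mathrm{Var}_{[0,h]}(\zeta)$ -- which already proves the upper bound with $C_1 = \mathrm{Var}_{[0,h]}(\zeta)$ and, on taking logarithms, shows that $\mathrm{Re}\,\lambda \to -\infty$ as $|\lambda| \to \infty$ along roots. So for the remaining (lower) bound we may assume $\mathrm{Re}\,z$ is as negative as we like.

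For the lower bound I would split off the atom of $\zeta$ at $t = h$. Decompose $d\zeta = A_h\,\delta_h + d\zeta_r$, where $A_h = \zeta(h) - \zeta(h^-)$ is the jump of $\zeta$ at the maximal delay and $\zeta_r$ carries the remaining (finite-variation) mass on $[0,h)$; the hypothesis $\zeta \in \sbv$ is used precisely here, to ensure that $A_h$ is invertible. Factoring out $e^{-zh}$,
\begin{displaymath}
    M(z) = e^{-zh}\,N(z), \qquad N(z) := A_h + \int_{[0,h)} e^{z(h - t)}\,d\zeta_r(t).
\end{displaymath}
The claim I would establish is that $N(z) \to A_h$ uniformly as $\mathrm{Re}\,z \to -\infty$. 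With $x = \mathrm{Re}\,z < 0$ fixed, split the integral at a cut-off $\delta \in (0,h)$: over $[h-\delta, h)$ the integrand has $|e^{z(h-t)}| \le 1$, so that piece is at most $|\zeta_r|([h-\delta,h))$, which tends to $0$ as $\delta \to 0^+$ since $|\zeta_r|$ is a finite measure with no mass at $h$; over $[0, h - \delta)$ one has $|e^{z(h-t)}| \le e^{x\delta}$, so that piece is at most $e^{x\delta}\,\mathrm{Var}_{[0,h)}(\zeta_r) \to 0$ as $x \to -\infty$. Choosing $\delta$ small and then a threshold $x_0 < 0$, we get $\|N(z) - A_h\| < \tfrac12 \|A_h^{-1}\|^{-1}$ for all $\mathrm{Re}\,z < x_0$; a Neumann-series argument then gives that $N(z)$ is invertible there, with $\|N(z)^{-1}\| \le 2\|A_h^{-1}\|$ and $\|N(z)\| \le \|A_h\| + \tfrac12\|A_h^{-1}\|^{-1}$.

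It remains to combine the two steps. Choose $C$ large enough that $|\lambda| > C$ forces $\mathrm{Re}\,\lambda < x_0$ -- possible by the a priori bound, which gives $\mathrm{Re}\,\lambda \le -h^{-1}\log(|\lambda|/\mathrm{Var}_{[0,h]}(\zeta))$. For such a root, $\lambda c = M(\lambda) c = e^{-\lambda h} N(\lambda) c$ for a unit vector $c$, i.e.\ $N(\lambda) c = \lambda e^{\lambda h} c$; applying the two-sided bound on $N(\lambda)$ to $c$ and to $c = N(\lambda)^{-1}(N(\lambda)c)$ yields
\begin{displaymath}
    \tfrac12\|A_h^{-1}\|^{-1} \le |\lambda|\,e^{h\,\mathrm{Re}\,\lambda} \le \|A_h\| + \tfrac12\|A_h^{-1}\|^{-1},
\end{displaymath}
which is exactly the asserted strip with $C_0 = \tfrac12\|A_h^{-1}\|^{-1}$ and $C_1 = \|A_h\| + \tfrac12\|A_h^{-1}\|^{-1}$ (enlarging $C_1$, if necessary, so it also dominates $\mathrm{Var}_{[0,h]}(\zeta)$ from the first step). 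The main obstacle, and the one place where $\zeta \in \sbv$ is genuinely needed, is the smallness of the tail $N(z) - A_h$ together with the invertibility of $A_h$: if the atom of $\zeta$ at $h$ were absent or degenerate, the dominant behaviour of $M(z)$ as $\mathrm{Re}\,z \to -\infty$ would be dictated by lower-order contributions (an extra factor $z^{-1}$ for an absolutely continuous kernel, or a lower-rank principal term), and the power of $e^{-h\,\mathrm{Re}\,z}$ in the strip would change accordingly.
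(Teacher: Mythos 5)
The paper offers no proof of this proposition --- it is imported directly from Diekmann et al.\ --- so your argument must stand on its own. Your upper bound does: the estimate $|\lambda|\le \mathrm{TV}(\zeta)\max\{1,e^{-h\,\mathrm{Re}\,\lambda}\}$ for an eigenvalue $\lambda$ of $M(\lambda)$ is correct, gives the right half of the strip with $C_1=\mathrm{TV}(\zeta)$, and shows $\mathrm{Re}\,\lambda\to-\infty$ along large roots. The Neumann-series bookkeeping and the tail-splitting estimate showing $N(z)\to A_h$ as $\mathrm{Re}\,z\to-\infty$ are also correct as computations.

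The gap is the premise on which the entire lower bound rests: that $\zeta\in\sbv$ forces the jump $A_h=\zeta(h)-\zeta(h^-)$ to be invertible. By the definition recalled in Appendix~\ref{sect:BV}, $\sbv$ for a matrix kernel is a condition on the \emph{scalar} functions $\eta_j$ in the representation \eqref{eq:detDelta} of $\det\Delta$ (each $d\eta_j$ must have an atom at the right end of its support); it says nothing about the matrix measure $d\zeta$ at $t=h$. Indeed, the paper notes that every purely atomic kernel is in $\sbv$, which includes kernels with no atom at $h$ or with a singular one. Concretely, take $n=2$, $\dot x_1=x_2(t-h_1)$, $\dot x_2=x_1(t-h_2)$ with $h_1\ne h_2$ and $h=\max(h_1,h_2)$: the jump of $\zeta$ at $h$ has rank one, yet the kernel is purely atomic and $\det\Delta(z)=z^2-e^{-z(h_1+h_2)}$, whose large roots satisfy $|z|=e^{-\frac{h_1+h_2}{2}\mathrm{Re}\,z}$. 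No constant $C_0>0$ then realizes a lower bound with exponent $h$, so your argument cannot be repaired at the level of the matrix $M(z)$: the correct exponent is governed by the scalar function $\det\Delta$ (its exponential type divided by $n$, i.e.\ the dominant pair among $(n-j,\tau_j)$), not by the maximal delay appearing in $\zeta$. The intended proof therefore works with \eqref{eq:detDelta} directly, isolating the dominant monomial $z^{n-j}e^{-z\tau_j}$ whose coefficient measure has an atom at $\tau_j$ --- this is exactly where $\sbv$ enters --- and showing it dominates the remaining terms as $\mathrm{Re}\,z\to-\infty$, by the same kind of tail estimate you performed. As written, your proof is valid only under the strictly stronger hypothesis that $d\zeta$ has an invertible atom at $t=h$ (in which case the dominant term is $\det(A_h)e^{-znh}$ and the two arguments coincide).
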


\begin{proposition}[Theorem 4.12, \cite{lunel1989exponential}] \label{prop:spectrum2}
    The number of roots of \eqref{eq:characteristic_eq} with modulus less than $r$, denoted by $n(r)$, satisfies\footnote{
    In \eqref{eq:spect_asymp}, $E(\det \Delta)$ refers to the exponential type of the function $z \mapsto \det (\Delta(z))$. 
    For the precise definition, see Definition 4.1, \cite{lunel1989exponential}, but for all  purposes herein, one could just interpret it as a positive constant determined by $\zeta$.
    }
    \begin{equation}
        \lim_{r \to \infty} \frac{n(r)}{r} = \frac{E(\det \Delta)}{\pi}.
        \label{eq:spect_asymp}
    \end{equation}
\end{proposition}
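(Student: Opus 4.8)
The plan is to run the classical route to zero--density theorems for entire functions of exponential type, applied to $g(z):=\det\Delta(z)$; recall that by \eqref{eq:charact_equal_spect} the zeros of $g$, counted with algebraic multiplicity, are exactly the points of $\sigma(A;X)$, so that $n(r)$ counts them in the disc $|z|<r$.

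First I would pin down the growth of $g$. Since $\zeta$ is supported on $[0,h]$, every entry of $z\,\mathrm{id}-\int_0^h e^{-zt}\,d\zeta(t)$ is entire of exponential type $\le h$ with exponents in $[-h,0]$; expanding the determinant then shows that $g$ is entire of finite exponential type $E:=E(\det\Delta)\in[0,nh]$, with (conjugate) indicator diagram contained in the real segment $[-E,0]$. The structural point is that the product of the diagonal entries contributes the pure polynomial $z^n$, a term of exponent $0$ which cannot be cancelled by the remaining terms (all of which carry strictly negative exponents); hence $0$ belongs to the indicator diagram, and since $-E$ is its leftmost point by definition of the exponential type, the diagram equals exactly $[-E,0]$. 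The indicator function is therefore $h_g(\theta)=E\max\{0,-\cos\theta\}$, so that $\frac{1}{2\pi}\int_0^{2\pi}h_g(\theta)\,d\theta=\frac{E}{\pi}$. (If $E=0$ the linearization is effectively delay--free, $g$ is a polynomial, $n(r)$ is bounded and \eqref{eq:spect_asymp} is trivial; assume henceforth $E>0$.)

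Next, assuming $g(0)\ne0$ (otherwise divide out the finite--order zero at the origin, which is asymptotically irrelevant), Jensen's formula gives
\begin{equation*}
    \int_0^r\frac{n(t)}{t}\,dt=\frac{1}{2\pi}\int_0^{2\pi}\log|g(re^{i\theta})|\,d\theta-\log|g(0)|.
\end{equation*}
The crux is to show that the circular mean of $\log|g|$ has the expected leading order,
\begin{equation*}
    \frac{1}{2\pi}\int_0^{2\pi}\log|g(re^{i\theta})|\,d\theta=\Big(\frac{1}{2\pi}\int_0^{2\pi}h_g(\theta)\,d\theta\Big)r+o(r)=\frac{E}{\pi}\,r+o(r)\qquad(r\to\infty),
\end{equation*}
i.e.\ that $g$ is a function of completely regular growth. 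Granting this, $\int_0^r t^{-1}n(t)\,dt=\frac{E}{\pi}r+o(r)$; since $n$ is nondecreasing, a routine Tauberian step---bound $n(r)\log(1+\varepsilon)\le\int_r^{(1+\varepsilon)r}t^{-1}n(t)\,dt=\frac{E}{\pi}\varepsilon r+o(r)$ and $\int_{(1-\varepsilon)r}^{r}t^{-1}n(t)\,dt\le n(r)\log\frac{1}{1-\varepsilon}$, then send $\varepsilon\to0$---upgrades this to $n(r)=\frac{E}{\pi}r+o(r)$, which is \eqref{eq:spect_asymp}.

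The hard part will be the completely--regular--growth claim. When $\zeta$ is a finite sum of jumps (finitely many discrete delays), $g$ is an exponential sum $\sum_j P_j(z)e^{\lambda_j z}$ with real exponents $\lambda_j\in[-E,0]$ and polynomial coefficients, and the statement is classical: all but finitely many zeros lie in a union of logarithmic strips $\mathrm{Re}\,z\approx-(\lambda_j-\lambda_{j+1})^{-1}\log|\mathrm{Im}\,z|$ along which they are asymptotically equispaced in $\mathrm{Im}\,z$ with spacing $2\pi/(\lambda_j-\lambda_{j+1})$, so the telescoping count over the upper and lower half--planes, $\sum_j(\lambda_j-\lambda_{j+1})=E$, yields the density $E/\pi$ directly (equivalently, one evaluates $[\arg g]_{|z|=r}$ by the argument principle, the polynomial arc and the dominant--exponential arc contributing $2Er+O(1)$ and the thin transition arcs the remaining zeros). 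For a general $\zeta$ more care is needed, and this is exactly where estimates of the type of Proposition~\ref{prop:spectrum1} enter---confining the large zeros to the thin sector $C_0e^{-h\,\mathrm{Re}\,z}\le|z|\le C_1e^{-h\,\mathrm{Re}\,z}$---together with Phragmén--Lindelöf bounds for $g$ away from that sector, which together give the required regularity of growth. Carrying out these estimates is the technical core; it is done in \cite{lunel1989exponential} (compare also the treatment in \cite{diekmann2012delay}), so I would cite it rather than reprove it.
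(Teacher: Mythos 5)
The paper does not prove this proposition at all: it is imported verbatim as Theorem~4.12 of \cite{lunel1989exponential}, so there is strictly nothing to compare your argument against. Your sketch is a sound outline of the classical Levin--Jensen route: the identification of the conjugate indicator diagram of $\det\Delta$ with $[-E,0]$, the computation $\frac{1}{2\pi}\int_0^{2\pi}E\max\{0,-\cos\theta\}\,d\theta=E/\pi$, Jensen's formula, and the monotonicity Tauberian step are all correct, and the reduction to the claim that $\det\Delta$ has completely regular growth is the right way to organize the proof. Two caveats. First, the completely-regular-growth property is not a technicality one can wave at --- it \emph{is} the content of the theorem (Jensen plus the indicator alone only give $\limsup n(r)/r\le E/\pi$ plus the averaged asymptotics of $N(r)=\int_0^r t^{-1}n(t)\,dt$; without regularity of growth the circular mean of $\log|g|$ could drop below $h_g$-order on a sparse set of radii), so your proposal, like the paper, ultimately rests on citing \cite{lunel1989exponential}; that is acceptable here, but it should be stated as such rather than as a step you could carry out. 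Second, a small imprecision: the claim that the $z^n$ term ``cannot be cancelled'' because all other determinant terms carry strictly negative exponents fails when $\zeta$ has an atom at $0$ (an undelayed linear term), in which case other summands also have exponent $0$; the correct and simpler observation is that $\int_0^h e^{-zt}\,d\zeta(t)$ stays bounded as $\mathrm{Re}\,z\to+\infty$, so $\det\Delta(x)=x^n(1+O(1/x))$ and hence $h_{\det\Delta}(0)=0$, which is all you need to conclude the diagram is exactly $[-E,0]$.
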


\begin{lemma}
    Let $\zeta \in \sbv$ and suppose $\sigma(A)$ has infinite cardinality.
    Then, \eqref{eq:spectralgap}  fails for $k = 2$ (and hence for all $k \geq 2$) for $|\alpha|$ large enough.
\end{lemma}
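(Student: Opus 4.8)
The plan is to argue by contradiction and extract a quantitative growth rate for the real parts of the eigenvalues as one moves out along the sequence, then show this growth is too fast for \eqref{eq:spectralgap} to survive. Since $\sigma(A)$ is infinite, by Theorem~\ref{thm:spectrum} the set $\{\lambda \in \sigma(A) \mid \mathrm{Re}\,\lambda \geq \gamma\}$ is finite for every $\gamma$, so $\mathrm{Re}\,\lambda \to -\infty$ along the sequence of eigenvalues (ordered by, say, decreasing real part), and in particular $|\lambda| \to \infty$. Hence for all but finitely many eigenvalues, Proposition~\ref{prop:spectrum1} applies: every such root lies in the strip $C_0 e^{-h\mathrm{Re}\,z} \leq |z| \leq C_1 e^{-h\mathrm{Re}\,z}$. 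Taking logarithms, this gives, for any eigenvalue $\lambda$ with $|\lambda|$ large,
\begin{displaymath}
    -\frac{1}{h}\big( \log|\lambda| - \log C_0 \big) \leq \mathrm{Re}\,\lambda \leq -\frac{1}{h}\big( \log|\lambda| - \log C_1 \big),
\end{displaymath}
so $\mathrm{Re}\,\lambda = -\tfrac{1}{h}\log|\lambda| + O(1)$; the real part is pinned logarithmically to $|\lambda|$.

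Next I would use Proposition~\ref{prop:spectrum2} to control how $|\lambda|$ grows along the sequence. Writing $E := E(\det\Delta) > 0$, the counting function satisfies $n(r)/r \to E/\pi$, so the $j$-th eigenvalue (counted with multiplicity, ordered by increasing modulus) has modulus $|\lambda_j| \sim \pi j / E \to \infty$ as $j \to \infty$. Combining with the previous display, $\mathrm{Re}\,\lambda_j = -\tfrac{1}{h}\log(\pi j/E) + O(1) \to -\infty$, which in particular re-confirms that the spectrum accumulates only at $-\infty$ along the real part. The crucial point, however, is the \emph{slowness} of this divergence: consecutive eigenvalues have real parts differing by only $O\big(\tfrac{1}{h}\log\tfrac{j+1}{j}\big) = o(1)$, so there is no room for a factor-of-$2$ gap far out.

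Now I would set up the contradiction. Suppose \eqref{eq:spectralgap} holds for $k=2$, i.e. there is a bounded $\Sigma \subset \sigma(A)$ with $\Sigma' = \sigma(A)\setminus\Sigma$ such that $\beta = \sup_{\lambda\in\Sigma'}\mathrm{Re}\,\lambda < 2\alpha = 2\inf_{\lambda\in\Sigma}\mathrm{Re}\,\lambda < 0$, and assume $|\alpha|$ is large. Since $\Sigma$ is bounded and $\sigma(A)$ is infinite, $\Sigma'$ is infinite, so $\beta$ is attained at some eigenvalue in $\Sigma'$ (the set $\{\mathrm{Re}\,\lambda \geq \beta\}$ being finite) — call its modulus $r_{\Sigma'}$. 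Because $|\alpha|$ is large, $\alpha$ too is the real part of an eigenvalue $\lambda_\alpha \in \Sigma$ of large modulus $r_\Sigma$, and both $\alpha$ and $\beta$ obey the logarithmic pinning: $\alpha = -\tfrac1h\log r_\Sigma + O(1)$ and $\beta = -\tfrac1h\log r_{\Sigma'} + O(1)$. The condition $\beta < 2\alpha$ forces $-\tfrac1h\log r_{\Sigma'} < -\tfrac2h\log r_\Sigma + O(1)$, i.e. $r_{\Sigma'} > c\, r_\Sigma^{\,2}$ for some constant $c>0$ (once $|\alpha|$, equivalently $r_\Sigma$, is large). But then consider the eigenvalues with modulus in the range $(r_\Sigma, c\,r_\Sigma^2)$: by the counting asymptotics there are $\sim \tfrac{E}{\pi}(c\,r_\Sigma^2 - r_\Sigma)\to\infty$ of them, all with real part $-\tfrac1h\log|\lambda| + O(1)$, hence with real part lying strictly between $\beta$ and $\alpha$ once $r_\Sigma$ is large (indeed their real parts range over an interval of length $\sim \tfrac1h\log r_\Sigma$ centered near $\alpha$, which straddles the forbidden zone $(\beta,\alpha)=(2\alpha+O(1),\alpha)$). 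Each such eigenvalue $\mu$ satisfies $\mathrm{Re}\,\mu > \beta$, so $\mu\notin\Sigma'$, hence $\mu\in\Sigma$; but then $\mathrm{Re}\,\mu \geq \alpha$, contradicting $\mathrm{Re}\,\mu < \alpha$. This contradiction establishes the claim for $k=2$, and since $k\alpha \leq 2\alpha < 0$ for $k\geq 2$, the gap condition fails a fortiori for all $k\geq 2$.

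\textbf{Main obstacle.} The delicate point is making the "straddling" step rigorous: one must verify that the $O(1)$ error terms in the logarithmic pinning from Proposition~\ref{prop:spectrum1} (and the fact that the counting estimate of Proposition~\ref{prop:spectrum2} is only asymptotic, not exact) do not allow $\Sigma$ and $\Sigma'$ to be chosen so that the spectrum conveniently avoids the interval $(\beta,\alpha)$. The clean way to handle this is to fix, once and for all, the finite set $\Sigma$ and note $\alpha = \min\{\mathrm{Re}\,\lambda : \lambda\in\Sigma\}$; then $\beta = \max\{\mathrm{Re}\,\lambda : \lambda \in \sigma(A), \mathrm{Re}\,\lambda < \alpha\}$ is simply the next real part below $\alpha$, and the content of the lemma is that this "spectral gap below $\alpha$" is only $o(|\alpha|)$ — in fact $\beta - \alpha = -\tfrac1h\log\tfrac{r_{\Sigma'}}{r_\Sigma} + O(1)$ with $r_{\Sigma'}/r_\Sigma \to 1$ because consecutive moduli are asymptotically equal (by Proposition~\ref{prop:spectrum2}, $|\lambda_{j+1}|/|\lambda_j|\to 1$). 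Hence $\beta - \alpha = O(1)$, so $\beta < 2\alpha$ would require $\alpha > \beta - \alpha + O(1) = O(1)$, i.e. $|\alpha|$ bounded — contradicting $|\alpha|$ large. This is the cleanest route and sidesteps the counting argument's imprecision, reducing everything to the two cited propositions.
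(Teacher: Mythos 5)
Your main argument is correct and is essentially the paper's proof: assume the gap condition, use Proposition~\ref{prop:spectrum1} to show every eigenvalue with modulus in $(C_1e^{-h(\alpha-\delta)},\,C_0e^{-h(\beta+\delta)}]$ would have real part in the forbidden interval $(\beta,\alpha)$, and use Proposition~\ref{prop:spectrum2} to show that annulus must nevertheless contain many eigenvalues once $\beta<2\alpha$ and $|\alpha|$ is large. Only the closing "cleanest route" needs a small repair: the eigenvalues attaining $\alpha$ and $\beta$ need not be consecutive in the modulus ordering, so one should instead note that the set of moduli avoids the whole interval $(C_1e^{-h\alpha},\,C_0e^{-h\beta})$, whose endpoint ratio tends to infinity, contradicting $|\lambda_{j+1}|/|\lambda_j|\to 1$.
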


\begin{proof}
    Supposing the contrary, we may extract a decreasing sequence $(\alpha_j)_{j \in \mathbb{N}} \subset \mathrm{Re} \, \sigma(A)$,  $\alpha_j \to -\infty$, such that 
    \begin{equation}
        \beta_j < 2 \, \alpha_j < 0, \qquad \beta_j := \sup \{ \mathrm{Re} \, \lambda \; | \; \lambda \in \sigma(A), \; \mathrm{Re} \, \lambda < \alpha_j \}.
        \label{eq:contr_hyp_spectrlemma}
    \end{equation}

    Choose $\delta > 0 $ small enough so that $2\delta < -\alpha_1$.
    Set $r_{1,j} : = C_1e^{-(\alpha_j - \delta) h}$ and $r_{2,j} : = C_0e^{-(\beta_j + \delta) h}$, where $C_0$, $C_1$ are as in Proposition~\ref{prop:spectrum1}.
    Then, for $j$ large enough so that $r_{2,j}>r_{1,j} > C$, we have by Proposition~\ref{prop:spectrum1} that
    \begin{displaymath}
        n(r_{2,j}) - n(r_{1,j}) \leq \#  \{ z \in \sigma(A) \; | \; \mathrm{Re} \, z \in ( \beta_j+\delta, \alpha_j - \delta ] \}.
    \end{displaymath}
    By definition of $\beta_j$, we must have $n(r_{2,j}) = n(r_{1,j}) $.
    On the other hand, Proposition~\ref{prop:spectrum2} states that
    \begin{displaymath}
        \left| \frac{n(r)}{r} - \frac{E( \det \Delta)}{\pi} \right| < \varepsilon_j, \qquad \text{for all } r \geq r_{1,j},
    \end{displaymath}
    with $\varepsilon_j \to 0$ as $j \to \infty$.
    Hence, for $j$ sufficiently large as above,
    \begin{multline*}
        n(r_{2,j}) - n(r_{1,j}) > \left( \frac{E (\det \Delta)}{\pi} - \varepsilon_j \right) C_0e^{-(\beta_j + \delta) h} - \left( \frac{E (\det \Delta)}{\pi} + \varepsilon_j \right) C_1e^{-(\alpha_j - \delta) h} \\
        > e^{-(\alpha_j - \delta) h } \left[ \left( \frac{E (\det \Delta)}{\pi} - \varepsilon_j \right) C_0e^{-(\alpha_j + 2\delta) h} - \left( \frac{E (\det \Delta)}{\pi} + \varepsilon_j \right) C_1\right],
    \end{multline*}
    where the last line used \eqref{eq:contr_hyp_spectrlemma}.
    It is clear that the right hand side is positive (in fact, arbitrarily large) for $j$ large enough, which yields the desired contradiction.
\end{proof}

\section{Statement of results}
\label{sect:statement}

In this section, we formulate precisely our main results.
For this, we first group the notational conventions and hypotheses that shall be used throughout.

\begin{enumerate}[label =(A.\arabic*)] 
    \item \label{A1} (Setup) Let $X$ denote the Banach space $ C([-h,0];\mathbb{R}^n)$ equipped with the supremum norm. Let $O \subset X$ be an open open neighbourhood of $0$; and suppose $f : O \to \mathbb{R}^n$ is of class $C^1$.\footnote{For the Lipschitz version of Theorem~\ref{thm:main1}, one could drop the $C^1$ assumption provided the equation is already in the form \eqref{eq:Fexpansion} and $\lip_\rho(R) \to 0$ as $\rho \to 0$ (in the notation of Lemma~\ref{lemma:Lipschitz}).}
    Let $\varphi: \mathcal{D}^\varphi \to O$ denote the semiflow generated by \eqref{eq:DDE_classical} (see Proposition~\ref{prop:semiflow}).
    Suppose $0$ is a stationary state of $\varphi$, and $F$ and $R$ are as in \eqref{eq:Fexpansion}.
    Suppose $\Sigma \subset \sigma(A;X)$ is a nonempty bounded subset of the spectrum invariant under complex conjugation, where $A$ is defined in \eqref{eq:A}, and set $\Sigma' = \sigma(A;X) \setminus \Sigma \neq \emptyset$. Denote the associated projection by $P_\Sigma$, its image by $X_\Sigma$ and its kernel by $X_{\Sigma'}$ (as in Lemma~\ref{lemma:proj}). Set $P_{\Sigma'} = \mathrm{id}_X - P_\Sigma$. 
\end{enumerate}
For statements concerning higher-degree smoothness, we shall utilize the following assumption on $f$.
\begin{enumerate}[label =(A.\arabic*)]
    \setcounter{enumi}{1}
    \item \label{A2} (Regularity) Suppose $f : O \to \mathbb{R}^n$ from \ref{A1} is of class $C^k$, $k \in \mathbb{N}^{\geq 2} \cup \{ \infty \}$.
\end{enumerate}
For the first result, we assume the following form of $\Sigma$.
\begin{enumerate}[label =(A.\arabic*)]
    \setcounter{enumi}{2}
    \item \label{A3} (Spectral subset) Suppose $\Sigma$ from \ref{A1} is of form
    \begin{displaymath}
        \Sigma = \{ \lambda \in \sigma(A;X) \; | \; \mathrm{Re} \, \lambda \geq \gamma \}
    \end{displaymath}
    for some $\gamma \in \mathbb{R}$. Let $\alpha :=\inf_{\lambda \in \Sigma} \mathrm{Re} \, \lambda,$ and $\beta := \sup_{\lambda \in {\Sigma'}} \mathrm{Re} \, \lambda$.
\end{enumerate}

For a given set $U$, let us denote by $\mathcal{U}(U)$ the set of pairs $(t,u)$ for which a trajectory initiated at $u$ remains in $U$ for all times up to $t$, i.e.,
\begin{displaymath}
    \mathcal{U}(U) = \left\{ (t,u) \in \mathcal{D}^\varphi  \; \big| \; \varphi ([0,t] \times \{u\}) \subset U  \right\},
\end{displaymath}
and let $\mathcal{U}_u(U) = \{ t \in \mathcal{D}^\varphi_u \; | \; (t,u) \in \mathcal{U}(U) \}$.

\begin{theorem} \label{thm:main1}
    Assume \ref{A1} and \ref{A3}.
    The following hold:
    \begin{enumerate}[label=\upshape{(\roman*)}]
        \item \label{thm1st1} \textup{(Invariant manifold)} There exists an open neighbourhood $U \subset O$ of $0$ and a $C^1$ submanifold $W^\Sigma \subset U$ tangent to $X_\Sigma$ at $0$, which is locally invariant under $\varphi_t$: If $u \in W^\Sigma$, then $\varphi_t(u) \in W^\Sigma$ for all $(t,u) \in \mathcal{U}(U)$.
        $\{\varphi_t|_{W^\Sigma} \}_{t \geq 0}$ extends to a jointly $C^1$ flow on $\mathcal{U}(U \cap W^\Sigma)$.
        \item \label{thm1st2} \textup{(Smoothness)}  Suppose in addition \ref{A2}. If $\beta < \ell \alpha $ for some integer $\ell \leq k$, then $W^\Sigma$ is a $C^\ell$ submanifold of $U$. (If $\gamma > 0$, then $W^\Sigma$ is of class $C^k$.)
        In either case, the restricted semiflow $\varphi_t|_{W^\Sigma}$ extends to a jointly $C^\ell$ (resp.\ $C^k$) flow on $\mathcal{U}(U \cap W^\Sigma)$.
        \item \label{thm1st3} \textup{(Attractivity)} There exists a continuous map $\pi : U \to W^\Sigma$ such that 
        \begin{equation}
            \pi \circ \varphi_t(u) = \varphi_t\vert_{W^\Sigma} \circ \pi (u), \qquad \text{for all } (t,u) \in \mathcal{U}(U).
            \label{eq:pi}
        \end{equation}
        Moreover, for any $u \in U$, there exists $C > 0$ such that 
        \begin{equation}
            | \varphi_t(u) - \varphi_t \circ \pi (u)|_X \leq C e^{ \gamma t}, \qquad \text{for all } t \in \mathcal{U}_u(U).
            \label{eq:attractivity}
        \end{equation}
        The preimages $\{\pi^{-1}(w)\}_{w \in W^\Sigma}$ are Lipschitz submanifolds of $U$.
        \item \label{thm1st4} \textup{(Pseudo-uniqueness)} If $\gamma > 0$, $W^\Sigma$ is uniquely determined by its tangency to $X_\Sigma$ at $0$. If $\gamma < 0$, $\pi^{-1}(0)$ is uniquely determined by its tangency to $X_{\Sigma'}$ at $0$. Otherwise, if $W^\Sigma$ and $\widetilde{W^\Sigma}$ are two invariant manifolds tangent to $X_\Sigma$ at $0$, there exists a neighbourhood $V \subset O$ on which the reduced dynamics are topologically conjugate, i.e., there exists a homeomorphism $g : W^\Sigma \cap V \to \widetilde{W^\Sigma} \cap V$ such that
        \begin{displaymath}
            \varphi_t \circ g(u) = g\circ \varphi_t (u), \qquad \text{for all }  (t,u) \in \mathcal{U}(W^\Sigma \cap V ).
        \end{displaymath}
    \end{enumerate}
\end{theorem}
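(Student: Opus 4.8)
The proof is carried out in Section~\ref{sect:proofmain1}; I sketch the strategy. The submanifold $W^\Sigma$ is produced by the Lyapunov--Perron method in the sun-star framework of Section~\ref{sect:DDEs}, and its attracting foliation (item~\ref{thm1st3}) by a companion fixed-point argument. First I would localize: using the cut-off of Lemma~\ref{lemma:Lipschitz}, replace $R$ in \eqref{eq:semiflow_pert} by $R_\rho$ that agrees with $R$ on the ball $B_\rho(0)\subset X$ and is globally Lipschitz with $\lip(R_\rho)=:\delta(\rho)\to 0$ as $\rho\to 0$; the modified semiflow $\varphi^\rho$ is globally defined and agrees with $\varphi$ while orbits stay in $B_\rho$. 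Since $\Sigma$ has the form \ref{A3}, $\beta<\gamma\le\alpha$, so one may fix small $\varepsilon_1,\varepsilon_2>0$ and $\eta$ with $\beta+\varepsilon_2<\eta<\alpha-\varepsilon_1$, and let $\mathrm{BC}^-_\eta$ be the Banach space of continuous $x:(-\infty,0]\to X$ with $\Vert x\Vert_\eta:=\sup_{t\le 0}e^{-\eta t}|x(t)|_X<\infty$. For $\xi\in X_\Sigma$ define
\[
    (\mathcal{G}_\xi x)(t)=T(t)\xi+\imath^{-1}\int_0^t T^{\odot *}(t-s)P_\Sigma R_\rho(x(s))\,ds+\imath^{-1}\int_{-\infty}^t T^{\odot *}(t-s)P_{\Sigma'}R_\rho(x(s))\,ds,
\]
the integrals being taken in the weak$*$ sense, where Proposition~\ref{prop:semiflow}\ref{sfitem:4} and the finite rank of $R_\rho$ legitimize the applications of $\imath^{-1}$. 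Using the dichotomy bounds of Lemma~\ref{lemma:dichotomy}, $\mathcal{G}_\xi$ maps $\mathrm{BC}^-_\eta$ into itself with $\Vert\mathcal{G}_\xi x-\mathcal{G}_\xi y\Vert_\eta\le c\,\delta(\rho)\Vert x-y\Vert_\eta$, $c$ depending only on $K_1,K_2$ and the gap $\alpha-\beta$; taking $\rho$ small gives $c\,\delta(\rho)<1$, hence a unique fixed point $x^*(\cdot\,;\xi)$, Lipschitz in $\xi$. Each $x^*(\cdot\,;\xi)$ is a backward orbit of $\varphi^\rho$ with $\widetilde{P}_\Sigma x^*(0;\xi)=\xi$, so $W^\Sigma:=\{x^*(0;\xi)\mid \xi\in X_\Sigma,\ |\xi|\ \text{small}\}$ is the graph of a Lipschitz map $h:X_\Sigma\to X_{\Sigma'}$ with $h(0)=0$ and $Dh(0)=0$ (using $R(0)=0$, $DR(0)=0$ from \eqref{eq:Fexpansion}), which is the tangency to $X_\Sigma$ at $0$. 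Shifting backward orbits shows $W^\Sigma$ is invariant under $\varphi^\rho$, hence locally invariant under $\varphi$ on a neighbourhood $U\subset B_\rho$; and since $W^\Sigma$ is a graph over the finite-dimensional $X_\Sigma$, the reduced dynamics $\xi\mapsto\widetilde{P}_\Sigma\varphi^\rho_t(\imath_\Sigma\xi+h(\xi))$ is a globally defined $C^1$ flow on $X_\Sigma$ (orbits on $W^\Sigma$ are genuine solutions with a backward extension, hence $C^1$ in $t$ through $t=0$), which pulls back to the jointly $C^1$ flow on $\mathcal{U}(U\cap W^\Sigma)$ of item~\ref{thm1st1}.

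For the smoothness statement~\ref{thm1st2}, I would differentiate the fixed-point relation in $\xi$ and run the argument on the scale of spaces $\mathrm{BC}^-_{j\eta}$, $1\le j\le\ell$, by a fiber-contraction argument as in Chapter~IX of \cite{diekmann2012delay}. The mechanism is that the $j$-th order terms produced by the chain rule for $R_\rho\circ x^*$ grow like $e^{j\eta t}$ as $t\to-\infty$, so the $j$-th derivative naturally lives in $\mathrm{BC}^-_{j\eta}$, and the $P_{\Sigma'}$-convolution is a well-defined contraction there precisely when $j\eta>\beta+\varepsilon_2$; for all $j\le\ell$ this is guaranteed, after shrinking $\varepsilon_1,\varepsilon_2$, exactly by the hypothesis $\beta<\ell\alpha$. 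Joint $C^\ell$ (resp.\ $C^k$) regularity of the reduced flow uses Proposition~\ref{prop:semiflow}\ref{sfitem:2}. When $\gamma>0$ one has $\beta<\gamma\le\alpha$ with $\alpha>0$, so choosing $\eta\in(\beta,\alpha)$ gives $j\eta\ge\eta>\beta+\varepsilon_2$ for every $j$ once the $\varepsilon$'s are small; there is then no ceiling on $\ell$ and $W^\Sigma$ is $C^k$.

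For item~\ref{thm1st3}, I would construct, for each $w\in W^\Sigma$, the strong stable fiber through $w$ as the set of $u\in U$ whose forward orbit satisfies $|\varphi_t(u)-\varphi_t(w)|_X\le Ce^{\gamma t}$; these fibers foliate $U$ with Lipschitz leaves and $\pi:u\mapsto(\text{base point of the fiber through }u)\in W^\Sigma$ is continuous, with $\pi\circ\varphi_t=\varphi_t|_{W^\Sigma}\circ\pi$ and the decay \eqref{eq:attractivity}. Concretely the foliation comes from a forward graph-transform fixed point for the equation governing $v=\varphi(u)-\varphi(w)$, contracting thanks to the gap $\beta<\gamma$ between the transverse rate ($\le\beta$) and the rate on $W^\Sigma$ ($\ge\gamma$); alternatively one invokes \cite{szalai2020invariant}. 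For item~\ref{thm1st4}: if $\gamma>0$ then $\alpha\ge\gamma>0$, so on any invariant manifold tangent to $X_\Sigma$ at $0$ the backward orbit through a point near $0$ is a graph over a $\xi(t)$ that decays at the linear rate, hence lies in $\mathrm{BC}^-_\eta$ and must equal $x^*(\cdot\,;\xi(0))$ by uniqueness of the fixed point; thus $W^\Sigma$ is unique. If $\gamma<0$ the symmetric argument, using a forward fixed point on a weighted space with weight $<\gamma$, shows $\pi^{-1}(0)$ (tangent to $X_{\Sigma'}$ at $0$) is unique. If $\gamma=0$, given two such manifolds $W^\Sigma$, $\widetilde{W^\Sigma}$, the restriction $\pi|_{\widetilde{W^\Sigma}}:\widetilde{W^\Sigma}\cap V\to W^\Sigma\cap V$ is near $0$ a small perturbation, along the fibers, of $\widetilde{P}_\Sigma|_{X_\Sigma}=\mathrm{id}$, hence a homeomorphism onto its image; its inverse $g$ conjugates the flows by the semiconjugacy \eqref{eq:pi}.

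The routine parts are the contraction estimates and the finite-dimensional reduced flow. The delicate points, and the ones I expect to dominate Section~\ref{sect:proofmain1} and Appendix~\ref{sect:mfdsmoothness}, are: (a) making every variation-of-constants manipulation rigorous in the sun-star/weak$*$ setting --- the applications of $\imath^{-1}$, convergence of the integral over $(-\infty,0]$, and the precompactness used throughout --- which is precisely where the machinery of Section~\ref{sect:DDEs} is needed; (b) the scale-of-spaces smoothness argument for item~\ref{thm1st2}, in particular the sharp appearance of the gap $\beta<\ell\alpha$ and the clean $C^k$ conclusion when $\gamma>0$; and (c) the independent construction of the attracting foliation for item~\ref{thm1st3}, together with the verification that the leaves $\pi^{-1}(w)$ are Lipschitz submanifolds of $U$.
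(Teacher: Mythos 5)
Your proposal is correct in outline but takes a genuinely different route from the paper for items \ref{thm1st1} and \ref{thm1st3}. The paper does not run a continuous-time Lyapunov--Perron scheme for existence; instead (Section~\ref{sect:proofmain1}) it cuts off the nonlinearity, proves global existence and a global Lipschitz bound for the cut-off semiflow $\varphi^\rho$, and then verifies hypotheses (H.1)--(H.4) of Chen--Hale--Tan \cite{chen1997invariant} for the \emph{time-one map} $\varphi_1^\rho = T(1)+N_\rho(1,\cdot)$, importing both the Lipschitz invariant manifold $W^\Sigma_\rho=\gr(\phi_\rho)$ and, crucially, the invariant foliation $\{M^{\Sigma'}_\rho(u)\}$ in one stroke; the projection $\pi$ is then assembled by a parametric Banach fixed point intersecting each leaf with the graph, and everything is localized back to $\varphi$. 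Your approach instead builds the backward-orbit fixed point and the strong-stable fibration from scratch, which is more self-contained but forces you to reprove the foliation theorem (the single-point intersection of each leaf with $W^\Sigma$, the Lipschitz regularity of the leaves, and the semiconjugacy \eqref{eq:pi}), i.e.\ essentially the content of \cite{chen1997invariant}. For item~\ref{thm1st2} your scale-of-spaces argument is the same as the paper's Appendix~\ref{sect:mfdsmoothness}, including the mechanism by which $\beta<\ell\alpha$ enters and the unconditional $C^k$ conclusion when $\gamma>0$; for item~\ref{thm1st4} your $\gamma=0$ argument via $\pi|_{\widetilde{W^\Sigma}}$ is the Burchard--Deng--Lu conjugacy the paper also uses.

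One point in your sketch needs genuine repair: the cutoff. You posit an $R_\rho$ agreeing with $R$ on a ball and globally Lipschitz, and then differentiate $R_\rho\circ x^*$ up to order $\ell$ in the smoothness step. But $X=C([-h,0];\mathbb{R}^n)$ admits \emph{no} Fr\'echet-differentiable norm or bump function, so a cutoff of the form $\chi_\rho(|u|_X)R(u)$ destroys even $C^1$ regularity and the fiber-contraction argument for \ref{thm1st2} cannot start. The paper circumvents this with the two-projection cutoff \eqref{eq:Rrho}, which is smooth only on the cone-like set $S$ of \eqref{eq:smoothnessset} (because $X_\Sigma$ is finite dimensional), and must then separately verify, via the bound $\lip(\phi_\rho)<2$ from \eqref{eq:lipphirho}, that $W^\Sigma_\rho\subset S$ so that the nonlinearity is $C^k$ along the orbits being differentiated. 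Without this (or an equivalent device) your item~\ref{thm1st2} has a gap; the $C^1$/Lipschitz parts of \ref{thm1st1} and \ref{thm1st3} are unaffected.
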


\begin{proof}
    See Section~\ref{sect:proofmain1}.
\end{proof}

\begin{remark}
    It is possible to replace \eqref{eq:attractivity} by the stronger statement
    \begin{displaymath}
        | \varphi_t(u) - \varphi_t \circ \pi (u)|_X \leq C e^{ \gamma t} |u-\pi(u)|_X, \qquad (t,u) \in \mathcal{U}(U),
    \end{displaymath}
    where now the constant $C>0$ is independent of $u$. 
    This follows by repeating the proof of Lemma 4.4 in \cite{buza2024spectral} for the case of delay equations, and using its result in place of \eqref{eq:decayrho} in the course of the proof.
\end{remark}

\begin{remark}[Strong-stable/unstable manifolds] \label{remark:strongmfds}
    It is well known that  $W^\Sigma$ is uniquely characterized by  Lyapunov exponents of negative semiorbits extendable for all times if $\gamma > 0$ (see statement \ref{item1:CHT} of Theorem~\ref{thm:CHT}). 
    In this case, $W^\Sigma$ is called a strong unstable manifold; it is a consequence of the above properties that it is of class $C^k$.
    If $\gamma \leq 0$, then $\pi^{-1}(0)$ represents the strong stable manifold, which is again $C^k$ smooth and can be characterized by forward Lyapunov exponents of its trajectories.
    These are known results, but for a (partial) proof of the strong unstable case, see Appendix~\ref{sect:mfdsmoothness}.
\end{remark}

Next, we  generalize  parts of this statement to spectral subsets not of the form \ref{A3}.
The following assumption (replacing \ref{A3}) specifies the form of stable spectral subsets that are admissible.

\begin{enumerate}[label =(A.\arabic*)]
    \setcounter{enumi}{3}
    \item \label{A4} (Nonresonance) Suppose $\Sigma$ is a bounded spectral subset satisfying
    \begin{displaymath}
         \Sigma \subset \{ \lambda \in \sigma(A;X) \; | \; \mathrm{Re} \, \lambda < 0 \}
    \end{displaymath}
    and invariant under complex conjugation. 
    Let 
    \begin{displaymath}
        \widetilde{\Sigma} = \{ \lambda \in \sigma(A;X) \setminus \Sigma \; | \; \inf \mathrm{Re} \, \Sigma \leq \; \mathrm{Re} \, \lambda < 0 \},
    \end{displaymath}
    and $\widetilde{\Sigma}' = \{ \lambda \in \sigma(A;X) \; | \; \mathrm{Re} \, \lambda < 0 \} \setminus \widetilde{\Sigma}$.
    Denote by $r$ the smallest integer for which 
    \begin{displaymath}
        (r+1) \sup \{ \mathrm{Re} \, \lambda \; | \; \lambda \in \sigma(A;X), \; \mathrm{Re} \, \lambda < 0 \} < \inf \{ \mathrm{Re} \, \lambda \; | \; \lambda \in  \widetilde{\Sigma} \}.
    \end{displaymath}
    Suppose that $\Sigma$ satisfies the non-resonance condition
    \begin{displaymath}
   \left( (i-j) \widetilde{\Sigma} +
    j \widetilde{\Sigma}' \right) \cap \widetilde{\Sigma} = \emptyset
    \end{displaymath}
    for all pairs $(j,i)$ of integers such that $2 \leq i \leq r$ and $1 \leq j \leq i$.
\end{enumerate}

\begin{theorem} \label{thm:main2}
    Suppose \ref{A1}, \ref{A2} and \ref{A4} hold such that $k > r$.
        Set $\gamma : = \inf \mathrm{Re} \, \Sigma$ and choose an integer $1 \leq \ell  \leq k$ such that 
    \begin{displaymath}
        \sup \{ \lambda \in \sigma(A;X) \; | \; \mathrm{Re} \, \lambda < \gamma \} < \ell \gamma.
    \end{displaymath}
    There exists an open neighbourhood $U \subset O$ of $0$ and a $C^\ell$  submanifold $W^\Sigma \subset U$ tangent to $X_\Sigma$ at $0$, which is locally invariant under $\varphi_t$: If $u \in W^\Sigma$, then $\varphi_t(u) \in W^\Sigma$ for all $(t,u) \in \mathcal{U}(U)$.
    $\{\varphi_t|_{W^\Sigma} \}_{t \geq 0}$ extends to a jointly $C^\ell$ flow on $\mathcal{U}(U \cap W^\Sigma)$.
\end{theorem}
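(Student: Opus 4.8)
The plan is to obtain $W^\Sigma$ as a transverse intersection of two objects that are (essentially) already at our disposal: the finite-dimensional, half-plane manifold furnished by Theorem~\ref{thm:main1}, and the leaf through the origin of an invariant spectral foliation constructed inside the stable manifold of $0$. Throughout, write $\gamma := \inf\mathrm{Re}\,\Sigma$, $\Sigma_1 := \{\lambda\in\sigma(A;X)\;|\;\mathrm{Re}\,\lambda\ge\gamma\}$ and $S := \{\lambda\in\sigma(A;X)\;|\;\mathrm{Re}\,\lambda<0\}$, and keep $\widetilde\Sigma$, $\widetilde\Sigma'$ as in \ref{A4}, so that $\widetilde\Sigma' = \Sigma\cup\{\lambda\;|\;\mathrm{Re}\,\lambda<\gamma\}$, $S = \widetilde\Sigma\sqcup\widetilde\Sigma'$, $\sigma(A;X) = \Sigma_1\cup\widetilde\Sigma'$, $\Sigma_1\cap\widetilde\Sigma' = \Sigma$, and both $X_{\widetilde\Sigma}$ and $X_{\Sigma_1}$ are finite-dimensional by Theorem~\ref{thm:spectrum}.

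First I would invoke Theorem~\ref{thm:main1} twice. Applied to $\Sigma_1$ --- which is of the form \ref{A3} with $\alpha = \gamma$ and $\beta = \sup\{\mathrm{Re}\,\lambda<\gamma\}$, so that the gap requirement $\beta<\ell\alpha$ is exactly the hypothesis $\sup\{\lambda\in\sigma(A;X)\;|\;\mathrm{Re}\,\lambda<\gamma\}<\ell\gamma$ of the present theorem --- it yields a finite-dimensional, locally invariant $C^\ell$ manifold $W^{\Sigma_1}\subset O$ tangent to $X_{\Sigma_1}$ at $0$, on which the restricted semiflow is governed by a $C^\ell$ vector field and extends to a jointly $C^\ell$ flow. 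Applied to $\{\mathrm{Re}\,\lambda\ge0\}$ (again of the form \ref{A3}, with parameter $0$, whose complementary spectral set is precisely $S$; if $\sigma(A;X)\subset S$ take instead $W^s$ a neighbourhood of $0$), the preimage $\pi^{-1}(0)$ is the (strong) stable manifold $W^s$, which by Remark~\ref{remark:strongmfds} is a locally invariant $C^k$ manifold tangent to $X_S$ at $0$.

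The technical heart is the second step: constructing, inside a neighbourhood of $0$ in $W^s$, an invariant spectral foliation with leaf space modeled on $X_{\widetilde\Sigma}$ and leaves tangent to translates of $X_{\widetilde\Sigma'}$, in the sense of \cite{szalai2020invariant,buza2025smooth} but carried out for the sun--star variation-of-constants semiflow \eqref{eq:semiflow_pert} restricted to the (curved, merely $C^k$) manifold $W^s$. The non-resonance conditions in \ref{A4}, together with the choice of $r$ via $(r+1)\sup\{\mathrm{Re}\,\lambda<0\}<\inf\mathrm{Re}\,\widetilde\Sigma$, are exactly what makes the graph-transform/Lyapunov--Perron scheme for the foliation map solvable and of class $C^r$; the condition $k>r$ supplies the smoothness of $f$ needed to push the expansion past order $r$. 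Moreover the single leaf $\mathcal{L}_0$ through the origin can be taken of class $C^\ell$: the obstructions to smoothness of its graph over $X_{\widetilde\Sigma'}$ coming from the $X_{\widetilde\Sigma}$-components occur only up to order $r$ (beyond order $r$ the gap $(r+1)\sup\mathrm{Re}\,S<\inf\mathrm{Re}\,\widetilde\Sigma$ removes all resonances and yields a uniform contraction), while the $X_{\{\mathrm{Re}\,\lambda<\gamma\}}$-components are controlled by the very gap $\sup\{\mathrm{Re}\,\lambda<\gamma\}<\ell\gamma$ that already bounded the smoothness of $W^{\Sigma_1}$. Thus $\mathcal{L}_0$ is a $C^\ell$ locally invariant submanifold of $W^s$ tangent to $X_{\widetilde\Sigma'}$ at $0$, and the restriction of the semiflow to it extends to a flow, being a leaf, through the fixed point, of a foliation whose base dynamics is a genuine finite-dimensional flow.

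Finally I would set $W^\Sigma := \mathcal{L}_0\cap W^{\Sigma_1}$ on a common neighbourhood $U\subset O$ of $0$. At $0$ one has $T_0\mathcal{L}_0 + T_0 W^{\Sigma_1} = X_{\widetilde\Sigma'} + X_{\Sigma_1} = X$ (transversality: $\widetilde\Sigma'\cup\Sigma_1 = \sigma(A;X)$) and $T_0\mathcal{L}_0\cap T_0 W^{\Sigma_1} = X_{\widetilde\Sigma'}\cap X_{\Sigma_1} = X_\Sigma$; transversality persists on a neighbourhood of $0$ by continuity, so $W^\Sigma$ is a $C^\ell$ submanifold of $U$ of dimension $\dim X_\Sigma$, tangent to $X_\Sigma$ at $0$ --- a transverse intersection of a finite-dimensional with a finite-codimensional $C^\ell$ submanifold of $X$. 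Local invariance of $W^\Sigma$ under $\varphi_t$ is immediate from that of $\mathcal{L}_0$ and $W^{\Sigma_1}$; and since $W^\Sigma$ is a forward-invariant $C^\ell$ submanifold of $W^{\Sigma_1}$, the $C^\ell$ vector field generating the flow on $W^{\Sigma_1}$ is tangent to it, hence restricts to a $C^\ell$ vector field on $W^\Sigma$ whose local flow provides the claimed jointly $C^\ell$ extension of $\varphi_t|_{W^\Sigma}$. The main obstacle is precisely the foliation construction of the third paragraph: transplanting the spectral-foliation machinery into the perturbation-of-dual-semigroups framework over the non-flat base $W^s$, and, above all, organising the smoothness bookkeeping so that the leaf through the fixed point attains the full order $\ell$ and not merely the foliation's order $r$; the transversality and flow-extension steps are then routine.
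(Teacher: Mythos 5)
Your proposal follows essentially the same route as the paper: apply Theorem~\ref{thm:main1} to the half-plane $\Omega=\Sigma_1=\{\mathrm{Re}\,\lambda\ge\gamma\}$, build the spectral foliation over $X_{\widetilde\Sigma}$ inside the stable manifold, and intersect the leaf through the origin transversally with the resulting $C^\ell$ manifold (the paper merely performs the intersection in the opposite order, as the preimage of $0$ under the foliation submersion restricted to $W^\Omega\pitchfork W^s$). The foliation step you correctly single out as the main obstacle is discharged in the paper not by a fresh Lyapunov--Perron construction but by flattening $W^s$ onto $X_s$ via the stable-manifold chart and verifying the hypotheses of Theorem~2.6 of \cite{buza2025smooth} for the induced semiflow, which is where \ref{A4} and $k>r$ enter.
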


\begin{proof}
    See Section~\ref{sect:proofmain2}.
\end{proof}

We have a similar result for the case of unstable spectral subsets.

\begin{enumerate}[label =(A.\arabic*)]
    \setcounter{enumi}{4}
    \item \label{A5} (Nonresonance) Suppose $\Sigma$ is a spectral subset satisfying
    \begin{displaymath}
         \Sigma \subset \{ \lambda \in \sigma(A;X) \; | \; \mathrm{Re} \, \lambda > 0 \}
    \end{displaymath}
    and invariant under complex conjugation. 
    Denote by $r$ the smallest integer for which 
    \begin{displaymath}
          (r+1) \inf \{ \mathrm{Re} \, \lambda \; | \; \lambda \in  {\Sigma} \} >  \sup \{ \mathrm{Re} \, \lambda \; | \; \lambda \in \sigma(A;X) \setminus \Sigma \}.
    \end{displaymath}
    Suppose that $\Sigma$ satisfies the (non-resonance) condition
    \begin{displaymath}
   j \Sigma \cap \{  \lambda \in \sigma(A;X) \setminus \Sigma \; | \;  \mathrm{Re} \, \lambda > 0 \} = \emptyset
    \end{displaymath}
    for all integers $j$ such that $2 \leq j \leq r$.
\end{enumerate}

\begin{theorem} \label{thm:main3}
    Suppose \ref{A1}, \ref{A2} and \ref{A5} hold such that $k > r$.
    There exists an open neighbourhood $U \subset O$ of $0$ and a $C^k$ submanifold $W^\Sigma \subset U$ tangent to $X_\Sigma$ at $0$, which is locally invariant under $\varphi_t$: If $u \in W^\Sigma$, then $\varphi_t(u) \in W^\Sigma$ for all $(t,u) \in \mathcal{U}(U)$.
    $\{\varphi_t|_{W^\Sigma} \}_{t \geq 0}$ extends to a jointly $C^k$ flow on $\mathcal{U}(U \cap W^\Sigma)$. 
\end{theorem}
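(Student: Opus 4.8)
The plan is to realise $W^\Sigma$ \emph{inside} the finite-dimensional unstable manifold of $0$ and then to carve it out by the parameterization method for local diffeomorphisms of Cabr\'e, Fontich and de la Llave \cite{Cabre2003a}, exactly as one constructs spectral submanifolds of finite-dimensional maps. First I would invoke Theorem~\ref{thm:main1} with $\Sigma^u:=\{\lambda\in\sigma(A;X)\;|\;\mathrm{Re}\,\lambda>0\}$: by Theorem~\ref{thm:spectrum} this set is finite and nonempty (as $\emptyset\neq\Sigma\subset\Sigma^u$), and for any $\gamma$ with $0<\gamma<\min\{\mathrm{Re}\,\lambda\;|\;\lambda\in\Sigma^u\}$ it equals $\{\lambda\in\sigma(A;X)\;|\;\mathrm{Re}\,\lambda\ge\gamma\}$, so \ref{A3} holds with $\gamma>0$ (the degenerate case of an entirely unstable spectrum being covered by the convention for $\Sigma'=\emptyset$ from Section~\ref{sect:spectrum}). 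As $f\in C^k$ with $k>r\ge 1$ by \ref{A2}, the ``$\gamma>0$'' clauses of Theorem~\ref{thm:main1}\ref{thm1st1}--\ref{thm1st2} yield a neighbourhood $U_0$ of $0$ and a $C^k$ locally invariant submanifold $W^u\subset U_0$ tangent to $X_{\Sigma^u}$ at $0$, on which $\{\varphi_t|_{W^u}\}$ extends to a jointly $C^k$ local flow.

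\textbf{Reduction to a finite-dimensional diffeomorphism.} Fix a $C^k$ chart $\Psi:V\subset X_{\Sigma^u}\to W^u$ with $\Psi(0)=0$ and $D\Psi(0)$ the inclusion $X_{\Sigma^u}\hookrightarrow X$. The time-one map $G:=\Psi^{-1}\circ\varphi_1\circ\Psi$ is then a $C^k$ local diffeomorphism of $(V,0)$ with $DG(0)=\widetilde{P}_{\Sigma^u}T(1)\,\imath_{\Sigma^u}=e^{A_{\Sigma^u}}$ (using $T(1)=D\varphi_1(0)$ and that $A_{\Sigma^u}$ generates $\widetilde{P}_{\Sigma^u}T(\cdot)\imath_{\Sigma^u}$, cf.\ Lemma~\ref{lemma:dichotomy}); its spectrum is $e^{\Sigma^u}$ by Lemma~\ref{lemma:proj}, hence avoids $0$. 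I would then apply the parameterization method of \cite{Cabre2003a} to $G$ and the $DG(0)$-invariant subspace $X_\Sigma$ (spectrum $e^{\Sigma}$), the complementary spectrum inside $X_{\Sigma^u}$ being $e^{\Sigma^u\setminus\Sigma}$ with $\Sigma^u\setminus\Sigma=\{\lambda\in\sigma(A;X)\setminus\Sigma\;|\;\mathrm{Re}\,\lambda>0\}$. The nonresonance conditions in \ref{A5} -- which, being additive, are stronger than the multiplicative conditions (valid modulo $2\pi\mathrm{i}$) the map actually needs -- together with the spectral gap defining $r$ there, furnish the hypotheses of \cite{Cabre2003a}; since $k>r$, the method produces a $C^k$ embedding $K:B\subset X_\Sigma\to V$ with $K(0)=0$ and $DK(0)$ the inclusion $X_\Sigma\hookrightarrow X_{\Sigma^u}$, whose image $\mathcal{W}:=K(B)$ is locally $G$-invariant and tangent to $X_\Sigma$ at $0$, and which is the unique such manifold among $C^k$ locally $G$-invariant manifolds tangent to $X_\Sigma$ at $0$ with the corresponding asymptotic rate.

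\textbf{Transport back and verification.} Set $W^\Sigma:=\Psi(\mathcal{W})\subset W^u\subset U_0$: as the image of a $C^k$ embedding under the $C^k$ chart $\Psi$ it is a $C^k$ submanifold of $X$, and $D(\Psi\circ K)(0)$ is the composition of the inclusions $X_\Sigma\hookrightarrow X_{\Sigma^u}\hookrightarrow X$, so $W^\Sigma$ is tangent to $X_\Sigma$ at $0$. Invariance of $\mathcal{W}$ under $G$ gives $\varphi_1(W^\Sigma)\subset W^\Sigma$; since for every $t$ the manifold $\varphi_t(W^\Sigma)$ is again a $C^k$ locally $\varphi_1$-invariant manifold tangent to $X_\Sigma$ at $0$ with the same rate (because $D\varphi_t(0)|_{W^u}=e^{tA_{\Sigma^u}}$ preserves the spectral subspace $X_\Sigma$), the uniqueness above forces $\varphi_t(W^\Sigma)=W^\Sigma$ -- i.e.\ full local invariance under the semiflow, after shrinking $U_0$ to a suitable $U\subset O$ to absorb orbits escaping the unstable fixed point. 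Finally, $\varphi_t|_{W^\Sigma}$ is the restriction to the flow-invariant $C^k$ submanifold $W^\Sigma$ of the jointly $C^k$ flow $\varphi_t|_{W^u}$ from the first step, hence extends to a jointly $C^k$ flow on $\mathcal{U}(U\cap W^\Sigma)$.

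\textbf{Main obstacle.} The delicate work sits at the two ends. First, one must secure that Theorem~\ref{thm:main1} genuinely delivers the unstable manifold with the claimed $C^k$ and joint-$C^k$-flow properties; this rests in turn on the smoothness of the semiflow in the sun-star setting established in Appendix~\ref{sect:semiflow_smoothness}. Second, one must verify line by line that the nonresonance and spectral-gap hypotheses of \ref{A5} are precisely those under which \cite{Cabre2003a} yields a genuinely $C^k$ -- rather than only finitely differentiable -- parameterization, and then carry out the routine but necessary promotion of $\varphi_1$-invariance to full semiflow invariance via uniqueness. Everything in between is the standard finite-dimensional SSM construction and transfers without change.
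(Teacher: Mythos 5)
Your construction is essentially the paper's: the authors' entire proof of Theorem~\ref{thm:main3} is the remark that one applies Theorem~1.2 of \cite{Cabre2003a} to the time-one map of the $C^k$ flow obtained by restricting $\varphi$ to the finite-dimensional unstable manifold of $0$, with details analogous to Theorem~\ref{thm:main2}; you have filled in exactly that outline, including the use of Theorem~\ref{thm:main1} with $\gamma>0$ to get a $C^k$ unstable manifold and the promotion of $\varphi_1$-invariance to semiflow invariance via uniqueness.

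One step deserves a correction rather than a parenthesis. You assert that the additive nonresonance conditions of \ref{A5}, ``being additive, are stronger than the multiplicative conditions (valid modulo $2\pi\mathrm{i}$) the map actually needs.'' The implication goes the wrong way: for the time-one map the parameterization method requires $e^{\lambda_1}\cdots e^{\lambda_j}\neq e^{\mu}$, i.e.\ $\lambda_1+\cdots+\lambda_j-\mu\notin 2\pi\mathrm{i}\,\mathbb{Z}$, whereas \ref{A5} only excludes $\lambda_1+\cdots+\lambda_j-\mu=0$. Spurious resonances of the time-one map through nonzero integer multiples of $2\pi\mathrm{i}$ are therefore \emph{not} ruled out by \ref{A5}. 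The repair is standard: for each of the finitely many tuples $(\lambda_1,\ldots,\lambda_j,\mu)$ with $2\le j\le r$ and $\lambda_1+\cdots+\lambda_j\neq\mu$, the set of times $t$ with $t(\lambda_1+\cdots+\lambda_j-\mu)\in 2\pi\mathrm{i}\,\mathbb{Z}\setminus\{0\}$ is discrete, so a generic $t_0>0$ gives a time-$t_0$ map with no resonances beyond those of the flow, and your uniqueness argument then upgrades $\varphi_{t_0}$-invariance to full semiflow invariance exactly as written. The paper's own sketch elides the same point, so this is a refinement rather than a divergence, but as stated your sentence is false and the step should be routed through a generic time-$t_0$ map (or the flow version of the parameterization method).
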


\begin{proof}
    The proof consists of an application of Theorem 1.2 of \cite{Cabre2003a} to the time-one map of the $C^k$ flow obtained upon restricting $\varphi$ to the finite-dimensional unstable manifold of $0$.
    The details would be analogous to the proof of Theorem~\ref{thm:main2} and are hence omitted.
\end{proof}

\subsection{Inertial manifolds}
\label{sect:IM_statement}

This section concerns the existence of inertial manifolds in system \eqref{eq:DDE_classical}. 
In the small delay regime, a positive answer to this question came as early as the 60s, through a series of papers by Yu.\ A.\ Ryabov \cite{ryabov1960application,ryabov1960application2,ryabov1961application,ryabov1963certain}; see also \cite{driver1968ryabov,driver1976linear,chicone2003inertial}.
Here we derive a different condition not based on the delay, but instead on the existence of a large enough spectral gap (Theorem~\ref{thm:IM}).
We do so utilizing the results of \cite{chen1997invariant} used in the proof of Theorem~\ref{thm:main1}.
These not only show the existence of a smooth inertial manifold, but also the existence of a corresponding invariant foliation, whose leaves characterize the approach of trajectories towards the inertial manifold (c.f.\ \eqref{eq:foliationrho}).
We consequently show that large enough spectral gaps exist in systems with sufficiently small delays -- compared to the Lipschitz constant of the nonlinearity -- thus recovering also the classical results (Corollary~\ref{corollary:smalldelays} and Remark~\ref{remark:IM_with_F}).

As a prerequisite, we shall henceforth assume that $f$ is globally Lipschitz on the whole of $X$.
This is not strictly necessary if the existence of a compact global attractor is known a priori, in which case cutoff functions encompassing the attractor can be used to ease this assumption (as in \cite{foias1988inertial}).

The term \textit{inertial manifold} was initially coined by the authors of \cite{foias1988inertial}, who define inertial manifolds as follows.\footnote{Note that Definition~\ref{def:IM} only makes sense for globally defined semiflows -- but this is certainly the case in our setting when $f$ is globally Lipschitz, see Lemma~\ref{lemma:globalexist}.}

\begin{definition}[Inertial manifolds, \cite{foias1988inertial,robinson2001}] \label{def:IM}
    Consider a semiflow $\{\varphi_t\}_{t \geq 0}$ on a Banach space $X$, with domain $\mathcal{D}^\varphi = \mathbb{R}^{\geq 0} \times X$.
    A subset $\mathcal{M} \subset X$ is said to be an inertial manifold for $\varphi_t$ if it has the following three properties
    \begin{enumerate}
        \item $\mathcal{M}$ is a finite-dimensional Lipschitz manifold,
        \item $\mathcal{M}$ is positively invariant, i.e., $\varphi_t(\mathcal{M}) \subset \mathcal{M}$, for all $t \geq 0$,
        \item $\mathcal{M}$ attracts all trajectories exponentially; that is, there exists $\kappa < 0$ (the rate of attraction) such that
        \begin{displaymath}
            \mathrm{dist}_X(\varphi_t(u),\mathcal{M}) \leq C(|u|_X) e^{ \kappa t}, \qquad  u \in X, \; t \geq 0,
        \end{displaymath}
        where $C(|u|_X)$ is a constant that depends only on the norm of $u \in X$.
    \end{enumerate}
\end{definition}

The following theorem states that, for a sufficiently large spectral gap, inertial manifolds exist for delayed equations.
For the precise statement, we fix the following constants.
Let $M > 0$ and $\omega > \omega(A)$, $\omega \neq 0$, be such that (c.f.\ \eqref{eq:growthbound}) $\Vert T(t) \Vert_{\mathcal{L}(X)} \leq M e^{\omega t}$ for all $t \geq 0$.

\begin{theorem} \label{thm:IM}
    Suppose \ref{A1}, \ref{A3} with $\alpha \leq 0$ and that $f$ is globally Lipschitz on the whole of $X$.
    Let $\varepsilon_1,\varepsilon_2 > 0$ be real numbers and let $K_1$ and $K_2$ be the constants from Lemma~\ref{lemma:dichotomy} associated to $\alpha,\beta,\varepsilon_1$ and $\varepsilon_2$.
    If there exists a positive real number
    \begin{equation}
        \nu < \frac{(\alpha-\beta-\varepsilon_1-\varepsilon_2)}{\ln2} 
        \label{eq:nu}
    \end{equation}
    such that
    \begin{equation}
        \lip(R) < \frac{\omega}{2M (e^{\omega /\nu} -1)}
        \label{eq:replacement}
    \end{equation}
    and 
    \begin{equation}
        \left(\sqrt{K_1} + \sqrt{K_2} \right)^2 4M^2  \lip(R) < \omega \frac{e^{(\alpha - \varepsilon_1-\omega)/\nu}    }{e^{\omega /\nu} - 1}
        \label{eq:nu_cond_2}
    \end{equation}
    are satisfied,
    then the conclusions of Theorem~\ref{thm:main1} hold for $U = X$; where the rate of attraction in \eqref{eq:attractivity} is replaced by some value $\kappa \in (\beta,\alpha)$ (we no longer have full control over this), and moreover, assertion~\ref{thm1st4} can be replaced by genuine uniqueness, characterized by the property \eqref{eq:negative_semiorbit_1}-\eqref{eq:negative_semiorbit_2} in Theorem~\ref{thm:CHT}\ref{item1:CHT}. 
    In particular, $W^\Sigma$ is a $C^1$ inertial manifold.
\end{theorem}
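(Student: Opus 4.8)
The plan is to deduce Theorem~\ref{thm:IM} from Theorem~\ref{thm:main1} by verifying that, under the global Lipschitz hypothesis on $f$ together with the quantitative spectral-gap conditions \eqref{eq:nu}--\eqref{eq:nu_cond_2}, the construction used to prove Theorem~\ref{thm:main1} actually produces a \emph{globally} defined invariant manifold (i.e.\ one can take $U = X$), and that the attracting foliation is globally defined as well. Concretely, I would first invoke Lemma~\ref{lemma:globalexist} to note that global Lipschitz continuity of $f$ forces $\mathcal{D}^\varphi = \mathbb{R}^{\geq 0} \times X$ and, via \eqref{eq:F} and \eqref{eq:Fexpansion}, that $F$ (hence $R = F - DF(0)$) is globally Lipschitz on $X$ with $\lip(R) = \lip(F - DF(0))$ controlled by $\lip(f - Df(0))$ up to the fixed geometric factor coming from the finite-rank representation \eqref{eq:F}. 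The key point is that the fixed-point argument behind Theorem~\ref{thm:main1} (the Lyapunov--Perron scheme of \cite{chen1997invariant}, in the sun-star variation-of-constants form \eqref{eq:semiflow_pert}) is carried out in a weighted space of negative (resp.\ bounded) semiorbits with exponential weight governed by a rate in $(\beta,\alpha)$; the contraction estimate there requires the Lipschitz constant of $R$ times a constant built from $K_1, K_2, M$ and the chosen weight to be $< 1$. When $R$ is only locally Lipschitz this forces a localization (shrinking $U$); when $R$ is globally Lipschitz with small enough constant, no localization is needed and one gets $U = X$.

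The second step is the bookkeeping that matches the abstract smallness requirement to the explicit inequalities \eqref{eq:replacement} and \eqref{eq:nu_cond_2}. Here the auxiliary parameter $\nu > 0$ plays the role of the "spectral gap used in the weight": condition \eqref{eq:nu} ensures $\nu \ln 2 < \alpha - \beta - \varepsilon_1 - \varepsilon_2$, i.e.\ that a weight $e^{\mu t}$ with $\mu \in (\beta + \varepsilon_2, \alpha - \varepsilon_1)$ and with the discretization step $1/\nu$ can be chosen so that the geometric-series factors $\sum_k (e^{\mu/\nu})^{\pm k}$ appearing when one sums the variation-of-constants estimate over unit-length (length $1/\nu$) time intervals converge with a controlled bound; the factor $e^{\omega/\nu} - 1$ in the denominators of \eqref{eq:replacement}--\eqref{eq:nu_cond_2} is exactly such a geometric sum of the $T(t)$-growth bound $Me^{\omega t}$ over a step of length $1/\nu$. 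I would therefore: (a) fix $\mu$ in the admissible open interval; (b) write the Lyapunov--Perron operator on the space of $\mu$-weighted sup-bounded orbits using \eqref{eq:dichotomy1}--\eqref{eq:dichotomy2} for the linear parts and \eqref{eq:replacement} to bound the $T(t)$-propagated nonlinear increments over each step; (c) show the resulting map is a contraction with constant controlled by the left-hand side of \eqref{eq:nu_cond_2} divided by its right-hand side, hence $< 1$; (d) conclude existence and uniqueness of the invariant manifold $W^\Sigma$ as the graph of the fixed point, globally over $X_\Sigma$. Tangency to $X_\Sigma$ at $0$ and $C^1$-smoothness then follow exactly as in Theorem~\ref{thm:main1}\ref{thm1st1}--\ref{thm1st2} (using $DR(0) = 0$), since those arguments are local near $0$ and unaffected by the globalization; the foliation and the attractivity estimate \eqref{eq:attractivity}, now with some rate $\kappa \in (\beta,\alpha)$ that we cannot pin down precisely because $\mu$ was chosen in an open interval rather than forced to equal a specific eigenvalue real part, come from the companion fixed-point construction of the leaves (again globally, since $R$ is globally Lipschitz). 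Finally, Definition~\ref{def:IM} is satisfied: $W^\Sigma$ is finite-dimensional (as $X_\Sigma$ is, by Lemma~\ref{lemma:proj}\ref{projlemma_item1}), positively invariant, and exponentially attracting with rate $\kappa < 0$ (recall $\alpha \leq 0$, and in fact $\kappa < \alpha \leq 0$), so $W^\Sigma$ is a $C^1$ inertial manifold; uniqueness via the Lyapunov-exponent characterization \eqref{eq:negative_semiorbit_1}--\eqref{eq:negative_semiorbit_2} replaces the weaker pseudo-uniqueness of \ref{thm1st4} because the global contraction makes the invariant-manifold fixed point unique rather than merely locally unique.

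The main obstacle I anticipate is not any single hard estimate but the careful alignment of the quantitative constants: one must choose the weight $\mu$ and verify simultaneously the two distinct smallness conditions that the stable and unstable directions of the Lyapunov--Perron operator impose — one of them (essentially \eqref{eq:replacement}) guaranteeing that the nonlinear increment over a single step is dominated by the linear decay margin $\omega$, the other (essentially \eqref{eq:nu_cond_2}, carrying the $(\sqrt{K_1}+\sqrt{K_2})^2$ factor) guaranteeing the summed/weighted operator is a contraction — and one must check these remain compatible with \eqref{eq:nu}, which bounds how large the step $1/\nu$ may be relative to the spectral gap. A secondary subtlety is tracking how $\lip(R)$ relates to $\lip(f - Df(0))$ through \eqref{eq:F}: the map $f \mapsto F$ is linear and bounded $\mathcal{L}(X,\mathbb{R}^n) \to \mathcal{L}(X, X^{\odot*})$, but one should record the norm of the embedding $v \mapsto \sum_i \langle v, e_i\rangle r_i^{\odot*}$ from $\mathbb{R}^n$ into $X^{\odot*}$ so that the constant $C$ in \eqref{eq:introcond}/\eqref{eq:nu_cond_remarked} is made explicit. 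I would also remark (as the statement does) that the global Lipschitz assumption can be weakened to the a priori existence of a compact global attractor together with a cutoff, exactly as in \cite{foias1988inertial}, since then one applies the above to the modified (globally Lipschitz) nonlinearity and notes that the resulting inertial manifold still captures the asymptotic dynamics; the details of this reduction I would defer to Remark~\ref{remark:IM_with_F} and the example in Section~\ref{sect:szaksz}.
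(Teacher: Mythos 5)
Your proposal is correct and follows essentially the same route as the paper: global existence from the global Lipschitz hypothesis, then an application of Theorem~\ref{thm:CHT} (Chen--Hale--Tan) to the \emph{unaltered} semiflow with discrete time step $1/\nu$, using \eqref{eq:replacement} to bound $\lip\big(N(1/\nu,\cdot)\big)$ as in Lemma~\ref{lemma:nonlinearity_sf} and \eqref{eq:nu_cond_2} to verify the gap/Lipschitz ratio condition \eqref{eq:H4Lipcond} of Remark~\ref{remark:altcondCHT}. The only imprecision is your reading of \eqref{eq:nu} as a geometric-series convergence condition; its actual role is the elementary inequality $e^{(\beta+\varepsilon_2)/\nu} < \tfrac12 e^{(\alpha-\varepsilon_1)/\nu}$, which is exactly what lets the right-hand side of \eqref{eq:nu_cond_2} serve as a lower bound for $\alpha_1-\alpha_2$ in \eqref{eq:H4Lipcond}.
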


\begin{proof}
    See Section~\ref{sect:IM}.
\end{proof}

\begin{remark}[Precise rate of decay] \label{remark:varrho}
    The precise value of the decay rate $\kappa$ guaranteed by Theorem~\ref{thm:IM} can be computed directly as the smaller solution of $\varrho(e^{s/\nu}) \lip(\varphi_{1/\nu}-T(1/\nu)) = 1$ within $s \in (\beta+\varepsilon_2,\alpha-\varepsilon_1)$, with $\varrho$ as in \eqref{eq:varrho}.
    In fact, since the proof of Theorem~\ref{thm:IM} only used $\alpha \leq 0$ to conclude attractivity; it is possible to construct inertial manifolds if $\beta < 0 < \alpha$ provided the above defined $\kappa$ is negative.
    This is certainly the case if $(\sqrt{K_1} e^{(\beta+\varepsilon_2)/\nu} + \sqrt{K_2} e^{(\alpha-\varepsilon_1)/\nu})/(\sqrt{K_1}+\sqrt{K_2}) < 1$, which is obtained upon examining the minimum of the above function in $(\beta,\alpha)$. 
\end{remark}

\begin{remark}
    The smoothness of the inertial manifold can be improved to $C^\ell$, provided that \ref{A2} holds with $k = \ell$ and the two solutions $\gamma_2 \leq \gamma_1$ of $\varrho(e^{s/\nu}) \lip(\varphi_{1/\nu}-T(1/\nu)) = 1$ within $s \in (\beta+\varepsilon_2,\alpha-\varepsilon_1)$ satisfy $\gamma_2 < \ell \gamma_1$.
    (This follows from Appendix~\ref{sect:mfdsmoothness} upon noting that $[\gamma_2,\gamma_1]$ here must be contained in the permissible interval $[\tilde{\eta},\overline{\eta}]$ in \eqref{eq:choice_of_rho} with $\lip(R)$ fixed; or otherwise via a slight alteration of Theorem~5 in \cite{irwin1980new}.) 
\end{remark}

\begin{remark} \label{remark:IM_altassumption}
    Supposing $\nu > |\omega|$, it suffices to check
    \begin{equation}
        \left(\sqrt{K_1} + \sqrt{K_2} \right)^2 8M^2  \lip(R)  e^{(\omega + \varepsilon_1 - \alpha)/|\omega|} < \nu
        \label{eq:nu_cond_remarked}
    \end{equation}
    in place of the condition \eqref{eq:nu_cond_2}.
    This follows by elementary properties of the exponential function ($\mathrm{sign}(x)(e^x-1) \leq 2|x|$ for $|x| < 1$).    
\end{remark}

The next corollary shows that the spectral gap assumption of Theorem~\ref{thm:IM} can be replaced by a smallness assumption on the delay. 
We thus recover, albeit with alternative assumptions, the results of \cite{driver1968ryabov}.
What we are able to conclude is slightly stronger due to the foliation result of \cite{chen1997invariant}.

The proof of Corollary~\ref{corollary:smalldelays} is based on the observation that we can exhibit an arbitrarily large spectral gap for small enough delays.
For this, we trace the dependence of the characteristic matrix $\Delta(z)$, from \eqref{eq:characteristic_matrix}, on $h$.
It will be convenient to use the representation
\begin{equation}
    \det(\Delta (z)) = z^n - \sum_{j=1}^n z^{n-j}\left( \prod_{i=1}^j    \int_0^h e^{-zt} d\eta_{ji}(t) \right),
    \label{eq:detDelta}
\end{equation}
where each $\eta_{ji}$ is a linear combination of entries of $\zeta$.
This follows directly from taking the determinant of \eqref{eq:characteristic_matrix} (Lemma I.4.3, \cite{diekmann2012delay}).
To make the statement more concise, we define\footnote{Here, $\mathrm{TV}$ refers to the total variation defined in \eqref{eq:TV}. In formula \eqref{eq:Q}, the convention $0^0 =1$ is used.}
\begin{equation}
    Q := \sum_{j=1}^n \frac{j}{n} \left( \prod_{i =1}^j \mathrm{TV}(\eta_{ji}) \right)^{\frac{n}{j}} (2(n-j))^{\frac{n}{j}-1}.
    \label{eq:Q}
\end{equation}

\begin{corollary} \label{corollary:smalldelays}
    Assume \ref{A1} and that $f$ is globally Lipschitz on the whole of $X$.
    Set  $\gamma  := -(2Q)^{\frac{1}{n}}-1$\footnote{The result could be stated for arbitrary $\gamma<-(2Q)^\frac{1}{n}$ for which the conditions hold, it is merely fixed here for concreteness and ease of use.} and produce the set $\Sigma$ as in \ref{A3}.
    If there exist $\varepsilon_1, \varepsilon_2 > 0$ such that 
    \begin{equation}
        h < \min \left\{ \frac{1}{r} \ln \left((2Q)^{-\frac{1}{n}} r \right), \frac{1}{|\gamma|} \ln \left((2Q)^{-\frac{1}{n}} |\gamma| \right)\right\}
        \label{eq:h_cond}
    \end{equation}
    with
    \begin{multline}
        r : = \max \left\{ \left(\sqrt{K_1} + \sqrt{K_2} \right)^2 8M^2  \lip(R)  e^{(\omega + \varepsilon_1 - \alpha)/|\omega|} , |\omega|, \frac{\omega}{\ln \left( \frac{\omega}{2M \lip(R)} + 1 \right)} \right\} \ln2 \\ -\gamma+ \varepsilon_1 + \varepsilon_2, \label{eq:r}
    \end{multline}
    ($K_1$, $K_2$ depending on $\varepsilon_1$, $\varepsilon_2$ as in the statement of Lemma~\ref{lemma:dichotomy}) then the conclusions of Theorem~\ref{thm:IM} hold.
\end{corollary}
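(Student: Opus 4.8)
The strategy is to verify the hypotheses of Theorem~\ref{thm:IM}, using Remark~\ref{remark:IM_altassumption} in place of \eqref{eq:nu_cond_2} (and Remark~\ref{remark:varrho} should it happen that $\alpha>0$, the decay-rate sign requirement there being secured once the gap below is wide enough). The only nontrivial point is that the delay bound \eqref{eq:h_cond} forces a sufficiently wide spectral gap along the negative real axis. Write $a:=(2Q)^{1/n}$, so that $\gamma=-a-1$ and $|\gamma|=a+1$; let $m$ be the maximum of the three expressions in \eqref{eq:r}, so $r=m\ln 2-\gamma+\varepsilon_1+\varepsilon_2$, and recall $\alpha\ge\gamma$. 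I will show \eqref{eq:h_cond} implies $\beta\le -s_+$ for some $s_+>r$. Granting this, $-\beta>r$, hence $\alpha-\beta-\varepsilon_1-\varepsilon_2\ge\gamma-\beta-\varepsilon_1-\varepsilon_2>\gamma+r-\varepsilon_1-\varepsilon_2=m\ln 2$, and one may fix $\nu$ with $m<\nu<(\alpha-\beta-\varepsilon_1-\varepsilon_2)/\ln 2$. Then \eqref{eq:nu} holds by construction; $\nu>m\ge|\omega|$ makes Remark~\ref{remark:IM_altassumption} applicable; and the two remaining conditions \eqref{eq:replacement} and \eqref{eq:nu_cond_remarked}, after rearranging, amount to lower bounds on $\nu$ by, respectively, the third and the first of the three expressions defining $m$, and so also hold. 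Theorem~\ref{thm:IM} then delivers the conclusion.

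To produce the gap I estimate the roots of the characteristic equation \eqref{eq:characteristic_eq} directly from the factored determinant \eqref{eq:detDelta}, avoiding the non-explicit constants of Propositions~\ref{prop:spectrum1} and \ref{prop:spectrum2}. Let $\lambda$ be a root with $\sigma:=\mathrm{Re}\,\lambda\le 0$; since $|e^{-\lambda t}|=e^{-\sigma t}\le e^{-\sigma h}$ for $t\in[0,h]$, bounding each Riemann--Stieltjes integral in \eqref{eq:detDelta} by the total variation of the corresponding $\eta_{ji}$ gives
\[
  |\lambda|^n \;\le\; \sum_{j=1}^n |\lambda|^{n-j}\, e^{-j\sigma h}\prod_{i=1}^j \mathrm{TV}(\eta_{ji}).
\]
I then apply Young's inequality to each summand in the weighted form $(|\lambda|^n)^{(n-j)/n} v_j^{j/n}\le \tfrac{n-j}{n}\delta_j|\lambda|^n+\tfrac{j}{n}\delta_j^{-(n-j)/j}v_j$, with $v_j:=e^{-n\sigma h}\big(\prod_{i=1}^j \mathrm{TV}(\eta_{ji})\big)^{n/j}$ and the weight $\delta_j:=\tfrac{1}{2(n-j)}$ for $1\le j\le n-1$ (the $j=n$ summand is already $v_n=e^{-n\sigma h}\prod_{i=1}^n \mathrm{TV}(\eta_{ni})$ and needs no splitting). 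The coefficient of $|\lambda|^n$ on the right collects to $\sum_{j=1}^{n-1}\tfrac{1}{2n}<\tfrac12$, while the remaining terms sum, using $\delta_j^{-(n-j)/j}=(2(n-j))^{n/j-1}$ and the convention $0^0=1$ for $j=n$, to exactly $e^{-n\sigma h}Q$ with $Q$ as in \eqref{eq:Q}. Hence $\tfrac12|\lambda|^n\le e^{-n\sigma h}Q$, i.e.\ $|\lambda|\le a\,e^{-\sigma h}$; and since $|\lambda|\ge -\sigma$, the number $s:=-\sigma\ge 0$ satisfies $s\,e^{-sh}\le a$.

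Now set $g(s):=s\,e^{-sh}$, which increases on $[0,1/h]$ to its maximum $1/(eh)$ and decreases afterwards, so $\{s\ge 0:g(s)>a\}$ is an interval $(s_-,s_+)$ as soon as $1/(eh)>a$. Since $s\mapsto\tfrac1s\ln(s/a)$ is maximised at $s=ea$ with value $1/(ea)$, both terms under the minimum in \eqref{eq:h_cond} are $\le 1/(ea)$, so \eqref{eq:h_cond} forces $h<1/(ea)$ and the interval $(s_-,s_+)$ exists; by the previous paragraph no characteristic root has $\mathrm{Re}\,\lambda\in(-s_+,-s_-)$. From $h<\tfrac{1}{|\gamma|}\ln(|\gamma|/a)$ we get $g(|\gamma|)=|\gamma|e^{-|\gamma|h}>|\gamma|\cdot(a/|\gamma|)=a$, so $|\gamma|\in(s_-,s_+)$, whence $s_-<|\gamma|$; thus the upper end $-s_-$ of the forbidden strip lies strictly below $\gamma$, and therefore every root with $\mathrm{Re}\,\lambda<\gamma$ must satisfy $\mathrm{Re}\,\lambda\le -s_+$, i.e.\ $\beta\le -s_+$. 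Likewise $h<\tfrac{1}{r}\ln(r/a)$ gives $g(r)>a$, hence $r\in(s_-,s_+)$ and $s_+>r$. This establishes $\beta\le -s_+<-r$, the inequality used above, and the corollary follows.

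The main obstacle is the root estimate of the middle paragraph. Propositions~\ref{prop:spectrum1}--\ref{prop:spectrum2} already locate the spectrum near the curve $|z|\sim e^{-h\,\mathrm{Re}\,z}$, but only with constants that are not explicit in $\zeta$; the corollary instead demands the quantitative envelope $|\lambda|\le(2Q)^{1/n}e^{-h\,\mathrm{Re}\,\lambda}$ with $Q$ in the precise shape \eqref{eq:Q}, and it is recovering that shape --- rather than some cruder Cauchy-type bound for the roots of the resulting polynomial inequality --- that dictates the weighted Young's inequality above. Everything downstream (solving $s\,e^{-sh}=a$ and matching the thresholds \eqref{eq:r}--\eqref{eq:h_cond} to the hypotheses of Theorem~\ref{thm:IM}) is routine bookkeeping.
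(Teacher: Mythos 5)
Your proposal is correct and follows essentially the same route as the paper: the same weighted Young's inequality applied to \eqref{eq:detDelta} to obtain the envelope $|\lambda|\le(2Q)^{1/n}e^{-h\,\mathrm{Re}\,\lambda}$ (you state it as a root bound, the paper as a lower bound on $|\det\Delta(z)|$ on the strip $\mathrm{Re}\,z\in[-r,\gamma]$), followed by matching the three expressions in \eqref{eq:r} to \eqref{eq:nu_cond_remarked}, the requirement $\nu>|\omega|$, and \eqref{eq:replacement}. One cosmetic slip: since $s_-<|\gamma|$ you have $-s_->\gamma$, so the upper end of the forbidden strip lies \emph{above} $\gamma$, not below it; what you actually need and do establish is $\gamma\in(-s_+,-s_-)$, so the argument stands.
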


\begin{proof}
    See Section~\ref{sect:IM}.
\end{proof}

For $\tau \leq h$, let us consider any smooth map $g_\tau : [-h,0] \to [-\tau,0]$ squeezing the interval (e.g., $\theta \mapsto\theta \tau/h $).
Consider $g_\tau^*: C([-\tau,0];\mathbb{R}^n) \to C([-h,0];\mathbb{R}^n)$ given by $g^*_\tau u : = u \circ g_\tau$.
A brief inspection of the characteristic equation, \eqref{eq:characteristic_eq}, shows that if the right-hand side $f$ of \eqref{eq:DDE_classical1} is replaced by $f \circ g^*_\tau$, then the number of roots (including multiplicity) of \eqref{eq:characteristic_eq} within a compact subset $K \subset \mathbb{C}$ is constant in a neighbourhood of a fixed $\tau$ for which there are no roots on $\partial K$ by  Hurwitz's theorem.
The limit $\tau \to 0$ yields the characteristic matrix $\Delta(z) = z \, \mathrm{id} - \zeta(h)$, whose roots correspond to eigenvalues of the non-delayed system.
In this setting, the proof of Corollary~\ref{corollary:smalldelays} constructs the spectral gap between (perturbations of) the set of eigenvalues present in the non-delayed system and the new ones appearing due to the delay.
This is not explicitly apparent from the proof, but for small enough delays it certainly must be the case due to the uniqueness result Theorem 3 of \cite{driver1968ryabov} and the above remarks.
We thus gain explicit understanding of the inertial manifold for small delays: 
it must be tangent at the fixed point to the spectral subspace spanned by the perturbed eigenvalues also present in the non-delayed system. 
This gives further confirmation to the natural intuition that sufficiently small delays are negligible, in the sense that all solutions decay exponentially fast (with exponent depending on the delay -- c.f.\ the role of $r$ in Corollary~\ref{corollary:smalldelays}) to a finite-dimensional invariant manifold of the same dimension and characteristics as the non-delayed system. 
A different, perhaps more direct perspective on negligible delays is given in \cite{kurzweil2006small}.

\begin{remark} \label{remark:IM_with_F}
    If $f(0) = 0$, one could also apply Theorem~\ref{thm:IM} to the fixed point $0$ with $A = A_0$ and $R = F$, i.e., without carrying out the linearization procedure of Section~\ref{sect:linearization} and keeping the linear part within the nonlinearity $F$.
    Indeed, Theorem~\ref{thm:CHT} -- see also \cite{chen1997invariant} -- itself does not assume that the nonlinearity is of higher order. 
    We lose, however, the tangency conclusion of Theorem~\ref{thm:main1}\ref{thm1st1}.
    In this case, our linear operator is $A_0$, whose spectral properties were discussed in Remark~\ref{remark:A0proj}.
    For any $\gamma < 0$, we have $\alpha = 0$, and that $\Sigma' = \emptyset$.
    The projection $P_{\Sigma'} = \mathrm{id}_X - P_{\{0\}}$ is given by $P_{\Sigma'}u = u - u(0)$.
    Lemma~\ref{lemma:dichotomy} continues to hold in this setting, we can in fact deduce this explicitly:
    For any $\beta_2 < 0$ (taking the place of $\beta + \varepsilon_2$), 
    \begin{displaymath}
        \Vert T_0(t) P_{\Sigma'} \Vert_{\mathcal{L}(X)} \leq K_2 e^{\beta_2 t} \Vert P_{\Sigma'} \Vert_{\mathcal{L}(X)}
    \end{displaymath}
    holds trivially since $T_0(t)P_{\Sigma'}u = 0$ for all $t \geq h$ (recalling \eqref{eq:T_0(t)}).
    The only caveat is that $K_2$ thus depends on $\beta_2$ and the delay $h$ through $K_2 e^{\beta_2 h} = 1$.
    The other part of the dichotomy, \eqref{eq:dichotomy1}, remains unaffected and we may safely take $K_1 = 1$ for any $\varepsilon_1 > 0$.
    In this setting, \eqref{eq:nu} translates to $\nu < (-\beta_2-\varepsilon_1)/\ln2$, which we may take as large as we want at the expense of enlarging $K_2$.
    We also note that $M = 1$ and any $\omega > 0$ works for the case of $T_0$ -- let us fix $\omega =1$ and henceforth assume $\beta_2 < -1$.
    The condition \eqref{eq:replacement} can be easily satisfied by the choice of $\beta_2$ alone.
    The other condition, \eqref{eq:nu_cond_remarked}, translates (with $\varepsilon_1 \ll 1$) to
    \begin{equation}
        16 \left( 1 + e^{-\beta_2 h/2}\right)^2  \lip(F) < |\beta_2|,
        \label{eq:h_cond_finale}
    \end{equation}
    which is perhaps more in line with the original  assumptions required for inertial manifolds in, e.g., \cite{driver1968ryabov} and \cite{chicone2003inertial}.
    What we gain on top of classical results is that the rate of attraction $\kappa$ towards the inertial manifold can be controlled via increasing $\beta_2$ and shrinking $h$, that the  $C^\ell$ smoothness of $W^\Sigma$ can be improved via $\beta_2$ and $h$ once more, and that trajectories are confined to a $C^0$ foliation along their approach to the inertial manifold. 
\end{remark}

The precise way in which the rate of decay $\kappa$ can be controlled via Remark~\ref{remark:varrho} is best understood via examples.
We will give a more elaborate description during the course of Example 2 in Section~\ref{sect:example2}. 

We also remark that, by squeezing the delay via $g_\tau$ as above, \eqref{eq:h_cond_finale} can always be achieved, since $\lip(F \circ g^*_\tau) = \lip(F)$.

\section{Examples}
\label{sect:examples}

The local invariance property of $W^\Sigma$ (Theorem~\ref{thm:main1}\ref{thm1st1}) can be alternatively characterized as 
\begin{equation}
    \varphi_t \circ K (u_1)= K \circ \psi_t (u_1), \qquad u_1 \in V \text{ such that } (t,K(u_1)) \in \mathcal{U}(U),
    \label{eq:invariance_examples}
\end{equation}
where $K : V \to X$ (with $V \subset X_\Sigma$ open, containing $0$) is the embedding with image $W^\Sigma$ and $\psi_t$ is the $C^\ell$ flow representing the reduced dynamics.
In particular, $\psi$ is generated by a $C^{\ell-1}$ vector field $H$.

In practice, one seeks the desired manifold and reduced dynamics in the form of polynomial expressions for $K$ and $H$.
For the purposes herein, we shall content ourselves with third order reduced dynamics -- this requires the manifold to be computed to second order.
The explicit computation of the polynomial coefficients
has been carried out in \cite{diekmann2012delay}, Chapter X, in the context of the Hopf bifurcation on a center manifold,
and more recently in \cite{szaksz2025spectral} in the context of spectral submanifolds.
Here, we give a brief recollection of the results in the present nomenclature.
(The projections $P_{\Sigma}^{\odot *}$ $P_{\Sigma'}^{\odot *}$ are formally introduced in Appendix~\ref{sect:mfdsmoothness}.)

\begin{lemma} \label{lemma:expansions}
    Suppose Theorem~\ref{thm:main1} holds with $W^\Sigma$ $C^\ell$, $\ell \geq 4$.
    Then,
    \begin{align*}
        &DK(0) = \imath_\Sigma, \\
        &D^2K(0) = \imath^{-1} \int_{0}^\infty T^{\odot *}(s) P_{\Sigma'}^{\odot *} D^2R(0)[T(-s)\imath_\Sigma \cdot, T(-s)\imath_\Sigma \cdot] \, ds,
    \end{align*}
    where $\imath_\Sigma : X_\Sigma \xhookrightarrow{} X$ is the inclusion.
    Moreover, $H = A_\Sigma +  \widetilde{P}_\Sigma^{\odot *} R \circ K$ and
    \begin{align*}
        &DH(0) = A_\Sigma, \\
        & D^2H(0) = \widetilde{P}_\Sigma^{\odot *} D^2R(0)[DK(0),DK(0)], \\
        & D^3H(0) = \widetilde{P}_\Sigma^{\odot *} D^3R(0)[DK(0),DK(0),DK(0)] + 2  \widetilde{P}_\Sigma^{\odot *} D^2R(0)[D^2K(0),DK(0)].
    \end{align*}
\end{lemma}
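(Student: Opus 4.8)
The plan is to obtain all six formulae by differentiating the two functional identities that characterize $K$ and $H$: the invariance equation \eqref{eq:invariance_examples} (or rather its infinitesimal version) and the fixed-point equation defining $K$ through the stable/unstable dichotomy. I would first recall from the construction underlying Theorem~\ref{thm:main1} (in the spirit of Chapter IX of \cite{diekmann2012delay}) that the embedding $K$ satisfies a variation-of-constants integral fixed-point equation of the form
\begin{equation*}
    K(u_1) = T(t)\imath_\Sigma u_1 \;+\; \imath^{-1}\!\int_{0}^{\infty} \! G(t,s)\, R\circ K(\psi_s(u_1))\, ds,
\end{equation*}
where $G(t,s)$ is the Green's function built from $T^{\odot *}(t-s)P_\Sigma^{\odot *}$ for $s<t$ and $-T^{\odot *}(t-s)P_{\Sigma'}^{\odot *}$ for $s>t$; evaluating at $t=0$ leaves only the $P_{\Sigma'}^{\odot *}$-part with the sign making the integral run over $[0,\infty)$ with $T^{\odot *}(-s) = T^{\odot *}(s)$ acting appropriately. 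Since $R(0)=0$, $DR(0)=0$, the first derivative of this identity at $u_1=0$ gives $DK(0)=\imath_\Sigma$ immediately (the integral term contributes nothing at first order because its integrand is quadratic in $K$ near $0$). Differentiating once more and using $DR(0)=0$ again, the only surviving term at second order is $\widetilde{P}_{\Sigma'}^{\odot *}D^2R(0)[DK(0)\cdot,DK(0)\cdot]$ transported by the semigroup, and substituting $DK(0)=\imath_\Sigma$ and $\psi_s$'s linearization $e^{A_\Sigma s}$ (so $DK(0)\,e^{A_\Sigma s} = T(s)\imath_\Sigma$ on $X_\Sigma$, matching $T(-s)\imath_\Sigma$ after the reflection built into the $P_{\Sigma'}$ half of the Green's function) yields the stated formula for $D^2K(0)$. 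Convergence of the integral is guaranteed by the spectral gap: Lemma~\ref{lemma:dichotomy} gives $\|T^{\odot *}(s)P_{\Sigma'}^{\odot *}\| \lesssim e^{(\beta+\varepsilon_2)s}$ against the growth $e^{2(\alpha-\varepsilon_1)s}$ of $D^2K(0)$ transported forward, and $\beta < \ell\alpha \le 2\alpha$ for $\ell\ge 4$.

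For the reduced vector field, I would start from the identity $H = A_\Sigma + \widetilde{P}_\Sigma^{\odot *} R\circ K$, which itself follows from projecting the infinitesimal form of the invariance equation onto $X_\Sigma$: differentiating \eqref{eq:invariance_examples} in $t$ at $t=0$ gives $A_0^{\odot *}\imath K(u_1) + F(K(u_1)) = DK(u_1)H(u_1)$ in $X^{\odot *}$; applying $\widetilde{P}_\Sigma^{\odot *}$, using $DK(u_1)$ maps into the range of $DK$ which is tangent to $X_\Sigma$ at $0$ and more precisely that $\widetilde{P}_\Sigma^{\odot *}\imath K = $ the $X_\Sigma$-coordinate, one isolates $H(u_1) = A_\Sigma(\text{coordinate}) + \widetilde{P}_\Sigma^{\odot *}R(K(u_1))$ — the $DF(0)$ part is absorbed into $A_\Sigma$ and $R$ is the higher-order remainder. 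Then $DH(0)=A_\Sigma$ is read off directly since $DR(0)=0$. The second derivative is $D^2H(0) = \widetilde{P}_\Sigma^{\odot *}D^2R(0)[DK(0),DK(0)]$ by the chain rule with $DR(0)=0$ killing the $D^2K(0)$ term. For $D^3H(0)$ I would apply the chain rule for the third derivative of the composition $R\circ K$: the general Faà di Bruno expansion gives $D^3(R\circ K)(0) = D^3R(0)[DK,DK,DK] + 3\,D^2R(0)[D^2K,DK] + DR(0)[D^3K]$; the last term vanishes, and — this is the one point needing care — the combinatorial coefficient $3$ must be reconciled with the stated $2$, which I expect comes from the symmetrization convention in how $D^2R(0)[D^2K(0),DK(0)]$ is written (treating $D^2K(0)$ as already a symmetric bilinear slot so that two of the three "$3$" terms coincide and the genuinely distinct count after the paper's bracket convention is $2$), or alternatively from a contribution of the $A_\Sigma$ term interacting with $D^2K$ that cancels one unit. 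I would verify this coefficient explicitly against the Hopf-bifurcation computation in Chapter X of \cite{diekmann2012delay} and the parallel formulae in \cite{szaksz2025spectral}, which is where I expect to spend the most effort; everything else is routine differentiation of the fixed-point equations justified by the dominated-convergence/uniform-convergence estimates that the $C^\ell$ ($\ell\ge4$) hypothesis and the spectral gap supply.

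The main obstacle, then, is purely bookkeeping: getting the numerical coefficient in $D^3H(0)$ right and making sure the projections $P_\Sigma^{\odot *}$, $P_{\Sigma'}^{\odot *}$ and the sign/reflection in the Green's function are deployed consistently so that $T(-s)\imath_\Sigma$ (rather than $T(s)\imath_\Sigma$) genuinely appears in $D^2K(0)$ — this is the orientation dictated by the $P_{\Sigma'}$-branch of the dichotomy running backward in the variation-of-constants representation of $K$. Differentiability of $K$ and $H$ to the required order is not an obstacle: it is exactly the content of Theorem~\ref{thm:main1}\ref{thm1st2} together with the smoothness-of-semiflow results of Appendix~\ref{sect:semiflow_smoothness}, so under the hypothesis $\ell\ge4$ all the derivatives invoked above exist and the differentiation under the integral sign is legitimate.
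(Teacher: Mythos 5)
Your approach is essentially the paper's: it likewise writes $K=\mathrm{ev}_0\circ\mathcal{W}$ with $\mathcal{W}$ the fixed point of the Lyapunov--Perron operator \eqref{eq:mathcalW4exp}, reads off $DK(0)=\imath_\Sigma$ and $D^2K(0)$ from $D\mathcal{W}(0)(s)=T(s)\imath_\Sigma$ together with $R(0)=0$, $DR(0)=0$, and obtains $H=A_\Sigma+\widetilde{P}_\Sigma^{\odot *}R\circ K$ by differentiating $\psi_t=\widetilde{P}_\Sigma\circ\mathrm{ev}_t\circ\mathcal{W}$ in $t$ at $t=0$. On the one point you flag: your Fa\`a di Bruno count is the correct one --- with $DR(0)=0$ the symmetric trilinear form $D^3(R\circ K)(0)$ contains three terms of type $D^2R(0)\big[D^2K(0)[\cdot,\cdot],DK(0)\cdot\big]$, so the coefficient should be $3$; neither of your proposed rationalizations can save the stated $2$ (the $A_\Sigma$ summand is linear and contributes nothing beyond first order, and symmetrizing the bracket does not change the count), and the paper's own proof does not derive this coefficient, saying only that the expansion ``follows immediately.'' The discrepancy is invisible in all of the paper's examples because there $D^2R(0)=0$, so you should trust your computation rather than the stated constant.
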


\begin{proof}
    See Appendix~\ref{sect:expansionslemmaproof}.
\end{proof}

In what follows, we use $(\cdot)^{\leq}$ to denote the $3$-jet of a map, e.g., $H^\leq (u)= DH(0)u + \frac12 D^2H(0)[u,u] + \frac{1}{6}D^3H(0)[u,u,u]$.

We remark that the form of $K$, $K = (\mathrm{id},\phi)$, is of no coincidence -- our method of proof assumed that the manifold can be written as a graph of some function $\phi: X_\Sigma \to X_{\Sigma'}$. 
In the language of \cite{haro2016parameterization}, this is the graph style of parameterization.
The parameterization, however, is not unique (even if the manifold $W^\Sigma$ is) -- there is a degree of freedom given by the invariance of \eqref{eq:invariance} under $K \mapsto K \circ \Theta$ and $\psi_t \mapsto \Theta^{-1} \circ \psi_t \circ \Theta$ (hence $H \mapsto D \Theta^{-1}[H \circ \Theta]$) for any diffeomorphism $\Theta: M \to X_\Sigma$ for a manifold $M$ modeled on $X_\Sigma$.
We shall make use of this degree of freedom in the examples, so as to put the reduced dynamics in a more manageable and familiar from.

The formula for $D^2K(0)$ can be significantly simplified if $\Sigma$ consists of algebraically simple eigenvalues.
If $w_1,\ldots,w_m$ is a basis for $X_\Sigma$ with respect to which $A_\Sigma$ is diagonal\footnote{This can only be achieved over the complexification of $X_\Sigma$ and $A_\Sigma$; we omit this from the notation, otherwise it would get too cluttered. Once the full operator $D^2K(0)$ is reconstructed, it will of course preserve the real subspace.} and we denote by $\lambda_i$ the eigenvalue corresponding to $w_i$, then for $1 \leq i,j \leq m$,
\begin{displaymath}
    D^2K(0)[w_i,w_j] = \imath^{-1} \int_{0}^\infty e^{- (\lambda_i + \lambda_j)s} T^{\odot *}(s) P_{\Sigma'}^{\odot *}  D^2R(0)[w_i,w_j] \, ds  .
\end{displaymath}
By the assumption on $\ell$, $ \mathrm{Re} \, (\lambda_i+\lambda_j) > \beta$.
Therefore,
\begin{equation}
    \int_{0}^\infty e^{- (\lambda_i + \lambda_j)s} T^{\odot *}(s) P_{\Sigma'}^{\odot *} \, ds = \imath^{-1} \big((\lambda_i+\lambda_j) \, \mathrm{id} - A^{\odot *} \big)^{-1} P_{\Sigma'}^{\odot *}.
    \label{eq:resolvent_identity}
\end{equation}
For strongly continuous semigroups,  \eqref{eq:resolvent_identity} is well known and is usually termed the resolvent identity (see, e.g., Lemma 7.23 of \cite{buhler2018functional}).
It continues to hold in the present setting due to the properties $A^{\odot *}$ and $T^{\odot *}$ satisfy (in particular, that $\Vert T^{\odot *}(t) P_{\Sigma'}^{\odot *} \Vert \leq Me^{\omega t}$ for  some $\omega > \beta$, $M>0$ and all $t \geq 0$, see Lemma~\ref{lemma:appendixdichotomy}), as can be directly checked by going through the proofs of Lemmas 7.13 and 7.23 in \cite{buhler2018functional}. 
See also Exercise IX.2.8, \cite{diekmann2012delay}.
We hence arrive at the formula 
\begin{displaymath}
    D^2K(0)[w_i,w_j] =P_{\Sigma'} \imath^{-1} \big((\lambda_i+\lambda_j) \, \mathrm{id} - A^{\odot *} \big)^{-1} D^2R(0)[w_i,w_j],
\end{displaymath}
from which the full operator $D^2K(0)$ can be reconstructed.

We recall from \cite{diekmann2012delay}, IV.(3.3), the following formula for a projection $P_{\{\lambda\}}$ associated to a simple eigenvalue $\lambda$: 
\begin{equation}
    [P_{\{\lambda\}} u](\theta) = e^{\lambda \theta} \xi(\lambda) \left[ u(0) + \int_0^h d\zeta(s) \int_0^s e^{-\lambda \tau} u(\tau-s) \, d\tau \right] ,
    \label{eq:P_lambda}
\end{equation}
where $\xi(z) : = \Delta(z)^{-1} (z-\lambda)$.
The obvious modifications -- replacing $u(0)$ above with the first component of an element of $X^{\odot *}$ -- give the extended projection $P^{\odot *}_{\{\lambda\}}$.
We will also need the formula for $(z \, \mathrm{id} - A^{\odot *})^{-1}$, but only the part pertaining to elements of the form $(c,0) \in X^{\odot *}$.
This is given in Corollary IV.5.4, \cite{diekmann2012delay}, as 
\begin{displaymath}
    (z \, \mathrm{id} - A^{\odot *})^{-1} (c,0) = ( \Delta(z)^{-1}c,\theta \mapsto e^{z\theta} \Delta(z)^{-1}c).
\end{displaymath}

Combining these together, one obtains the explicit formula
\begin{multline}
    D^2K(0)[w_i,w_j] = \left[ e^{(\lambda_i + \lambda_j) [\cdot]} - \sum_{\lambda \in \Sigma} e^{\lambda[ \cdot]} \xi(\lambda) \left( 1 + \int_0^h d\zeta (s) \frac{e^{-\lambda s}-e^{-(\lambda_i + \lambda_j) s}}{\lambda_i + \lambda_j - \lambda} \right) \right] \\ \times \Delta(\lambda_i + \lambda_j)^{-1} \mathrm{proj}_1 \big(D^2R(0)[w_i,w_j] \big),
    \label{eq:D2K}
\end{multline}
where $\mathrm{proj}_1:X^{\odot *} \to \mathbb{R}^n$ is the projection onto the first factor.
Higher order terms can be computed analogously -- for instance, if $D^2K(0) = 0$, the coefficient $D^3K(0)$ is obtained by replacing $D^2R(0)$ by $D^3R(0)$ in the above and using three eigenvectors/eigenvalues.

We remark if $\ell$ does not meet the conditions of Lemma~\ref{lemma:expansions}, one may still compute the right-hand sides of the coefficients given therein, so long as $R$ is sufficiently smooth and 'nonresonance' conditions are met (so that $\lambda_i+\lambda_j \notin \Sigma'$ in the above).
However, one may no longer associate them to Taylor coefficients of $K$ and $H$.

\subsection{The Cushing equation}
\label{sect:example1}

We consider the system (with its linear part taken from \cite{insperger2011semi}, originally from \cite{cushing1977time})
\begin{equation}
    \dot{x}(t) = b \int_0^hx(t-\theta) \, d\theta  + a \big(  x(t) - \sin (x(t)) \big)
    \label{eq:cushing}
\end{equation}
on $\mathbb{R}$, for some $ b \in \mathbb{R}$ and $a > 0$. 
Equation \eqref{eq:cushing} admits an equilibrium state at $0$.
Throughout, we shall fix the delay $h = 1$.
In the setting of Section~\ref{sect:linearization},  we have $\zeta (t)= b t$, $t \in [0,h]$, and
\begin{displaymath}
    R(u) = \big(au(0) -a\sin u(0) ,0 \big).
\end{displaymath}
We have immediately that $\lip(R) = 2a$.

The characteristic matrix \eqref{eq:characteristic_matrix} reads
\begin{equation}
    \Delta(z) = z - b \frac{1 - e^{-z h}}{z}.
    \label{eq:charact_cushing}
\end{equation}
Note that $\lim_{z \to 0} \Delta(z) = -bh$, that is,  roots can only cross the imaginary axis away from the origin, so long as $b \neq 0$.
It can be shown that \eqref{eq:cushing} is linearly stable for $b \in (-\pi^2/(2h^2),0)$ (see Section~2.1.2 of \cite{insperger2011semi} or plug in $z = \mathrm{i} \varpi$, $\varpi \in \mathbb{R} \setminus\{0\}$, into \eqref{eq:charact_cushing} and seek the smallest-in-modulus solution for $b$).

\begin{figure}
     \centering
     \begin{subfigure}[b]{0.49\textwidth}
         \centering
         \includegraphics[width=\textwidth]{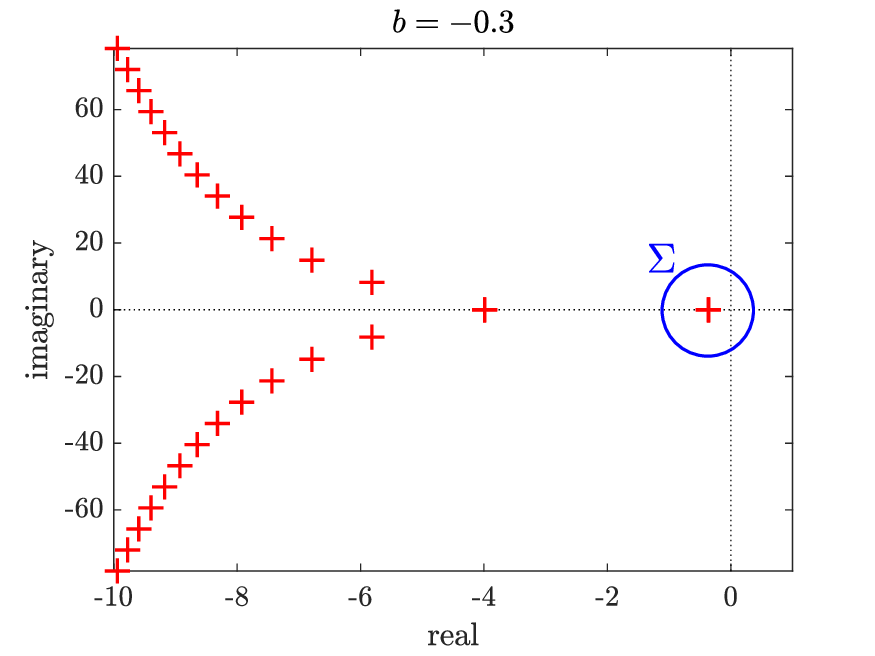}
     \end{subfigure}
     \hfill
     \begin{subfigure}[b]{0.49\textwidth}
         \centering
         \includegraphics[width=\textwidth]{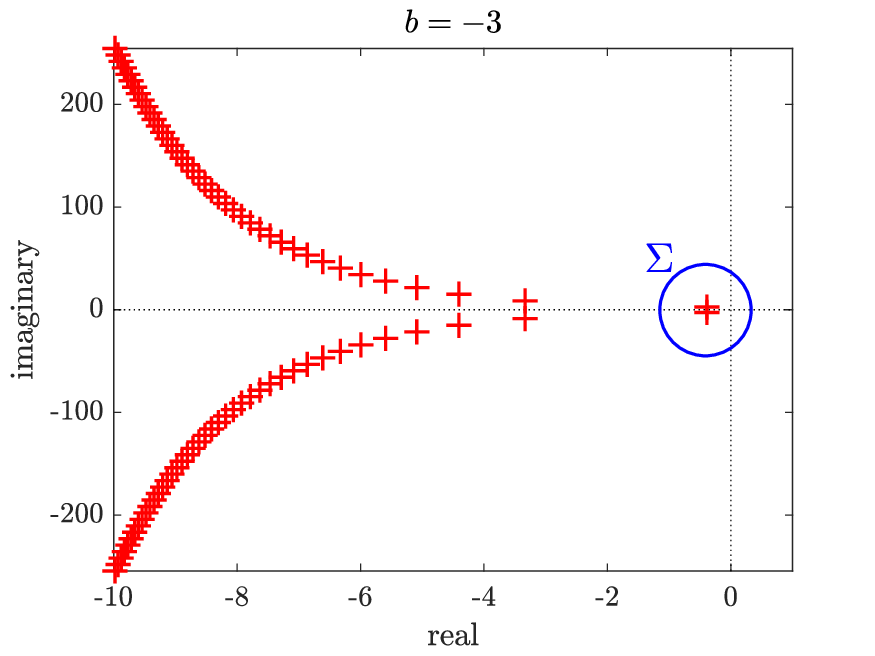}
     \end{subfigure}
        \caption{Roots of \eqref{eq:charact_cushing} in the complex plane.}
        \label{fig:spect_cushing}
\end{figure}

We perform SSM computations at two values of $b$, $b = -0.3$ and $b = -3$ within the stable domain.
As a first step, we need to compute the corresponding spectra, which is shown in Figure~\ref{fig:spect_cushing}.
For later reference, we note here the values of the slowest eigenvalues:
\begin{align*}
     &\lambda_1 \approx -0.361, & &\lambda_2 \approx -3.99, &  &\text{for }b = -0.3;  \\
    &\lambda_{1,2} \approx -0.387 \pm \mathrm{i} 2.66, & &\lambda_{3,4} \approx -3.33 \pm \mathrm{i} 8.67 , & &\text{for }b = -3.
\end{align*}
We take $\Sigma = \{ \lambda_1\}$ for the $b = -0.3$ case and $\Sigma = \{\lambda_1,\lambda_2\}$ for the $b = -3$ case.
The values above determine $\alpha = -0.361$ and $\beta = -3.99$ for $b = -0.3$; and $\alpha = -0.387$ and $\beta = -3.33$ for $b = -3$.
Our manifold $W^\Sigma$ will thus have sufficient smoothness, $\ell \geq 4$, to recover third order dynamics (Lemma~\ref{lemma:expansions}).

Next we show under what conditions on $a$ there exist inertial manifolds, which will be uniquely determined by their dimension in this example.
An inspection of \eqref{eq:charact_cushing} shows that the equations $\Delta(z) =0$ and $\Delta'(z) = 0$, when considered simultaneously, admit no solutions if $b = -0.3$ or $b = -3$ (and $h = 1$). 
This implies, that all roots of $\Delta$ are simple. 
In turn,
Corollary V.6.4 of \cite{diekmann2012delay} implies that the solution to the linear part of \eqref{eq:cushing} can be written as the convergent series 
$\sum_j e^{\lambda_j t} c_j$ for some $c_j \in \mathbb{C}$.
Thus, for any $\varepsilon_1,\varepsilon_2 > 0$ and $\omega > \alpha$, we may take $K_1 = K_2 = M = 1$ in the notation of Theorem~\ref{thm:IM}.
Therefore, using that $\lip(R)  = 2a$, condition \eqref{eq:nu_cond_remarked} boils down to the existence of $\omega >  \alpha$ for which
\begin{displaymath}
    \max \left\{ 64a \ln 2 e^{(\omega - \alpha)/|\omega|}, \ln 2 |\omega| \right\} < \alpha - \beta
\end{displaymath}
holds.
Since, in both cases, $\ln 2|\alpha| < \alpha - \beta$, we may take $\omega$  sufficiently close to $\alpha$, so that the only remaining condition is
\begin{displaymath}
    a < \frac{\alpha - \beta}{64 \ln 2}.
\end{displaymath} 

Next, we compute the reduced dynamics.
We first note that $R = g \circ \mathrm{ev_0}$ for $g: \mathbb{R} \to X^{\odot{*}}$, $y \mapsto (a\sin(y) - ay,0)$, and $\mathrm{ev}_0: u \mapsto u(0)$ the (bounded, linear) evaluation map $X \to \mathbb{R}$.
Hence,
\begin{displaymath}
    D^jR(0) = D^jg(0) [\mathrm{ev}_0 (\cdot),\ldots,\mathrm{ev}_0(\cdot)].
\end{displaymath}
We have that $D^2R(0) = 0$, which implies by $D^2H(0)$ and $D^2K(0) = 0$ by Lemma~\ref{lemma:expansions}.
The third coefficient is simply $D^3R(0) = (a[\mathrm{ev}_0(\cdot)]^3,0)$.

Consider the case $b = -0.3$.
Then $A_\Sigma = \lambda_1$; what remains to compute is $D^3H(0)$.
By \eqref{eq:P_lambda}, we have
\begin{displaymath}
    D^3H(0) = \theta \mapsto  e^{\lambda_1 \theta} \xi(\lambda_1)  a[\mathrm{ev}_0(\cdot)]^3.
\end{displaymath}
The third order dynamics are therefore given by
\begin{equation}
    \frac{d}{dt} w = H^\leq(w) =\lambda_1w + \frac16 (\theta \mapsto  e^{\lambda_1 \theta} \xi(\lambda_1)  a[\mathrm{ev}_0(w)]^3), \qquad w \in X_\Sigma.
    \label{eq:reddyn_w}
\end{equation}
In the present setting, $X_\Sigma$ is a one-dimensional vector space spanned by $\theta \mapsto e^{\lambda_1 \theta}$ (see Section~\ref{sect:characteristiceq}).
We may use the linear isomorphism $\Theta:\mathbb{R} \to X_\Sigma$,  $\Theta(y) = (\theta \mapsto ye^{\lambda_1 \theta}) $ to pull \eqref{eq:reddyn_w} back to $\mathbb{R}$.
The desired dynamics are $\widetilde{H} := D\Theta^{-1} [H^\leq \circ \Theta] = \Theta^{-1} \circ H^\leq \circ \Theta$, or explicitly,
\begin{equation}
    \frac{d}{dt} y =  \widetilde{H}(y) = \lambda_1 y + \frac16  \xi(\lambda_1)  a y^3.
    \label{eq:reddyn_y}
\end{equation}
The coefficient $\xi(\lambda_1)$ can be explicitly computed via L'Hôpital's rule as
\begin{displaymath}
    \xi(\lambda_1) = \lim_{z \to \lambda_1} \frac{z - \lambda_1}{z - b \frac{1 - e^{-z h}}{z}} = \frac{1}{1+\frac{b}{\lambda_1^2}(1-e^{-\lambda_1h}) - \frac{bh}{\lambda_1}e^{-\lambda_1h}}.
\end{displaymath}

We now turn to $b = -3$, the case in which $\Sigma$ comprises a pair of a complex conjugate eigenvalues.
The relevant projection is $\widetilde{P}^{\odot *}_\Sigma = \widetilde{P}^{\odot *}_{\{\lambda_1\}}+\widetilde{P}^{\odot *}_{\{\lambda_2\}}$, whose image defines the (two dimensional, real) subspace 
\begin{equation}
    X_\Sigma = \left\{ \theta \mapsto ze^{\lambda_1 \theta} + \bar{z}  e^{\lambda_2 \theta} \in X \; \big| \;  z \in \mathbb{C} \right\}. 
    \label{eq:Xsigma_z}
\end{equation}
An alternative characterization, via $z = x+\mathrm{i}y$, is
\begin{equation}
    X_\Sigma = \left\{ \theta \mapsto 2 e^{\mathrm{Re} \, \lambda_1 \theta} \big(x \cos (\mathrm{Im} \, \lambda_1 \theta) -y\sin (\mathrm{Im} \, \lambda_1 \theta)  \big)  \in X \; \big| \;  (x,y)\in \mathbb{R}^2 \right\}. 
    \label{eq:Xsigma2}
\end{equation}
The third order dynamics on $X_\Sigma$ are given by
\begin{displaymath}
    \frac{d}{dt} w = H^\leq(w) =A_\Sigma w + \frac16 \big(\theta \mapsto  (e^{\lambda_1 \theta} \xi(\lambda_1) +e^{\lambda_2 \theta} \xi(\lambda_2)) a[\mathrm{ev}_0(w)]^3\big), \qquad w \in X_\Sigma.
\end{displaymath}
The second characterization of $X_\Sigma$, \eqref{eq:Xsigma2}, suggests the parameterization $\Theta: \mathbb{R}^2 \to X_\Sigma$, $\Theta(x,y) =\theta \mapsto 2 e^{\mathrm{Re} \, \lambda_1 \theta} \big(x \cos (\mathrm{Im} \, \lambda_1 \theta) -y\sin (\mathrm{Im} \, \lambda_1 \theta)  \big) $.
By linearity once more, $\widetilde{H} = \Theta^{-1} \circ H^\leq \circ \Theta$, resulting in
\begin{equation}
    \begin{pmatrix}
        \dot{x} \\ \dot{y}
    \end{pmatrix}
    =
    \begin{pmatrix}
    \mathrm{Re} \, \lambda_1 & - \mathrm{Im} \, \lambda_1 \\
    \mathrm{Im} \, \lambda_1 & \mathrm{Re} \, \lambda_1
    \end{pmatrix}
    \begin{pmatrix}
        x \\ y
    \end{pmatrix}
    +
    \frac43 x^3 a 
    \begin{pmatrix}
        \mathrm{Re} \, \xi(\lambda_1) \\ \mathrm{Im} \, \xi(\lambda_1)
    \end{pmatrix}.
    \label{eq:R2dynamics}
\end{equation}
It is perhaps more suitable to parameterize dynamics characterized by complex conjugate pairs of eigenvalues on the half-cylinder $\mathbb{R}^{\geq 0} \times \mathbb{S}$, where $\mathbb{S} = \mathbb{R}/2\pi \mathbb{Z}$, corresponding to polar coordinates.
This can be achieved via $\Theta : \mathbb{R}^{\geq 0} \times \mathbb{S} \to X_\Sigma$ given by $\Theta(r,\phi) = \theta \mapsto re^{\mathrm{i} \phi}e^{\lambda_1 \theta} +re^{-\mathrm{i} \phi}  e^{\lambda_2 \theta}$. 
(Note that $\Theta$ is a diffeomorphism on $\{r > 0 \}$ but not at the origin, as the preimage of $\{0\}$ is $\{0\} \times \mathbb{S}$).
The reduced dynamics would then be obtained as $\widetilde{H} = D\Theta^{-1} [ H^\leq \circ \Theta]$ -- we will only carry out the explicit computation for the following example, however.

\subsection{Small delay inertial manifolds for a one-dimensional example}
\label{sect:example2}

Consider now
\begin{equation}
    \dot{x}(t) =-x(t-h)  + x(t) - \sin x(t).
    \label{eq:example2}
\end{equation}
We examine the vicinity of the steady state at $x=0$.
As before, we consider the $x(t)$ term as part of the nonlinearity, so as to make it third order.
The linear part hence agrees with the problem treated in \cite{szaksz2025spectral}.

In the setting of Section~\ref{sect:linearization},  
\begin{displaymath}
    \zeta(\theta) = \begin{cases}
                0, & \text{for } \theta \in [0,h)\\
        -1, & \text{for } \theta = h
    \end{cases}
\end{displaymath}
and $R(u) = \big(u(0) -\sin u(0) ,0 \big)$, thus $\lip(R) = 2$.
The characteristic equation takes the simple form
\begin{equation}
    \Delta(z) = z +e^{-zh} = 0.
    \label{eq:charact_example2}
\end{equation}
For $h < 1/e$, \eqref{eq:charact_example2} admits two real solutions $\lambda_1(h),\lambda_2(h) \in \mathbb{R}$, $\lambda_2(h) < \lambda_1(h)$.
Here $\lambda_1(h)$ corresponds to the eigenvalue that perturbs from  $\lambda_1(0) = -1$; whereas $\lim_{h\to 0}\lambda_2(h) = - \infty$ (see also Figure~\ref{fig:dumb}). 
All other roots must be complex and have real part less than $\lambda_2(h)$ (clearly there is no root with $\mathrm{Re} \, z \in (\lambda_2(h),\lambda_1(h))$ -- now the continuity argument preceding Remark~\ref{remark:IM_with_F} along with Theorem~\ref{thm:spectrum} implies the desired conclusion). 

\begin{figure}
     \centering
     \includegraphics[width=0.5\textwidth]{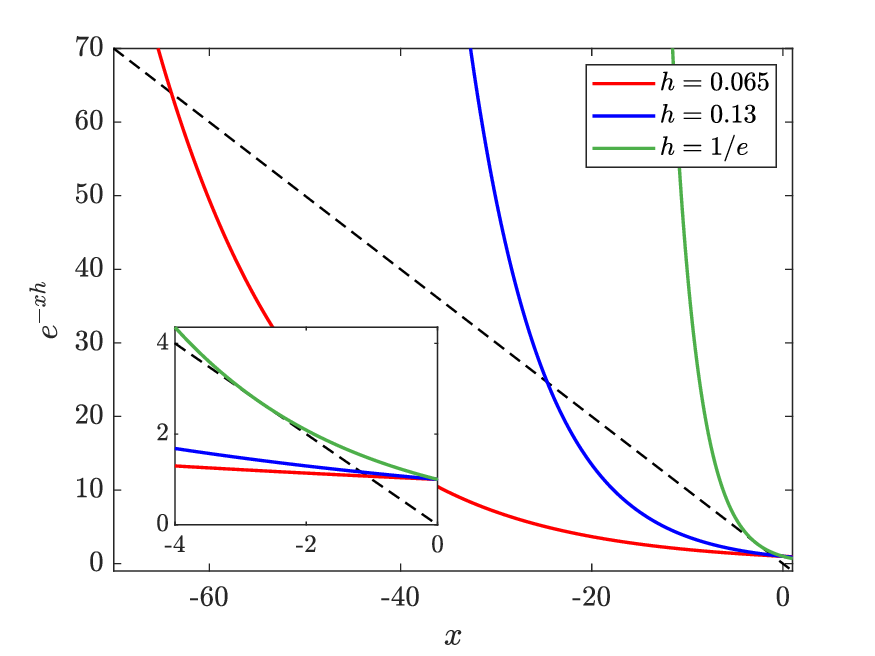}
     \caption{The real roots of \eqref{eq:charact_example2}, obtained as the intersection of $x \mapsto e^{-xh}$ and $x \mapsto -x$ for three different values of $h$. The black dashed line shows $x \mapsto -x$.}
     \label{fig:dumb}
\end{figure}

Next, we derive conditions on the delay $h$ under which there exists a unique inertial manifold associated to $\Sigma = \{ \lambda_1(h)\}$.
Via the same argument as in Example~\ref{sect:example1}, we may  take for the constants appearing in Theorem~\ref{thm:IM} $M=K_1=K_2 = 1$ for any $\varepsilon_1,\varepsilon_2 > 0$ and $\omega > \lambda_1(h)$.
The constant $Q$ from \eqref{eq:Q} is simply $Q = \mathrm{TV}(\zeta) = 1$, hence $\gamma = -3$ in the statement of Corollary~\ref{corollary:smalldelays}.
We compute $r$ from \eqref{eq:r} as 
\begin{displaymath}
    r= 3+ \ln 2 \max \left\{ 64 e^{(\omega-\lambda_1(h))/|\omega|},|\omega|, \frac{\omega}{\ln(\omega/4+1)} \right\}.
\end{displaymath}
Noting that $\lambda_1(h) \in (-5,-1)$ for all $h \in (0,1/e)$ (see Figure~3 of \cite{szaksz2025spectral}), we may safely take $\omega$ arbitrarily close to $\lambda_1(h)$ (for each $h$) and still have the first term dominate -- we may hence use $r = 3 + 64 \ln2$.
Plugging this in \eqref{eq:h_cond},
Corollary~\ref{corollary:smalldelays} now asserts, for $h < 0.066$, the existence of a unique inertial manifold $W^\Sigma$ tangent to the space $X_\Sigma = \{ \theta \mapsto ce^{\lambda_1(h) \theta} \in X \; | \; c \in \mathbb{R} \}$ at the origin.

Next, we estimate (from above) the rate of attraction $\kappa$.
We do this via the initial version of the inertial manifold result, Theorem~\ref{thm:IM}, more specifically, via the route proposed in Remark~\ref{remark:varrho}.
For a given $h < 1/e$, we may compute the  two rightmost eigenvalues $\lambda_1,\lambda_2 \in \mathbb{R}$ and set 
$\varepsilon = 10^{-5}$ (arbitrary), $\nu = (\lambda_1-\lambda_2-3\varepsilon)/\ln(2)$, $\alpha_1 = e^{(\lambda_1-\varepsilon)/\nu}$ and $\alpha_2 =e^{(\lambda_2+\varepsilon)/\nu}$.
We estimate the Lipschitz constant of the nonlinearity $\lip(\varphi_{1/\nu}-T(1/\nu))$ according to \eqref{eq:LipN_IM} (with $\omega = \lambda_1 + \varepsilon$).
The function of interest (from Remark~\ref{remark:varrho}) is
\begin{equation}
    \tau_R(s) : = \left(\frac{1}{\alpha_1-e^{s/\nu}}  + \frac{1}{e^{s/\nu}- \alpha_2} \right)\frac{2}{\omega} \left( e^{\omega /\nu} - 1 \right)  e^{\omega /\nu} \lip(R), \qquad s \in(\lambda_2 + \varepsilon,\lambda_1 - \varepsilon), 
    \label{eq:tau_R}
\end{equation}
 plotted in Figure~\ref{fig:R2h} for $h = 0.065$ and $h = 0.13$. 
The intersection of \eqref{eq:tau_R} with the line $s \mapsto 1$ give $\gamma_2$ and $\gamma_1$ from the statement of Theorem~\ref{thm:CHT}. 
In particular, the first intersection point, $\gamma_2$, approximates the decay rate $\kappa$.
This is an upper bound on $\kappa$, since the estimate on the Lipschitz constant (possibly) pushes the curve \eqref{eq:tau_R} upwards.

\begin{figure}
     \centering
     \begin{subfigure}[b]{0.49\textwidth}
         \centering
         \includegraphics[width=\textwidth]{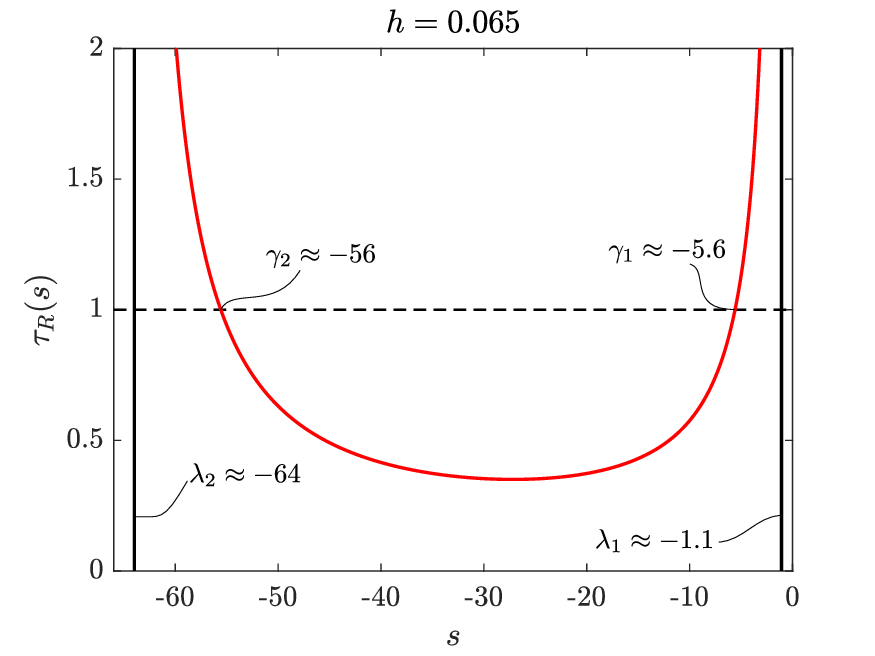}
     \end{subfigure}
     \hfill
     \begin{subfigure}[b]{0.49\textwidth}
         \centering
         \includegraphics[width=\textwidth]{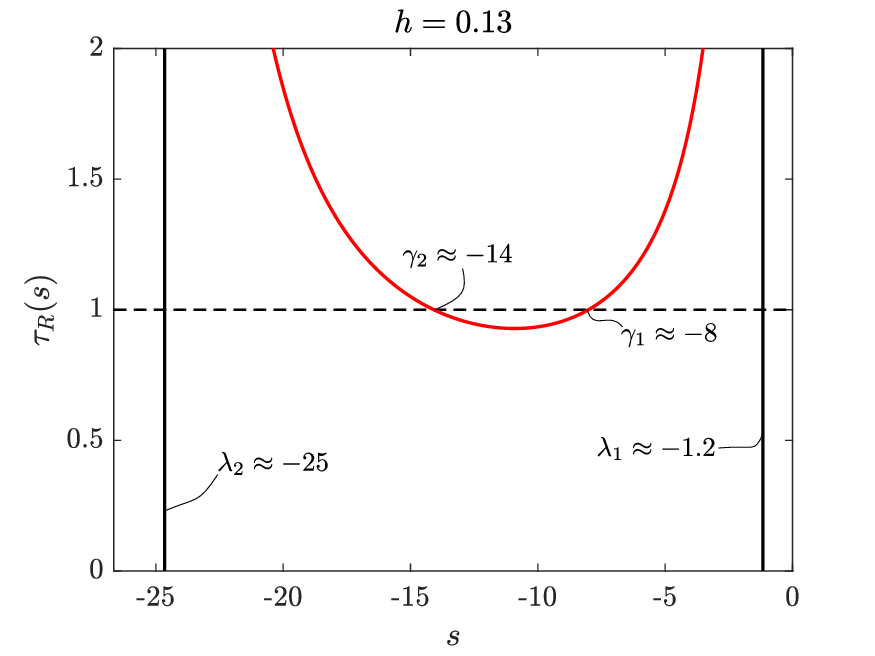}
     \end{subfigure}
        \caption{The graph of \eqref{eq:tau_R} for two values of $h$.}
        \label{fig:R2h}
\end{figure}

We also observe that, while Corollary~\ref{corollary:smalldelays} necessitated $h < 0.066$, the initial version in Theorem~\ref{thm:IM} shows existence and uniqueness of the inertial manifold $W^\Sigma$ for up to $h =0.13$ (see Figure~\ref{fig:R2h}).
The difference roughly gauges  the loss incurred through the estimates of Remark~\ref{remark:IM_altassumption} and the proof of Corollary~\ref{corollary:smalldelays}.
Let us compare this with what Remark~\ref{remark:IM_with_F} would suggest. 
For this, we verify \eqref{eq:h_cond_finale}.
First, we note that $|\beta_2|$ must be chosen larger than the left hand side of  \eqref{eq:h_cond_finale} evaluated at $h = 0$.
In our case, this translates to $\beta_2 < - 192$ (using $\lip(F) = 3$).
Beyond this value, we may always find $h$ satisfying \eqref{eq:h_cond_finale} -- the dependence of the maximal such $h$ on $\beta_2$ is depicted in Figure~\ref{fig:F} (left).
Recall that Remark~\ref{remark:IM_with_F} applied Theorem~\ref{thm:CHT} with the linear part in the 'nonlinearity' -- it hence does not carry any tangency property.
Similarly to the case above, we may compute $\varrho$ from \eqref{eq:varrho} to obtain the Lyapunov exponents characterizing the manifold (and thus also the decay rate).
In the present context, $\omega = 1$, $\nu = (-\beta_2-2\varepsilon)/\ln2$, $\alpha_1 = e^{-\varepsilon/\nu}$, $\alpha_2 = e^{\beta_2/\nu}$, and $C_2 =e^{-h \beta_2}$.
We plot 
\begin{equation}
        \tau_F(s) :=  \left(\frac{1}{\alpha_1-e^{s/\nu}}  + \frac{C_2}{e^{s/\nu}- \alpha_2} \right)\frac{2}{\omega} \left( e^{\omega /\nu} - 1 \right)  e^{\omega /\nu} \lip(F), \qquad s \in(\beta_2,- \varepsilon) 
        \label{eq:tau_F}
\end{equation}
alongside the prior curve \eqref{eq:tau_R} in Figure~\ref{fig:F} (right), for the value $h = 0.002$ (and $\beta_2 = -500$).
Comparing the two, we may conclude  via the negative semiorbit characterization of  Theorem~\ref{thm:CHT}\ref{item1:CHT} that the two manifolds must coincide.

\begin{figure}
     \centering
     \begin{subfigure}[b]{0.49\textwidth}
         \centering
         \includegraphics[width=\textwidth]{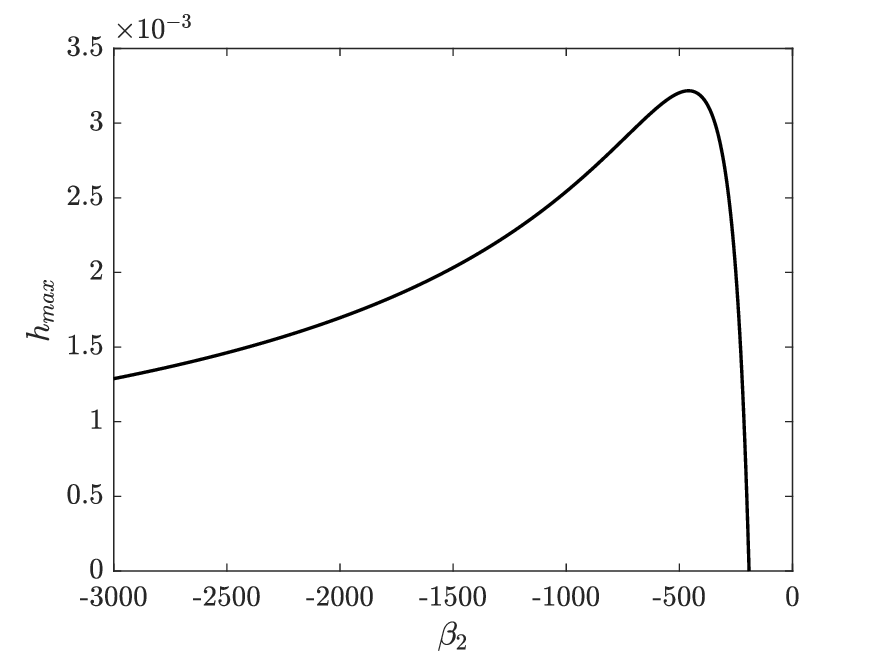}
     \end{subfigure}
     \hfill
     \begin{subfigure}[b]{0.49\textwidth}
         \centering
         \includegraphics[width=\textwidth]{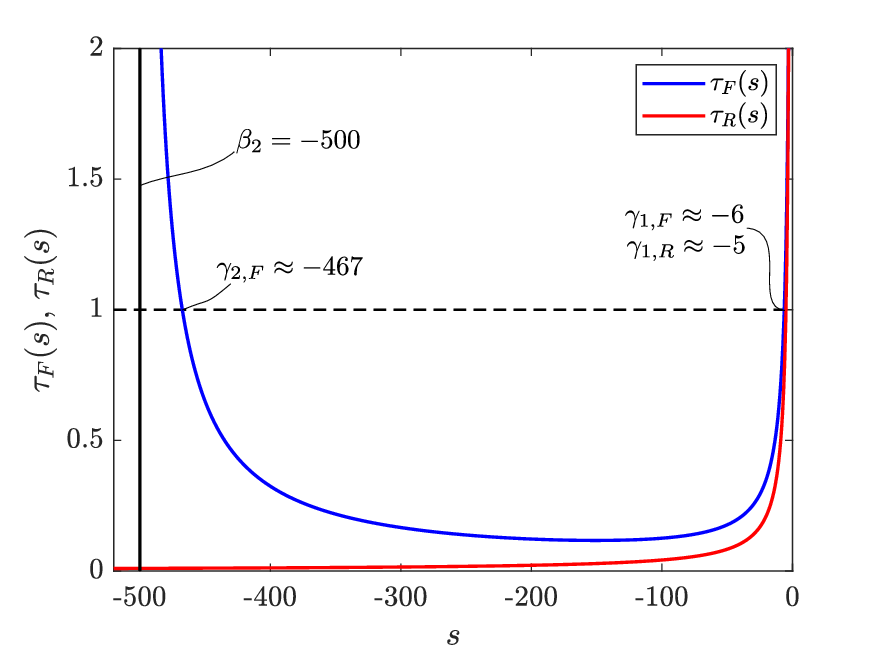}
     \end{subfigure}
        \caption{(Left) dependence of of the maximal delay $h_{max}$ on $\beta_2$ -- according to the (suboptimal) prediction of \eqref{eq:h_cond_finale}. (Right) The graph of \eqref{eq:tau_F} compared with \eqref{eq:tau_R} for $h = 0.002$ and $\beta_2 = -500$.}
        \label{fig:F}
\end{figure}

\subsubsection{The example of \texorpdfstring{\cite{szaksz2025spectral}}{[SOS25]}}
\label{sect:szaksz}

Next, we revisit the example considered in \cite{szaksz2025spectral}. 
In particular, we modify the nonlinearity in \eqref{eq:example2} to $-x(t)^3$, that is,
\begin{equation}
    \dot{x}(t) = - x(t-h) - x(t)^3.
    \label{eq:szaksz}
\end{equation}
This differs from the equation in \cite{szaksz2025spectral} by a sign (on the nonlinearity) -- 
we performed this change to make the Hopf bifurcation occurring at the critical delay $h = \pi/2$ supercritical.
Nonetheless, all coefficients of the parameterization will be comparable to \cite{szaksz2025spectral} up to a sign difference.

We take $\Sigma =\{\lambda_1\}$ when the dominant eigenvalue is real (when $h < 1/e$) and $\Sigma = \{\lambda_1,\lambda_2\}$ when the dominant eigenvalues form a complex conjugate pair (when $h > 1/e$), $\overline{\lambda}_2 = \lambda_1$.
Despite the nonlinearity not being globally Lipschitz,
the local theory (i.e., Theorems~\ref{thm:main1}, \ref{thm:main2}, \ref{thm:main3}) continues to hold, we thus obtain local existence of $W^\Sigma$ about $0$.
Next, we carry out the explicit computation of the parameterization up to third order via Lemma~\ref{lemma:expansions} and a normal form transformation for the complex case to recover the findings of \cite{szaksz2025spectral}. 
Note that the use of Lemma~\ref{lemma:expansions} necessitates a spectral gap of the form $\lambda_2 < 4\lambda_1$ in the case of a dominant real eigenvalue,
which is satisfied for $ h < \sqrt[3]{2} \ln(2)/3$.
It is also certainly satisfied in the complex conjugate case near the bifurcation.

When the dominant eigenvalue is real, $\Sigma = \{\lambda_1\}$, the reduced dynamics are 
\begin{equation}
    \dot{y}= \widetilde{H}(y) = \lambda_1 y - \xi(\lambda_1) y^3,
    \label{eq:szaksz_y}
\end{equation}
where $\widetilde{H} := \Theta^{-1} \circ H^\leq \circ \Theta$ with $\Theta : \mathbb{R} \to X_\Sigma$ denoting the isomorphism $y \mapsto (\theta \mapsto y e^{\lambda_1 \theta})$ as before.
This is computed entirely analogously to \eqref{eq:reddyn_y}, now with $R (u)= (-u(0)^3,0)$ -- and hence $D^3R(0) = (-6 [\mathrm{ev}_0 (\cdot)]^3, 0)$, noting that $D^2K(0)$ vanishes just like in Section~\ref{sect:example1}.
As for the parameterization of the manifold, $\widetilde{K} : = K^\leq \circ \Theta$, it remains to compute the third coefficient. 
For this, the third order analogue of \eqref{eq:D2K} is employed, which yields
\begin{equation}
    D^3K(0)[\Theta(y),\Theta(y),\Theta(y)] = -6 \left[ e^{3 \lambda_1 [\cdot]} -e^{\lambda_1[\cdot]}  \xi(\lambda_1) \frac{\Delta(3\lambda_1)}{2 \lambda_1} \right]\Delta(3\lambda_1)^{-1} y^3.
    \label{eq:szaksz_K3}
\end{equation}
Noting that $\xi$ is given explicitly as 
\begin{displaymath}
    \xi(\lambda_1) = \lim_{z \to \lambda_1} \frac{z-\lambda_1}{\Delta(z)} = \frac{1}{1-he^{-\lambda_1 h}} = \frac{1}{1+\lambda_1 h},
\end{displaymath}
we observe that \eqref{eq:szaksz_y} and \eqref{eq:szaksz_K3} recover (89)-(92) of \cite{szaksz2025spectral} (with a sign difference).

In the complex conjugate case, when $\Sigma = \{ \lambda_1,\lambda_2\}$, the reduced dynamics are obtained analogously to \eqref{eq:R2dynamics}. 
In the coordinates \eqref{eq:Xsigma_z}, they are given by
\begin{subequations}\label{eq:zequations}
    \begin{align}
        &\dot{z} = \lambda_1 z - \xi(\lambda_1) (z + \overline{z})^3 =: g(z), \\
        &\dot{\overline{z}} = \lambda_2 \overline{z}  - \xi(\lambda_2) (z + \overline{z})^3.
    \end{align} 
\end{subequations}
By nature of our procedure, \eqref{eq:zequations} corresponds to the graph style of parameterization.
We proceed via a normal form transformation to achieve the form reported in \cite{szaksz2025spectral}.

When $\lambda_1,\lambda_2$ are purely imaginary,
the (semisimple) normal form of \eqref{eq:zequations} is given by (see e.g.\ Chapter 1 of \cite{murdock2003normal})
\begin{subequations}\label{eq:qequations}
\begin{align}
    &\dot{q} = \lambda_1 q - \beta_{21} |q|^2 q, \\
    &\dot{\overline{q}} = \lambda_2 \overline{q}  - \overline{\beta}_{21} |q|^2 \overline{q}.
\end{align} 
\end{subequations}
Classically, in normal form computations, the transformation which leads to the form \eqref{eq:qequations} is  sought as a change of coordinates $z = q + p(q,\overline{q})$ for which
\begin{displaymath}
    g(q + p(q,\overline{q})) = \frac{d}{dt} \big(q + p(q,\overline{q}) \big)
\end{displaymath}
holds up to third order; where $p$ is a degree three homogeneous polynomial in $q$ and $\overline{q}$ over $\mathbb{C}$ (in this example).
At third order, this equation is
\begin{equation}
    Dp (q,\overline{q})[\lambda_1 q,\lambda_2 \overline{q}] - \lambda_1 p(q,\overline{q}) = -\xi(\lambda_1) (q + \overline{q})^3 +\beta_{21} |q|^2 q.
    \label{eq:hom}
\end{equation}
The coefficient $\beta_{21}$ is uniquely determined by requiring that \eqref{eq:hom}
is solvable for $p$ when $\lambda_1,\lambda_2$ are purely imaginary.
We obtain  $\beta_{21} = 3 \xi(\lambda_1)$.
Moreover, the solution to \eqref{eq:hom} (unique when $\lambda_1,\lambda_2$ are not purely imaginary) is 
\begin{displaymath}
    p(q,\overline{q}) = -\frac{\xi(\lambda_1)}{2 \lambda_1} q^3 - \frac{3 \xi (\lambda_1)}{2 \lambda_2} |q|^2 \overline{q} - \frac{\xi(\lambda_1)}{3 \lambda_2-\lambda_1} \overline{q}^3.
\end{displaymath}

We now introduce polar coordinates via $q = re^{\mathrm{i} \phi}$, $(r,\phi) \in \mathbb{R}^{\geq 0} \times \mathbb{S}$.
For $r >0$, the dynamics are hence given by
\begin{subequations}\label{eq:polars}
    \begin{align}
        & \dot{r} = \mathrm{Re} \, \lambda_1  r - 3 \mathrm{Re} \, \xi(\lambda_1) r^3,\\
        & \dot{\phi} = \mathrm{Im} \, \lambda_1 - 3 \mathrm{Im} \, \xi(\lambda_1) r^2. \label{eq:polars2}
    \end{align} 
\end{subequations}
To identify the associated parameterization of the manifold, we first formalize the computations done thus far. 
Recall that $H^\leq$ denotes the original third order vector field on $X_\Sigma$; and consider the following maps:
\begin{align*}
    &\vartheta: \mathbb{R}^{\geq 0} \times \mathbb{S} \to \mathbb{C}^2,  & &\vartheta(r,\phi) := (re^{\mathrm{i} \phi},re^{-\mathrm{i} \phi}), \\
    &\Phi: \mathbb{C}^2 \to \mathbb{C}^2, & &\Phi(q_1,q_2) := \big(q_1+p(q_1,q_2),q_2+\tilde{p}(q_2,q_1)\big), \\
    & \Psi: \mathbb{C}^2 \to (X_\Sigma)_\mathbb{C}, & &\Psi(z_1,z_2) :=\theta \mapsto z_1e^{\lambda_1 \theta} +z_2e^{\lambda_2 \theta},
\end{align*}
where $\tilde{p}$ is simply $p$ with its coefficients conjugated; and set $\Xi : = \Psi \circ \Phi$.
Then, the vector field defining \eqref{eq:polars} (for $r > 0$) is 
\begin{displaymath}
   \widetilde{H} (r,\phi) = \big(D\vartheta(r,\phi) \big)^{-1} \left( (D\Xi^{-1}) \circ \Xi [ H^\leq_\mathbb{C} \circ \Xi] \right)^{\leq} \circ \vartheta (r,\phi).
\end{displaymath}
Consequently, the associated parameterization of the manifold (to third order in $q$) is 
\begin{equation}
    \widetilde{K} = \left( K^\leq \circ \Xi \right)^\leq \circ \vartheta.
    \label{eq:widetildeK}
\end{equation}
For $w_1 : \theta \mapsto e^{\lambda_1 \theta}$ and $w_2 : \theta \mapsto e^{\lambda_2 \theta}$ 
we write 
\begin{displaymath}
    K^\leq(zw_1+\overline{z}w_2) = zw_1+\overline{z}w_2 + \frac16 \left(K_{30}z^3 + 3K_{21} |z|^2z + 3K_{12} |z|^2 \overline{z} + K_{03}\overline{z}^3  \right),
\end{displaymath}
where $K_{30} = D^3K(0)[w_1,w_1,w_1]$ and similarly for the rest.
An explicit computation via \eqref{eq:D2K} yields
\begin{align*}
    &K_{30} = - 6 \left[ \Delta(3\lambda_1)^{-1} e^{3 \lambda_1[\cdot]} -\frac{\xi(\lambda_1)}{2\lambda_1}e^{\lambda_1[\cdot]} - \frac{\xi(\lambda_2)}{3 \lambda_1 - \lambda_2} e^{\lambda_2 [\cdot]} \right], \\
    &K_{21} =  - 6 \left[\Delta(2\lambda_1+\lambda_2)^{-1} e^{(2\lambda_1+\lambda_2)[\cdot]} -\frac{\xi(\lambda_1)}{\lambda_1+\lambda_2}e^{\lambda_1[\cdot]} - \frac{\xi(\lambda_2)}{2 \lambda_1} e^{\lambda_2 [\cdot]}  \right];
\end{align*}
$K_{03}$ and $K_{12}$ are their complex conjugates.
Substituting into \eqref{eq:widetildeK},
\begin{displaymath}
    \widetilde{K} (r, \phi) = r \left( e^{\lambda_1 [\cdot]}  e^{\mathrm{i} \phi} + e^{\lambda_2 [\cdot]} e^{-\mathrm{i} \phi} \right) + r^3 \left( \widetilde{K}_{30} e^{3\mathrm{i} \phi} + \widetilde{K}_{21} e^{\mathrm{i} \phi} + \widetilde{K}_{12} e^{-\mathrm{i} \phi} + \widetilde{K}_{03} e^{-3\mathrm{i} \phi} \right),
\end{displaymath}
where
\begin{align*}
    &\widetilde{K}_{30} = -\Delta(3\lambda_1)^{-1} e^{3 \lambda_1[\cdot]}, \\
    &\widetilde{K}_{21} = -3 \left[\Delta(2\lambda_1+\lambda_2)^{-1} e^{(2\lambda_1+\lambda_2)[\cdot]} -\frac{\xi(\lambda_1)}{\lambda_1+\lambda_2}e^{\lambda_1[\cdot]}  \right];
\end{align*}
the rest of the coefficients are given by their complex conjugates once more.
This agrees with the parameterization of \cite{szaksz2025spectral} (with a sign difference). 

The reduced dynamics \eqref{eq:polars} predict the existence of a limit cycle whenever $\mathrm{Re} \, \xi(\lambda_1) \neq 0$ and $\mathrm{Re} \, \lambda_1 / \mathrm{Re} \, \xi(\lambda_1) > 0$. 
Both of these are certainly satisfied for $\mathrm{Re} \, \lambda_1 > 0$ (i.e., $h>\pi/2$), in which case the predicted limit cycle is stable (this corresponds to the 'supercritical' part of the Hopf bifurcation), with 
amplitude $\hat{r} = \sqrt{\mathrm{Re} \, \lambda_1 /  (3\mathrm{Re} \, \xi(\lambda_1))}$ and angular frequency $\omega$ determined by the right hand side of \eqref{eq:polars2} evaluated at $\hat{r}$.
Its trajectory in physical space can be estimated -- to third order -- via $x_{per}(t) \approx\widetilde{K}(\hat{r},e^{\mathrm{i}\omega t})$ (c.f.\ (112) of \cite{szaksz2025spectral}).

In examples where the nonlinearity is no longer globally Lipschitz but the existence of a compact global attractor is known, we may still conclude the existence (but no longer uniqueness) of an inertial manifold by use of cutoff functions.
This still requires a moderately large spectral gap (or small delay), at least in comparison to the Lipschitz constant $\lip(R_\rho)$ of the cut-off nonlinearity (as in Section~\ref{sect:cutoff}), where $\rho$ is selected such that $B_\rho^X(0)$ contains the global attractor $\mathcal{A}$. 
Due to the cutoff function, the foliation is no longer global either, but the exponential attraction rate in Definition~\ref{def:IM} can still be achieved so long as $\varphi_t(u)$ enters (and stays within) $B_\rho^X(0)$ for all times $t \geq t^*$ for some finite $t^*$ depending only on $|u|_X$ (this absorbing property can also be used to prove the existence of a global attractor via Proposition~\ref{prop:semiflow}\ref{sfitem:4} and the procedure described in Chapter 10 of \cite{robinson2001}).

In the simple example \eqref{eq:szaksz} considered here, we may exhibit the permissible size of the attractor $\mathcal{A}$ explicitly, by means of \eqref{eq:tau_R} (or perhaps more precisely, via \eqref{eq:H4Lipcond}).
With $\alpha_1,\alpha_2,\omega,\nu$ defined as prior to \eqref{eq:tau_R}, the Lipschitz constant of the cut-off nonlinearity, $R_\rho$ (see Section~\ref{sect:cutoff} for a precise definition), must satisfy
\begin{equation}
    \lip(R_\rho) < \frac{\alpha_1-\alpha_2}{\frac{8}{\omega} \left( e^{\omega /\nu} - 1 \right)  e^{\omega /\nu}}
    \label{eq:example2Lipcond}
\end{equation}
for the manifold $W^\Sigma_\rho$ to exist.
By Lemma~\ref{lemma:Lipschitz} and its footnote, we have
\begin{displaymath}
    \lip(R_\rho) \leq 2(1+2\Vert P_\Sigma \Vert)\lip_{4\rho}(R) \leq2(1+2\Vert P_\Sigma \Vert) 48\rho^2.
\end{displaymath}
Computing $\Vert P_\Sigma \Vert$ via \eqref{eq:P_lambda} now shows that 
the critical $\rho$ for which \eqref{eq:example2Lipcond} is met is bounded from below by $\rho_{crit} \approx 0.008$ in the delay range $h \in (\pi/2,\pi/2+0.1)$. 
Certainly then, if $h$ is only marginally above its critical value, the stable limit cycle can be confined to be within $B^X_{\rho_{crit}}(0)$. 
Provided no other attractors exist, this shows the existence of an inertial manifold (this is using Remark~\ref{remark:varrho} and the numerical observation that the smaller solution will be attained below $0$).
The conclusions obtained in the context of this example are rather weak and seemingly insignificant; our main purpose was to demonstrate the general procedure of obtaining inertial manifolds in the absence of globally Lipschitz nonlinearities.

\section{Proof of Theorem~\ref{thm:main1}}
\label{sect:proofmain1}

In this section, we provide a proof of Theorem~\ref{thm:main1}, which will largely be based on Theorem 1.1 of \cite{chen1997invariant}.

\subsection{Cutoff functions}
\label{sect:cutoff}

In order to apply this result, one needs \textit{global} control over the Lipschitz constant of the nonlinearity.
This is usually achieved by modifying the nonlinearity $R$ away from the fixed point at $0$ by means of cutoff functions.
Since $X$ is not a Hilbert space, nor is it finite dimensional, the existence of \textit{smooth} cutoff functions is a nontrivial question.
In fact, it is known that $X = C([-h,0];\mathbb{R}^n)$ does \textit{not} admit a Fréchet differentiable norm or cutoff function (this follows from Theorem 24, \cite{FRY2002}, noting that $X$ is separable but $X^*$ is not).

To retain smoothness on some (strictly smaller) subset of $O$, we proceed via two cutoff functions 
and define
\begin{equation}
    R_\rho (u) : =  \chi_\rho ( |P_\Sigma u |_X) \chi_\rho (|P_{\Sigma'} u |_X) R(u), \qquad u \in O,
    \label{eq:Rrho}
\end{equation}
where $\chi: \mathbb{R}^{\geq 0} \to [0,1]$ is a $C^\infty$ function satisfying
\begin{enumerate}[label=\upshape{(\roman*)}]
    \item $\chi(y) = 1$ for $y \in [0,1]$,
    \item $\chi(y) \in [0,1]$ for $y \in (1,2)$,
    \item $\chi(y) = 0$ for $y \geq 2$,
    \item \label{chipoint4} $|D\chi (y)| \leq 2$ for all $y \in \mathbb{R}^{\geq 0}$,
\end{enumerate}
and $\chi_\rho(u) : = \chi(u/\rho)$.
Note that $\lip(\chi_\rho) \leq 2/\rho$ by point \ref{chipoint4}.
Suppose $\rho$ is small enough so that the support of the two cutoff functions is contained in $O$.
Then, we may extend the domain of $R_\rho$ from $O$ to the whole of $X$ by setting $R_\rho|_{X \setminus O} \equiv 0$ (this modification does not alter the smoothness properties of \eqref{eq:Rrho}).
Since $\im(P_\Sigma) = X_\Sigma$ is finite dimensional, $R_\rho $ preserves the smoothness of $R$ on the open set 
\begin{equation}
    S = \left\{    u \in X     \; \big| \;      |P_{\Sigma'} u |_X < \max\{\rho , 2 |P_\Sigma u|_X \}        \right\} . 
    \label{eq:smoothnessset}
\end{equation}
Note also that $\mathrm{supp}(R_\rho) \subset \overline{B_{4\rho}(0)}^X$.

We have the following result.

\begin{lemma} \label{lemma:Lipschitz}
    Let $R_\rho :X \to X^{\odot *}$ be as in \eqref{eq:Rrho}, extended to the whole of $X$ as above.
    Denote by $\lip_\delta(R)$ the Lipschitz constant of $R$ on the (closed) ball $\overline{B_\delta(0)}^X$ of radius $\delta$ centered at the origin, and by $\lip(R_\rho)$ the global Lipschitz constant of $R_\rho:X \to X^{\odot *} $.
    Then, there exist  constants $C,c >0 $ such that $\lip(R_\rho) \leq  C \, \lip_{c \rho}(R) = o(1)$ as $\rho \to 0$.\footnote{A straightforward computation shows that we may take $c = 4$ and $C = 2(1+2\Vert P_\Sigma \Vert)$.}
\end{lemma}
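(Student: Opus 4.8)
The plan is to factor $R_\rho = g_\rho \cdot R$, where $g_\rho : X \to [0,1]$ is the scalar cutoff $g_\rho(u) := \chi_\rho(|P_\Sigma u|_X)\,\chi_\rho(|P_{\Sigma'}u|_X)$, and then to bound the Lipschitz constant of a product of a bounded Lipschitz scalar function with a bounded Lipschitz $X^{\odot *}$-valued map. Three elementary facts feed into this. First, $u \mapsto |P_\Sigma u|_X$ and $u \mapsto |P_{\Sigma'}u|_X$ are Lipschitz with constants $\|P_\Sigma\|_{\mathcal{L}(X)}$ and $\|P_{\Sigma'}\|_{\mathcal{L}(X)} \le 1 + \|P_\Sigma\|_{\mathcal{L}(X)}$; since $\lip(\chi_\rho) \le 2/\rho$ and $0 \le \chi_\rho \le 1$, the Lipschitz product rule yields $\|g_\rho\|_\infty \le 1$ and $\lip(g_\rho) \le \tfrac{2}{\rho}\,(1 + 2\|P_\Sigma\|_{\mathcal{L}(X)})$. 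Second, from $u = P_\Sigma u + P_{\Sigma'}u$ one gets that $4\rho \le |u|_X$ forces $|P_\Sigma u|_X \ge 2\rho$ or $|P_{\Sigma'}u|_X \ge 2\rho$, hence $g_\rho(u) = 0$; thus $\mathrm{supp}(R_\rho) \subset \overline{B_{4\rho}(0)}^X$, which is contained in $O$ for $\rho$ small. Third, on $\overline{B_{4\rho}(0)}^X$ the map $R$ is Lipschitz with constant $\le \lip_{4\rho}(R)$, and — since $R(0) = 0$ — satisfies $|R(u)|_{X^{\odot *}} \le 4\rho\,\lip_{4\rho}(R)$ there.

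Given these, the product rule applied on $\overline{B_{4\rho}(0)}^X$ gives, for $u,v$ in that ball,
\begin{align*}
    |R_\rho(u) - R_\rho(v)|_{X^{\odot *}}
    &\le \|g_\rho\|_\infty\,|R(u) - R(v)|_{X^{\odot *}} + \Big(\sup_{|w|_X \le 4\rho}|R(w)|_{X^{\odot *}}\Big)\,|g_\rho(u) - g_\rho(v)| \\
    &\le C\,\lip_{4\rho}(R)\,|u-v|_X,
\end{align*}
for a constant $C$ depending only on $\|P_\Sigma\|_{\mathcal{L}(X)}$ (one may in fact take $c=4$ and $C = 2(1+2\|P_\Sigma\|_{\mathcal{L}(X)})$ after a more careful accounting of the constants, cf.\ the footnote). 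To upgrade this to a global estimate, take arbitrary $u,v \in X$. If both lie outside $\overline{B_{4\rho}(0)}^X$, then $R_\rho(u) = R_\rho(v) = 0$. If, say, $u$ lies in the ball while $v$ does not, use the intermediate value theorem to pick a point $v'$ on the segment $[u,v]$ with $|v'|_X = 4\rho$; then $v' \in \overline{B_{4\rho}(0)}^X$, $|u - v'|_X \le |u-v|_X$, and $R_\rho(v') = 0 = R_\rho(v)$ by the second fact above, so $|R_\rho(u) - R_\rho(v)|_{X^{\odot *}} = |R_\rho(u) - R_\rho(v')|_{X^{\odot *}} \le C\,\lip_{4\rho}(R)\,|u-v|_X$. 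Hence $\lip(R_\rho) \le C\,\lip_{4\rho}(R)$, i.e.\ the assertion with $c=4$.

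Finally, $\lip_{4\rho}(R) = o(1)$ as $\rho \to 0$: since $f$, and hence $F$ and $R = F - DF(0)$, is $C^1$ with $DR(0) = 0$ (by \eqref{eq:Fexpansion}), the mean value inequality on the convex ball $\overline{B_{4\rho}(0)}^X$ gives $\lip_{4\rho}(R) \le \sup_{|w|_X \le 4\rho}\|DR(w)\|_{\mathcal{L}(X,X^{\odot *})}$, which tends to $\|DR(0)\|_{\mathcal{L}(X,X^{\odot *})} = 0$ by continuity of $DR$; under the Lipschitz-only variant mentioned in the footnote to \ref{A1}, this decay is assumed directly. The only step requiring mild care is the inside/outside case split — in particular invoking the second fact to ensure the segment $[u,v]$ crosses the sphere $\{|w|_X = 4\rho\}$ at a point where $g_\rho$ has already vanished — everything else being the routine product-rule bookkeeping.
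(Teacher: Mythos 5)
Your proposal is correct and follows essentially the same route as the paper: factor out the scalar cutoff $g_\rho$ (the paper's $\xi_\rho$), record its global Lipschitz constant $O(1/\rho)$ and the support containment $\mathrm{supp}(R_\rho)\subset \overline{B_{4\rho}(0)}^X$, apply the product rule together with $|R(u)|_{X^{\odot*}}\le 4\rho\,\lip_{4\rho}(R)$ (from $R(0)=0$), and conclude the $o(1)$ decay from $DR(0)=0$ and continuity of $DR$. The only cosmetic difference is your intermediate-value/segment argument for the mixed inside--outside case, where the paper instead bounds that case directly by $|\xi_\rho(u)-\xi_\rho(v)|\,|R(u)|$ using $\xi_\rho(v)=0$; both yield the claimed estimate (the lemma only asserts existence of some $C,c$, so your slightly larger constant is immaterial).
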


\begin{proof}
    This is a consequence of Lemma IX.4.1, \cite{diekmann2012delay}, and the properties of $R$ discussed below \eqref{eq:Fexpansion}. 
    For completeness, we replicate the full proof here.
    
    Set $\xi_\rho(u) := \chi_\rho ( |P_\Sigma u |_X) \chi_\rho (|P_{\Sigma'} u |_X)$, and note that $\xi_\rho(u) = 0 $ for $|u|_X \geq c \rho $ for some $c > 0$; moreover, $\xi_\rho$ is globally Lipschitz with $\lip(\xi_\rho) = C/\rho$ for some $C > 0$ (depending on the norm of $P_\Sigma$).
    For $u,v \in X$, we have
    \begin{align*}
        | \xi_\rho (u) &R(u) - \xi_\rho (v) R(v)  |_{X^{\odot *}} \\
        &\leq | R(u) -  R(v)  |_{X^{\odot *}} \xi_{\rho}(v) + |\xi_\rho (u)- \xi_\rho (v)| | R(u)   |_{X^{\odot *}}  \\
        & \leq 
        \begin{cases}
             \lip_{c\rho}(R) |u-v|_X + \frac{C}{\rho}|u-v|_X \lip_{c\rho}(R) c\rho  & \text{if $|u|_X,|v|_X \leq c\rho $} \\
             0 & \text{if $|u|_X,|v|_X \geq c\rho $} \\
             \frac{C}{\rho} |u-v|_X \lip_{c\rho}(R) c\rho  & \text{if $|u|_X \leq c\rho$, $|v|_X \geq c\rho $} 
        \end{cases}\\
        &\leq \lip_{c\rho}(R) (c C + 1) |u-v|_X.
    \end{align*}
    Finally, note that $\lip_{c \rho}(R) \leq \sup\{ \Vert DR (u)\Vert \; | \; u \in \overline{B_{c\rho}(0)}^X  \}$, which goes to $0$ as $\rho \to 0$ by $DR(0) = 0 $ and continuity of $u \mapsto DR(u)$.
\end{proof}

\subsection{Global existence of forward solutions for the cut-off system}

Denote by $\varphi_t^\rho$ the semiflow produced by replacing $R$ with $R_\rho$ in \eqref{eq:semiflow_pert}, i.e.,
\begin{equation}
    \varphi_t^\rho(u) = T(t) u + \imath^{-1} \int_0^t T^{\odot *} (t-s) R_\rho \circ \varphi_s^\rho(u) \, ds,  \qquad (t,u) \in \mathcal{D}^{\varphi^\rho}.
    \label{eq:varphirho}
\end{equation}
The statement of Theorem 1.1, \cite{chen1997invariant}, assumes global existence of forward solutions, i.e., $\mathcal{D}^{\varphi^\rho} = \mathbb{R}^{\geq 0} \times X$.
This follows from arguments entirely analogous to the ODE case; we include a proof here for completeness.

\begin{lemma}[Global existence for $\varphi^\rho$] \label{lemma:globalexist}
    Equation \eqref{eq:varphirho} defines a maximal semiflow $\varphi^\rho$ with domain $\mathcal{D}^{\varphi^\rho} = \mathbb{R}^{\geq 0} \times X$.
\end{lemma}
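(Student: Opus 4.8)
The plan is to combine the local existence and uniqueness furnished by the abstract perturbation theory with a global a priori bound that rules out finite-time blow-up, exactly as in the ODE case. First I would note that $R_\rho$ is of finite rank (by construction it takes values in $\mathrm{span}\{r_1^{\odot *},\dots,r_n^{\odot *}\}$) and globally, hence locally, Lipschitz by Lemma~\ref{lemma:Lipschitz}; it therefore falls within the scope of the variation-of-constants framework of Chapter VII of \cite{diekmann2012delay}. Consequently, just as in Proposition~\ref{prop:semiflow}, equation \eqref{eq:varphirho} determines a unique maximal semiflow $\varphi^\rho$ on an open domain $\mathcal{D}^{\varphi^\rho}\subset[0,\infty)\times X$, and for each $u\in X$ the fibre $\mathcal{D}^{\varphi^\rho}_u$ is an interval $[0,t^*(u))$ enjoying the continuation property: if $t^*(u)<\infty$, then $\limsup_{t\uparrow t^*(u)}|\varphi^\rho_t(u)|_X=\infty$.

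It thus suffices to prove a finite a priori bound for $|\varphi^\rho_t(u)|_X$ on each bounded time interval. Fix $u\in X$ and $t<t^*(u)$. Since $R(0)=0$ we have $R_\rho(0)=0$, whence $|R_\rho(v)|_{X^{\odot *}}\le \lip(R_\rho)\,|v|_X$ for all $v\in X$. Using $\Vert T(t)\Vert_{\mathcal{L}(X)}\le Me^{\omega t}$ together with the fact that $\Vert T^{\odot *}(t)\Vert_{\mathcal{L}(X^{\odot *})}$ obeys the same growth bound (adjoint and sun-dual semigroups inherit the growth bound of $T$), and writing $\omega^+=\max\{\omega,0\}$, the formula \eqref{eq:varphirho} gives
\begin{displaymath}
    |\varphi^\rho_t(u)|_X \;\le\; Me^{\omega^+ t}|u|_X \;+\; M\lip(R_\rho)\int_0^t e^{\omega^+(t-s)}\,|\varphi^\rho_s(u)|_X\,ds.
\end{displaymath}
Multiplying through by $e^{-\omega^+ t}$ and applying Gr\"onwall's inequality yields $|\varphi^\rho_t(u)|_X\le M|u|_X\exp\big((\omega^++M\lip(R_\rho))\,t\big)$, which is finite for every finite $t$. (Alternatively, since $\mathrm{supp}(R_\rho)\subset\overline{B_{4\rho}(0)}^X$, one may simply bound $|R_\rho(v)|_{X^{\odot *}}$ by a constant and integrate directly, avoiding Gr\"onwall.) This contradicts the continuation property unless $t^*(u)=\infty$; as $u\in X$ was arbitrary, $\mathcal{D}^{\varphi^\rho}=\mathbb{R}^{\geq 0}\times X$.

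The only genuinely delicate ingredient is the continuation/blow-up alternative in the sun--star setting — that a finite maximal existence time forces the orbit to leave every bounded set — which is where the infinite-dimensional argument departs from the ODE proof; this is supplied by the results of Sections~VII.3 and VII.4 of \cite{diekmann2012delay} (and is already implicit in Proposition~\ref{prop:semiflow}), so I would simply invoke it. Everything else is the routine Gr\"onwall estimate above.
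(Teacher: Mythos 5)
Your argument is correct in substance, but it is not the route the paper takes, and it leans on one external ingredient that the paper deliberately avoids. The paper's proof never invokes a blow-up/continuation alternative: it observes that, because $\lip(R_\rho)$ is a \emph{global} Lipschitz constant, the Banach fixed point theorem applied to $\mathcal{T}_u$ on $C([0,T];X)$ is a contraction for a time step $T$ chosen via $\frac{M}{\omega}(e^{\omega T}-1)\lip(R_\rho)<1$, i.e.\ a $T$ that is \emph{independent of the initial condition}. One can therefore restart from $\varphi^\rho_{Tm}(v)$ and always extend the solution by the fixed increment $T$, which directly contradicts finiteness of the maximal existence time. No a priori bound on the orbit is needed. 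Your route instead requires (i) the alternative ``$t^*(u)<\infty \Rightarrow \limsup_{t\uparrow t^*(u)}|\varphi^\rho_t(u)|_X=\infty$'' and (ii) a Gr\"onwall bound excluding (i). Step (ii) is fine (and, as you note, even overkill since $R_\rho$ is globally bounded by its compact support in norm). The weak link is step (i): it is \emph{not} ``implicit in Proposition~\ref{prop:semiflow}'' as stated in this paper --- that proposition only asserts openness of the domain and maximality in the algebraic sense of Appendix~\ref{sect:SGprelim} --- so you must genuinely import the continuation criterion from Chapter~VII of \cite{diekmann2012delay} (where it holds because $R_\rho$ maps bounded sets to bounded sets). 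Moreover, the standard proof of that criterion is itself the uniform-local-existence-time argument, so the paper's proof is essentially your black box unpacked and applied directly; in the globally Lipschitz setting it is both shorter and self-contained. If you do keep your structure, replace the appeal to Proposition~\ref{prop:semiflow} by an explicit citation of the continuation result, or better, note that the uniform contraction time makes the whole boundedness step unnecessary.
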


\begin{proof}
    It follows directly from Proposition~\ref{prop:semiflow} (or Section VII.3, \cite{diekmann2012delay}) that \eqref{eq:varphirho} determines a unique maximal semiflow $\varphi^\rho$ on some domain $\mathcal{D}^{\varphi^\rho} \subset \mathbb{R}^{\geq 0} \times X$.
    We show global existence.
    Assume, for contradiction, the existence of $v \in X$ for which there exists $t^* \notin \mathcal{D}^{\varphi^\rho}_v$, $t^*  \in (0,\infty)$.
    
    Since $T(t)$ is strongly continuous on $X$, there exists $M > 0$ and $\omega > \omega(A)$, $\omega \neq 0$, (c.f.\ \eqref{eq:growthbound}) for which  $\Vert T(t)\Vert_{\mathcal{L}(X)} \leq Me^{\omega t}$.
    Note that $T^{\odot *}(t)$ satisfies the same bounds, since it is obtained from $T(t)$ by taking the dual twice and a restriction inbetween (neither of which increase its norm). 
    Choose $T > 0$ such that
    \begin{equation}
        \frac{M}{\omega}\left( e^{\omega T } - 1 \right) \lip(R_\rho) < 1.
        \label{eq:Tdef}
    \end{equation}
    Choose $m \in \mathbb{N}$ such that $Tm < t^* < T(m+1)$, and set $u = \varphi^\rho_{Tm}(v)$.\footnote{The crux of the argument is that $T$, as defined by \eqref{eq:Tdef}, is independent of the initial condition $u$ due to the uniformity of the Lipschitz constant.}
    The map $\mathcal{T}_u :   C([0,T];X) \to C([0,T];X)$ defined by 
    \begin{displaymath}
        [\mathcal{T}_u (x)](t) = T(t) u + \imath^{-1} \int_0^t T^{\odot *} (t-s) R_\rho ( x(s))\, ds, \qquad t \in [0,T],
    \end{displaymath}
    has image in $C([0,T],X)$ (Lemma III.2.1, \cite{diekmann2012delay}); and moreover is a uniform contraction by \eqref{eq:Tdef}:
    \begin{displaymath}
        \Vert \mathcal{T}_u (x) - \mathcal{T}_u(y) \Vert_{C([0,T];X)} \leq \frac{M}{\omega}\left( e^{\omega T } - 1 \right) \lip(R_\rho) \Vert x-y \Vert_{C([0,T];X)},
    \end{displaymath}
    where we have used also that $\Vert \imath^{-1} \Vert_{\mathcal{L}(\imath(X),X)} =1$, with $\imath(X) \subset X^{\odot *}$ equipped with the subspace topology.
    Hence $\mathcal{T}_u$ has a unique fixed point in $C([0,T];X)$, which implies $T \in \mathcal{D}^{\varphi^{\rho}}_u$ (by uniqueness of $\varphi^\rho$).
    This contradicts the maximality of $\varphi^\rho$.
\end{proof}

\subsection{Global Lipschitz constant for the cut-off semiflow \texorpdfstring{$\varphi^\rho$}{}}

Next, we translate the bounds on the Lipschitz constant of $R_\rho$ to the nonlinearity of the semiflow $\varphi^\rho$.
Let us define $N_\rho: \mathbb{R}^{\geq 0} \times X \to X$ by
\begin{displaymath}
    N_\rho(t,u ) := \imath^{-1} \int_0^t T^{\odot *}(t-s) R_\rho \circ \varphi_s^\rho(u) \, ds.
\end{displaymath} 

\begin{lemma} \label{lemma:nonlinearity_sf}
    Let $\omega > \omega(A)$, $\omega \neq 0$, and $M>0$ be as in the proof of Lemma~\ref{lemma:globalexist}, and fix $t \geq 0$.
    Then $\varphi_t^\rho : X \to X$ is globally Lipschitz for $\rho > 0$ sufficiently small (depending on $t$), with
    \begin{displaymath}
       \sup_{s \in [0,t]} \lip\big( \varphi_s^\rho\big) \leq 2M e^{t \omega}.
    \end{displaymath}
    Moreover,
    \begin{equation}
         \lip\big(N_\rho(t,\cdot)\big) \leq \frac{2M^2}{\omega} \left( e^{\omega t} - 1 \right)  e^{\omega t} \lip(R_\rho).
         \label{eq:LipN}
    \end{equation}
\end{lemma}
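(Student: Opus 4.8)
The plan is to extract both estimates from the variation-of-constants identity \eqref{eq:varphirho} by a Gronwall argument. Throughout I use only the operator-norm bounds $\Vert T(t)\Vert_{\mathcal{L}(X)}\le Me^{\omega t}$ and $\Vert T^{\odot *}(t)\Vert_{\mathcal{L}(X^{\odot *})}\le Me^{\omega t}$ for $t\ge 0$ (the second one is recorded in the proof of Lemma~\ref{lemma:globalexist}: passing to the double dual and restricting in between does not increase the norm), the contraction $\Vert\imath^{-1}\Vert_{\mathcal{L}(\imath(X),X)}=1$, the weak*-integral norm estimate of Appendix~\ref{sect:weakstar_int}, and the fact that $\lip(R_\rho)\to 0$ as $\rho\to 0$ from Lemma~\ref{lemma:Lipschitz}. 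By Lemma~\ref{lemma:globalexist}, $\varphi^\rho_s(u)$ is defined for all $s\ge 0$ and $u\in X$, and it is continuous in $s$ by Proposition~\ref{prop:semiflow}\ref{sfitem:1}, so all integrals below are legitimate.

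\emph{Step 1 (Lipschitz bound on $\varphi^\rho_s$).} Fix $u,v\in X$ and set $g(s):=|\varphi^\rho_s(u)-\varphi^\rho_s(v)|_X$. Subtracting two instances of \eqref{eq:varphirho}, taking norms, estimating the weak* integral, and applying the Lipschitz bound on $R_\rho$ gives
\begin{equation*}
    g(s)\le Me^{\omega s}|u-v|_X + M\lip(R_\rho)\int_0^s e^{\omega(s-\tau)}g(\tau)\,d\tau,\qquad s\ge 0 .
\end{equation*}
Multiplying by $e^{-\omega s}$ reduces this to $e^{-\omega s}g(s)\le M|u-v|_X+M\lip(R_\rho)\int_0^s e^{-\omega\tau}g(\tau)\,d\tau$, so Gronwall's inequality yields $g(s)\le Me^{\omega s}e^{\lip(R_\rho)Ms}|u-v|_X$. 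Choosing $\rho_0=\rho_0(t)>0$ so that $\lip(R_\rho)Mt\le\ln 2$ for $\rho<\rho_0$ (possible by Lemma~\ref{lemma:Lipschitz}), we obtain $g(s)\le 2Me^{\omega s}|u-v|_X$ for every $s\in[0,t]$; hence $\varphi^\rho_s$ is globally Lipschitz on $[0,t]$ with $\lip(\varphi^\rho_s)\le 2Me^{\omega s}$, and in particular $\sup_{s\in[0,t]}\lip(\varphi^\rho_s)\le 2Me^{t\omega}$.

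\emph{Step 2 (Lipschitz bound on $N_\rho(t,\cdot)$).} For the same $\rho<\rho_0(t)$ and any $u,v\in X$, the weak*-integral estimate combined with Step 1 gives
\begin{align*}
    |N_\rho(t,u)-N_\rho(t,v)|_X
    &\le \int_0^t \Vert T^{\odot *}(t-s)\Vert_{\mathcal{L}(X^{\odot *})}\,\lip(R_\rho)\,|\varphi^\rho_s(u)-\varphi^\rho_s(v)|_X\,ds\\
    &\le \int_0^t Me^{\omega(t-s)}\,\lip(R_\rho)\,2Me^{t\omega}\,|u-v|_X\,ds\\
    &= \frac{2M^2}{\omega}\big(e^{\omega t}-1\big)e^{\omega t}\,\lip(R_\rho)\,|u-v|_X ,
\end{align*}
where we used $\int_0^t e^{\omega(t-s)}\,ds=\omega^{-1}(e^{\omega t}-1)$; this is exactly \eqref{eq:LipN}. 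The argument is the classical ODE Gronwall estimate, so the only genuine care needed is that the threshold $\rho_0$ depend on $t$ alone — automatic, since $\lip(R_\rho)$ is a \emph{global} constant and the Gronwall factor therefore does not see $(u,v)$ — and that one never invokes strong continuity of $T^{\odot *}$ (which fails): only its uniform norm bound and the weak*-integral estimate of Appendix~\ref{sect:weakstar_int} are used, and they suffice.
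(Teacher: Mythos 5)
Your proof is correct, and both estimates come out with exactly the constants claimed. The route differs from the paper's in one substantive way: the paper does not use Gronwall. It bounds the integral term by $\tfrac{M}{\omega}(e^{\omega t}-1)\lip(R_\rho)\,\sup_{s\in[0,t]}|\varphi^\rho_s(u)-\varphi^\rho_s(v)|_X$, takes the supremum of the whole inequality over $[0,t]$, and absorbs the right-hand side into the left using the smallness condition $\lip(R_\rho)<\omega/\bigl(2M(e^{\omega t}-1)\bigr)$, which is \eqref{eq:replaced}; the factor $2$ is the absorption constant $(1-\tfrac12)^{-1}$. Your Gronwall argument instead produces the multiplicative factor $e^{M\lip(R_\rho)s}$ and closes with the different smallness condition $M\lip(R_\rho)t\le\ln 2$. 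Both conditions are achievable for $\rho$ small by Lemma~\ref{lemma:Lipschitz}, so as a proof of the lemma as stated yours is complete. The practical difference is that the paper's specific threshold \eqref{eq:replaced} is reused verbatim as the hypothesis \eqref{eq:replacement} of Theorem~\ref{thm:IM} (with $t=1/\nu$ and $\lip(R_\rho)$ replaced by $\lip(R)$), so the absorption form is the one that plugs directly into the inertial-manifold argument; with your version one would have to either re-derive \eqref{eq:LipN} under \eqref{eq:replacement} or restate that hypothesis. A minor shared caveat, not specific to your write-up: your passage from $\lip(\varphi^\rho_s)\le 2Me^{\omega s}$ to $\sup_{s\in[0,t]}\lip(\varphi^\rho_s)\le 2Me^{t\omega}$ implicitly uses $\omega>0$, exactly as the paper's step ``taking the supremum over $t\in[0,T]$'' does, even though the lemma formally allows $\omega<0$; for $\omega<0$ the correct uniform bound is $2M$.
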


\begin{proof}
    The case $t =0$ is trivial, so assume $t > 0$.
    Then, for $u,v \in X$,
    \begin{displaymath}
        |\varphi_t^\rho (u) - \varphi_t^\rho(v)|_X \leq Me^{\omega t}  |u-v|_X + \frac{M}{\omega} \left( e^{\omega t} - 1 \right) \lip(R_\rho) \sup_{s \in [0,t]}  |\varphi_s^\rho (u) - \varphi_s^\rho(v)|_X.
    \end{displaymath}
    Taking the supremum over $t \in [0,T]$ and choosing $\rho$ small enough such that (c.f.\ Lemma~\ref{lemma:Lipschitz})
    \begin{equation}
        \lip(R_\rho) < \frac{\omega}{ 2 M \left( e^{\omega t} - 1 \right)},
        \label{eq:replaced}
    \end{equation}
    we obtain 
    \begin{displaymath}
        \sup_{t \in [0,T]} |\varphi_t^\rho (u) - \varphi_t^\rho(v)|_X \leq 2Me^{\omega T}  |u-v|_X .
    \end{displaymath}
    Moreover, 
    \begin{displaymath}
        |N_\rho(t,u ) - N_\rho(t,v)|_X \leq \frac{M}{\omega} \left( e^{\omega t} - 1 \right) \lip(R_\rho) 2Me^{\omega t}  |u-v|_X. \qedhere
    \end{displaymath}
\end{proof}

\subsection{Theorem 1.1 of \texorpdfstring{\cite{chen1997invariant}}{[CHT97]}}
\label{sect:CHT}

In this section, we quote Theorem 1.1 of \cite{chen1997invariant}, and verify its assumptions.
Theorem~\ref{thm:main1} is simply deduced as a corollary of this result.

\begin{theorem}[Theorem 1.1, \cite{chen1997invariant}]
\label{thm:CHT}
Assume \ref{A1} and \ref{A3}.
Let $\varphi_t^\rho$ denote the cut-off semiflow \eqref{eq:varphirho}.
Suppose $\gamma_1$ and $\gamma_2$ are real numbers such that $\beta < \gamma_2 < \gamma_1 < \alpha$.
For $\rho>0$ sufficiently small, there is a globally Lipschitz 
map 
$\phi_\rho : X_\Sigma \to X_{\Sigma'}$ with $\phi_\rho(0) = 0$, such that the submanifold 
\begin{equation}
W^{\Sigma}_\rho : = \gr ( \phi_\rho) = \left\{ \, v_1 + \phi_\rho(v_1) \; \vert \; v_1 \in X_\Sigma  \, \right\}
\label{eq:Wsigmarho}
\end{equation}
of $X$ satisfies the following properties:
\begin{enumerate}[label=\upshape{(\roman*)}]
  \item \label{item1:CHT} The restriction of the semiflow $\varphi_t^\rho$ to $W^{\Sigma}_\rho$ leaves the submanifold invariant and can be uniquely extended to a Lipschitz flow $\{\varphi_t^\rho \vert_{W^{\Sigma}_\rho} \}_{t \in \mathbb{R}}$ defined for all times.
  If $u \in W^{\Sigma}_\rho$, then the negative semiorbit $\{ \varphi_t^\rho(u) \}_{t \leq 0}$ satisfies
  \begin{equation}
      \limsup_{t \to -\infty} \frac{1}{|t|} \ln |\varphi_t^\rho(u)|_X \leq -  \gamma_1
      \label{eq:negative_semiorbit_1}
  \end{equation}
  Conversely, if $\{w(t)\}_{t \leq 0} \subset X$ is a negative semiorbit of $\varphi^\rho$\footnote{A function $w : \mathbb{R}^{\leq 0} \to X$ is called a negative semiorbit of $\varphi^\rho$ if $\varphi_t^\rho(w(s)) = w(t+s)$ for any $t \geq 0$ and $s \leq -t$.} that satisfies
  \begin{equation}
      \limsup_{t \to -\infty} \frac{1}{|t|} \ln |w(t)|_X \leq -  \gamma_2,
      \label{eq:negative_semiorbit_2}
  \end{equation}
  then $\{w(t)\}_{t \leq 0} \subset W^\Sigma_\rho$.
  
  \item There exists a continuous map $\psi_\rho : X \times X_{\Sigma'} \to X_\Sigma$ such that for each $u \in W^{\Sigma}_\rho$, $\psi_\rho(u,P_{\Sigma'} u ) = P_{\Sigma} u$ and the manifold $M^{\Sigma'}_\rho(u) = \{ \, \psi_\rho(u,v_2)+v_2 \; | \; v_2 \in X_{\Sigma'} \, \}$ passing through $u$ satisfies
  \begin{subequations} \label{eq:foliationrho}
  \begin{gather}
      \varphi_t^\rho \big( M^{\Sigma'}_\rho(u) \big) \subset M^{\Sigma'}_\rho\big(\varphi_t^\rho(u)\big), \qquad t \geq 0, \label{eq:foliationinvariance} \\
      M^{\Sigma'}_\rho(u) = \left\{ \, v \in X \; \Big\vert \; \limsup_{t \to \infty} \frac1t \ln \vert \varphi_t^\rho(u) - \varphi_t^\rho (v) \vert_X \leq  \gamma_2 \,  \right\}.\label{eq:foliationdecay}
  \end{gather}
  \end{subequations}
  Moreover, $\psi_\rho : X \times X_{\Sigma'} \to X_\Sigma$ is uniformly Lipschitz  with respect to its second argument. 
  For every $u \in X$, $M^{\Sigma'}_\rho(u) \cap W^{\Sigma}_\rho$ is a single point. 
  In particular, 
  \begin{displaymath}
    M^{\Sigma'}_\rho(u) \cap M^{\Sigma'}_\rho(v) = \emptyset \quad \text{for } u,v \in W^{\Sigma}_\rho, \; u \neq v; \qquad \bigcup_{u \in W^{\Sigma}_\rho} M^{\Sigma'}_\rho(u) = X;
  \end{displaymath}
  that is, $\{ M^{\Sigma'}_\rho(u)  \}_{u \in W^{\Sigma}_\rho}$ form a foliation of $X$ over $W^{\Sigma}_\rho$.

  \item \label{extralipschitzstatement} The Lipschitz constants $\lip(\phi_\rho)$ and $\lip \big(\psi_\rho(u,\cdot) \big)$ tend to $0$ as $\rho \to 0$, the latter independently of $u \in X$. 
\end{enumerate}
\end{theorem}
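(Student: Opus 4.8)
The plan is to obtain Theorem~\ref{thm:CHT} directly from Theorem 1.1 of \cite{chen1997invariant}, so that the content of the argument is the verification of that result's hypotheses for the cut-off semiflow $\varphi^\rho$ of \eqref{eq:varphirho}. Those hypotheses are, in essence, three: (a) $\varphi^\rho$ is globally defined in forward time; (b) the linear part $T(t)$ of \eqref{eq:varphirho} admits an exponential dichotomy adapted to the splitting $X = X_\Sigma \oplus X_{\Sigma'}$, with a spectral gap strictly containing the interval $[\gamma_2,\gamma_1]$; and (c) the nonlinear part of $\varphi^\rho$ is globally Lipschitz with Lipschitz constant small relative to that gap. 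Hypothesis (a) is exactly Lemma~\ref{lemma:globalexist}. For (b), since $\beta < \gamma_2 < \gamma_1 < \alpha$, I would choose $\varepsilon_1 \in (0,\alpha-\gamma_1)$ and $\varepsilon_2 \in (0,\gamma_2-\beta)$; then Lemma~\ref{lemma:dichotomy} supplies constants $K_1,K_2>0$ realizing the dichotomy with rates $\alpha-\varepsilon_1$ and $\beta+\varepsilon_2$, and by construction $\alpha-\varepsilon_1 > \gamma_1 > \gamma_2 > \beta+\varepsilon_2$, as needed.

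Hypothesis (c) is where $\rho$ enters. Writing $\varphi_t^\rho(\cdot) = T(t)\,\cdot + N_\rho(t,\cdot)$ with $N_\rho$ as in Lemma~\ref{lemma:nonlinearity_sf}, I would fix any time step $\tau>0$ first; then $\varphi_\tau^\rho$ is a globally Lipschitz perturbation of the linear map $T(\tau)$, and Lemma~\ref{lemma:nonlinearity_sf} yields the explicit bound $\lip(N_\rho(\tau,\cdot)) \leq \frac{2M^2}{\omega}(e^{\omega\tau}-1)e^{\omega\tau}\,\lip(R_\rho)$, while Lemma~\ref{lemma:Lipschitz} gives $\lip(R_\rho)\to 0$ as $\rho\to 0$. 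The gap condition required by \cite{chen1997invariant} — in the form used there for functional differential equations (its Example~5.2), equivalently after recasting the sun-star setting as in Chapter~IX of \cite{diekmann2012delay} — asks precisely that $\lip(N_\rho(\tau,\cdot))$, weighted by the dichotomy constants $K_1,K_2$ at scale $\tau$, stay below a positive threshold determined by $\gamma_1,\gamma_2$ and the dichotomy rates. Since $\tau$ is fixed before $\rho$ is driven to zero, this holds for every $\rho$ below some $\rho_0>0$.

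With (a)--(c) in place, Theorem 1.1 of \cite{chen1997invariant} delivers the Lipschitz graph $\phi_\rho$ with $\phi_\rho(0)=0$, the invariance of $W^\Sigma_\rho=\gr(\phi_\rho)$ together with its extension to a Lipschitz flow on all of $\mathbb{R}$ and the negative-semiorbit characterization \eqref{eq:negative_semiorbit_1}--\eqref{eq:negative_semiorbit_2} of item~\ref{item1:CHT}, and the invariant Lipschitz foliation $\{M^{\Sigma'}_\rho(u)\}$ with the decay characterization \eqref{eq:foliationdecay} and the transversality of each leaf to $W^\Sigma_\rho$ (item~(ii)). Item~\ref{extralipschitzstatement} comes out of the construction itself: in \cite{chen1997invariant} both $\phi_\rho$ and $u\mapsto\psi_\rho(u,\cdot)$ are produced as fixed points of Lyapunov--Perron / graph-transform contractions whose contraction factors and whose a priori bounds on $\lip(\phi_\rho)$ and $\sup_{u\in X}\lip(\psi_\rho(u,\cdot))$ are proportional to $\lip(N_\rho(\tau,\cdot))$, hence to $\lip(R_\rho)$; letting $\rho\to 0$ forces both Lipschitz constants to $0$, the latter uniformly in $u$.

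The main obstacle I anticipate is not analytic but structural: one must check that \eqref{eq:varphirho}, whose forcing $R_\rho$ takes values in the larger space $X^{\odot*}$ and enters through a weak-star integral, genuinely fits the abstract hypotheses of \cite{chen1997invariant}. The point to verify is that $N_\rho(t,\cdot)$ maps $X$ into $X$ with the stated Lipschitz bound and the joint continuity demanded there; once this identification is made — exactly as in the functional-differential-equation specialization of \cite{chen1997invariant}, or via the sun-star formalism of Chapter~IX of \cite{diekmann2012delay} — the remainder of the proof reduces to matching the constants produced by Lemmas~\ref{lemma:Lipschitz} and~\ref{lemma:nonlinearity_sf} against the gap condition.
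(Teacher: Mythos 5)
Your proposal is correct and follows essentially the same route as the paper: the paper likewise reduces the statement to verifying hypotheses (H.1)--(H.4) of \cite{chen1997invariant} for $\varphi^\rho_1 = T(1) + N_\rho(1,\cdot)$, using Lemma~\ref{lemma:globalexist} for global forward existence, Lemma~\ref{lemma:dichotomy} (with $\varepsilon_1,\varepsilon_2$ chosen exactly as you do) for the dichotomy, and Lemmas~\ref{lemma:Lipschitz} and~\ref{lemma:nonlinearity_sf} to drive the nonlinearity's global Lipschitz constant below the gap threshold as $\rho \to 0$. Your treatment of item~\ref{extralipschitzstatement} also matches the paper's, which attributes it to the explicit Lipschitz bounds (3.16) and (3.18) in \cite{chen1997invariant} being proportional to $\lip(N_\rho)$.
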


\begin{proof}
We verify hypotheses (H.1)-(H.4) of \cite{chen1997invariant}
for the case of 
$\varphi_t^\rho$, which we recall below for ease of use.
\begin{enumerate}[label =(H.\arabic*)]
  \item \label{H1CHT} $\varphi^\rho$ is continuous as a map  $\mathbb{R}^{\geq0} \times X \to X$ and there exists a constant $t > 0$ such that 
  \begin{displaymath}
    \sup_{s \in [0,t]} \mathrm{Lip} ( \varphi_s^\rho ) <\infty.
  \end{displaymath}
  
  \item \label{H2CHT} The map $\varphi_t^\rho$, with $t$ as in \ref{H1CHT}, can be decomposed as $\varphi_t^\rho = L + N$ where $L \in \mathcal{L}(X)$ and $N: X \to X$ is globally Lipschitz. 
  
  \item \label{H3CHT} There are subspaces $X_i \subset X$, $i =1,2$ and continuous projections $P_i : X \to X_i$ such that $P_1 + P_2 = \mathrm{id}_X$, $X = X_1 \oplus X_2$, $L$ leaves $X_i$ invariant and $L$ commutes with $P_i$, $i = 1,2$. 
  Denoting by $L_i:X_i \to X_i$ the restrictions of $L$, $L_1$ has bounded inverse and there exist constants $\alpha_1  > \alpha_2 \geq 0$, $C_1 \geq 1$, and $C_2 \geq 1$ such that
  \begin{subequations} \label{eq:dichotomyreq}
  \begin{align}
      \Vert L_1^{-j} P_1\Vert_{\mathcal{L}(X)} &\leq C_1 \alpha_1^{-j},  &j \in \mathbb{N}_0, \\
      \Vert L_2^{j} P_2\Vert_{\mathcal{L}(X)} &\leq C_2 \alpha_2^{j},   &j \in \mathbb{N}_0.
  \end{align}
  \end{subequations}

  \item \label{H4CHT} 
  Let 
  \begin{equation}
      \varrho(s):= \frac{C_1}{\alpha_1 - s} + \frac{C_2}{s-\alpha_2}, \qquad s \in (\alpha_2,\alpha_1).
      \label{eq:varrho}
  \end{equation}
  The numbers $\gamma_1,\gamma_2$ from the statement satisfy $\alpha_2 <e^{\gamma_2 t}<e^{\gamma_1 t} < \alpha_1$ and $\varrho(s) \lip(N) < 1$ for all $s \in (e^{\gamma_2 t},e^{\gamma_1 t})$.
\end{enumerate}

Hypotheses \ref{H1CHT} and \ref{H2CHT} follow from Lemma~\ref{lemma:nonlinearity_sf}, with $t =1$, $N = N_\rho(1,\cdot)$ and $L = T(1)$. 
The dichotomy in \ref{H3CHT} is supplied by Lemma~\ref{lemma:dichotomy}, with $X_1 = X_\Sigma$ and $X_2 = X_{\Sigma'}$.
In particular, \eqref{eq:dichotomy} implies \eqref{eq:dichotomyreq} for any $\alpha_1 < e^\alpha$ and $\alpha_2 > e^\beta$ -- we choose these so that $\alpha_1 > e^{\gamma_1}$ and $\alpha_2 < e^{\gamma_2}$.
The condition \ref{H4CHT} can be achieved through control of $\lip(R_\rho)$ via \eqref{eq:LipN} and Lemma~\ref{lemma:Lipschitz}, taking $\rho$ sufficiently small.

Statement \ref{thm:CHT}\ref{extralipschitzstatement} is not part of the main statement in \cite{chen1997invariant}, but is a direct consequence of equations (3.16) and (3.18) therein (see also \eqref{eq:lipphirho}).
\end{proof}

\begin{remark}\label{remark:altcondCHT}
As remarked also in \cite{chen1997invariant}, if the operators $L$ and $N$ satisfy
\begin{equation}
    \frac{(\sqrt{C_1}+\sqrt{C_2})^2}{\alpha_1 - \alpha_2} \mathrm{Lip} ( N ) < 1,
    \label{eq:H4Lipcond}
\end{equation}
then one can find $\gamma_1,\gamma_2$ satisfying \ref{H4CHT}.
(Equation \eqref{eq:H4Lipcond} is obtained simply by evaluating $\varrho$ at its minimum within $(\alpha_2,\alpha_1)$.)
\end{remark}

We may now deduce Theorem~\ref{thm:main1} as a corollary of Theorem~\ref{thm:CHT}.

\begin{proof}[Proof of Theorem~\ref{thm:main1}]
Suppose $\rho >0$ is small enough so that Theorem~\ref{thm:main1} applies and such that $\lip(\phi_\rho) < 1$ and $\sup_{u \in X} \lip \big(\psi_\rho(u,\cdot) \big) < 1$.
Then, an application of the parametric Banach fixed point theorem (see, e.g., Theorem 21, \cite{Irwin72}) to the map
\begin{gather*}
     g: X \times X_\Sigma \to  X_\Sigma \\ 
    (u,v_1) \mapsto \psi_\rho(u,\phi_\rho(v_1))
\end{gather*}
shows that, for each $u \in X$, the map $g(u,\cdot) :X_\Sigma \to  X_\Sigma$ has a unique fixed point.
Moreover, if we denote this fixed point by $f (u)$, then the map $ f: X \to X_\Sigma$ is continuous.
Now, defining $\pi_\rho: X \to  W^{\Sigma_1}_\rho$ as
\begin{displaymath}
   \pi_\rho(u): = f(u)+\phi_\rho \circ f (u)
\end{displaymath}
yields a continuous map, which satisfies   $M^{\Sigma'}_\rho(u) = \pi_\rho^{-1} \big( \pi_\rho(u) \big)$ for each $u \in X$.
Hence, \eqref{eq:foliationinvariance} is equivalent to
\begin{equation}
    \pi_\rho \circ \varphi_t^{\rho}(u) = \varphi_t^\rho \vert_{W^{\Sigma}_\rho} \circ \pi_\rho(u), \qquad \text{for all } (t,u) \in \mathbb{R}^{\geq 0 } \times X.
    \label{eq:pirho}
\end{equation}
It follows from \eqref{eq:foliationdecay}, with $u \in X$ fixed, that for any $\varepsilon > 0$, there exists $C > 0$ such that 
\begin{equation}
    | \varphi_t^\rho(u) - \varphi_t^\rho \circ \pi_\rho (u)|_X \leq C e^{(\gamma_2 + \varepsilon) t} , \qquad \text{for all } t \geq 0. 
    \label{eq:decayrho}
\end{equation}
Here, we have the freedom to choose $\gamma_2 < \gamma$, since $\gamma_2 \in (\beta,\alpha)$ was arbitrary in Theorem~\ref{thm:CHT} and $\beta < \gamma$.
Note also that $\pi_\rho |_{W^\Sigma_\rho} = \mathrm{id}_{W^{\Sigma}_\rho}$.

Take an open neighbourhood $V$ of $0$ such that 
\begin{displaymath}
    V \subset \left\{ u \in X \; \big| \; |P_\Sigma u | < \rho, \;  |P_{\Sigma'} u | < \rho \right\}.
\end{displaymath}
Then $\varphi \equiv \varphi^\rho$ on $\mathcal{U}(V)$.
Set $U := \big( \pi_\rho^{-1} (W^\Sigma_\rho \cap V) \big) \cap V$.
This is an open neighbourhood of $0$.

We may now declare $W^\Sigma : = W^\Sigma_\rho \cap U \;  (= W^\Sigma_\rho \cap V)$ and $\pi : = \pi_\rho \vert_U$. 
Statement \ref{thm:main1}\ref{thm1st1} is immediate from the fact that $\varphi \equiv \varphi^\rho$ on $\mathcal{U}(U)$.
Statement \ref{thm:main1}\ref{thm1st3} follows from the choice of $U$ and \eqref{eq:foliationrho}.
More precisely, 
by construction of $U$, we have $\pi(U) \subset W^\Sigma \subset  U$.
Given $(t,u) \in \mathcal{U}(U)$, \eqref{eq:pirho} combined with the above observation implies that $(t,\pi(u)) \in \mathcal{U}(U)$ and $\pi \circ \varphi_t(u) = \varphi_t|_{W^\Sigma} \circ \pi(u)  $.
This shows \eqref{eq:pi}.
If $\mathcal{U}_u(U) \neq \mathbb{R}^{\geq 0}$,  \eqref{eq:attractivity} is trivially satisfied.
Otherwise, \eqref{eq:pi} holds for all times on a domain on which $\pi_\rho = \pi$ and $\varphi^\rho = \varphi$;
hence \eqref{eq:decayrho} implies \eqref{eq:attractivity}, being that $\gamma_2 < \gamma$.

The first part of statement \ref{thm:main1}\ref{thm1st4} was covered in Remark~\ref{remark:strongmfds}.
The second part follows from the argument coined by \cite{burchard1992smooth} for the case of center manifolds.
It is written out in the specific context of the present result in \cite{buza2024spectral}, Lemma 4.8 (note that its proof is independent of the equation at hand, and relies only on the existence of the foliation map $\pi$).

For the proof of statement \ref{thm:main1}\ref{thm1st2} and the $C^1$ part of \ref{thm:main1}\ref{thm1st1}, see Appendix~\ref{sect:mfdsmoothness}.
Once a certain degree of smoothness, say $C^\ell$, $\ell \leq k$, of the manifold $W^\Sigma$ has been obtained, Corollary (8A.9) of \cite{marsden1976hopf} immediately implies that the restricted semiflow extends to a jointly $C^\ell$ flow.
The tangency of the manifold $W^\Sigma$ to $X_\Sigma$ at $0$ is obtained in Corollary~\ref{corollary:tangency}.
\end{proof}

\section{Proof of Theorem~\ref{thm:main2}}
\label{sect:proofmain2}

We start by reducing the problem to the case when $0$ is a (spectrally) stable fixed point, by restricting the semiflow $\varphi$ to the stable manifold $W^s$.
Explicitly, we consider the setting of Appendix~\ref{sect:mfdsmoothness} (or that of Chapter VIII, \cite{diekmann2012delay}).
Choose $\sup\{ \mathrm{Re} \, \lambda \; | \; \lambda \in \sigma(A;X), \; \mathrm{Re} \, \lambda < 0 \}< \gamma < 0$.
The stable manifold can be characterized via a $C^k$ map $\mathcal{W}^s : U_s \to BC^{-\gamma}(\mathbb{R}^{\geq 0};X)$\footnote{The Banach space $BC^{-\gamma}(\mathbb{R}^{\geq 0};X)$ consists of continuous functions $\mathbb{R}^{\geq 0} \to X$ for which the norm  given by $|f|_{-\gamma}=\sup_{t \geq 0} e^{-\gamma t}|f(t)|_X$ is finite.
This is analogous to Definition~\ref{def:BC}.
$\gamma$ being negative means that forward orbits on $W^s$ decay exponentially, at least as fast as $e^{\gamma t}$.
},
which maps points of an open neighbourhood $U_s$ of $0$ in $X_s$ to forward orbits on the submanifold $W^s$.
Here $X_s$ denotes the stable subspace characterized by the half-plane  of the spectrum strictly to the left of the imaginary axis; let us also denote by $\widetilde{P}_s:X \to X_s$ the associated projection with restricted range.
We define
\begin{equation}
    \psi_t : = \widetilde{P}_s \circ \mathrm{ev}_t \circ \mathcal{W}^s, \qquad t \geq 0,
    \label{eq:psi}
\end{equation}
on $U_s$,
where $\mathrm{ev}_t: BC^{-\gamma}(\mathbb{R}^{\geq 0};X) \to X$ is the continuous linear map given by $f \mapsto f(t)$.

\begin{lemma} \label{lemma:psi}
    The maps $\{\psi_t\}_{t \geq 0}$ from \eqref{eq:psi} constitute a semiflow  with domain $\mathcal{D}^\psi \subset\mathbb{R}^{\geq 0} \times U_s$, which satisfies the same smoothness properties as the original semiflow $\varphi$.
\end{lemma}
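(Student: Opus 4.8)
The plan is to show that the family $\{\psi_t\}_{t\ge0}$ defined in \eqref{eq:psi} is a semiflow on a suitable open subdomain $\mathcal{D}^\psi\subset\mathbb{R}^{\ge0}\times U_s$ and that it inherits the regularity of $\varphi$ recorded in Proposition~\ref{prop:semiflow}. The key observation is that $\psi$ is nothing but the representation of $\varphi|_{W^s}$ in the $X_s$-coordinates supplied by the parameterization $\mathcal{W}^s$. Concretely, the map $\widetilde{P}_s\circ\mathrm{ev}_0\circ\mathcal{W}^s$ is the identity on $U_s$ (this is the normalization built into the Lyapunov–Perron construction of $W^s$ in Appendix~\ref{sect:mfdsmoothness}/Chapter~VIII of \cite{diekmann2012delay}), while $\mathrm{ev}_t\circ\mathcal{W}^s$ sends a point to the value at time $t$ of the forward orbit on $W^s$ through it. Hence $\im(\mathrm{ev}_t\circ\mathcal{W}^s)\subset W^s$, and the forward orbit through $\mathrm{ev}_t\circ\mathcal{W}^s(u)$ is the time-shifted orbit through $\mathrm{ev}_0\circ\mathcal{W}^s(u)=u$. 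Since $W^s$ is the graph of a $C^k$ map over an open subset of $X_s$ and $\widetilde{P}_s$ restricted to $W^s$ is a $C^k$ diffeomorphism onto its image near $0$, the conjugacy between $\psi$ and $\varphi|_{W^s}$ is $C^k$.

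First I would fix the domain: set $\mathcal{D}^\psi:=\{(t,u)\in\mathbb{R}^{\ge0}\times U_s \mid \widetilde{P}_s\varphi_s(\mathcal{W}^s(u)(0))\in U_s \text{ for all } s\in[0,t]\}$, or more cleanly use that $\mathcal{W}^s$ already encodes a forward-invariant neighbourhood, so $\mathcal{D}^\psi$ is the largest set on which the orbit stays in the coordinate patch; in either case it is relatively open because $\varphi$ is jointly continuous (Proposition~\ref{prop:semiflow}\ref{sfitem:1}) and $\widetilde P_s$, $\mathrm{ev}_t$, $\mathcal{W}^s$ are continuous. Next I would verify the semiflow axioms: $\psi_0=\mathrm{id}_{U_s}$ follows from the normalization $\mathrm{ev}_0\circ\mathcal{W}^s=\imath_s$ on $U_s$; the cocycle property $\psi_{t+s}=\psi_t\circ\psi_s$ follows from the cocycle property of $\varphi$ together with the fact that the forward orbit on $W^s$ through a point of $W^s$ is unique and stays on $W^s$ (so $\mathcal{W}^s(\psi_s(u))$ is the time-$s$-shifted orbit $\mathcal{W}^s(u)(\cdot+s)$, which is exactly the content of the invariance/translation property of the Lyapunov–Perron fixed point). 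Then joint continuity of $\psi$ on $\mathcal{D}^\psi$ is immediate from composing the continuous maps $\varphi$, $\mathcal{W}^s$, $\mathrm{ev}_t$, $\widetilde P_s$ (with joint continuity in $(t,u)$ of $\mathrm{ev}_t\circ\mathcal{W}^s(u)$ coming from continuity of evaluation $BC^{-\gamma}\times\mathbb{R}^{\ge0}\to X$ on bounded-in-$t$ ranges).

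For the smoothness statements I would argue as follows. For fixed $t$, $\psi_t=\widetilde P_s\circ\mathrm{ev}_t\circ\mathcal{W}^s$ is a composition of a bounded linear map, a bounded linear map $\mathrm{ev}_t$, and the $C^k$ map $\mathcal{W}^s$; hence $\psi_t$ is $C^k$ whenever $f$ is $C^k$, matching Proposition~\ref{prop:semiflow}\ref{sfitem:2}. For the joint $C^1$ (resp.\ joint $C^\ell$ on $t>h$) statement, I would use the conjugacy $\psi_t = (\widetilde P_s|_{W^s})\circ \varphi_t\circ (\widetilde P_s|_{W^s})^{-1}$ valid on the coordinate patch, where $\widetilde P_s|_{W^s}$ is a $C^k$ diffeomorphism near $0$ (its inverse is $v_1\mapsto v_1+\phi_s(v_1)$ with $\phi_s$ the $C^k$ graph map of $W^s$); then joint smoothness of $\psi$ reduces to joint smoothness of $\varphi$ restricted to the invariant manifold $W^s$, which follows from the corresponding statement for $\varphi$ (Proposition~\ref{prop:semiflow}\ref{sfitem:2}--(iii), or Corollary (8A.9) of \cite{marsden1976hopf} applied to the $C^k$ flow on $W^s$ once the manifold's smoothness is known). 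Finally I would note $\psi_t=D\varphi_t(0)|_{X_s}$ linearized at $0$ has spectrum $\sigma(A;X)\cap\{\mathrm{Re}\,\lambda<0\}$, so $0$ is a spectrally stable fixed point of $\psi$, which is the point of the reduction.

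The main obstacle I anticipate is not any single hard estimate but making the identification ``$\psi$ is the representation of $\varphi|_{W^s}$'' fully rigorous: one must be careful that $\mathcal{W}^s(u)$ is genuinely the forward $\varphi$-orbit through $u$ (equivalently, $\mathrm{ev}_t\circ\mathcal{W}^s(u)=\varphi_t(\imath_s u + \phi_s(u))$ for $u\in U_s$ and $t$ in the domain), and that the translation property $\mathcal{W}^s(\psi_s u)=\mathcal{W}^s(u)(\cdot+s)$ holds — both are standard properties of the Lyapunov–Perron construction but need to be quoted precisely from Appendix~\ref{sect:mfdsmoothness} (or Chapter~VIII of \cite{diekmann2012delay}). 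Once that dictionary is in place, every assertion of the lemma is a formal consequence of the corresponding property of $\varphi$ together with the $C^k$-regularity of $\mathcal{W}^s$, so I would keep the write-up short and lean on those references.
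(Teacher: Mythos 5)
Your proposal is correct and follows essentially the same route as the paper's proof: both rewrite $\psi_t$ as $\widetilde{P}_s\circ\varphi_t\circ K$ with $K=\mathrm{ev}_0\circ\mathcal{W}^s$, use the normalizations $\widetilde{P}_s\circ K=\mathrm{id}_{U_s}$ and $K\circ\widetilde{P}_s|_{\im(K)}=\mathrm{id}$ together with invariance of $W^s$ to get the semiflow property on the domain $\{(t,u):\psi_\tau(u)\in U_s,\ \tau\in[0,t]\}$, and inherit smoothness by composition of the $C^k$ maps $\varphi_t$, $\mathcal{W}^s$ with bounded linear maps. The "obstacle" you flag (making the dictionary between $\mathcal{W}^s$ and forward $\varphi$-orbits rigorous) is handled in the paper exactly as you propose, by citing the construction in the appendix.
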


\begin{proof}
    Denoting by $K: U_s \to X$ the $C^k$ map $\mathrm{ev}_0 \circ \mathcal{W}^s$ (whose image is $W^s$),
    we have 
    \begin{equation}
        \psi_t = \widetilde{P}_s \circ \varphi_t \circ K, \qquad t \geq 0.
        \label{eq:psi2}
    \end{equation}
    Therefore, smoothness properties are inherited.
    
    Invariance of $W^s$ implies that $\varphi_t \circ K(u) \in \im(K)$ for all $t\geq 0$ such that $\psi_\tau(u) \in U_s$ for $\tau \in [0,t]$.
    By construction, we have $K \circ \widetilde{P}_s|_{\im(K)} = \mathrm{id}_{\im(K)}$ and $\widetilde{P}_s \circ K = \mathrm{id}_{U_s}$ -- for a detailed derivation see Appendix~\ref{sect:mfdsmoothness}, specifically the paragraph above \eqref{eq:phirho}.
    The semiflow property thus follows from \eqref{eq:psi2} on the domain of $\psi$, which can be identified as 
    \begin{equation}
        \mathcal{D}^\psi = \{ (t,u) \in \mathbb{R}^{\geq 0} \times U_s \; | \; \psi_\tau (u) \in U_s \text{ for all } \tau \in [0,t] \}. \qedhere
        \label{eq:mathcalDpsi}
    \end{equation}
\end{proof}

\begin{lemma} \label{lemma:LyapstabCkb}
    The fixed point $0$ of $\psi_t$ is Lyapunov and exponentially stable (with respect to the induced norm $| \cdot |_{X_s}$).
    Moreover, there exists an open neighbourhood of the origin $U \subset U_s$ such that $\mathbb{R}^{\geq 0} \times U \subset \mathcal{D}^\psi$ and $\psi_t \in C^k_b(U)$ for all $t \geq 0$.
\end{lemma}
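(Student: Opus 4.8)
The plan is to work directly with the formula $\psi_t=\widetilde P_s\circ\mathrm{ev}_t\circ\mathcal W^s$ of \eqref{eq:psi}, which is meaningful for \emph{every} $t\ge 0$ on all of $U_s$, and to extract both the stability statement and the $C^k$ bounds from the exponential weight built into the target space $BC^{-\gamma}(\mathbb{R}^{\geq 0};X)$ of the $C^k$ map $\mathcal W^s$. First I recall that $\mathrm{ev}_t\in\mathcal L\big(BC^{-\gamma}(\mathbb{R}^{\geq 0};X),X\big)$ satisfies $\|\mathrm{ev}_t\|\le e^{\gamma t}$ for $t\ge 0$, because $|f(t)|_X=e^{\gamma t}\big(e^{-\gamma t}|f(t)|_X\big)\le e^{\gamma t}|f|_{-\gamma}$, and that $\gamma<0$. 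Fix $R_0>0$ with $B_{R_0}(0)^{X_s}\subset U_s$. Since $\mathcal W^s$ is $C^k$ and $\mathcal W^s(0)=0$ (the stable manifold passes through the fixed point $0$), continuity of $D\mathcal W^s$ at $0$ gives $\delta>0$ and $L:=\|D\mathcal W^s(0)\|+1$ with $\|D\mathcal W^s(v)\|\le L$ for $|v|_{X_s}<\delta$, so by the mean value inequality $|\mathcal W^s(u)|_{-\gamma}\le L\,|u|_{X_s}$ for $|u|_{X_s}<\delta$; after shrinking $\delta$, continuity at $0$ of each of the \emph{finitely many} higher derivatives also gives $\|D^j\mathcal W^s(v)\|\le C_j:=\|D^j\mathcal W^s(0)\|+1$ for $|v|_{X_s}<\delta$ and $1\le j\le k$. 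I then set
\[
 r:=\tfrac12\min\Big\{\delta,\ R_0,\ \tfrac{R_0}{\|\widetilde P_s\|\,L}\Big\},\qquad U:=B_r(0)^{X_s}\subset U_s,\qquad C_0:=L r,
\]
and adopt the convention $D^0\mathcal W^s(u):=\mathcal W^s(u)$ (with $\|D^0\mathcal W^s(u)\|:=|\mathcal W^s(u)|_{-\gamma}\le C_0$ on $U$).

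Next I would prove $\mathbb{R}^{\geq 0}\times U\subset\mathcal D^\psi$ and the stability claim simultaneously. For $u\in U$ and any $\tau\ge 0$, \eqref{eq:psi} gives
\[
 |\psi_\tau(u)|_{X_s}=\big|\widetilde P_s\,\mathrm{ev}_\tau\,\mathcal W^s(u)\big|_{X_s}\le\|\widetilde P_s\|\,e^{\gamma\tau}\,|\mathcal W^s(u)|_{-\gamma}\le\|\widetilde P_s\|\,L\,e^{\gamma\tau}\,|u|_{X_s}\le\|\widetilde P_s\|\,L\,r<R_0,
\]
hence $\psi_\tau(u)\in B_{R_0}(0)^{X_s}\subset U_s$ for all $\tau\ge 0$; by the description \eqref{eq:mathcalDpsi} of $\mathcal D^\psi$ this means $(t,u)\in\mathcal D^\psi$ for every $t\ge 0$. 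The same estimate with $\gamma<0$ shows $|\psi_t(u)|_{X_s}\le\|\widetilde P_s\|\,L\,|u|_{X_s}$ for all $t\ge 0$ (Lyapunov stability, the right-hand side being small with $u$) and $|\psi_t(u)|_{X_s}\to 0$ exponentially at rate $\gamma$ as $t\to\infty$ (exponential stability in $|\cdot|_{X_s}$).

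Finally, for the regularity, each $\psi_t=\widetilde P_s\circ\mathrm{ev}_t\circ\mathcal W^s$ is a composition of the $C^k$ map $\mathcal W^s$ with the bounded linear maps $\mathrm{ev}_t$ and $\widetilde P_s$, so it is $C^k$ on $U_s$, and the chain rule together with linearity gives $D^j\psi_t(u)=\widetilde P_s\circ\mathrm{ev}_t\circ D^j\mathcal W^s(u)$ for $0\le j\le k$. Thus, for $u\in U$, $t\ge 0$ and $0\le j\le k$,
\[
 \big\|D^j\psi_t(u)\big\|\le\|\widetilde P_s\|\,e^{\gamma t}\,\|D^j\mathcal W^s(u)\|\le\|\widetilde P_s\|\max\{C_0,C_1,\dots,C_k\},
\]
uniformly in $t$, whence $\psi_t\in C^k_b(U)$ for every $t\ge 0$. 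The one point that requires care here is this last uniform bound: since $X_s$ is in general infinite-dimensional, $\overline U$ is not compact, and one cannot pass from continuity of the derivatives $D^j\mathcal W^s$ to a bound on $U$ by a compactness argument; the resolution — and the reason the statement holds — is that only the finitely many derivatives with $j\le k$ are involved, so it is enough to use their continuity at the single point $0$ and shrink $U$ accordingly, which is exactly what the choice of $r$ above achieves.
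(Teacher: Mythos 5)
Your proof is correct and follows essentially the same route as the paper's: both arguments exploit $\gamma<0$ to get $\|\mathrm{ev}_t\|\le e^{\gamma t}\le 1$ uniformly in $t\ge 0$, use continuity of $\mathcal W^s$ and of its finitely many derivatives at the single point $0$ (with $\mathcal W^s(0)=0$) to shrink $U$, and read off forward invariance, stability, and the uniform $C^k_b$ bounds from $D^j\psi_t(u)=\widetilde P_s\circ\mathrm{ev}_t\circ D^j\mathcal W^s(u)$. Your use of the mean value inequality to get the explicit factor $|u|_{X_s}$ in the decay estimate is a harmless refinement of the paper's direct continuity argument.
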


\begin{proof}
        Choose $\varepsilon> 0$ such that $B_\varepsilon^{X_s}(0) \subset U_s$.
    Since $\mathcal{W}^s : U_s \to BC^{-\gamma}(\mathbb{R}^{\geq 0};X)$ is continuous and $\mathcal{W}^s(0) = 0$, we may choose a small enough neighbourhood $U$ of $0$ in $U_s$ such that 
    \begin{displaymath}
        \Vert P_s \Vert_{\mathcal{L}(X)}\sup_{t \geq 0} e^{-\gamma t} \left|\mathrm{ev}_t \circ \mathcal{W}^s(u) \right|_{X} < \varepsilon, \qquad u \in U.
    \end{displaymath}
    Since $\gamma < 0$, this implies Lyapunov and exponential stability of $0$ under $\psi_t$; and moreover that 
    \begin{displaymath}
        \Vert P_s \Vert_{\mathcal{L}(X)}\left| \mathrm{ev}_t \circ \mathcal{W}^s(u) \right|_{X} < \varepsilon \qquad  \text{for } (t,u) \in \mathbb{R}^{\geq 0} \times U.
    \end{displaymath}
    In particular, $\mathbb{R}^{\geq 0} \times U \subset \mathcal{D}^\psi$, c.f.\ \eqref{eq:mathcalDpsi}, and $\psi_t \in C^0_b(U)$ for all $t \geq 0$.

    Since $\mathcal{W}^s$ is of class $C^k$, we may shrink $U$ such that\footnote{For the introduction of $M_j$ spaces of multilinear maps, see Appendix~\ref{sect:mfdsmoothness}.}
    \begin{displaymath}
        \left\Vert D^{j} \mathcal{W}^s(u) - D^j \mathcal{W}^s(0) \right\Vert_{M_j(X_s;BC^{-\gamma}(\mathbb{R}^{\geq 0};X))} < \varepsilon, \qquad \text{for all } u \in U, \; 1 \leq j \leq k.
    \end{displaymath}
    We have
    \begin{displaymath}
        D^j \psi_t(u) = \mathrm{ev}_t \left( \widetilde{P}_s D^j \mathcal{W}^s(u) \right). 
    \end{displaymath}
    Hence $\psi_t \in C^k_b(U)$ follows for all $t \geq 0$, noting that $\Vert \mathrm{ev}_t\Vert \leq 1$ due to $\gamma < 0$.
\end{proof}

\begin{lemma} \label{lemma:jointcont}
    For each $0 \leq j \leq k$,  $(t,u) \mapsto D^j(\psi_t \circ \imath_{\widetilde{\Sigma}}) (u)$ is jointly continuous over the line $\mathbb{R}^{\geq 0} \times \{0\}$, where $\imath_{\widetilde{\Sigma}} : U_s \cap X_{\widetilde{\Sigma}} \xhookrightarrow{} U_s$ is the inclusion.
\end{lemma}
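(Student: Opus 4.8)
The plan is to reduce the statement to the representation established in the proof of Lemma~\ref{lemma:LyapstabCkb}, namely $D^j\psi_t(v)=\mathrm{ev}_t\bigl(\widetilde P_s\,D^j\mathcal{W}^s(v)\bigr)$ for $v$ in a neighbourhood of $0$, and then to exploit the fact that, by hypothesis \ref{A4} together with Theorem~\ref{thm:spectrum}, the set $\widetilde\Sigma$ is \emph{finite}, so that $X_{\widetilde\Sigma}$ is finite-dimensional. The latter is the only structural input beyond the $C^k$ regularity of $\mathcal{W}^s$ and the bound $\Vert\mathrm{ev}_t\Vert\le 1$ (valid since $\gamma<0$ and $t\ge 0$, as already used in the proof of Lemma~\ref{lemma:LyapstabCkb}).

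First I would note that, since $\widetilde P_s$, $\mathrm{ev}_t$ and $\imath_{\widetilde\Sigma}$ are bounded linear maps (all their higher derivatives vanishing), the chain rule gives, for $u$ in the neighbourhood $U\cap X_{\widetilde\Sigma}$ of $0$ provided by Lemma~\ref{lemma:LyapstabCkb},
\[
D^j(\psi_t\circ\imath_{\widetilde\Sigma})(u)=\mathrm{ev}_t\circ\widetilde P_s\circ B_u,\qquad B_u:=D^j\mathcal{W}^s(\imath_{\widetilde\Sigma}u)\bigl[\imath_{\widetilde\Sigma}\,\cdot,\dots,\imath_{\widetilde\Sigma}\,\cdot\bigr]\in M_j\!\left(X_{\widetilde\Sigma};BC^{-\gamma}(\mathbb{R}^{\ge0};X)\right),
\]
where $\mathrm{ev}_t$ and $\widetilde P_s$ act on the $BC^{-\gamma}$-valued output. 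Since $\mathcal{W}^s$ is $C^k$ and $j\le k$, the map $D^j\mathcal{W}^s$ is continuous, and precomposing with the fixed bounded operators $\imath_{\widetilde\Sigma}$ keeps continuity; hence $u\mapsto B_u$ is continuous from $U\cap X_{\widetilde\Sigma}$ into $M_j(X_{\widetilde\Sigma};BC^{-\gamma}(\mathbb{R}^{\ge0};X))$.

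Next I would establish equicontinuity of the curves generated by $B_0$. Fix a basis $e_1,\dots,e_d$ of $X_{\widetilde\Sigma}$; by equivalence of norms on a finite-dimensional space there is a constant $c$ with $|w|_{X_{\widetilde\Sigma}}\le 1$ implying that the coordinates of $w$ are bounded by $c$, so that $B_0[w_1,\dots,w_j]$ is a linear combination, with coefficients bounded by $c^{\,j}$, of the \emph{finitely many} functions $B_0[e_{m_1},\dots,e_{m_j}]\in BC^{-\gamma}(\mathbb{R}^{\ge0};X)$, each of which is continuous. Consequently the family $\{B_0[w_1,\dots,w_j]:|w_i|_{X_{\widetilde\Sigma}}\le1\}$ is equicontinuous at every $t_0\ge0$, i.e. $\Vert(\mathrm{ev}_t-\mathrm{ev}_{t_0})\circ B_0\Vert_{M_j(X_{\widetilde\Sigma};X)}\to 0$ as $t\to t_0$. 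Combining this with the triangle inequality, for any $t_0\ge0$ and any sequence $(t_n,u_n)\to(t_0,0)$ in the domain one gets
\[
\bigl\Vert D^j(\psi_{t_n}\!\circ\imath_{\widetilde\Sigma})(u_n)-D^j(\psi_{t_0}\!\circ\imath_{\widetilde\Sigma})(0)\bigr\Vert\;\le\;\Vert\widetilde P_s\Vert\Bigl(\Vert\mathrm{ev}_{t_n}\Vert\,\Vert B_{u_n}-B_0\Vert_{M_j}+\bigl\Vert(\mathrm{ev}_{t_n}-\mathrm{ev}_{t_0})\circ B_0\bigr\Vert_{M_j(X_{\widetilde\Sigma};X)}\Bigr),
\]
and the right-hand side tends to $0$: the first term by continuity of $u\mapsto B_u$ and $\Vert\mathrm{ev}_{t_n}\Vert\le1$, the second by the equicontinuity just shown. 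This yields joint continuity at every point of $\mathbb{R}^{\ge0}\times\{0\}$ and the case $j=0$ is either subsumed or follows directly from continuity of the semiflow $\psi$.

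The main obstacle is precisely the appearance of $\mathrm{ev}_t$: this family is \emph{not} norm-continuous in $t$ on $BC^{-\gamma}(\mathbb{R}^{\ge0};X)$ (equivalently, $t\mapsto T(t)=D\varphi_t(0)$ is only strongly continuous, not norm continuous), so one cannot factor the argument through a hypothetical continuity of $t\mapsto\mathrm{ev}_t$. The restriction to $X_{\widetilde\Sigma}$ via $\imath_{\widetilde\Sigma}$ — which is finite-dimensional thanks to the spectral picture forced by \ref{A4} — is exactly what repairs this: it converts $B_0$ into an operator whose unit-ball image is a uniformly equicontinuous family of $X$-valued curves, restoring joint continuity.
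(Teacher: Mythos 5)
Your proof is correct, but it takes a different route from the paper's. The paper's proof is a one-line appeal to Remark~\ref{remark:jointcont} applied to the finite-dimensional submanifold $W=K(X_{\widetilde{\Sigma}}\cap U_s)$; that remark in turn rests on the last assertion of Proposition~\ref{prop:sf_smoothness} (joint \emph{strong} continuity of $(s,u)\mapsto D^j\varphi_s(u)[v_1,\ldots,v_j]$, obtained from the implicit function theorem applied to the variation-of-constants map on $C([0,t];X)$), upgraded to norm continuity by finite-dimensionality of the domain. You instead stay entirely inside the stable-manifold representation $\psi_t=\widetilde P_s\circ\mathrm{ev}_t\circ\mathcal{W}^s$ of Lemmas~\ref{lemma:psi}--\ref{lemma:LyapstabCkb}: continuity in $u$ comes from $\mathcal{W}^s\in C^k$, continuity in $t$ comes from the fact that $B_0$ restricted to the unit ball of the finite-dimensional $X_{\widetilde{\Sigma}}$ produces a uniformly equicontinuous family of the finitely many continuous curves $B_0[e_{m_1},\ldots,e_{m_j}]\in BC^{-\gamma}(\mathbb{R}^{\geq0};X)$, and the two are glued with $\Vert\mathrm{ev}_t\Vert\leq1$. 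The underlying mechanism is the same in both arguments — finite-dimensionality of $X_{\widetilde{\Sigma}}$ (guaranteed by \ref{A4} and Theorem~\ref{thm:spectrum}) converts pointwise-in-direction $t$-continuity into the required uniform continuity — but your version is more self-contained: it needs only the definition of $BC^{-\gamma}$ and the representation of $D^j\psi_t$ already recorded in the proof of Lemma~\ref{lemma:LyapstabCkb}, and avoids routing back through the semiflow-smoothness machinery of Appendix~\ref{sect:semiflow_smoothness} and the range-restriction bookkeeping for $K\circ\imath_{\widetilde{\Sigma}}$. The price is that it is tied to the stable-manifold parameterization, whereas the paper's Remark~\ref{remark:jointcont} is stated for an arbitrary finite-dimensional embedded submanifold and is reused as such. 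Your closing diagnosis — that the obstruction is the failure of norm continuity of $t\mapsto\mathrm{ev}_t$ (equivalently of $t\mapsto T(t)$) and that restriction to a finite-dimensional subspace is exactly what repairs it — matches the paper's intent precisely.
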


\begin{proof}
    Apply Remark~\ref{remark:jointcont} with $W = K(X_{\widetilde{\Sigma}} \cap U_s)$; noting that $ K \circ \imath_{\widetilde{\Sigma}} = \imath_W \circ \widetilde{K \circ \imath_{\widetilde{\Sigma}}}$, where $\widetilde{K \circ \imath_{\widetilde{\Sigma}}}$ is considered with range restricted to $W$ (it retains the smoothness of $K$ by Theorem 18.2, \cite{munkres2000topology}).
\end{proof}

We are now ready to prove the main result.

\begin{proof}[Proof of Theorem~\ref{thm:main2}]
    Let
    \begin{displaymath}
        \Omega := \left\{ \lambda \in \sigma(A;X) \; | \; \mathrm{Re} \, \lambda \geq \inf \mathrm{Re} \, \Sigma \right\}.
    \end{displaymath}
    We may apply Theorem~\ref{thm:main1} with $\Omega$ as the spectral subset to obtain a locally invariant $C^\ell$ submanifold $W^\Omega$ of $X$ (contained in a small enough neighbourhood of the origin).
    The inclusion $W^\Omega \xhookrightarrow{} X$ is transversal over $W^s$ (locally, about $0$) 
    in the sense of \cite{lang2012fundamentals}, page 29.
    We hence have, up to perhaps shrinking the manifolds, that $W^{\Omega,s} : = W^\Omega \pitchfork W^s$ is a locally invariant $C^\ell$ submanifold of $X$.
    It is tangent to $X_\Omega \cap X_s$ at $0$. 
    If $\widetilde{\Sigma}$ is empty, we are done -- hence we suppose otherwise in what follows.

    Next, we apply Theorem 2.6, \cite{buza2025smooth}, to the semiflow $\psi$ with respect to the splitting $X_s = X_{\widetilde{\Sigma}} \oplus X_{\widetilde{\Sigma}'}$.
    We verify assumptions (A.1) and (A.2) of \cite{buza2025smooth}.
    The contents of (A.1) were verified by Lemmas \ref{lemma:psi} and \ref{lemma:LyapstabCkb}.
    The splitting for (A.2) therein is provided above,   $X_{\widetilde{\Sigma}}$ taking the role of $X_0$.
    That $t \mapsto D\psi_t(0)$ is strongly continuous is a consequence of the tangency of $W^s = \im(K)$ to $X_s$ at $0$:
    \begin{align*}
    D \psi_t(0) &= \widetilde{P}_s D \varphi_t(0)DK(0) \\
                &= T(t)|_{X_s} \widetilde{P}_sDK(0) \\
                &= T(t)|_{X_s}.
    \end{align*}
    This leaves  $X_{\widetilde{\Sigma}'}$ invariant; its generator, $A|_{X_s}$, satisfies $X_{\widetilde{\Sigma}} \subset\dom(A|_{X_s})$.
    The joint continuity assumption was verified by Lemma~\ref{lemma:jointcont}. 
    The remainder of (A.2) therein concerns spectral assumptions.
    Noting that spectral mapping holds for the semigroup $T(t)|_{X_s}$ (Theorem~\ref{thm:spectrum}) and that the spectrum of $A|_{X_s}$ is confined to the region $|z| \leq C_1 e^{-h \mathrm{Re} \, z} $ for large enough $|z|$ (Theorem I.4.1, \cite{diekmann2012delay}), we may verify the spectral assumptions directly on the generator, as in Remark 2.8, \cite{buza2025smooth} (simply replace its sectoriality assumption with the above). 
    (A.2) therein is hence fulfilled by \ref{A4} herein.

    This yields a $C^k$ submersion $\pi: U \to X_{\widetilde{\Sigma}}$ on some small enough neighbourhood $U$ of the origin in $X_s$ that satisfies $\pi(0) = 0$, 
    \begin{equation}
        D\pi(0)|_{X_{\widetilde{\Sigma}}} \in \mathrm{Aut}(X_{\widetilde{\Sigma}})
        \label{eq:Dpi0}
    \end{equation}
    and
    \begin{equation}
        \pi \circ \psi_t = \vartheta_t \circ \pi \qquad \text{for all } t \geq 0
        \label{eq:pi_invariance}
    \end{equation}
    on some open $V \subset U$ containing the origin; where $\{ \vartheta_t \}_{t \geq 0}$ is a semiflow on $X_{\widetilde{\Sigma}}$, with a fixed point at $0$.
    Up to perhaps confining $W^{\Omega,s}$ to a smaller neighbourhood of the origin, we may assume $\widetilde{P}_s (W^{\Omega,s}) \subset V$.
    Precomposing \eqref{eq:pi_invariance} with $\widetilde{P}_s|_{W^{\Omega,s}}$, we obtain (c.f.\ the proof of Lemma~\ref{lemma:psi})
    \begin{displaymath}
        \pi \circ \widetilde{P}_s \circ \varphi_t |_{W^{\Omega,s}} = \vartheta_t \circ \pi \circ \widetilde{P}_s|_{W^{\Omega,s}}, \qquad t \geq 0.
    \end{displaymath}
    By Lyapunov stability of $0$ with respect to  $\varphi_t |_{W^{\Omega,s}}$ and local invariance of $W^{\Omega,s}$, we may choose a neighbourhood $\widetilde{V} \subset W^{\Omega,s}$ of the origin such that $\varphi_t(\widetilde{V}) \subset W^{\Omega,s}$ for all $t \geq 0$.
    We hence arrive at
    \begin{equation}
        \pi \circ \widetilde{P}_s |_{W^{\Omega,s}}  \circ \varphi_t = \vartheta_t \circ \pi \circ \widetilde{P}_s|_{W^{\Omega,s}}, \qquad t \geq 0,
        \label{eq:invariance_finale}
    \end{equation}
    which holds on $\widetilde{V}$.
    
    Due to the restriction map, $\pi \circ \widetilde{P}_s |_{\widetilde{V}}$ is only of class $C^\ell$.
    We check that it is a submersion at $0$.
    We have
    \begin{equation}
        D \big( \pi \circ \widetilde{P}_s |_{\widetilde{V}} \big) (0) = D \pi (0) \widetilde{P}_s  D \imath_{W^{\Omega,s}}(0),
        \label{eq:DpiP}
    \end{equation}
    where $\imath_{W^{\Omega,s}}:W^{\Omega,s} \xhookrightarrow{} X$ is the inclusion. 
    Noting the tangency of $W^{\Omega,s}$ to $X_\Omega \cap X_s $ at $0$,
    the map \eqref{eq:DpiP} is
    \begin{equation}
        X_\Omega \cap X_s \xhookrightarrow{} X_s \xrightarrow{D \pi (0)} X_{\widetilde{\Sigma}}.
        \label{eq:quoteforkernel}
    \end{equation}
    Considering that $X_{\widetilde{\Sigma}} \subset X_\Omega \cap X_s$, \eqref{eq:Dpi0} indeed confirms that $\pi \circ \widetilde{P}_s |_{\widetilde{V}}$ is a $C^\ell$ submersion on a neighbourhood of $0$.
    The kernel of \eqref{eq:quoteforkernel} can be identified as $X_{\Sigma}$.
    
    Hence, $\big(   \pi \circ \widetilde{P}_s |_{\widetilde{V}}  \big)^{-1}(0)$ is a locally invariant (by \eqref{eq:invariance_finale}) $C^\ell$ submanifold of $\widetilde{V}$, and hence of $X$, tangent to $X_{\Sigma}$ at $0$. 
    The joint smoothness of the restricted semiflow follows from the same argument as in the proof of Theorem~\ref{thm:main1}.
\end{proof}

\section{Proof of Theorem~\ref{thm:IM} and Corollary~\ref{corollary:smalldelays}}
\label{sect:IM}

The proof of Theorem~\ref{thm:IM} proceeds analogously to the proof of Theorem~\ref{thm:main1} in Section~\ref{sect:proofmain1}.
We simply verify the assumptions of Theorem 1.1, \cite{chen1997invariant}, now for the full, unaltered semiflow. 

\begin{proof}[Proof of Theorem~\ref{thm:IM}]
    Under the global Lipschitz assumption, the solutions \eqref{eq:semiflow_pert} exist for all times (just as in Lemma~\ref{lemma:globalexist}), hence the domain of $\varphi$ is $\mathcal{D}^\varphi = \mathbb{R}^{\geq 0} \times X$.
    Repeating the proof of Lemma~\ref{lemma:nonlinearity_sf}
    with
    $N: \mathbb{R}^{\geq 0} \times X \to X$,
    \begin{displaymath}
        N(t,u ) := \imath^{-1} \int_0^t T^{\odot *}(t-s) R \circ \varphi_s(u) \, ds,
    \end{displaymath}
    we obtain
    \begin{equation}
        \lip\big(N(1/\nu,\cdot)\big) \leq \frac{2M^2}{\omega} \left( e^{\omega /\nu} - 1 \right)  e^{\omega /\nu} \lip(R),
        \label{eq:LipN_IM}
    \end{equation}
    having used \eqref{eq:replacement} in place of \eqref{eq:replaced}.

    The assumptions \ref{H1CHT}-\ref{H3CHT} are now easily verified, just as in the proof of Theorem~\ref{thm:CHT}.
    In place of \ref{H4CHT}, we shall verify \eqref{eq:H4Lipcond} of Remark~\ref{remark:altcondCHT}.
    This follows from \eqref{eq:nu_cond_2} and the condition   $\nu < (\alpha - \beta - \varepsilon_1 - \varepsilon_2)/ \ln 2$.
    Indeed, the latter implies
    \begin{displaymath}
      \frac{1}{2} e^{( \alpha - \varepsilon_1)/\nu} <  e^{( \alpha - \varepsilon_1) / \nu} - e^{(\beta + \varepsilon_2) / \nu},
    \end{displaymath}
    whereas \eqref{eq:nu_cond_2} is simply
    \begin{displaymath}
        \left(\sqrt{K_1} + \sqrt{K_2} \right)^2\lip\big(N(1/\nu,\cdot)\big) < \frac{1}{2} e^{( \alpha - \varepsilon_1) / \nu}.
    \end{displaymath}
    This shows \eqref{eq:H4Lipcond}; and hence all conclusions of Theorem~\ref{thm:IM}, by an application of Theorem~\ref{thm:CHT} to the unaltered semiflow (i.e., without the $\rho$ sub/superscripts).
    Since Theorem~\ref{thm:CHT} provides a rate of attraction strictly less than $\alpha \leq 0$, we must have that the resulting manifold is exponentially attracting. 
\end{proof}

\begin{proof}[Proof of Corollary~\ref{corollary:smalldelays}]
    We shall produce a sufficiently large spectral gap contained in the region $\mathrm{Re} \, z < 0$.
    In this region, from \eqref{eq:detDelta}, we have
    \begin{displaymath}
        |\det(\Delta (z))| \geq |z|^n \left( 1 - \sum_{j=1}^n \left( \frac{e^{-h \mathrm{Re} \, z}}{|z|}\right)^j \prod_{i =1}^j \mathrm{TV}(\eta_{ji})  \right).
    \end{displaymath}
    We apply Young's inequality to each term in the summation.
    Explicitly, for $j = 1,\ldots,n-1$, 
    \begin{displaymath}
        \left( \frac{e^{-h \mathrm{Re} \, z}}{|z|}\right)^j \prod_{i =1}^j \mathrm{TV}(\eta_{ji}) \leq \frac{j}{n} (\varepsilon_j)^{-\frac{n}{j}} \left( \frac{e^{-h \mathrm{Re} \, z}}{|z|}\right)^n  + \frac{n-j}{n} \left( \varepsilon_j \prod_{i =1}^j \mathrm{TV}(\eta_{ji}) \right)^{\frac{n}{n-j}},
    \end{displaymath}
    for some $\varepsilon_j >0$.
    We choose $\varepsilon_j$ such that
    \begin{displaymath}
        \frac{n-j}{n} \left( \varepsilon_j \prod_{i =1}^j \mathrm{TV}(\eta_{ji}) \right)^{\frac{n}{n-j}} = \frac{1}{2n}.
    \end{displaymath}
    Therefore, we arrive at
    \begin{displaymath}
       |\det(\Delta (z))| \geq |z|^n \left( \frac{1}{2} -  \left( \frac{e^{-h \mathrm{Re} \, z}}{|z|}\right)^n Q \right),
    \end{displaymath}
    with $Q$ as in \eqref{eq:Q}.
    Hence, if 
    \begin{equation}
        h < \frac{1}{-\mathrm{Re} \, z} \ln \left( (2Q)^{-\frac{1}{n}} |z|\right),
        \label{eq:h_cond2}
    \end{equation}
    then $|\det(\Delta (z))| > 0$. 

    The condition \eqref{eq:h_cond} on $h$ implies that \eqref{eq:h_cond2} holds on the vertical strip 
    \begin{displaymath}
        \{ z \in \mathbb{C} \; | \; \mathrm{Re} \, z \in [-r,\gamma] \}.
    \end{displaymath}
    This shows the existence of a spectral gap of size $r+\gamma$, containing $\gamma$.
    We hence must have $\alpha - \beta >r + \gamma$.
    The existence of $\nu>0$ satisfying the assumptions of Theorem~\ref{thm:IM} now follows from the choice of $r$ in \eqref{eq:r} and Remark~\ref{remark:IM_altassumption}.
    The rate of attraction claim follows from the fact that we could have equivalently defined $\gamma = -r$ (since both choices are within the same spectral gap).
\end{proof}

\section{Conclusions}

In this paper, we have derived existence and regularity results for spectral submanifolds (SSMs) in time delay systems.
The main purpose in doing so was to provide rigorous
mathematical support to existing works on the subject \cite{szaksz2024reduction,szaksz2025spectral} and those to come.
The statements of all main results are gathered in Section~\ref{sect:statement}.
Fleshing out the theoretical aspects 
is perhaps more practically pertinent
here than in the case of other infinite-dimensional dynamical systems (e.g., PDEs), since the embedding defining the invariant manifold can be explicitly constructed in a way that preserves the infinite-dimensional nature of the problem \cite{szaksz2025spectral}.
This observation 
provided the main theme of examples explored herein. 
In particular, the procedure described in the examples (Section~\ref{sect:examples}) is approximate only in the sense of truncating the Taylor expansion of the embedding, 
no other form of discretization was performed.

Explicit formulas for the Taylor coefficients of the embedding and the reduced dynamics in the graph style of parameterization are given in Lemma~\ref{lemma:expansions}.
These formulas are completely rigorous under spectral gap assumptions ensuring the necessary smoothness of the manifold, but the coefficients themselves could, in practice, be computed under less strict non-resonance conditions. 
We demonstrated, through the course of Examples~\ref{sect:example1} and \ref{sect:example2}, how these formulas 
are  utilized in
explicit calculations.
Moreover, based on the theory of Section~\ref{sect:IM_statement}, we derived conditions on the nonlinearity (Example~\ref{sect:example1}) or delay (Example~\ref{sect:example2}) under which the SSM extends to a globally defined inertial manifold, which is then uniquely characterized by Lyapunov exponents of its trajectories.
In the final example (Section~\ref{sect:szaksz}), we demonstrated how the existence (but no longer uniqueness) of inertial manifolds may be concluded if the nonlinearity is not globally Lipschitz but the existence and size of the global attractor is known.

The examples highlight 
that the dimension of an inertial manifold need not be equal to the dimension of the physical configuration of the system, which is perhaps a novelty that has not been explicitly pointed out before (to the authors' knowledge).

The conditions for the existence of SSMs locally about a fixed point are quite permissive.
Indeed, $C^1$ pseudo-unstable manifolds exist whenever $f$ (from \eqref{eq:DDE_classical}) is of class $C^1$, without any additional assumptions.  
Even the non-resonance conditions for Theorem~\ref{thm:main2}, spelled out in \ref{A4}, hold generically.
In contrast, global theory, i.e., the existence of inertial manifolds, hinges on stringent conditions which may be difficult to verify in practice.
Even in the academic example of Section~\ref{sect:szaksz}, the permissible size of the attractor is rather small. 
Of course, if one has full control over the delay or the nonlinearity, the conditions can be easily verified, as already observed in \cite{driver1968ryabov}.

The local theory described herein should extend to the case when the fixed point is replaced by a periodic orbit in a straightforward fashion, by applying similar invariant manifold results to the Poincaré map.
More generally, neighborhoods of normally hyperbolic invariant manifolds could also be considered, but this necessarily hinges on a uniform spectral gap condition (in the Sacker-Sell sense), which would be difficult to verify in practice.
Perhaps a more interesting question is whether the picture described in the work of Haller et al.~\cite{haller2023nonlinear} holds near a fixed point of a DDE.
Namely, if there exists a family of invariant manifolds of fractional smoothness tangent to the slow subspace that tessellate the complement of the strong stable manifold in phase space.
The practical significance of this question is intrinsically tied to data-driven techniques, whereas in the theoretical realm, it poses an interesting problem due to the failure of Sternberg-type linearization results  in the DDE setting, which the original proof in \cite{haller2023nonlinear} relies on.

\appendix
\section{Preliminaries}
\label{sect:prelim}

In this appendix, we give a brief overview of some basic concepts appearing in the work that might be unfamiliar to some readers.
Section~\ref{sect:BV} is largely based on Appendix I of \cite{diekmann2012delay}; 
the definitions in Section~\ref{sect:SGprelim} were taken from \cite{buhler2018functional} and \cite{marsden1976hopf};
whereas Section~\ref{sect:weakstar_int} follows III.1 and A.II.3.13 of \cite{diekmann2012delay}.

\subsection{Functions of bounded variation}
\label{sect:BV}

A function $\eta:[a,b] \to \mathbb{R}$ is said to be of \textit{bounded variation}, $\eta \in \bv\big([a,b];\mathbb{R}\big)$, if its total variation,
\begin{equation}
    \mathrm{TV}(\eta) = \sup_{P \in \mathcal{P}(a,b)} \sum_{j=1}^{m_P} |\eta(\sigma_j)-\eta(\sigma_{j-1})|,
    \label{eq:TV}
\end{equation}
is bounded; here the supremum is taken over all partitions $P = \{ \sigma_0,\ldots,\sigma_{m_P} \} \in \mathcal{P}(a,b)$ of $[a,b]$ with $a = \sigma_0 < \sigma_1<\cdots<\sigma_{m_P}=b$.
The width of a partition is
\begin{displaymath}
    \mathrm{w}(P) = \max_{1\leq j \leq m_P} (\sigma_j-\sigma_{j-1}).
\end{displaymath}
A function $\eta \in \bv\big([a,b];\mathbb{R}\big)$ is called \textit{normalized} if $\eta(a) = 0$ and $\eta$ is continuous from the right on $(a,b)$.
We write $\eta \in \nbv\big([a,b];\mathbb{R}\big)$ to express that $\eta$ is of normalized bounded variation.
These definitions extend to the complex- and multidimensional case in the obvious fashion (through real/complex parts and component-wise).

Let $\zeta :[a,b] \to \mathbb{C}^{n \times n}$ and $u : [a,b]\to \mathbb{C}^{n}$ be two given maps.
Then for any partition $P \in \mathcal{P}(a,b)$ and any choice of $\tau_j \in [\sigma_{j-1},\sigma_j]$ we denote by
\begin{displaymath}
    S(P,\zeta,u) := \sum_{j=1}^{m_P} [\zeta(\sigma_j)-\zeta(\sigma_{j-1})]u(\tau_j).
\end{displaymath}
If there exists $c \in \mathbb{C}^n$ such that for all $\varepsilon > 0$ there exists $\delta>0$ for which $|c-S(P,\zeta,u)| < \varepsilon$ holds for all $P \in \mathcal{P}(a,b)$ with $\mathrm{w}(P) < \delta$ and all choices of $\tau_j $, we say that $u$ is Riemann-Stieltjes integrable with respect to $\zeta$ over $[a,b]$ and write
\begin{displaymath}
    c = \int_a^b d\zeta(t) u(t).
\end{displaymath}

The (original form of the) Riesz representation theorem \cite{Riesz1909Sur} asserts that any continuous linear functional on $C \big( [a,b]; \mathbb{C} \big)$ is represented by a unique element of $\nbv \big( [a,b] ;\mathbb{C}\big)$.
This is used in establishing \eqref{eq:RieszDf0} and \eqref{eq:pairing}. 
For the reader more familiar with the modern version of Riesz's theorem (e.g., Theorem 3.15, \cite{salamon2016measure}), we remark that this is equivalent to the statement that the dual space of $\mathbb{C}$-valued continuous functions on $[a,b]$ is (isometrically) isomorphic to the space of finite complex Borel measures on $[a,b]$; with the equivalence given by setting $\zeta: t \mapsto \mu([a,t]) \in \nbv$ for $\mu$ a Borel measure.

A function $\eta \in \nbv([a,b]; \mathbb{C})$ belongs to $ \sbv([a,b]; \mathbb{C})$ if there exists $t \in(0,b]$ such that $\lim_{s \searrow t}\eta(s) \neq \eta(t)$ and $\eta(s) = \eta(t)$ for all $s \in [t,b]$. 
Given a kernel $\zeta\in \nbv([a,b]; \mathbb{C}^{n \times n})$ determining $\Delta$ through \eqref{eq:characteristic_matrix},
we say $\zeta \in \sbv$ if the characteristic function, which can be written as (c.f.\ Section V.8, \cite{diekmann2012delay})
\begin{displaymath}
    \det \Delta(z) = z^n + \sum_{j = 1}^n \int_0^{\tau_j} e^{-z \theta} d \eta_j (\theta) z^{n-j}
\end{displaymath}
for some $\tau_j \leq j h$ and $\eta_j \in \nbv([0,\tau_j];\mathbb{C})$, satisfies in addition the requirement $\eta_j \in \sbv([0,\tau_j];\mathbb{C})$.
This is a fairly general concept; e.g., $\zeta \in \sbv$ if it is purely atomic (that is, \eqref{eq:DDE_classical1} consists of discrete delays only), or if $\det_* \zeta \in \sbv$, where $\det_*$ denotes the determinant with respect to the convolution product (Definition V.5.2, \cite{diekmann2012delay}).

\subsection{Semiflows and semigroups} 
\label{sect:SGprelim}

Let $M$ be a Banach manifold. 
A family of maps $\{\varphi_t\}_{t \geq 0}$, $\varphi_t:M \to M$, is called a \textit{semiflow} on $M$ if
\begin{subequations} \label{eq:semiflow}
    \begin{align}
    \varphi_0 &= \mathrm{id}_M, \label{eq:semiflow1}\\ 
    \varphi_t \circ \varphi_s &= \varphi_{t+s}, \qquad \text{for all } t,s \geq 0. \label{eq:semiflow2}
\end{align}\end{subequations}
If \eqref{eq:semiflow2} holds for all $t,s \in \mathbb{R}$, $\varphi$ is called a \textit{flow}.
A \textit{local semiflow} is a map $\varphi:\mathcal{D}^\varphi \to M$ satisfying \eqref{eq:semiflow} wherever defined, where  $\mathcal{D}^\varphi \subset [0,\infty) \times M$ is an open subset such that $(0,p) \in \mathcal{D}^\varphi$ for all $p \in M$.
Let $\mathcal{D}_t^\varphi : = \{p \in M \, | \, (t,p) \in \mathcal{D}^\varphi \}$ and $\mathcal{D}_p^\varphi : = \{t \in [0,\infty) \, | \, (t,p) \in \mathcal{D}^\varphi \}$.  
The semiflow is called \textit{maximal} if $t \in \mathcal{D}^\varphi_p$ and $s\in \mathcal{D}^\varphi_{\varphi_t(p)}$ imply that $s+t \in \mathcal{D}^\varphi_p$.

A \textit{semigroup} (of operators on a Banach space $(X,|\cdot|)$) is a map $T: [0,\infty) \to \mathcal{L}(X)$ satisfying the semiflow property \eqref{eq:semiflow}.
A semigroup is \textit{strongly continuous} if it moreover satisfies
\begin{displaymath}
    \lim_{t \searrow 0}\vert T(t) x - x \vert = 0, \qquad \text{for all } x \in X.
\end{displaymath}
The \textit{infinitesimal generator} of $T$ is the linear operator $A : \mathrm{dom}(A) \to X$ given by
\begin{equation}
    Ax = \lim_{t \searrow 0} \frac1t (T(t) x - x)
    \label{eq:generator_sg}
\end{equation}
on its domain 
\begin{displaymath}
    \dom(A) = \left\{x \in X \; \Big| \; \text{the limit } \lim_{t \searrow 0} \frac1t (T(t) x - x) \text{ exists} \right\}.
\end{displaymath}
If $T$ is strongly continuous, $A$ is necessarily a closed operator and its domain $\dom(A)$ is dense in $X$. 

Given an unbounded operator $A:\dom(A) \to X$ with a dense domain $\dom(A) \subset X$, 
its dual operator,
\begin{displaymath}
    A^* : \dom(A^*) \to X^*, \qquad \dom(A^*) \subset X^*,
\end{displaymath}
is defined via its domain, the linear subspace
\begin{equation}
    \dom(A^*) = \left\{ x^* \in X^* \; \bigg| \; \parbox{7cm}{there exists a constant $c \geq 0$ such that \\ $|\langle x^*,Ax\rangle| \leq c |x|_X$ for all $x \in \dom(A)$}  \right\},
    \label{eq:dual_domain}
\end{equation}
on which its action is determined by the relation
\begin{displaymath}
    \langle A^* x^*,x \rangle = \langle x^*,Ax\rangle, \qquad \text{for all } x \in \dom(A). 
\end{displaymath}

\subsection{Weak* integration}
\label{sect:weakstar_int}

Suppose $X$ is a Banach space, denote its dual by $X^*$.
Let $I \subset \mathbb{R}$ be an interval and let $f: I \to X^{ *}$ be such that
\begin{equation}
    \langle f( \cdot ),x\rangle_{X^{ *},X } \in L^1(I,\mathbb{C}), \qquad \text{for all } x \in X.
    \label{eq:weakstarintreq}
\end{equation}
Then,
\begin{displaymath}
    \int_I f(t) \,dt
\end{displaymath}
is defined as the unique element of $X^{ *}$ satisfying
\begin{displaymath}
    \left\langle \int_I f(t) \,dt,x \right\rangle_{X^{ *},X} = \int_I \langle f(t),x\rangle_{X^{ *},X} \, dt  \qquad \text{for all } x \in X.
\end{displaymath}
See A.II.3.13 of \cite{diekmann2012delay}, for instance, for an argument substantiating this claim using the Closed Graph Theorem.

In the main text, weak* integration is typically used in the context when (using the notation of Section~\ref{sect:DDEs})
\begin{displaymath}
    f(t,s) = T_0^{\odot *}(t-s) F \circ \varphi_s(u), \qquad \text{for some } u \in O.
\end{displaymath}
Since $f(t,\cdot):[0,t] \to X^{\odot *}$ is norm continuous for any $t \in \mathcal{D}^\varphi_u$, \eqref{eq:weakstarintreq} is clearly met.
Moreover, $t \mapsto \int_0^tf(t,s) \, ds$ is norm continuous and takes values in $\imath(X)$, see Lemma III.2.1, \cite{diekmann2012delay}.

\subsection{Notions of smoothness}
\label{sect:notions_of_smoothness}

Let us gather the notions of smoothness used throughout this work.

Let $(X,|\cdot|_X)$ and $(Y,|\cdot|_Y)$ be Banach spaces.
Let $f : O \to Y$ denote a map, with $O \subset X$ open.
We say $f$ is \textit{locally Lipschitz} if for every $u \in O$, there exists an open neighbourhood $U$ of $u$ and a constant $L \geq 0$ for which $|f(v)-f(w)|_Y \leq L |v-w|_X$ for all $v,w \in U$.
We say $f$ is \textit{globally Lipschitz} if $U = O$ in the above.
In this case, we denote by $\lip(f)$ the global Lipschitz constant.

$f$ is called \textit{Fréchet differentiable} at $u \in O$ if there exists a bounded linear operator $A \in \mathcal{L}(X;Y)$ such that
\begin{displaymath}
    \lim_{|\delta|_X \to 0} \frac{| f(u+\delta)-f(u)-A\delta|_Y}{|\delta|_X} = 0.
\end{displaymath}
In this case, we denote $Df(u) = A$.
If $f$ is Fréchet differentiable for all points in $O$, it is said to be of class $C^1$ if the map $u \mapsto Df(u)$ is continuous from $O $ to $\mathcal{L}(X;Y)$.
Inductively, $f$ is of class $C^k$, $k \in \mathbb{N}$ if $Df$ is of class $C^{k-1}$.
$f$ is said to be of class $C^\infty$ if it is $C^k$ for all $k \in \mathbb{N}$.

$f$ belongs to the space $C^k_b(O)$ if it is $C^k$, and moreover, $u \mapsto D^jf(u)$ is bounded on $O$ for each $0 \leq j \leq k$.
The space $C^k_b(O)$ is Banach with the norm $\sup \{ \Vert D^jf(u) \Vert \; | \; u \in O, \; 0\leq j \leq k \}$.

\section{Smoothness of the semiflow}
\label{sect:semiflow_smoothness}

Here, we derive, for completeness, $C^k$-in-space smoothness properties of the semiflow generated by a delay equation \eqref{eq:DDE_classical} with $f \in C^k(X;\mathbb{R}^n)$, for integers $k \geq 2$.

Recall the following result about the smoothness of composition maps.

\begin{theorem}[Theorem 6, \cite{Irwin72}] \label{thm:irwin}
    Let $A$ be a compact topological space and let $Y,Z$ denote Banach spaces, and $U \subset Y$ an open subset.
    Suppose $g : U \to Z$ is a $C^k$ map, $k \in \mathbb{N}_0$.
    Then $\hat{g} : C(A;U) \to C(A;Z)$ given by $\hat{g}(f) = g \circ f$ is $C^k$.
\end{theorem}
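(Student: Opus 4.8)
The plan is to prove the statement by induction on $k$, following the classical argument for the smoothness of substitution (Nemytskii) operators, sometimes called the $\Omega$-lemma. Throughout, $C(A;Y)$ carries the supremum norm $\Vert f \Vert_\infty = \sup_{a\in A}|f(a)|_Y$ (finite because $f(A)$ is compact, hence bounded), making it a Banach space; the same for $C(A;Z)$ and $C(A;\mathcal{L}(Y;Z))$, the latter being a Banach space since $Z$ is. I would first observe that $C(A;U)$ is open in $C(A;Y)$: for $f_0\in C(A;U)$ the compact set $f_0(A)$ has positive distance $\delta_0$ from the closed set $Y\setminus U$, so the $\delta_0$-ball around $f_0$ in $C(A;Y)$ lies in $C(A;U)$. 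Hence it makes sense to ask that $\hat{g}$ be $C^k$ there.

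\emph{Base case $k=0$.} Here I would show $\hat{g}$ is continuous at an arbitrary $f_0\in C(A;U)$. Given $\varepsilon>0$, continuity of $g$ at each point of the compact set $f_0(A)$, together with extraction of a finite subcover of $f_0(A)$ by suitably small balls, yields $\delta>0$ such that, whenever $\Vert h\Vert_\infty<\delta$, one has $f_0(a)+h(a)\in U$ and $|g(f_0(a)+h(a))-g(f_0(a))|_Z<\varepsilon$ for every $a\in A$. Taking the supremum over $a$ gives $\Vert\hat{g}(f_0+h)-\hat{g}(f_0)\Vert_\infty\le\varepsilon$.

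\emph{Inductive step.} Assume the statement holds for $C^{k-1}$ maps into any Banach space. Let $g\in C^k(U;Z)$, so $Dg:U\to\mathcal{L}(Y;Z)$ is $C^{k-1}$. The candidate derivative of $\hat{g}$ at $f_0$ is the bounded linear operator $C(A;Y)\to C(A;Z)$, $h\mapsto\big(a\mapsto Dg(f_0(a))h(a)\big)$ (its norm is at most $\sup_{a}\Vert Dg(f_0(a))\Vert$). To establish Fréchet differentiability I would apply, pointwise with $u=f_0(a)$ and $v=h(a)$, the integral mean value estimate
\begin{displaymath}
    \big| g(u+v)-g(u)-Dg(u)v\big|_Z \le |v|_Y\,\sup_{t\in[0,1]}\big\Vert Dg(u+tv)-Dg(u)\big\Vert ,
\end{displaymath}
valid once the segment $[u,u+v]$ lies in $U$, and then bound the supremum on the right uniformly in $a\in A$ by a finite-subcover argument (using continuity of $Dg$ at each point of $f_0(A)$, and shrinking $\delta$ so that all the relevant segments $[f_0(a),f_0(a)+h(a)]$ stay inside $U$). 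This gives $D\hat{g}(f_0)h=\big(a\mapsto Dg(f_0(a))h(a)\big)$.

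It remains to check that $D\hat{g}$ is $C^{k-1}$, which promotes $\hat{g}$ to $C^k$. Consider the pointwise evaluation map $B:C(A;\mathcal{L}(Y;Z))\times C(A;Y)\to C(A;Z)$, $B(\Phi,h)(a)=\Phi(a)h(a)$; it is bounded bilinear since $\Vert B(\Phi,h)\Vert_\infty\le\Vert\Phi\Vert_\infty\Vert h\Vert_\infty$, hence $C^\infty$, and its currying $\mathrm{cur}(B):C(A;\mathcal{L}(Y;Z))\to\mathcal{L}\big(C(A;Y);C(A;Z)\big)$, $\Phi\mapsto B(\Phi,\cdot)$, is bounded linear, hence $C^\infty$. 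Writing $\widehat{Dg}$ for the substitution operator attached to $Dg$, the displayed formula for $D\hat g(f_0)$ yields the factorization $D\hat{g}=\mathrm{cur}(B)\circ\widehat{Dg}$ as maps $C(A;U)\to\mathcal{L}\big(C(A;Y);C(A;Z)\big)$. Since $Dg$ is $C^{k-1}$, the inductive hypothesis (with target Banach space $\mathcal{L}(Y;Z)$) gives that $\widehat{Dg}$ is $C^{k-1}$, so $D\hat{g}$, a composition of a $C^{k-1}$ map with a bounded linear one, is $C^{k-1}$; hence $\hat{g}$ is $C^k$, closing the induction. The main obstacle is the differentiability estimate: in infinite dimensions $Dg$ need not be uniformly continuous on a closed bounded neighbourhood of $f_0(A)$ (such sets are not compact), so the uniform-in-$a$ control of $\sup_{t\in[0,1]}\Vert Dg(f_0(a)+th(a))-Dg(f_0(a))\Vert$ must be wrung out of a finite subcover of the compact set $f_0(A)$ rather than from naive uniform continuity.
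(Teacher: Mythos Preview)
Your proof is correct and follows the standard $\Omega$-lemma argument. Note, however, that the paper does not actually prove this statement: it is quoted as Theorem~6 of \cite{Irwin72} and used as a black box, so there is no ``paper's own proof'' to compare against. Your inductive argument via the factorization $D\hat{g} = \mathrm{cur}(B)\circ\widehat{Dg}$ is precisely the classical route (and indeed the one Irwin takes), and your emphasis on extracting uniformity from a finite subcover of the compact image $f_0(A)$---rather than from uniform continuity on a ball---is exactly the point where compactness of $A$ enters.
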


We remark that the first derivative $D \hat{g} : C(A;U) \to \mathcal{L}\big( C(A;Y);C(A;Z) \big)$ is given by
\begin{displaymath}
    \big(D\hat{g} (f) [\eta] \big) (x) = Dg(f(x))[\eta(x)], \qquad f \in C(A;U), \; \eta \in C(A;Y).
\end{displaymath}
Thus, 
\begin{equation}
    \Vert D\hat{g} (f) \Vert \leq \sup_{x \in A} \Vert Dg(f(x)) \Vert.
    \label{eq:Dghat}
\end{equation}

Recall also that for an open subset $U$ of a Banach space $Y$, we have that $C(A;U)$ is an open subset of $C(A;Y)$. 

\begin{proposition} \label{prop:sf_smoothness}
    Suppose $f$ in \eqref{eq:DDE_classical} is $C^k$, with $k \in \mathbb{N}$.
    Let $\varphi: \mathcal{D}^\varphi \to O $ denote the semiflow from Proposition~\ref{prop:semiflow}.
    Given $u \in O$, there exists $t >0$ small enough and a neighbourhood $U \subset \mathcal{D}_t^\varphi$ of $u$ such that $\varphi_t|_{U}$ is $C^k$.
    Moreover, $(s,u)\mapsto D^j \varphi_s(u)[v_1,\ldots,v_j]$ is jointly continuous on $ [0,t] \times U$ for each integer $j \leq k$, and fixed $v_i \in X$, $1 \leq i \leq j$.
\end{proposition}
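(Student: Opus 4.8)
The idea is to bootstrap from the variation-of-constants formula \eqref{eq:variationofconstants} by setting up a fixed-point problem in the Banach space $C([0,t];X)$ (or rather a closed ball therein) whose solution is the flow and whose parameter is the initial condition $u$, and then invoke the smooth-dependence-on-parameters version of the contraction mapping principle together with Theorem~\ref{thm:irwin}. First I would fix $u_0 \in O$ and choose $\delta>0$ so that $\overline{B_{2\delta}(u_0)}^X \subset O$; by continuity of $\varphi$ (Proposition~\ref{prop:semiflow}\ref{sfitem:1}) and local Lipschitz continuity of $F$, pick $t>0$ small enough that $\varphi_s(v) \in \overline{B_{2\delta}(u_0)}^X$ for all $s \in [0,t]$ and $v \in \overline{B_{\delta}(u_0)}^X =: U$, and small enough that the operator
\begin{displaymath}
    \big[\mathcal{T}(v,x)\big](s) = T_0(s) v + \imath^{-1} \int_0^s T_0^{\odot *}(s-\tau) F(x(\tau)) \, d\tau, \qquad s \in [0,t],
\end{displaymath}
is a uniform (in $v$) contraction on the closed ball $\mathcal{B} := \{ x \in C([0,t];X) \; | \; x(s) \in \overline{B_{2\delta}(u_0)}^X \}$, using $\lip(F)$ on $\overline{B_{2\delta}(u_0)}^X$ and the bound $\Vert \imath^{-1}\Vert = 1$, $\Vert T_0^{\odot *}(s)\Vert \le Me^{\omega s}$, exactly as in the proof of Lemma~\ref{lemma:globalexist}. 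That $\mathcal{T}$ maps $U \times \mathcal{B}$ into $\mathcal{B}$ follows from the choice of $t$ (possibly shrinking $\delta$ and $t$ once more). The unique fixed point $x = x(v) \in \mathcal{B}$ then satisfies $x(v)(s) = \varphi_s(v)$.

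Next I would address smoothness of $\mathcal{T}$ in $(v,x)$. The map $v \mapsto (s\mapsto T_0(s)v)$ is bounded linear from $X$ to $C([0,t];X)$, hence $C^\infty$. For the nonlinear part, write $F = \sum_i \langle f(\cdot),e_i\rangle r_i^{\odot *}$ and use that $f \in C^k(O;\mathbb{R}^n)$; by Theorem~\ref{thm:irwin} (with $A=[0,t]$, compact) the superposition operator $\widehat{F}: x \mapsto (s \mapsto F(x(s)))$ is $C^k$ from $C([0,t];O)$ to $C([0,t];X^{\odot*})$. Composition with the bounded linear convolution operator $\mathcal{C}: g \mapsto (s \mapsto \imath^{-1}\int_0^s T_0^{\odot *}(s-\tau)g(\tau)\,d\tau)$, which maps $C([0,t];X^{\odot*})$ into $C([0,t];X)$ (Lemma III.2.1 of \cite{diekmann2012delay}, already invoked in Lemma~\ref{lemma:globalexist}), shows $x \mapsto \mathcal{C}\circ\widehat{F}(x)$ is $C^k$. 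Hence $\mathcal{T}: U \times \mathcal{B} \to \mathcal{B}$ is $C^k$, and (shrinking $t$ so the uniform contraction constant is $<1$) the uniform contraction principle with parameters — in the $C^k$ form, e.g.\ Theorem~21 and its corollary in \cite{Irwin72}, or the standard implicit-function-theorem argument — yields that $v \mapsto x(v)$ is $C^k$ from $U$ to $C([0,t];X)$. Post-composing with the bounded linear evaluation $\mathrm{ev}_t: C([0,t];X)\to X$ gives $\varphi_t|_U = \mathrm{ev}_t \circ x(\cdot) \in C^k(U;X)$. The joint continuity of $(s,v)\mapsto D^j\varphi_s(v)[v_1,\dots,v_j]$ on $[0,t]\times U$ follows because $v \mapsto x(v)$ is $C^k$ into $C([0,t];X)$, so $v \mapsto D^j x(v)$ is continuous into $\mathcal{L}^{(j)}\big(X;C([0,t];X)\big)$, while for each fixed tuple $(v_1,\dots,v_j)$ the map $s \mapsto D^jx(v)[v_1,\dots,v_j](s)$ is continuous; a standard $\varepsilon/2$ argument combining these two (uniform in $s$ continuity in $v$, and continuity in $s$ for fixed $v$) gives joint continuity.

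\textbf{Main obstacle.} The genuine content is the verification that the superposition operator $\widehat{F}$ is $C^k$, not merely $C^1$, as a map between spaces of continuous functions; this is precisely where Theorem~\ref{thm:irwin} of \cite{Irwin72} does the heavy lifting, and it is worth being careful that its hypotheses apply (the target $O$ is open in $X$, $[0,t]$ is compact, and one must track that the iterates stay in $C([0,t];O)$, an open subset of $C([0,t];X)$). The only other subtlety is purely bookkeeping: the estimates controlling the contraction constant and the self-mapping property must be arranged \emph{uniformly} over $v \in U$, which is automatic here since the Lipschitz and norm bounds depend only on the fixed ball $\overline{B_{2\delta}(u_0)}^X$ and on $\lip(F)$, $M$, $\omega$, $t$ — none on $v$. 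With that in hand, everything else is the routine machinery of parameter-dependent contractions and linear post-composition.
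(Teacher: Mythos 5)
Your proposal is correct and follows essentially the same route as the paper's proof: recast the variation-of-constants formula as a parameter-dependent fixed-point problem on $C([0,t];X)$, use Theorem~\ref{thm:irwin} to get $C^k$ smoothness of the superposition operator, impose a smallness condition on $t$ to make the relevant linearization invertible, and conclude by evaluation at $t$ plus the same $\varepsilon/2$ argument for joint continuity. The only (immaterial) differences are that you invoke the $C^k$ uniform contraction principle where the paper applies the Implicit Function Theorem to $(u,y)\mapsto y-\mathcal{F}(u,y)$, and that you work with the pair $(T_0,F)$ rather than $(T,R)$.
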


\begin{proof}
    Let $M, \omega$ be as in the proof of Lemma~\ref{lemma:globalexist}.
    By continuity of $\varphi$ (Proposition~\ref{prop:semiflow}\ref{sfitem:1}) and continuity of $v \mapsto DR(v)$, we may choose $t > 0$ small enough so that
    \begin{equation}
        \frac{M}{\omega} \left( e^{\omega t} - 1 \right) \sup_{s \in [0,t]} \Vert DR (\varphi_s(u)) \Vert < 1.
        \label{eq:smoothnesspropcondition}
    \end{equation}

    Consider the map
    \begin{gather*}
        \mathcal{F} : \mathcal{D}^\varphi_t \times C([0,t];O) \to  C([0,t];X), \\
        (u,y) \mapsto \left( s \mapsto y(s) -  T(s)u - \imath^{-1} \int_0^s T^{\odot *}(s-\tau) R (y(\tau)) \, d\tau \right).
    \end{gather*}
    Note that $\mathcal{F}$ is well defined and indeed has image in $C([0,t];X)$, by Lemma III.2.1, \cite{diekmann2012delay}. 
    Then, by Proposition~\ref{prop:semiflow}, since $u \in \mathcal{D}^\varphi_t$,
    \begin{displaymath}
        \mathcal{F}\big(u, s \mapsto \varphi_s(u) \big) = 0.
    \end{displaymath}
    Moreover, the map $\mathcal{F}$ can be written as
    \begin{displaymath}
        \mathcal{F} = L_1 - L_3 \circ \widehat{R} \circ L_2,
    \end{displaymath}
    where 
    \begin{align*}
        &L_1: \mathcal{D}^\varphi_t \times C([0,t];O) \to  C([0,t];X),& &L_1 (u,y) (s) = y(s) - T(s)u, \\
        &L_2 : \mathcal{D}^\varphi_t \times C([0,t];O) \to C([0,t];O), & &L_2(u,y) = y, \\
       &L_3 :C([0,t];X^{\odot *}) \to  C([0,t];X), & &L_3(z)(s) = \imath^{-1} \int_0^s T^{\odot *}(s-\tau) R(z(\tau)) \, d \tau.
    \end{align*}
    All of these are continuous linear maps or restrictions thereof.
    Hence, by Theorem~\ref{thm:irwin}, $\mathcal{F}$ is of class $C^k$.
    Moreover, 
    \begin{displaymath}
        D_2 \mathcal{F}\big(u,s \mapsto \varphi_s(u)\big) = \mathrm{id}_{C([0,t];X)} -L_3 \circ D \widehat{R} \big(s \mapsto \varphi_s(u)\big) .
    \end{displaymath}
    Since, by \eqref{eq:Dghat},
    \begin{displaymath}
        \left\Vert L_3 \circ D \widehat{R} \big(s \mapsto \varphi_s(u)\big) \right\Vert \leq \frac{M}{\omega} \left( e^{\omega t} - 1 \right) \sup_{s \in [0,t]} \Vert DR (\varphi_s(u)) \Vert,
    \end{displaymath}
    \eqref{eq:smoothnesspropcondition} implies that $D_2 \mathcal{F}\big(u,s \mapsto \varphi_s(u)\big)$ is a linear isomorphism.

    We may hence apply the Implicit Function Theorem (Theorem I.5.9, \cite{lang2012fundamentals}) to obtain a neighbourhood $U \subset  \mathcal{D}^\varphi_t$ of $u$ and a unique $C^k$ map $\mathcal{G}:U \to C([0,t];O)$ such that
    \begin{displaymath}
        \mathcal{F}(w, \mathcal{G}(w)) = 0 \qquad \text{for all }  w \in U.
    \end{displaymath}
    By uniqueness, and since $U \subset \mathcal{D}^\varphi_t$, 
    we necessarily have $\mathcal{G}(w) = s \mapsto \varphi_s(w)$.
    Composing $\mathcal{G}$ with the (continuous, linear) evaluation map $\mathrm{ev}_t : y \mapsto y(t)$, we have
    \begin{displaymath}
        \varphi_t = \mathrm{ev}_t \circ \mathcal{G} \qquad \text{on } U.
    \end{displaymath}
    This yields the first claim immediately.
    For the second, observe that, for $u \in U$ and $v_i \in X$, $1 \leq i \leq j $, $j \leq k$,
    \begin{displaymath}
        D^j\varphi_s(u)[v_1,\ldots,v_j] = \mathrm{ev}_s \big(  D^j \mathcal{G}(u)[v_1,\ldots,v_j]\big), \qquad s \in [0,t].
    \end{displaymath}
    Note, in particular, that $D^j \mathcal{G}(u)[v_1,\ldots,v_j]\in C([0,t];X)$.
    Hence, for $(s,u),(\tau,w) \in [0,t] \times U$,
    \begin{align*}
        \big| D^j\varphi_s(u)[v_1,\ldots,v_j] - &D^j\varphi_\tau(w)[v_1,\ldots,v_j] \big|_X  \\
        &\leq \left| D^j \mathcal{G}(u)[v_1,\ldots,v_j] - D^j \mathcal{G}(w)[v_1,\ldots,v_j] \right|_{C([0,t];X)} \\
        &+\left|  \left( D^j \mathcal{G}(u)[v_1,\ldots,v_j] \right)(s) - \left( D^j \mathcal{G}(u)[v_1,\ldots,v_j] \right)(\tau)  \right|_{X},
    \end{align*}
    from which the second assertion follows. 
\end{proof}

\begin{proposition}
    Let $t > 0$ be fixed, and assume the conditions of Propositon~\ref{prop:sf_smoothness}.
    Then $\varphi_t$ is $C^k$ on $\mathcal{D}_t^\varphi$. 
\end{proposition}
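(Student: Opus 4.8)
The plan is to upgrade the \emph{local} smoothness statement of Proposition~\ref{prop:sf_smoothness} to $C^k$ smoothness of $\varphi_t$ on the full slice $\mathcal{D}_t^\varphi$ for an arbitrary fixed $t>0$, using the semigroup property to propagate smoothness forward in time. First I would fix $t>0$ and an arbitrary point $u \in \mathcal{D}_t^\varphi$; by definition of the maximal semiflow, the compact orbit segment $\{\varphi_s(u) : 0 \le s \le t\}$ is contained in $O$. Along this segment, continuity of $\varphi$ (Proposition~\ref{prop:semiflow}\ref{sfitem:1}) together with continuity of $v \mapsto DR(v)$ lets me cover $[0,t]$ by finitely many short subintervals: there exist $0 = t_0 < t_1 < \cdots < t_m = t$ and, for each $i$, a number $\delta_i > 0$ with $t_{i+1}-t_i \le \delta_i$ such that the smallness condition \eqref{eq:smoothnesspropcondition} holds with $t$ replaced by $\delta_i$ and $u$ replaced by $\varphi_{t_i}(u)$ (using uniform continuity of $s \mapsto DR(\varphi_s(u))$ on $[0,t]$, the same $\delta$ works at every base point $\varphi_{t_i}(u)$). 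Then Proposition~\ref{prop:sf_smoothness} applies at each base point $\varphi_{t_i}(u)$ with horizon $t_{i+1}-t_i$.

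Next I would assemble these local pieces. Proposition~\ref{prop:sf_smoothness} gives, for each $i$, an open neighbourhood $U_i \subset \mathcal{D}^\varphi_{t_{i+1}-t_i}$ of $\varphi_{t_i}(u)$ on which $\varphi_{t_{i+1}-t_i}$ is $C^k$. Working backwards from $i = m-1$ to $i = 0$ and using continuity of the maps $\varphi_{t_{i+1}-t_i}$, I can shrink to an open neighbourhood $V$ of $u$ such that $\varphi_{t_i}(V) \subset U_i$ for every $i$ (and $V \subset \mathcal{D}_t^\varphi$, which is open by Proposition~\ref{prop:semiflow}). On $V$ the semigroup property \eqref{eq:semiflow2} gives the factorization
\begin{displaymath}
    \varphi_t|_V = \varphi_{t_m - t_{m-1}} \circ \varphi_{t_{m-1} - t_{m-2}} \circ \cdots \circ \varphi_{t_1 - t_0}\big|_V,
\end{displaymath}
a finite composition of $C^k$ maps (each restricted to an open set on which the preceding one maps into), hence $C^k$ on $V$ by the chain rule for Fréchet derivatives. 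Since $u \in \mathcal{D}_t^\varphi$ was arbitrary, $\varphi_t$ is $C^k$ on $\mathcal{D}_t^\varphi$, which is the claim.

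The only mild subtlety — and the point I would be most careful about — is the uniformity of the time-step: I need a single $\delta > 0$ that works as the local horizon at \emph{every} base point $\varphi_{t_i}(u)$ simultaneously, so that $[0,t]$ is covered by finitely many steps. This is exactly where uniform continuity of $s \mapsto DR(\varphi_s(u))$ on the compact interval $[0,t]$ enters: it makes $\sup_{s \in [t_i, t_i+\delta]} \Vert DR(\varphi_s(u))\Vert$ small uniformly in $i$ once $\delta$ is small, so that \eqref{eq:smoothnesspropcondition} holds along the whole chain with a fixed $\delta$. Everything else is bookkeeping: shrinking neighbourhoods backwards through a finite composition and invoking the chain rule. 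I do not expect any genuine obstacle here; the content of the smoothness argument is already in Proposition~\ref{prop:sf_smoothness}, and this proposition merely removes the "$t$ small" restriction by iteration.
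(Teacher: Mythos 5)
Your proposal is correct and follows essentially the same strategy as the paper: subdivide $[0,t]$, apply Proposition~\ref{prop:sf_smoothness} at each intermediate point $\varphi_{t_i}(u)$, pull the local neighbourhoods back to a common neighbourhood $V$ of $u$, and write $\varphi_t|_V$ as a finite composition of $C^k$ maps. The only structural difference is how the finiteness of the subdivision is secured: you take a single uniform step $\delta$ justified by compactness of the orbit segment, whereas the paper chooses the steps $t_j$ adaptively (each as large as possible subject to \eqref{eq:smoothnesspropcondition}, bounded below by $1/2$) and rules out accumulation of the partial sums $T_j$ below $t$ by showing it would force $\Vert DR(\varphi_s(u))\Vert \to \infty$. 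Your uniform-step version is, if anything, the more direct argument. One small inaccuracy: uniform continuity of $s \mapsto DR(\varphi_s(u))$ does \emph{not} make $\sup_{s \in [t_i,t_i+\delta]} \Vert DR(\varphi_s(u))\Vert$ small as $\delta \to 0$ --- that supremum is at least $\Vert DR(\varphi_{t_i}(u))\Vert$. What you actually need, and what continuity on the compact segment gives you, is that this supremum is bounded uniformly in $i$ by $\sup_{s\in[0,t]}\Vert DR(\varphi_s(u))\Vert < \infty$; the smallness in \eqref{eq:smoothnesspropcondition} then comes from the factor $\tfrac{M}{\omega}(e^{\omega\delta}-1) \to 0$. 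This does not affect the validity of the proof.
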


\begin{proof}
    Fix $u \in \mathcal{D}_t^\varphi$ and set $T_1 = 0$.
    For each $j \in \mathbb{N}$, we may select $t_j > 0$ inductively such that
    \begin{equation}
       \frac{1}{2} < \frac{M}{\omega} \left( e^{\omega t_j} - 1 \right) \sup_{s \in [T_j,T_j+t_j]} \Vert DR (\varphi_s(u)) \Vert < 1,
       \label{eq:contrad}
    \end{equation}
    where $T_j = \sum_{i < j} t_i$ for $j \geq 2$
    (using the axiom of dependent choice).    

    Since $(T_j)_{j\in \mathbb{N}}$ is a (strictly) increasing sequence, it either tends to $\infty$ or converges to a limit $T$.
    We claim $T > t$.
    For if, on the contrary, $T \leq t$, we have by \eqref{eq:contrad} and the fact that necessarily $t_j \to 0$,
    \begin{displaymath}
        \lim_{s\to T}\Vert DR (\varphi_s(u)) \Vert  = \infty,
    \end{displaymath}
    which would contradict that $R$ is $C^1$ at $\varphi_T(u) \in O$ (since $T \in \mathcal{D}^\varphi_u$).

    Hence, $T >t$, and we may choose $m \in \mathbb{N}$ large enough so that $T_{m+1} \geq t > T_m$.
    Up to perhaps shrinking $t_m$, we may assume $T_{m+1} = t$.
    For each $ 1 \leq j \leq m$, Proposition~\ref{prop:sf_smoothness} provides a neighbourhood $U_j \subset \mathcal{D}^\varphi_{t_j}$ of $\varphi_{T_j}(u)$ on which $\varphi_{t_j}$ is $C^k$.
    Then,
    \begin{displaymath}
        V :=\bigcap_{j =1}^{m} \varphi_{T_j}^{-1}(U_j)
    \end{displaymath}
    is a nonempty open neighbourhood of $u$ in $\mathcal{D}^\varphi_t$ (by maximality of $\varphi$) on which
    \begin{displaymath}
        \varphi_t = \varphi_{t_m} \circ \cdots \circ \varphi_{t_1}
    \end{displaymath}
    is $C^k$.
\end{proof}

\begin{remark}[Smoothness of $\varphi_1^\rho$ about $W^\Sigma_\rho$] \label{remark:cutoff_semiflow_smoothness}
    While $R_\rho$ is not smooth in general, it retains the $C^k$ smoothness of $R$ on the open set $S$ defined by \eqref{eq:smoothnessset}.
    Moreover, $C([0,t];S)$ is an open subset of $C([0,t];X)$; $W^\Sigma_\rho$ is an invariant set for $\varphi_t^\rho$ contained within $S$ for $\rho$ sufficiently small\footnote{This can be ensured via Theorem~\ref{thm:CHT}\ref{extralipschitzstatement}, which gives control over $\lip(\phi_\rho)$. If this is chosen to be less than $2$, $W^\Sigma_\rho \subset S$ according to \eqref{eq:smoothnessset}.}, and $s \mapsto \varphi_s^\rho(u) \in C([0,t];S)$ for all $u \in W^\Sigma_\rho$. 
    Therefore, the proof of Proposition~\ref{prop:sf_smoothness} goes through with $\mathcal{F}_\rho : X \times C([0,t];S) \to C([0,t];X)$ in place of $\mathcal{F}$ (with the obvious modification $R \mapsto R_\rho$).
    In particular, $\mathcal{F}_\rho$ is $C^k$ on this restricted space, and the above discussion shows that each $u \in W^\Sigma_\rho$ produces a solution within the domain of $\mathcal{F}_\rho$ of the form $\mathcal{F}_\rho (u, s \mapsto \varphi^\rho_s(u)) =0$.
    As in the proof of Proposition~\ref{prop:sf_smoothness}, the Implicit Function Theorem then produces $C^k$ smoothness of $\varphi_1^\rho$ on neighbourhoods of $u \in W^\Sigma_\rho$, where $\rho >0$ is small enough so that \eqref{eq:smoothnesspropcondition} holds with $t =1 $ and $R = R_\rho$, the supremum now being taken over the whole of $W^\Sigma_\rho$.
\end{remark}

\begin{remark}[Joint continuity property for Theorem~\ref{thm:main2}] \label{remark:jointcont}
    In the setting of Section~\ref{sect:linearization}, we have that $\varphi_t(0) = 0$ for all $t \geq 0$; and $DR(0) = 0$.
    Hence, \eqref{eq:smoothnesspropcondition} is satisfied for all $t \geq 0$ if $u$ is taken to be $0$.
    The last assertion of Proposition~\ref{prop:sf_smoothness} then implies that, for any $t > 0$, there exists $U \subset \mathcal{D}_t^\varphi$ containing $0$ such that $(s,u) \mapsto D^j\varphi_s(u)[v_1,\ldots,v_j]$ is jointly continuous on $[0,t] \times U$.
    Hence,  $(s,u) \mapsto D^j\varphi_s(u)[v_1,\ldots,v_j]$ is jointly continuous over $\mathbb{R}^{\geq 0} \times \{0\}$.
    Suppose $W$ is a finite dimensional submanifold embedded in $X$; denote the embedding by $\imath_W: W \xhookrightarrow{} X$.
    Recall that 'strong' continuity in the above sense implies 'norm' continuity in the finite dimensional case, hence $(s,u_1) \mapsto D^j(\varphi_s \circ \imath_W )(u_1) = D^j \varphi_s (u_1) [\imath_W (\cdot),\ldots,\imath_W(\cdot)]$ is jointly continuous over $\mathbb{R}^{\geq 0} \times \{0\}$. 
\end{remark}

\section{Smoothness of \texorpdfstring{$W^\Sigma_\rho$}{}}
\label{sect:mfdsmoothness}

Here, we outline the proof of the smoothness claim of Theorem~\ref{thm:main1}\ref{thm1st2}, following the 'scale of Banach spaces' approach of \cite{vanderbauwhede1987center}, designed originally for center manifolds.
The discrete-time version, known as Irwin's method \cite{irwin1980new,de1995irwin}, would produce the same smoothness conclusions.
The latter, specifically, would be directly applicable to the setting of Section~\ref{sect:proofmain1} if $R_\rho$ was $C^k$ on its whole domain (using uniqueness of $W^\Sigma_\rho$ once a cutoff function has been fixed).
If one traces through the proof of Theorem 2.1 in \cite{de1995irwin}, it is apparent (see Lemma 3.4 specifically) that $C^k$ smoothness of the map $\varphi^\rho_1$ is only used on a neighbourhood of negative semiorbits contained in $W^\Sigma_\rho$ -- which we do indeed have (c.f.\ Remark~\ref{remark:cutoff_semiflow_smoothness}).

We  include a proof anyway so as to avoid concealing any details. 
It will moreover serve to provide proofs of the assertions made in Remark~\ref{remark:strongmfds}.
We assume throughout this section that $f$ is $C^k$, $k \geq 1$.

First, we remark that the projections $P_\Sigma$ of Lemma~\ref{lemma:proj} can be extended to the larger space $X^{\odot *}$ via $P^{\odot *}_\Sigma :=(P_\Sigma^*|_{X^\odot})^*$ -- these will have the same range and satisfy $\imath P_\Sigma =P^{\odot *}_\Sigma \imath$.
(They moreover coincide with spectral projections of the operator $A^{\odot *}$.) 
We shall only make use of $P^{\odot *}_\Sigma$ in the special case when $\Sigma$ corresponds to a half space of the spectrum, as in \ref{A3}, for which we can directly quote the following result.

\begin{lemma}[Theorem 2.12, \cite{diekmann2012delay}] \label{lemma:appendixdichotomy}
    Let $A$ be as in \eqref{eq:A}, let $T(t)$ denote the semigroup it generates.
    If 
    \begin{displaymath}
         \Sigma = \{ \lambda \in \sigma(A;X) \; | \; \mathrm{Re} \, \lambda \geq \gamma \},
    \end{displaymath}
    then $\Sigma = \{ \lambda \in \sigma(A^{\odot *};X) \; | \; \mathrm{Re} \, \lambda \geq \gamma \}$,
    \begin{displaymath}
        X^{\odot *} = \im (P^{\odot *}_\Sigma) \oplus \ker (P^{\odot *}_\Sigma)
    \end{displaymath}
    and, if $\alpha :=\inf_{\lambda \in \Sigma} \mathrm{Re} \, \lambda,$ and $\beta := \sup_{\lambda \in \Sigma'} \mathrm{Re} \, \lambda$,
    for any $\varepsilon > 0$ there exists $K > 0$ such that
     \begin{subequations}\label{eq:dichotomy_appendix}
     \begin{align}
         \Vert T^{\odot *}(t) P_\Sigma^{\odot *} \Vert_{\mathcal{L}(X^{\odot *})} &\leq K e^{(\alpha -\varepsilon)t} \Vert P_{\Sigma}^{\odot *} \Vert_{\mathcal{L}(X^{\odot *})}, \qquad &t \leq 0,  \label{eq:dichotomyapp1}\\ 
         \Vert T^{\odot *}(t)P_{\Sigma'}^{\odot *} \Vert_{\mathcal{L}(X^{\odot *})} &\leq K e^{(\beta+\varepsilon) t} \Vert P_{\Sigma'}^{\odot *}\Vert_{\mathcal{L}(X^{\odot *})}, \qquad &t \geq 0. \label{eq:dichotomyapp2} 
     \end{align}    
     \end{subequations}
\end{lemma}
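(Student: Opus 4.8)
The plan is to obtain every assertion by transporting the corresponding facts about $A$ and $T(t)$ on $X$ — collected in Lemma~\ref{lemma:proj} and Theorem~\ref{thm:spectrum} — across the chain of dual pairings $X \leftrightarrow X^\odot$ and $X^\odot \leftrightarrow X^{\odot *}$. The full argument is that of Section III.2 of \cite{diekmann2012delay}, to which I would ultimately defer, but the steps are as follows.

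First I would set up the spectral bookkeeping. Since $A$ is closed and densely defined, $\sigma(A^*;X^*) = \sigma(A;X)$ with $(\lambda\,\mathrm{id} - A^*)^{-1} = \big((\lambda\,\mathrm{id} - A)^{-1}\big)^*$, and the resolvent leaves the $T^*$-invariant subspace $X^\odot$ invariant; $A^\odot$ being the part of $A^*$ in $X^\odot$, this gives $\sigma(A^\odot;X^\odot) = \sigma(A;X)$ with resolvent obtained by restriction, where $\odot$-reflexivity (which holds for $T$ because $DF(0)$ is a bounded perturbation of $A_0$, see Section~\ref{sect:linearization}) is what prevents the spectrum from shrinking. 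Iterating the same reasoning once more yields $\sigma(A^{\odot *};X^{\odot *}) = \sigma(A;X)$ and $(\lambda\,\mathrm{id} - A^{\odot *})^{-1} = \big((\lambda\,\mathrm{id} - A^\odot)^{-1}\big)^*$; in particular the part of $\sigma(A^{\odot *})$ in the half-plane $\{\mathrm{Re}\,z \geq \gamma\}$ is exactly $\Sigma$, which is the first claim. Feeding these resolvent identities into the Cauchy integral $\frac{1}{2\pi\mathrm{i}}\oint_\Gamma (\lambda\,\mathrm{id} - A^{\odot *})^{-1}\,d\lambda$ over a contour $\Gamma$ enclosing the finite set $\Sigma$ (finite by Theorem~\ref{thm:spectrum}) identifies the functional-calculus spectral projection of $A^{\odot *}$ with $\big(P_\Sigma^*|_{X^\odot}\big)^* = P^{\odot *}_\Sigma$; the splitting $X^{\odot *} = \im(P^{\odot *}_\Sigma)\oplus\ker(P^{\odot *}_\Sigma)$ into $A^{\odot *}$-invariant subspaces, with $\im(P^{\odot *}_\Sigma)$ finite-dimensional, is then the usual output of holomorphic functional calculus.

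For the dichotomy I would descend to the $\odot$-level, where the semigroup $T^\odot(t)$ is strongly continuous. Writing $P^{\odot *}_\Sigma = \big(P^\odot_\Sigma\big)^*$ and $T^{\odot *}(t) = \big(T^\odot(t)\big)^*$, and using that the spectral projections commute with the semigroup, one gets the isometric identities $\Vert T^{\odot *}(t)P^{\odot *}_\Sigma\Vert_{\mathcal{L}(X^{\odot *})} = \Vert T^\odot(t)P^\odot_\Sigma\Vert_{\mathcal{L}(X^\odot)}$, and likewise with $\Sigma'$, so it suffices to prove the estimates on $X^\odot$. Now $T^\odot(t)$ is eventually compact: $T(t)$ is (Proposition~\ref{prop:semiflow}\ref{sfitem:4}), hence so is the adjoint $T^*(t)$ by Schauder's theorem, and restricting to the closed invariant subspace $X^\odot$ preserves compactness; thus $T^\odot$ and $A^\odot$ fall under the hypotheses of Theorem~\ref{thm:spectrum}, with $\sigma(A^\odot) = \sigma(A)$ by the first step. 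The estimates \eqref{eq:dichotomyapp1}--\eqref{eq:dichotomyapp2} are then exactly the conclusion of Lemma~\ref{lemma:dichotomy} applied to $T^\odot$, $A^\odot$ and the splitting $X^\odot = \im(P^\odot_\Sigma)\oplus\ker(P^\odot_\Sigma)$ (with the same $\alpha$, $\beta$), transported back through the isometries above; if $\Sigma' = \emptyset$ the second estimate is vacuous and the first follows as in the proof of Lemma~\ref{lemma:dichotomy}. The one delicate point is the bookkeeping of the first step — matching the spectra of $A$, $A^\odot$, $A^{\odot *}$ and identifying the duality-defined $P^{\odot *}_\Sigma$ with the functional-calculus spectral projection of $A^{\odot *}$; once that is in place everything else is a routine consequence of eventual compactness and \eqref{eq:growthbound_spectralrad}, and since it is carried out in detail in \cite{diekmann2012delay} I would keep the exposition brief.
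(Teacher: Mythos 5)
The paper gives no proof of this lemma at all—it is quoted verbatim as Theorem 2.12 of \cite{diekmann2012delay}—so there is nothing for your argument to diverge from, and your sketch is a correct account of the standard duality-transport argument behind that citation (resolvent identities across $X \to X^\odot \to X^{\odot*}$, identification of $P^{\odot*}_\Sigma$ with the functional-calculus projection, reduction of the norm estimates to the strongly continuous, eventually compact semigroup $T^\odot$ and then the argument of Lemma~\ref{lemma:dichotomy}). Your deferral to \cite{diekmann2012delay} for the details is exactly what the paper itself does.
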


To characterize backwards orbits, we introduce the following Banach spaces (see  \cite{vanderbauwhede1987center}).

\begin{definition} \label{def:BC}
    For each $\eta \in \mathbb{R}$, 
\begin{displaymath}
    BC^\eta (\mathbb{R}^{\leq 0};X) = \left\{ f \in C(\mathbb{R}^{\leq 0};X) \; \Big| \; \sup_{t \in \mathbb{R}^{\leq 0}} e^{\eta t} |f(t)|_X < \infty\right\}
\end{displaymath}
is a Banach space when equipped with the norms
\begin{displaymath}
    | f |_\eta = \sup_{t \in \mathbb{R}^{\leq 0}} e^{\eta t} |f(t)|_X.
\end{displaymath}
\end{definition}

Note that for $ \tilde{\eta} \leq \eta$ there exists a continuous embedding $\jmath_{ \tilde{\eta}}^\eta : BC^{\tilde{\eta}} (\mathbb{R}^{\leq 0};X) \xhookrightarrow{} BC^\eta (\mathbb{R}^{\leq 0};X)$ with norm one.
We shall occasionally use the shorthand notation $X_\eta := BC^\eta (\mathbb{R}^{\leq 0};X)$ in sub/superscripts.

\begin{lemma} \label{lemma:mathcalK}
    The operator defined by
    \begin{align}
        [\mathcal{K} F](t) =- \imath^{-1} \int_t^0 &T^{\odot *} (t-s) P_{\Sigma}^{\odot *} F(s) \, ds \nonumber\\
         &+ \imath^{-1} \int_{-\infty}^t T^{\odot *} (t-s) P_{\Sigma'}^{\odot *} F(s) \, ds, \qquad t \leq 0, \label{eq:mathcalK}
    \end{align}
    is bounded as a map $\mathcal{K}: BC^{-\eta} (\mathbb{R}^{\leq 0};X^{\odot *}) \to BC^{-\eta} (\mathbb{R}^{\leq 0};X)$ for each $\eta \in (\beta,\alpha)$.
    Moreover, $\Vert \mathcal{K} \Vert_{\mathcal{L}(X_{-\eta}^{\odot *};X_{-\eta})}$ can be bounded uniformly with respect to $\eta$ on compact subsets of $(\beta,\alpha)$. 
\end{lemma}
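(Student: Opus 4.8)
The plan is to split the operator \eqref{eq:mathcalK} into its ``stable'' piece, involving $P_{\Sigma}^{\odot *}$, and its ``unstable'' piece, involving $P_{\Sigma'}^{\odot *}$, and to estimate each by means of the exponential dichotomy of Lemma~\ref{lemma:appendixdichotomy}. First I would fix $\eta \in (\beta,\alpha)$ and pick $\varepsilon>0$ so small that $\beta+\varepsilon<\eta<\alpha-\varepsilon$; let $K=K(\varepsilon)>0$ be the corresponding constant in \eqref{eq:dichotomy_appendix} (absorbing the fixed factors $\Vert P_{\Sigma}^{\odot*}\Vert$ and $\Vert P_{\Sigma'}^{\odot*}\Vert$ into it). For $F\in BC^{-\eta}(\mathbb{R}^{\leq 0};X^{\odot *})$ one has $|F(s)|_{X^{\odot *}}\leq |F|_{-\eta}\,e^{\eta s}$ for all $s\leq 0$, and this is the only way $F$ enters.

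Before estimating I would dispose of well-definedness. For fixed $t\leq 0$ the map $s\mapsto T^{\odot *}(t-s)P_{\Sigma}^{\odot *}F(s)$ is norm-continuous on the compact interval $[t,0]$, while $s\mapsto T^{\odot *}(t-s)P_{\Sigma'}^{\odot *}F(s)$ is norm-continuous on $(-\infty,t]$ and, by the second estimate in \eqref{eq:dichotomy_appendix} together with $\eta>\beta+\varepsilon$, absolutely integrable there. That both integrals take values in $\imath(X)$ and depend norm-continuously on $t$ is exactly what the arguments behind Lemma III.2.1 of \cite{diekmann2012delay} supply (cf.\ Appendix~\ref{sect:weakstar_int}); hence $\mathcal{K}F\in C(\mathbb{R}^{\leq 0};X)$ and one may use $\Vert\imath^{-1}\Vert_{\mathcal{L}(\imath(X),X)}=1$ freely.

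The core step is the pair of elementary bounds. On the stable piece $s\in[t,0]$ forces $t-s\leq 0$, so \eqref{eq:dichotomyapp1} and the estimate $\int_t^0 e^{(\eta-\alpha+\varepsilon)s}\,ds\leq e^{(\eta-\alpha+\varepsilon)t}/(\alpha-\varepsilon-\eta)$ (valid since $\eta-\alpha+\varepsilon<0$ and $t\leq 0$) give
\begin{displaymath}
\Bigl|\imath^{-1}\!\int_t^0 T^{\odot *}(t-s)P_{\Sigma}^{\odot *}F(s)\,ds\Bigr|_X\leq \frac{K}{\alpha-\varepsilon-\eta}\,|F|_{-\eta}\,e^{\eta t}.
\end{displaymath}
On the unstable piece $s\leq t$ forces $t-s\geq 0$, so \eqref{eq:dichotomyapp2} and $\int_{-\infty}^t e^{(\eta-\beta-\varepsilon)s}\,ds = e^{(\eta-\beta-\varepsilon)t}/(\eta-\beta-\varepsilon)$ (the integral converging because $\eta-\beta-\varepsilon>0$) give the analogous bound with $(\eta-\beta-\varepsilon)^{-1}$ in place of $(\alpha-\varepsilon-\eta)^{-1}$. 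Adding the two contributions, multiplying by $e^{-\eta t}$ and taking the supremum over $t\leq 0$ yields
\begin{displaymath}
|\mathcal{K}F|_{-\eta}\leq K(\varepsilon)\Bigl(\tfrac{1}{\alpha-\varepsilon-\eta}+\tfrac{1}{\eta-\beta-\varepsilon}\Bigr)|F|_{-\eta},
\end{displaymath}
which is the asserted boundedness. For the uniformity statement, given a compact $[\eta_1,\eta_2]\subset(\beta,\alpha)$ I would fix a \emph{single} $\varepsilon>0$ with $\beta+\varepsilon<\eta_1$ and $\eta_2<\alpha-\varepsilon$; then $K(\varepsilon)$ is independent of $\eta$ and the bound above is dominated, for every $\eta\in[\eta_1,\eta_2]$, by $K(\varepsilon)\bigl((\alpha-\varepsilon-\eta_2)^{-1}+(\eta_1-\beta-\varepsilon)^{-1}\bigr)$. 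I do not anticipate a genuine obstacle; the only points requiring a little care are (i) choosing one $\varepsilon$ that simultaneously separates $\eta$ from both $\alpha$ and $\beta$ and can be held fixed over the compact parameter range, and (ii) invoking the correct result of \cite{diekmann2012delay} to guarantee that the weak$^*$ convolution integrals land in $\imath(X)$ with continuous dependence on $t$.
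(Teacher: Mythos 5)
Your proof is correct and follows essentially the same route as the paper's: fix $\eta$, choose $\varepsilon$ separating $\eta$ from both $\alpha$ and $\beta$, apply the dichotomy of Lemma~\ref{lemma:appendixdichotomy} to each of the two convolution integrals, and obtain the bound $K(\varepsilon)\bigl((\alpha-\varepsilon-\eta)^{-1}+(\eta-\beta-\varepsilon)^{-1}\bigr)$, with uniformity on compacta secured by holding a single $\varepsilon$ fixed. Your explicit remarks on well-definedness (convergence of the improper integral and the fact that the weak$^*$ integrals land in $\imath(X)$ continuously in $t$, via Lemma III.2.1 of \cite{diekmann2012delay}) are a welcome addition that the paper leaves implicit.
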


\begin{proof}
    Fix $\eta \in (\beta,\alpha)$ and $F \in BC^\eta (\mathbb{R}^{\leq 0};X^{\odot *})$.
    Choose $0 <\varepsilon < \min \{\eta-\beta,\alpha-\eta\}$.
    For $t \leq 0$, we have, by \eqref{eq:dichotomy_appendix},
    \begin{align*}
        e^{-\eta t}\left|[\mathcal{K} F](t) \right|_X &\leq C |F|_{-\eta} \left( \int_t^0 e^{( \alpha - \varepsilon-\eta)(t-s)} \, ds +  \int_{-\infty}^t e^{( \beta + \varepsilon-\eta)(t-s)} \, ds  \right) \\ 
        &\leq C |F|_{-\eta} \left( \frac{1}{\alpha - \varepsilon-\eta} - \frac{1}{\beta + \varepsilon-\eta} \right),
    \end{align*}
    for some $C > 0$.
    The uniformity claim, for a compact subset $[\tilde{\eta},\eta] \subset (\beta,\alpha)$, follows by choosing $\varepsilon < \min\{\tilde{\eta}-\beta,\alpha-\eta\}/2$.
\end{proof}

We need the following result on composition operators acting on $BC^\eta$ spaces.

\begin{lemma}[Lemma A.IV.1.1, \cite{diekmann2012delay}] \label{lemma:compositionBC}
    Let $Y$ and $Z$ be two Banach spaces.
    Let $g:Y \to Z$ be a $C^k_b$ mapping with  $g(0) = 0$, $k \geq 1$.
    Then, the map $\hat{g} : f \mapsto g \circ f$ satisfies the following.
    \begin{enumerate}[label=\upshape{(\roman*)}]
        \item \label{item_comp1} For $\eta > 0$, the map $\hat{g}: BC^{-\eta} (\mathbb{R}^{\leq 0};Y) \to BC^{-\eta} (\mathbb{R}^{\leq 0};Z)$ is $C^k$.
        \item \label{item_comp2} For $\eta, \tilde{\eta} < 0$ such that $\tilde{\eta} <k\eta$, the map $ \hat{g}: BC^{-{\eta}} (\mathbb{R}^{\leq 0};Y) \to BC^{-\tilde{\eta}} (\mathbb{R}^{\leq 0};Z)$ is $C^k$.
    \end{enumerate}
    In both cases, for $f,h \in BC^{-{\eta}} (\mathbb{R}^{\leq 0};Y) $,
    \begin{equation}
       \big( D \hat{g}(f) [h] \big) (s) = Dg(f(s)) [h(s)].
       \label{eq:derivRhat}
    \end{equation}
\end{lemma}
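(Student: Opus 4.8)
The plan is to prove both parts by induction on $k\ge 1$, reading the statement for \emph{all} pairs of Banach spaces so that in the inductive step it can be applied to the derivative $Dg$ (this is the ``scale of Banach spaces'' mechanism of \cite{vanderbauwhede1987center}). Write $|f|_{-\eta}=\sup_{t\le 0}e^{-\eta t}|f(t)|$ and note $|g(y)|_Z\le\|Dg\|_\infty|y|_Y$ (mean value inequality with $g(0)=0$). Two elementary facts make the weights close up. First, if $\tilde\eta\le\eta$ then $e^{(\eta-\tilde\eta)t}\le 1$ on $\mathbb R^{\le 0}$, and $e^{(\eta-\tilde\eta)t}\to 0$ as $t\to-\infty$ when $\tilde\eta<\eta$. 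Second, if $\eta>0$ then every $f\in BC^{-\eta}(\mathbb R^{\le 0};Y)$ obeys $|f(t)|\le e^{\eta t}|f|_{-\eta}\to 0$ as $t\to-\infty$, so $f$ extends continuously to the one-point compactification $[-\infty,0]$ via $f(-\infty):=0$, and hence $\overline{f(\mathbb R^{\le 0})}$ is compact in $Y$. In regime (i) the weight hypothesis $\tilde\eta=\eta$ is used through the second fact; in regime (ii) the hypothesis $\tilde\eta<k\eta\,(\le\eta)$ is used through the first.

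Base case $k=1$. One checks first that $\hat g$ is well defined and globally Lipschitz between the stated spaces: $e^{-\tilde\eta t}|g(f(t))-g(h(t))|\le\|Dg\|_\infty e^{(\eta-\tilde\eta)t}e^{-\eta t}|f(t)-h(t)|\le\|Dg\|_\infty|f-h|_{-\eta}$. The candidate derivative $D\hat g(f):h\mapsto\big(s\mapsto Dg(f(s))h(s)\big)$ then lies in $\mathcal L\big(BC^{-\eta}(\mathbb R^{\le0};Y),BC^{-\tilde\eta}(\mathbb R^{\le0};Z)\big)$ with norm $\le\|Dg\|_\infty$ by the same estimate. For Fréchet differentiability, write pointwise
\begin{displaymath}
\big(\hat g(f+h)-\hat g(f)-D\hat g(f)h\big)(s)=G(f(s),h(s))\,h(s),\qquad G(y_1,y_2):=\int_0^1\big[Dg(y_1+\tau y_2)-Dg(y_1)\big]\,d\tau,
\end{displaymath}
so the whole problem reduces to $\sup_{s\le 0}e^{(\eta-\tilde\eta)s}\|G(f(s),h(s))\|\to 0$ as $|h|_{-\eta}\to 0$; here $G$ is continuous, $G(\cdot,0)=0$, $\|G\|\le 2\|Dg\|_\infty$. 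The key technical lemma is: if $\Phi:Y\to W$ is continuous and $K\subset Y$ compact, then $\sup\{\|\Phi(y+z)-\Phi(y)\|:y\in K,\ |z|\le\rho\}\to 0$ as $\rho\to 0^+$ (contradiction plus sequential compactness of $K$); applied to $G$ this yields $\sup\{\|G(y_1,y_2)\|:y_1\in K,\ |y_2|\le\rho\}\to 0$. In regime (i) take $K=\overline{f(\mathbb R^{\le 0})}$, use $|h(s)|\le|h|_{-\eta}$ and $e^{(\eta-\tilde\eta)s}=1$. In regime (ii) split $\mathbb R^{\le 0}=(-\infty,s_0]\cup[s_0,0]$: on the tail bound by $2\|Dg\|_\infty e^{(\eta-\tilde\eta)s_0}$, small once $s_0$ is negative enough; on $[s_0,0]$ use that $f([s_0,0])$ is compact and $|h(s)|\le e^{\eta s_0}|h|_{-\eta}$. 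Continuity of $f\mapsto D\hat g(f)$ follows from the same lemma applied to $Dg$. This gives the $k=1$ case together with \eqref{eq:derivRhat}.

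Inductive step. Assume the statement for $k-1$ and let $g\in C^k_b(Y;Z)$, $k\ge 2$, $g(0)=0$; the base case already gives $\hat g\in C^1$ with $D\hat g(f)h=\big(s\mapsto Dg(f(s))h(s)\big)$. Split $Dg=\widetilde{Dg}+Dg(0)$ with $\widetilde{Dg}:=Dg-Dg(0)\in C^{k-1}_b(Y;\mathcal L(Y,Z))$, $\widetilde{Dg}(0)=0$; correspondingly $D\hat g(f)=\mathcal M\big(\widehat{\widetilde{Dg}}(f)\big)+B_0$, where $B_0:h\mapsto\big(s\mapsto Dg(0)h(s)\big)$ does not depend on $f$ (and is bounded $BC^{-\eta}\to BC^{-\tilde\eta}$ because $\tilde\eta\le\eta$), and $\mathcal M(M)(h)(s):=M(s)h(s)$ defines a bounded linear map $BC^{-\sigma}(\mathbb R^{\le0};\mathcal L(Y,Z))\to\mathcal L\big(BC^{-\eta}(\mathbb R^{\le0};Y),BC^{-\tilde\eta}(\mathbb R^{\le0};Z)\big)$ whenever $\tilde\eta\le\eta+\sigma$. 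In regime (i), the induction hypothesis (applied to $\widetilde{Dg}$, weight $\sigma=\eta>0$) makes $\widehat{\widetilde{Dg}}$ of class $C^{k-1}$, and $\tilde\eta=\eta\le 2\eta$ lets $\mathcal M$ act. In regime (ii), pick $\sigma$ with $\tilde\eta-\eta\le\sigma<(k-1)\eta$ — a nonempty range precisely because $\tilde\eta<k\eta$ — apply the induction hypothesis to get $\widehat{\widetilde{Dg}}:BC^{-\eta}\to BC^{-\sigma}$ of class $C^{k-1}$, and compose with the bounded $\mathcal M$. Either way $D\hat g\in C^{k-1}$, hence $\hat g\in C^k$.

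The main obstacle is the Fréchet differentiability estimate at the base of the induction in regime (ii): the Taylor remainder must be $o(|h|_{-\eta})$ \emph{uniformly in} $s\in\mathbb R^{\le 0}$ even though $f,h$ need not be bounded there and $Dg$ need not be uniformly continuous on $Y$ (uniform continuity of a $C^1_b$ map fails in general on infinite-dimensional $Y$). The resolution is exactly the compact-time-window/exponential-tail splitting above, which trades uniform continuity of $Dg$ on all of $Y$ for uniform continuity on compacta — available for free. The secondary nuisance is the weight bookkeeping in the inductive step: peeling off the constant $Dg(0)$, keeping the pointwise-multiplication operator $\mathcal M$ bounded, and checking that the slack $\tilde\eta<k\eta$ is exactly what permits inserting an intermediate weight $\sigma$ between the $(k-1)$-level hypothesis and $\mathcal M$.
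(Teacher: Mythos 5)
Your proof is correct, but it takes a genuinely different route from the paper. The paper does not reprove the lemma at all: it cites Lemma A.IV.1.1 of \cite{diekmann2012delay}, which establishes the statement for the whole-line spaces $BC^\eta(\mathbb{R};Y)$, and transfers it to the half-line setting via the mirror embedding $[\imath_Y f](t) := f(-|t|)$, observing that $\hat g$ commutes with this embedding. You instead give a self-contained induction on $k$: a base case resting on the mean-value estimate, the compactness of $\overline{f(\mathbb{R}^{\leq 0})}$ when $\eta>0$ (via the continuous extension to $[-\infty,0]$), and the tail/compact-window splitting when $\eta<0$; followed by the bootstrap $D\hat g(f)=\mathcal{M}\bigl(\widehat{\widetilde{Dg}}(f)\bigr)+B_0$ with the intermediate weight $\sigma\in[\tilde\eta-\eta,(k-1)\eta)$, whose nonemptiness is exactly the hypothesis $\tilde\eta<k\eta$. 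All the weight bookkeeping checks out (in particular $\tilde\eta\leq\eta+\sigma$ for boundedness of $\mathcal{M}$, and $\sigma<(k-1)\eta$ for the induction hypothesis), and the uniform-continuity-on-compacta lemma correctly substitutes for the unavailable uniform continuity of $Dg$ on all of $Y$. What the paper's reduction buys is brevity (two lines, given the reference); what your argument buys is transparency about where each hypothesis enters, and it is essentially a reconstruction of the proof in Appendix IV of \cite{diekmann2012delay} (the scale-of-Banach-spaces mechanism of \cite{vanderbauwhede1987center}) adapted directly to the half-line, which the paper itself remarks "would be virtually the same."
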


\begin{proof}
    The proof is given in Appendix IV of \cite{diekmann2012delay} for the spaces $BC^\eta(\mathbb{R};Y)$ (equipped with norms $\sup_{t \in \mathbb{R}} e^{-\eta|t|} |f(t)|_Y$, for a Banach space $Y$).
    The proof for our case would be virtually the same, but the result can also be deduced directly, by considering the embedding $\imath_Y : BC^{\eta} (\mathbb{R}^{\leq 0};Y) \xhookrightarrow{} BC^\eta(\mathbb{R};Y)$ given by mirroring about the origin ($[\imath_Yf](t) : = f(-|t|)$, $t \in \mathbb{R}$) and noting that $\widehat{g}$ preserves its image; that is, $\imath_{Z} \circ \hat{g} =  \hat{g}\circ\imath_Y$.
\end{proof}

Invariant manifolds will be extracted as fixed points of (some variant of) the map
\begin{equation}
    \mathcal{F}(u_1,y) (t) = T(t) u_1 + [\mathcal{K} \circ \widehat{R} (y)](t).
    \label{eq:contraction}
\end{equation}
Here, $u_1 \in X_\Sigma = \im(P_\Sigma^{\odot *})$; but the exact $BC$ space on which $y$ is considered depends on the number $\gamma$ (from Lemma~\ref{lemma:appendixdichotomy}) defining $\Sigma$, since $\mathcal{K}$ needs to act boundedly on the image of $\widehat{R}$  (c.f.\ Lemma~\ref{lemma:mathcalK}).
We first deal with the simplest case, when $\gamma > 0$, for which we will be able to show $C^k$ smoothness without any spectral gap assumptions (hence proving the statements of Remark~\ref{remark:strongmfds}).

\subsection{The strong-unstable case}

\begin{lemma} \label{lemma:Rsmoothness}
    Suppose $\eta>0$ and that $R:O \to X^{\odot*}$ is as in Section~\ref{sect:linearization}, of class $C^k$, $k \geq 1$.
    For $\delta > 0$ sufficiently small, the map $\widehat{R}:B_\delta^{^{X_{-\eta}}}(0) \to BC^{-\eta} (\mathbb{R}^{\leq 0};X^{\odot *})$ is $C^k$.
    Moreover, $\sup_{f \in B_\delta^{^{X_{-\eta}}}(0)}\Vert D \widehat{R}(f) \Vert_{\mathcal{L}(X_{-\eta};X_{-\eta}^{\odot *})} = o(1)$ as $\delta \to 0$.
\end{lemma}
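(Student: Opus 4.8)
The plan is to prove Lemma~\ref{lemma:Rsmoothness} by exhibiting the substitution operator $\widehat{R}$ as a composition of Irwin's theorem (Theorem~\ref{thm:irwin}) with a bounded linear embedding, exploiting that for $\eta>0$ every element of $BC^{-\eta}(\mathbb{R}^{\leq 0};X)$ decays to $0$ at $-\infty$. The obstruction to simply quoting Lemma~\ref{lemma:compositionBC} is that $R$ is not globally $C^k_b$, and — since $X=C([-h,0];\mathbb{R}^n)$ admits no $C^1$ cutoff — it cannot be modified away from $0$ into a globally $C^k_b$ map either. The way around this is the observation that on a small ball $B^{X_{-\eta}}_\delta(0)$ the operator $\widehat R$ only ever evaluates $R$ on a small ball of $X$, where $R$ is an honest $C^k$ map with bounded derivatives, and Theorem~\ref{thm:irwin} requires only $C^k$-ness on an open set.

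\emph{Step 1 (localisation and target space).} First I would fix $\delta_0>0$ with $\overline{B^X_{\delta_0}(0)}\subset O$ and $L_j:=\sup_{|u|_X\le\delta_0}\Vert D^jR(u)\Vert<\infty$ for $0\le j\le k$ (possible by continuity of each $D^jR$ at $0$), noting that $DR(0)=0$ forces $\ell(\delta):=\sup_{|u|_X\le\delta}\Vert DR(u)\Vert\to0$ as $\delta\to0$. Then, for any $\delta\in(0,\delta_0)$ and $f\in B^{X_{-\eta}}_\delta(0)$, $s\le0$, one has $|f(s)|_X\le e^{\eta s}|f|_{-\eta}<\delta$, so $f(s)\in B^X_\delta(0)\subset O$, $f(s)\to0$ as $s\to-\infty$, $s\mapsto R(f(s))$ is continuous, and the mean value inequality over the convex set $B^X_{\delta_0}(0)$ gives $e^{-\eta s}|R(f(s))|_{X^{\odot*}}\le L_1\,e^{-\eta s}|f(s)|_X\le L_1|f|_{-\eta}$; hence $\widehat R(f)\in BC^{-\eta}(\mathbb{R}^{\leq 0};X^{\odot*})$.

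\emph{Step 2 (Irwin via compactification, and the topology upgrade — the hard part).} Next I compactify $\mathbb{R}^{\leq 0}$ to $[-\infty,0]$; by Step 1 the map $\iota:f\mapsto\bar f$, with $\bar f|_{\mathbb{R}^{\leq 0}}=f$ and $\bar f(-\infty)=0$, is a bounded linear (hence $C^\infty$) map $B^{X_{-\eta}}_\delta(0)\to C([-\infty,0];B^X_{\delta_0}(0))$. Theorem~\ref{thm:irwin}, with compact space $[-\infty,0]$, open set $B^X_{\delta_0}(0)\subset X$ and the $C^k$ map $R|_{B^X_{\delta_0}(0)}$, makes $\widehat R:C([-\infty,0];B^X_{\delta_0}(0))\to C([-\infty,0];X^{\odot*})$ of class $C^k$, with $D^j\widehat R(w)[w_1,\dots,w_j]=\bigl(x\mapsto D^jR(w(x))[w_1(x),\dots,w_j(x)]\bigr)$; composing with $\iota$ and with the bounded restriction $C([-\infty,0];X^{\odot*})\to C_b(\mathbb{R}^{\leq 0};X^{\odot*})$ shows $\widehat R$ is $C^k$ from $B^{X_{-\eta}}_\delta(0)$ into the sup-normed space $C_b(\mathbb{R}^{\leq 0};X^{\odot*})$, with image in the subspace $BC^{-\eta}(\mathbb{R}^{\leq 0};X^{\odot*})$. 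The delicate point is to promote this to $C^k$-ness into $BC^{-\eta}$, and this rests entirely on the elementary weight estimate that for $s\le0$, $\eta>0$, $j\ge1$ and $h_i\in BC^{-\eta}$,
\[
e^{-\eta s}\prod_{i=1}^j|h_i(s)|_X\le e^{(j-1)\eta s}\prod_{i=1}^j|h_i|_{-\eta}\le\prod_{i=1}^j|h_i|_{-\eta},
\]
valid precisely because $e^{(j-1)\eta s}\le1$ (this is where $\eta>0$ enters). Fed into the derivative formula above it gives $D^j\widehat R(f)\in M_j(X_{-\eta};X_{-\eta}^{\odot*})$ with norm $\le\sup_{s\le0}\Vert D^jR(f(s))\Vert\le L_j$, and
\[
\bigl\Vert D^j\widehat R(f)-D^j\widehat R(f')\bigr\Vert_{M_j(X_{-\eta};X_{-\eta}^{\odot*})}\le\sup_{s\le0}\Vert D^jR(f(s))-D^jR(f'(s))\Vert,
\]
whose right side is the $C([-\infty,0])$-distance between $\bar f,\bar f'$ composed with the substitution operator of the $C^0$ map $D^jR$, hence tends to $0$ as $f'\to f$ in $BC^{-\eta}$ (a fortiori in $C([-\infty,0])$) by Theorem~\ref{thm:irwin} again. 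Applying the same estimate to the first-order remainder $R(f(s)+h(s))-R(f(s))-DR(f(s))[h(s)]=\int_0^1\bigl(DR(f(s)+\tau h(s))-DR(f(s))\bigr)[h(s)]\,d\tau$ bounds its $BC^{-\eta}$-norm by $\bigl(\sup_{s,\tau}\Vert DR(f(s)+\tau h(s))-DR(f(s))\Vert\bigr)|h|_{-\eta}=o(|h|_{-\eta})$, again by continuity of the substitution operator of $DR$ on $C([-\infty,0])$. An induction on $k$ (higher remainders treated identically with $DR$ replaced by $D^jR$) then yields that $\widehat R:B^{X_{-\eta}}_\delta(0)\to BC^{-\eta}(\mathbb{R}^{\leq 0};X^{\odot*})$ is $C^k$ with derivative given by \eqref{eq:derivRhat}.

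\emph{Step 3 (the ``moreover''), and the obstacle.} Taking $j=1$ above gives $\Vert D\widehat R(f)\Vert_{\mathcal{L}(X_{-\eta};X_{-\eta}^{\odot*})}\le\sup_{s\le0}\Vert DR(f(s))\Vert\le\ell(\delta)$ uniformly over $f\in B^{X_{-\eta}}_\delta(0)$, and $\ell(\delta)\to0$ as $\delta\to0$ by Step 1, which is the claimed $o(1)$. I expect Step 2 to be the only non-routine part: everything else is either a direct invocation of Theorem~\ref{thm:irwin} or elementary bookkeeping with exponential weights, and the real content of Step 2 is recognising that the weight $e^{(j-1)\eta s}\le1$ lets one trade sup-norm control of the derivatives of $R$ for $BC^{-\eta}$-norm control at no cost. (An alternative to Step 2 would be to replay the proof of Lemma~\ref{lemma:compositionBC}\ref{item_comp1} verbatim, replacing the global $C^k_b$ bounds on $g$ by the bounds $L_j$ on $\overline{B^X_{\delta_0}(0)}$, which is licit since every $f\in B^{X_{-\eta}}_\delta(0)$ has range in $B^X_\delta(0)\subset\overline{B^X_{\delta_0}(0)}$.) The output of this lemma then combines with Lemma~\ref{lemma:mathcalK} to give smoothness of the fixed-point map \eqref{eq:contraction}.
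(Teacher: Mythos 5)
Your proposal is correct, and its core observation coincides with the paper's: since $\eta>0$, every $f\in B_\delta^{X_{-\eta}}(0)$ satisfies $|f(s)|_X\le e^{\eta s}|f|_{-\eta}<\delta$ for all $s\le 0$, so $\widehat R$ only ever samples $R$ on the small ball $B_\delta^X(0)$ where $R$ is $C^k$ with bounded derivatives and $\Vert DR\Vert$ is small; the ``moreover'' bound then falls out of the pointwise derivative formula exactly as in your Step 3. Where you diverge is in how the $C^k$-ness into $BC^{-\eta}$ is actually established. The paper disposes of this in one line by invoking Lemma~\ref{lemma:compositionBC}\ref{item_comp1} (i.e.\ Lemma A.IV.1.1 of \cite{diekmann2012delay}) together with the remark that smoothness is local, so the global $C^k_b$ hypothesis of that lemma can be replaced by $C^k_b$ on $B_\delta^X(0)$. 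You instead reprove the content of that composition lemma from scratch: compactify $\mathbb{R}^{\le 0}$ to $[-\infty,0]$ (legitimate, since $\eta>0$ forces $f(s)\to 0$ as $s\to-\infty$), apply Irwin's Theorem~\ref{thm:irwin} there to get $C^k$-ness into the sup-normed space, and then upgrade to the strictly stronger $BC^{-\eta}$ topology via the weight inequality $e^{(j-1)\eta s}\le 1$ applied to the derivative formula, the Taylor remainders, and the continuity of $f\mapsto D^j\widehat R(f)$. You correctly identify this upgrade as the nontrivial step --- $C^k$-ness into $C_b$ does not imply $C^k$-ness into $BC^{-\eta}$, since the latter norm dominates the former --- and your treatment of it is sound. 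Your route is longer but self-contained and, if anything, more careful than the paper's appeal to ``smoothness is a local property''; the paper's route is shorter because it delegates all of this bookkeeping to the quoted composition lemma. The parenthetical alternative you mention at the end of Step 3 is essentially the paper's proof.
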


\begin{proof}
    Choose $\delta>0$ small enough so that $B_\delta^X(0) \subset O$ and $R|_{B_\delta^X(0)} \in C^k_b$.
    Since $\eta > 0$, we have $\cup_{t\leq0} \mathrm{ev}_t (B_\delta^{^{X_{-\eta}}}(0)) \subset B_\delta^X(0)$, and
    hence the assumptions of Lemma~\ref{lemma:compositionBC} are  met on the open set $B_\delta^{^{X_{-\eta}}}(0)$. 
    Noting that smoothness is a local property, 
    we may conclude that $\widehat{R}$ is $C^k$ on $B_\delta^{X_{-\eta} }(0)$.
    The bound is an immediate consequence of \eqref{eq:derivRhat}, the inclusions above, and 
    \begin{displaymath}
        \sup_{|u| \leq \delta} \Vert D R(u) \Vert = o(1) \qquad \text{as } \delta \to 0,
    \end{displaymath}
    (c.f.\ Section~\ref{sect:linearization}).
\end{proof}

\begin{proof}[Partial proof of Remark~\ref{remark:strongmfds}]
Suppose $\gamma > 0$ and choose $\eta \in (\min\{0,\beta\},\alpha)$.
Consider \eqref{eq:contraction} as $\mathcal{F}: X_\Sigma \times B_\delta^{X_{-\eta}}(0) \to BC^{-\eta} (\mathbb{R}^{\leq 0};X)$.
This is well defined and $C^k$ for $\delta$ small enough by Lemmas \ref{lemma:mathcalK} and \ref{lemma:Rsmoothness}.
In fact, we may ensure, by perhaps further adjusting $\delta$, that
\begin{equation}
    \Vert \mathcal{K} \Vert_{\mathcal{L}(X_{-\eta}^{\odot *};X_{-\eta})} \sup_{|u| \leq \delta} \Vert D R(u) \Vert < \frac{1}{2},
    \label{eq:Kcontraction1}
\end{equation}
so that $\mathcal{F}$ is a contraction on the second factor.
We may hence apply the Implicit Function Theorem to the map $(u_1,y) \mapsto y-\mathcal{F}(u_1,y)$ about the fixed point $\mathcal{F}(0,0) = 0$ to obtain a neighbourhood $U_\Sigma$ of $0$ in $X_\Sigma$ and a unique $C^k$ map
\begin{displaymath}
    \mathcal{W}: U_\Sigma \to B_\delta^{X_{-\eta}}(0)
\end{displaymath}
such that $\mathcal{F}(u_1,\mathcal{W}(u_1)) = \mathcal{W}(u_1)$.
The image of the $C^k$ map $\mathrm{ev}_0 \circ \mathcal{W} : U_\Sigma \to B_\delta^X(0)$ is then the desired 'strong-unstable' manifold.
It is uniquely characterized by backwards Lyapunov exponents of its orbits, manifest in the choice of $BC^{-\eta} (\mathbb{R}^{\leq 0};X)$, $\eta \in (\min\{0,\beta\},\alpha)$.

Next, we show that it agrees with $W^\Sigma_\rho$ from Theorem~\ref{thm:CHT} on a small enough neighbourhood of the origin.
For a given selection of $\gamma_1,\gamma_2 > 0$ in Theorem~\ref{thm:CHT}, we may consider $\eta$ on the interval $ [\gamma_2-\varepsilon,\gamma_1+\varepsilon]$, where $\varepsilon$ is chosen such that this interval lies within $(\min\{0,\beta\},\alpha)$.
By the final claim of Lemma~\ref{lemma:mathcalK}, we may ensure \eqref{eq:Kcontraction1} holds uniformly for  $\eta \in [\gamma_2-\varepsilon,\gamma_1+\varepsilon]$.

Since the spaces $BC^{-\eta}(\mathbb{R}^{\leq 0}; X)$, $\eta \in [\gamma_2-\varepsilon,\gamma_1+\varepsilon]$, are all embedded in $BC^{-(\gamma_2-\varepsilon)}(\mathbb{R}^{\leq 0}; X)$, we must have $\jmath^{-\eta}_{-(\gamma_2-\varepsilon)} \circ \mathcal{W}^{-(\gamma_2-\varepsilon)} = \mathcal{W}^{-\eta}$ by the uniqueness claim of the Implicit Function Theorem (where the superscript of $\mathcal{W}$ refers to the space in which it was obtained).

Comparing these observations with Theorem~\ref{thm:CHT}\ref{item1:CHT}; in particular, \eqref{eq:negative_semiorbit_1} and \eqref{eq:negative_semiorbit_2}, we must have that $\im(\mathrm{ev}_0 \circ \mathcal{W}) = W^\Sigma_\rho$
on a sufficiently small neighbourhood of the origin where both $\mathcal{W}$ and $W^\Sigma_\rho$ are defined and $\varphi = \varphi^\rho$.
\end{proof}

Strong-stable manifolds of Remark~\ref{remark:strongmfds} are proven analogously, with appropriate modifications to $\mathcal{K}$, and considering positive semiorbits.

\subsection{The pseudo-unstable case}

The case $\gamma \leq 0$ (when necessarily $\eta <0$ is required for the $BC$ space) is much more complicated; firstly because Lemma~\ref{lemma:Rsmoothness} no longer applies (this necessitates the application of a cutoff function), and secondly due to the difference in items \ref{item_comp1} and \ref{item_comp2} of Lemma~\ref{lemma:compositionBC}.
We first state a replacement of Lemma~\ref{lemma:Rsmoothness} for the modified nonlinearity $R_\rho$ from Section~\ref{sect:cutoff}.

\begin{lemma}
    For all $\eta \in \mathbb{R}$, the substitution operators 
    \begin{displaymath}
        \widehat{R}_\rho: BC^{\eta} (\mathbb{R}^{\leq 0};X)\to BC^{\eta} (\mathbb{R}^{\leq 0};X^{\odot *})
    \end{displaymath}
    defined by $\widehat{R}_\rho : f \mapsto R_\rho \circ f$
    are globally Lipschitz continuous with $\lip(\widehat{R}_\rho) \leq \lip(R_\rho) = o(1)$ as $\rho \to 0$.
\end{lemma}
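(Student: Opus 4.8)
The plan is to prove the stated Lipschitz property of the substitution operator $\widehat{R}_\rho$ by reducing it to the already-established global Lipschitz bound on $R_\rho$ itself (Lemma~\ref{lemma:Lipschitz}), exploiting the fact that the weighted-supremum structure of the $BC^\eta$ norm is compatible with pointwise Lipschitz estimates. The key observation is that $R_\rho$ is \emph{globally} Lipschitz on all of $X$ with constant $\lip(R_\rho)$, so there is no longer any need to restrict the domain of $\widehat{R}_\rho$ or to invoke a boundedness hypothesis on derivatives as in Lemma~\ref{lemma:Rsmoothness}; the statement is simply a transfer of a global Lipschitz constant through a composition operator, and is independent of the sign of $\eta$.

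Concretely, first I would fix $\eta \in \mathbb{R}$ and take arbitrary $f, h \in BC^{\eta}(\mathbb{R}^{\leq 0};X)$. For each $t \leq 0$, by the global Lipschitz property of $R_\rho : X \to X^{\odot *}$,
\begin{displaymath}
    e^{\eta t} \big| R_\rho(f(t)) - R_\rho(h(t)) \big|_{X^{\odot *}} \leq \lip(R_\rho) \, e^{\eta t} | f(t) - h(t) |_X \leq \lip(R_\rho) \, | f - h |_\eta.
\end{displaymath}
Taking the supremum over $t \leq 0$ yields $| \widehat{R}_\rho(f) - \widehat{R}_\rho(h) |_\eta \leq \lip(R_\rho) \, | f - h |_\eta$, which shows both that $\widehat{R}_\rho$ indeed maps $BC^\eta(\mathbb{R}^{\leq 0};X)$ into $BC^\eta(\mathbb{R}^{\leq 0};X^{\odot *})$ — one checks $\widehat{R}_\rho(f)$ is continuous since $R_\rho$ is (Lipschitz hence) continuous and $f$ is continuous, and the weighted sup is finite because $R_\rho(0) = 0$ together with the estimate above applied to $h \equiv 0$ — and that $\lip(\widehat{R}_\rho) \leq \lip(R_\rho)$.

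Finally, the claim $\lip(R_\rho) = o(1)$ as $\rho \to 0$ is precisely the content of Lemma~\ref{lemma:Lipschitz}, which gives $\lip(R_\rho) \leq C \, \lip_{c\rho}(R)$ with $\lip_{c\rho}(R) \to 0$ as $\rho \to 0$ (using $DR(0) = 0$ and continuity of $u \mapsto DR(u)$). Chaining the two inequalities gives $\lip(\widehat{R}_\rho) \leq \lip(R_\rho) = o(1)$, as asserted. I do not anticipate a genuine obstacle here: the only mild subtlety is to confirm that $\widehat{R}_\rho(f)$ genuinely lands in the target $BC^\eta$ space (continuity and finiteness of the weighted norm), but both follow immediately from $R_\rho$ being globally Lipschitz with $R_\rho(0)=0$, so the proof is essentially a two-line estimate plus a citation of Lemma~\ref{lemma:Lipschitz}.
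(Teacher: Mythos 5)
Your proof is correct and is essentially the same as the paper's: a pointwise application of the global Lipschitz bound for $R_\rho$, a supremum over $t\le 0$ against the weight $e^{\eta t}$, and a citation of Lemma~\ref{lemma:Lipschitz} for the $o(1)$ claim. The extra remark on well-definedness (continuity plus finiteness of the weighted norm via $R_\rho(0)=0$) is a harmless elaboration of what the paper leaves implicit.
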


\begin{proof}
    Observe, for $f,g \in  BC^{\eta} (\mathbb{R}^{\leq 0};X)$, that
    \begin{align*}
        \big| \widehat{R}_\rho(f) - \widehat{R}_\rho(g) \big|_{\eta} &= \sup_{t \leq 0} e^{\eta t} \left| R_\rho (f(t)) - R_\rho (g(t))\right|_{X^{\odot *}} \\
        &\leq \lip(R_\rho) \left| f-g \right|_{\eta}.
    \end{align*}
    This shows that $\widehat{R}_\rho$ is well defined and $\lip(\widehat{R}_\rho) \leq \lip(R_\rho)$. Lemma~\ref{lemma:Lipschitz} completes the proof.
\end{proof}

We may now produce a Lipschitz manifold via the same argument as above.
We shall consider, for later purposes, a range of $\eta \in [\tilde{\eta},\overline{\eta}] \subset (\beta,\alpha)$  with $k\overline{\eta} > \tilde{\eta}$, $k \geq 1$.
In particular, consider $\mathcal{F}_\rho: X_\Sigma \times BC^{-\eta} (\mathbb{R}^{\leq 0};X) \to BC^{-\eta} (\mathbb{R}^{\leq 0};X)$, with $\eta \in [\tilde{\eta},\overline{\eta}]$, given by
\begin{equation}
    \mathcal{F}_\rho(u_1,y) (t)=  T(t) u_1 + [\mathcal{K} \circ \widehat{R}_\rho (y)](t).
    \label{eq:Frho}
\end{equation}
Shrink $\rho$ so that
\begin{equation}
    \Vert \mathcal{K} \Vert_{\mathcal{L}(X_{-\eta}^{\odot *};X_{-\eta})} \lip(R_\rho) < \frac{1}{2}, \qquad \text{for all } \eta \in [\widetilde{\eta},\overline{\eta}].
    \label{eq:choice_of_rho}
\end{equation}
(This can be done by the last assertion of Lemma~\ref{lemma:mathcalK}.)
Since $\mathcal{F}_\rho$ is globally Lipschitz (in both entries), the (parametric) Banach fixed point theorem (Theorem 21, \cite{Irwin72}) implies the existence of a globally Lipschitz map $\mathcal{W}_\rho^{-\eta}:X_\Sigma \to BC^{-\eta} (\mathbb{R}^{\leq 0};X)$ such that 
\begin{equation}
    \mathcal{F}_\rho(u_1,\mathcal{W}^{-\eta}_\rho(u_1)) = \mathcal{W}_\rho^{-\eta}(u_1).
    \label{eq:FrhoW}
\end{equation}
Note that uniqueness of the solution implies $\jmath_{-\overline{\eta}}^{-\eta} \circ \mathcal{W}^{-\overline\eta}_\rho = \mathcal{W}^{-\eta}_\rho$ (since $\mathcal{K} \circ \widehat{R}_\rho$ commutes with $\jmath_{-\overline{\eta}}^{-\eta}$).
Uniqueness also implies the invariance property
\begin{equation}
    \mathrm{ev}_t \circ \mathcal{W}_\rho^{-\eta} = \mathrm{ev}_0 \circ  \mathcal{W}_\rho^{-\eta}  \circ \widetilde{P}_\Sigma \circ \mathrm{ev}_t \circ\mathcal{W}_\rho^{-\eta}, \qquad t \leq 0.
    \label{eq:invariance}
\end{equation}
We may estimate the Lipschitz constant of $\mathcal{W}_\rho^{-\eta}$ via
\begin{multline*}
    \big| \mathcal{W}^{-\eta}_\rho(u_1) - \mathcal{W}^{-\eta}_\rho(v_1) \big|_{-\eta } \leq \\ \Vert \mathcal{K} \Vert_{\mathcal{L}(X_{-\eta}^{\odot *};X_{-\eta})} \lip(R_\rho) \big| \mathcal{W}^{-\eta}_\rho(u_1) - \mathcal{W}^{-\eta}_\rho(v_1) \big|_{-\eta } + K |u_1-v_1|_X
\end{multline*}
to be $\lip\big( \mathcal{W}^{-\eta}_\rho\big)\leq 2K$,
with $K$ as in \eqref{eq:dichotomy_appendix} (for $\varepsilon < \alpha - \overline{\eta}$).
Observe, from \eqref{eq:mathcalK}, that $P_\Sigma \circ \mathrm{ev}_0  \circ \mathcal{K} = 0$.
This shows $\widetilde{P}_\Sigma \circ\mathrm{ev}_0 \circ \mathcal{W}_\rho^{-\eta} = \mathrm{id}_{X_\Sigma}$.
Hence, the manifold $\im(\mathrm{ev}_0 \circ \mathcal{W}^{-\eta}_\rho)$ can be alternatively written as the graph of the map
\begin{equation}
    \phi_\rho :=\widetilde{P}_{\Sigma'} \circ\mathrm{ev}_0 \circ \mathcal{W}^{-\eta}_\rho:X_\Sigma \to X_{\Sigma'}.
    \label{eq:phirho}
\end{equation}
From \eqref{eq:FrhoW}, we have  $\phi_\rho = \widetilde{P}_{\Sigma'} \circ\mathrm{ev}_0 \circ \mathcal{K} \circ \widehat{R}_\rho \circ \mathcal{W}_\rho^{-\eta}$ and
\begin{equation}
    \lip(\phi_\rho) \leq 2K \Vert P_{\Sigma'}\Vert_{\mathcal{L}(X)} \Vert \mathcal{K} \Vert_{\mathcal{L}(X_{-\eta}^{\odot *};X_{-\eta})} \lip(R_\rho), \qquad \text{for all } \eta \in [\widetilde{\eta},\overline{\eta}].
    \label{eq:lipphirho}
\end{equation}
Up to potentially adjusting $\rho$, we may hence assume $\lip(\phi_\rho) < 2$.
We have thus ensured that 
is contained in the open set $S$ on which $R_\rho$ is $C^k$ \eqref{eq:smoothnessset}. 
Since $\mathrm{supp}(R_\rho) \subset \overline{B_{4\rho}(0)}^X$, we moreover have
\begin{equation}
    \sup_{x \in X_\Sigma} |\phi_\rho(x)| < \rho
    \label{eq:phirhomax}
\end{equation}
for $\rho$ small enough.
With the cutoff function fixed, by the same argument as above in the proof of Remark~\ref{remark:strongmfds}, we have $\im(\mathrm{ev}_0 \circ \mathcal{W}_\rho) = W^\Sigma_\rho$ (which now holds globally), hence the notation $\phi_\rho$ is no mistake, it is indeed equal to the one in \eqref{eq:Wsigmarho}.

For $\eta \in [\widetilde{\eta},\overline{\eta}]$, let
\begin{displaymath}
    V^{-\eta} = \left\{ y \in BC^{-\eta}(\mathbb{R}^{\leq 0} ; X) \; \big| \; |P_{\Sigma'}y|_0 < \infty \right\}
\end{displaymath}
(recall $| \cdot |_0$ stands for the $BC^0$ norm).
$V^{-\eta}$ is a Banach space continuously embedded in $BC^{-\eta}(\mathbb{R}^{\leq 0} ; X)$ when equipped with the norm $|y|_{V^{-\eta}} = |P_\Sigma y|_{-\eta} + |P_{\Sigma'}y|_0$.
Consider the open subset
\begin{displaymath}
    V^{-\eta}_\rho = \left\{ y \in BC^{-\eta}(\mathbb{R}^{\leq 0} ; X) \; \big| \; |P_{\Sigma'}y|_0 < \rho \right\}.
\end{displaymath}
It follows from the invariance property \eqref{eq:invariance} that
\begin{displaymath}
    \mathrm{ev}_t \circ \mathcal{W}^{-\eta}_\rho \in \im \left( \mathrm{ev}_0 \circ \mathcal{W}_\rho^{-\eta} \right) = \gr(\phi_\rho), \qquad t \leq 0. 
\end{displaymath}
By \eqref{eq:phirhomax}, we have
\begin{equation}
    \im \left( \mathcal{W}_\rho^{-\eta}\right) \subset V^{-\eta}_\rho.
    \label{eq:image_contained_in_O0}
\end{equation}

The reason we cannot repeat the proof of the $\eta > 0$ case above is that now, according to Lemma~\ref{lemma:compositionBC}\ref{item_comp2}, the map $\widehat{R}_\rho$ and hence $\mathcal{F}_\rho$ are only smooth with differing domain and range spaces -- thus the implicit function theorem no longer directly applies.
Roughly speaking, this is a consequence of the spaces $BC^{-\eta}$ allowing growth (as $t \to -\infty$) for $\eta < 0$.
To show smoothness of $\mathcal{W}_\rho^{-\eta}$, we consider an abstract version of \eqref{eq:Frho}, and provide a version of implicit mapping that allows for such differences in domains and ranges (following \cite{vanderbauwhede1987center,diekmann2012delay}).

Let $(Y_0,|\cdot|_{Y_0})$, $(Y,|\cdot|_Y)$, $(Y_1,|\cdot|_{Y_1})$ and $(\Lambda,|\cdot|_{\Lambda})$ be Banach spaces such that 
\begin{displaymath}
    Y_0 \xhookrightarrow{J_0} Y \xhookrightarrow{J} Y_1
\end{displaymath}
with $J_0$, $J$ continuous embeddings.
Let $O_0$ denote an open subset of $Y_0$.
Consider the fixed point equation
\begin{equation}
   f(\lambda,y) = y
   \label{eq:f_fixedpoint}
\end{equation}
for some $ f : \Lambda \times Y \to Y$ satisfying the following hypotheses:
\begin{enumerate}[label =(Hf.\arabic*)]
    \item \label{Hf1} The function $g:(\lambda,y_0) \mapsto J \circ f(\lambda,J_0(y_0))$ is of class $C^1$ on $\Lambda \times O_0$.
    There exist mappings $f^{(1)} : \Lambda \times J_0(O_0) \to \mathcal{L}(Y)$ and $f^{(1)}_1 : \Lambda \times J_0(O_0) \to \mathcal{L}(Y_1)$ such that
    \begin{displaymath}
        D_2g(\lambda,y_0)[z_0] = Jf^{(1)} (\lambda,J_0(y_0))[J_0(z_0)] \qquad \text{for all } (\lambda,y_0,z_0) \in \Lambda \times O_0 \times Y_0
    \end{displaymath}
    and
    \begin{displaymath}
        Jf^{(1)} (\lambda,J_0(y_0))[y] = f^{(1)}_1(\lambda,J_0(y_0)) [J(y)] \qquad \text{for all } (\lambda,y_0,y) \in \Lambda \times O_0 \times Y.
    \end{displaymath}

    \item \label{Hf2} There exists some $\kappa \in [0,1)$ such that for all $y,\tilde{y} \in Y$ and $\lambda \in \Lambda$
    \begin{equation}
        |f(\lambda,y) - f(\lambda,\tilde{y}) |_Y \leq \kappa |y-\tilde{y}|_Y,
        \label{eq:kappacontraction}
    \end{equation}
    and for all $\overline{y} \in J_0(O_0)$ 
    \begin{displaymath}
        \Vert f^{(1)}(\lambda,\overline{y}) \Vert_{\mathcal{L}(Y)} \leq \kappa, \qquad \Vert f^{(1)}_1(\lambda,\overline{y}) \Vert_{\mathcal{L}(Y_1)} \leq \kappa.
    \end{displaymath}
    \item \label{Hf3} Let $\Psi : \Lambda \to Y$ denote the map associating to each $\lambda$ a solution of \eqref{eq:f_fixedpoint} (this exists by \eqref{eq:kappacontraction}).
    Suppose $\Psi = J_0 \circ \Phi$ for some continuous $\Phi: \Lambda \to Y_0$ with image contained in $O_0$.
    \item \label{Hf4} $f_0 :\Lambda \times Y_0 \to Y$, $(\lambda,y_0) \mapsto f(\lambda,J_0(y_0))$ has continuous partial derivative
    \begin{displaymath}
        D_1f_0: \Lambda \times Y_0 \to \mathcal{L}(\Lambda;Y).
    \end{displaymath}
    \item \label{Hf5} The mapping $(\lambda,y_0) \mapsto J \circ f^{(1)}(\lambda,J_0(y_0))$ is continuous on $\Lambda \times O_0$.
\end{enumerate}

\begin{lemma}[Lemma IX.6.7, \cite{diekmann2012delay}] \label{lemma:6.7}
    Assume \ref{Hf1}-\ref{Hf5} hold.
    Then $\Psi$ is locally Lipschitz continuous.
    Moreover, $J \circ \Psi$ is of class $C^1$ with $D( J \circ \Psi)(\lambda) = J \circ \mathcal{A}(\lambda)$, where $\mathcal{A}: \Lambda \to \mathcal{L}(\Lambda;Y)$ is the solution map of the equation 
    \begin{displaymath}
        A =  D_1f_0(\lambda,\Phi(\lambda)) + f^{(1)} (\lambda,\Psi(\lambda)) A 
    \end{displaymath}
    for $A$ (whose existence is guaranteed by \ref{Hf2}-\ref{Hf4}).
\end{lemma}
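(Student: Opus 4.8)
The plan is to follow the scale-of-Banach-spaces contraction scheme of \cite{vanderbauwhede1987center} (the source of Lemma~\ref{lemma:6.7}): first exhibit a candidate derivative, then upgrade $\Psi$ from a mere fixed point to a locally Lipschitz map, then prove differentiability of $J\circ\Psi$ at the level of the largest space $Y_1$, and finally prove continuity of the derivative. The candidate derivative comes for free: for each fixed $\lambda$ the affine equation $A=D_1f_0(\lambda,\Phi(\lambda))+f^{(1)}(\lambda,\Psi(\lambda))A$ in $\mathcal{L}(\Lambda;Y)$ has linear part of operator norm at most $\kappa<1$ by \ref{Hf2}, so the Banach fixed point theorem yields a unique solution $\mathcal{A}(\lambda)$, with $\|\mathcal{A}(\lambda)\|_{\mathcal{L}(\Lambda;Y)}\le(1-\kappa)^{-1}\|D_1f_0(\lambda,\Phi(\lambda))\|$ locally bounded in $\lambda$ thanks to \ref{Hf3}--\ref{Hf4}.

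Next I would establish that $\Psi$ is locally Lipschitz. Subtracting the two fixed point identities $\Psi(\lambda)=f(\lambda,\Psi(\lambda))$, $\Psi(\lambda')=f(\lambda',\Psi(\lambda'))$ and inserting the intermediate term $f(\lambda,\Psi(\lambda'))=f_0(\lambda,\Phi(\lambda'))$, the contraction estimate \eqref{eq:kappacontraction} absorbs $\kappa|\Psi(\lambda)-\Psi(\lambda')|_Y$ on the left, while $|f_0(\lambda,\Phi(\lambda'))-f_0(\lambda',\Phi(\lambda'))|_Y$ is bounded by $\sup\|D_1f_0\|\,|\lambda-\lambda'|_\Lambda$ on a neighbourhood of the base point, using continuity of $D_1f_0$ from \ref{Hf4} and of $\Phi$ from \ref{Hf3}. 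This gives $(1-\kappa)|\Psi(\lambda)-\Psi(\lambda')|_Y\le C|\lambda-\lambda'|_\Lambda$ locally; in particular the increment $w(\mu):=\Psi(\lambda+\mu)-\Psi(\lambda)\in J_0(Y_0)\subset Y$ satisfies $|w(\mu)|_Y=O(|\mu|_\Lambda)$, the linear (not merely sublinear) bound that makes the remainder terms in the next step genuinely $o(|\mu|)$.

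The heart of the argument is differentiability of $J\circ\Psi$. I would apply $J$ to $w(\mu)=f(\lambda+\mu,\Psi(\lambda+\mu))-f(\lambda,\Psi(\lambda))$, peel off the $\lambda$-increment (handled by \ref{Hf4}, contributing $J(D_1f_0(\lambda,\Phi(\lambda))[\mu])$ up to an $o(|\mu|)$ error in $Y$), and rewrite the $y$-increment as $g(\lambda+\mu,\Phi(\lambda+\mu))-g(\lambda+\mu,\Phi(\lambda))=\int_0^1 D_2g(\lambda+\mu,\Phi(\lambda)+s(\Phi(\lambda+\mu)-\Phi(\lambda)))[\Phi(\lambda+\mu)-\Phi(\lambda)]\,ds$, which is legitimate since $g$ is $C^1$ by \ref{Hf1} and, for $\mu$ small, the whole segment lies in $O_0$ by openness of $O_0$ and continuity of $\Phi$. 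The two identities of \ref{Hf1} convert this integral into $\int_0^1 f^{(1)}_1(\lambda+\mu,\,\cdot\,)[J(w(\mu))]\,ds$. Writing the analogous decomposition of $J(\mathcal{A}(\lambda)[\mu])$ from the defining equation of $\mathcal{A}(\lambda)$ and subtracting, the difference $d(\mu):=J(w(\mu))-J(\mathcal{A}(\lambda)[\mu])\in Y_1$ solves $d(\mu)=\int_0^1 f^{(1)}_1(\lambda+\mu,\,\cdot\,)[d(\mu)]\,ds+R(\mu)$, where $R(\mu)$ combines the $C^1$-remainder of $f_0$ in $\lambda$ (an $o(|\mu|)$ term in $Y$, hence in $Y_1$) with $\int_0^1\bigl[Jf^{(1)}(\lambda+\mu,\,\cdot\,)-Jf^{(1)}(\lambda,\Psi(\lambda))\bigr][\mathcal{A}(\lambda)[\mu]]\,ds$, which is $o(|\mu|_\Lambda)$ in $Y_1$ because $\|\mathcal{A}(\lambda)[\mu]\|_Y=O(|\mu|)$ while the operator-norm difference in $\mathcal{L}(Y;Y_1)$ tends to $0$ by \ref{Hf5} and continuity of $\Phi$. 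Since $\|f^{(1)}_1\|_{\mathcal{L}(Y_1)}\le\kappa<1$ by \ref{Hf2}, this forces $(1-\kappa)|d(\mu)|_{Y_1}\le|R(\mu)|_{Y_1}=o(|\mu|_\Lambda)$, i.e.\ $J\circ\Psi$ is differentiable at $\lambda$ with derivative $J\circ\mathcal{A}(\lambda)$. Continuity of $\lambda\mapsto J\circ\mathcal{A}(\lambda)$ in $\mathcal{L}(\Lambda;Y_1)$ then follows by the same perturbation-of-a-linear-contraction estimate: writing $Jf^{(1)}(\lambda,\Psi(\lambda))=f^{(1)}_1(\lambda,\Psi(\lambda))\circ J$, the quantity $(\mathrm{id}-f^{(1)}_1(\lambda,\Psi(\lambda)))[J\mathcal{A}(\lambda)-J\mathcal{A}(\lambda')]$ equals a sum of error terms controlled by the continuity statements \ref{Hf4}, \ref{Hf5} and \ref{Hf3} together with the local bound on $\|\mathcal{A}(\lambda')\|_{\mathcal{L}(\Lambda;Y)}$, and these errors vanish as $\lambda'\to\lambda$.

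The step I expect to be the main obstacle is bookkeeping of \emph{which} space of the scale each estimate lives in. The reason the implicit function theorem does not apply directly is that the $y$-derivative of $f$ is only available as a map between \emph{different} spaces — it raises $Y_0$ into $Y$ and $Y$ into $Y_1$ — so $w(\mu)$ cannot be controlled in $Y$; the whole point is to run the differentiability estimate in $Y_1$, where $f^{(1)}_1$ genuinely acts, and to exploit that the \emph{same} constant $\kappa<1$ bounds $f$, $f^{(1)}$ and $f^{(1)}_1$ simultaneously. One must also verify at every stage that the vectors to which $f^{(1)}_1$ is applied ($w(\mu)$, $\mathcal{A}(\lambda)[\mu]$, $J\mathcal{A}(\lambda)-J\mathcal{A}(\lambda')$) lie in $J(Y)$, so that the compatibility identity of \ref{Hf1} can be invoked and the continuity hypothesis \ref{Hf5} — stated only for arguments pulled back through $J_0$ — actually applies after composition with the continuous $\Phi$ of \ref{Hf3}.
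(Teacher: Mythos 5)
Your argument is correct and is essentially the standard contraction-on-a-scale-of-Banach-spaces proof of Lemma IX.6.7 in \cite{diekmann2012delay} (the paper itself gives no proof, only the citation): local Lipschitz continuity of $\Psi$ from the uniform contraction \ref{Hf2} plus \ref{Hf4}, differentiability of $J\circ\Psi$ by comparing the increment with $J\mathcal{A}(\lambda)[\mu]$ in $Y_1$ where $f^{(1)}_1$ acts, and continuity of the derivative by the same perturbation estimate. Your closing remarks correctly identify the only delicate points — running the remainder estimate in $Y_1$ rather than $Y$, and checking that every vector fed to $f^{(1)}_1$ lies in $J(Y)$ so the compatibility identities of \ref{Hf1} and the continuity in \ref{Hf5} apply.
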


Let $Y_1,\ldots , Y_n$ and $Z$ be Banach spaces.  
By convention, we endow product spaces with the norm $|(y_1,\ldots,y_n)|_{Y_1 \times \ldots \times Y_n} = \sup \{|y_1|_{Y_1},\ldots,|y_n|_{Y_n}\}$.
We denote by $\mathrm{M}(Y_1,\ldots,Y_n;Z)$ the Banach space of bounded multilinear maps 
\begin{displaymath}
    Y_1 \times \ldots \times Y_n \longrightarrow Z
\end{displaymath}
equipped with the norm 
\begin{displaymath}
    \Vert N \Vert_{\mathrm{M}(Y_1,\ldots,Y_n;Z)} = \sup \big\{ | N(y_1, \ldots, y_n) |_Z \; \big| \; |(y_1, \ldots, y_n)|_{Y_1 \times \ldots \times Y_n} < 1 \big\}.
\end{displaymath}
We use the shorthand notation $\mathrm{M}_n(Y;Z)$ for the space $\mathrm{M}(Y,\ldots,Y;Z)$, where $Y$ appears $n \geq 0$ times, with the convention that $\mathrm{M}_0(Y;Z) = Z$.

Recall that $R_\rho$ is $C^k$ on $S$ (c.f.\ \eqref{eq:smoothnessset}).
Let $1 \leq j \leq k$, and  let $\xi_i < 0$ for $i =1,\ldots,j$, $\sum \xi_i = : \xi$, and $\eta < 0 $ denote real numbers.
We may define, if $\eta \leq \xi$, the map  $\widehat{R}_\rho^{(j)}: V^{-\sigma}_\rho \to M_j(BC^{-\xi_1}(\mathbb{R}^{\leq 0};X),\ldots,BC^{-\xi_j}(\mathbb{R}^{\leq 0};X);BC^{-\eta}(\mathbb{R}^{\leq 0};X^{\odot *}))$, $\sigma < 0$, by 
\begin{equation}
    \left( \widehat{R}_\rho^{(j)}(y)[h_1,\ldots,h_j] \right) (t):= D^jR_\rho(y(t))[h_1(t),\ldots,h_j(t)], \qquad t \leq 0.
    \label{eq:widehatR(j)}
\end{equation}
It is immediate that this is well defined and takes values in $M_j(X_{-\xi_1},\ldots,X_{-\xi_j};X^{\odot *}_{\eta})$ by $\eta \leq  \xi$.

\begin{lemma}\label{lemma:wideR} 
    Let $1 \leq j \leq k$ and $\xi_i < 0$ for $i =1,\ldots,j$, $\sum \xi_i = : \xi$. 
    Then we have
    \begin{enumerate}[label=\upshape{(\roman*)}]
        \item \label{wideR1} If $\eta \leq  \xi $, we have $\widehat{R}_\rho^{(j)}(y) \in M_j(X_{-\xi_1},\ldots,X_{-\xi_j};X^{\odot *}_{\eta})$ for all $y \in V^{-\sigma}_\rho$, $\sigma < 0$. The mapping $y \mapsto \widehat{R}_\rho^{(j)}(y)$ is continuous if $\eta < j \xi$.
        \item \label{wideR2} If $\eta < j \xi$, the map $\jmath_{-\xi}^{-\eta/j} \circ \widehat{R}_\rho|_{V_\rho^{-\xi}}: V_\rho^{-\xi} \to BC^{-\eta}(\mathbb{R}^{\leq};X^{\odot *})$ is of class $C^j$.
        \item \label{wideR3} Let $\Phi:X_\Sigma \to V_\rho^{-\sigma}$, $\sigma < 0$, be a $C^1$ map. Then the mapping $\widehat{R}_\rho^{(j)} \circ \Phi:X_\Sigma \to M_j(X_{-\xi_1},\ldots,X_{-\xi_j};X^{\odot *}_{\eta})$ is $C^1$, provided that $\eta < \sigma +  \xi$; its derivative is given as
        \begin{equation}
            D (\widehat{R}_\rho^{(j)} \circ \Phi)(u_1) [v_1,\ldots,v_j] (s)=  \widehat{R}_\rho^{(j+1)}(\Phi(u_1))[ v_1(s),\ldots,v_j(s),D\Phi(u_1) [\cdot ](s)]
            \label{eq:Rhatcompderiv}
        \end{equation}
    \end{enumerate}
\end{lemma}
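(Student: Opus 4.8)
The plan is to prove the three assertions of Lemma~\ref{lemma:wideR} by carefully tracking which $BC$-weights are needed for well-definedness versus continuity versus differentiability, mirroring the structure of Lemma~\ref{lemma:compositionBC} but keeping the intermediate multilinear spaces $M_j$ explicit. Throughout I will use that $R_\rho$ is $C^k$ on the open set $S$ of \eqref{eq:smoothnessset} with all derivatives $D^jR_\rho$ bounded globally (since $\mathrm{supp}(R_\rho)\subset\overline{B_{4\rho}(0)}^X$), and that $V_\rho^{-\sigma}$ maps into $S$ pointwise by the argument around \eqref{eq:image_contained_in_O0}, so that \eqref{eq:widehatR(j)} makes sense.

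For \ref{wideR1}, well-definedness is the pointwise estimate
\begin{displaymath}
    e^{-\eta t}\big| D^jR_\rho(y(t))[h_1(t),\ldots,h_j(t)]\big|_{X^{\odot*}} \leq \|D^jR_\rho\|_\infty\, e^{(\xi-\eta)t}\prod_{i=1}^j \big(e^{-\xi_i t}|h_i(t)|_X\big),
\end{displaymath}
which is bounded on $t\leq 0$ precisely because $e^{(\xi-\eta)t}\leq 1$ when $\eta\leq\xi$. Continuity of $y\mapsto\widehat R_\rho^{(j)}(y)$ under the stronger condition $\eta<j\xi$ follows from uniform continuity of $D^jR_\rho$ on the compact set $\overline{B_{4\rho}(0)}^X$ combined with the slack factor $e^{(\xi-\eta)t}$: write the difference $\widehat R_\rho^{(j)}(y)-\widehat R_\rho^{(j)}(\tilde y)$, estimate $|D^jR_\rho(y(t))-D^jR_\rho(\tilde y(t))|$ by a modulus of continuity evaluated at $|y(t)-\tilde y(t)|_X\leq e^{\sigma t}|y-\tilde y|_{-\sigma}$, and split into $|t|$ large (where $e^{(\xi-\eta)t}$ is tiny, giving uniform smallness) and $|t|$ small (where $y(t)$ stays in a fixed compact set and the modulus of continuity is small). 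This is the standard device behind Lemma~\ref{lemma:compositionBC}; the only novelty is carrying the extra $\prod e^{-\xi_i t}|h_i(t)|$ factors, which are harmless because $\eta\le\xi$.

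For \ref{wideR2}, I would induct on $j$. The base case $j=0$ is just the global Lipschitz/continuity of $\widehat R_\rho$ on $BC$-spaces. For the inductive step, the candidate derivative of $\jmath\circ\widehat R_\rho$ at $f$ is $h\mapsto\widehat R_\rho^{(1)}(f)[h]$ viewed in the appropriate weighted space, and one verifies Fréchet differentiability by the mean-value estimate
\begin{displaymath}
    \big|R_\rho(f(t)+h(t))-R_\rho(f(t))-DR_\rho(f(t))[h(t)]\big|_{X^{\odot*}} \leq |h(t)|_X\sup_{\theta\in[0,1]}\|DR_\rho(f(t)+\theta h(t))-DR_\rho(f(t))\|,
\end{displaymath}
again absorbing the weight mismatch via $e^{(\xi-\eta/j)t}$ with $\eta<j\xi$ giving genuine decay. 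Continuity of the derivative is then exactly part \ref{wideR1} applied with $j=1$, and the higher derivatives are handled by the inductive hypothesis applied to $\widehat R_\rho^{(1)}$, using that $D^jR_\rho$ is itself $C^{k-j}$ with bounded derivatives. For \ref{wideR3}, since $\Phi$ is $C^1$ into $V_\rho^{-\sigma}$, the composition $\widehat R_\rho^{(j)}\circ\Phi$ is differentiated by the chain rule, the only subtlety being that one differentiates the \emph{argument} slot of $R_\rho^{(j)}$, which costs one more derivative of $R_\rho$ and one more weight $\sigma$; the resulting formula \eqref{eq:Rhatcompderiv} is checked pointwise, and continuity of the derivative follows from \ref{wideR1} (continuity part) together with continuity of $D\Phi$, the hypothesis $\eta<\sigma+\xi$ being exactly what makes the weighted multilinear space well-defined with a slack factor for the continuity argument.

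The main obstacle is bookkeeping the weight inequalities: one must be scrupulous about when $\eta\leq\xi$ suffices (mere boundedness / well-definedness) versus when the strict inequality $\eta<j\xi$ or $\eta<\sigma+\xi$ is genuinely needed (to get a decaying factor $e^{\varepsilon t}$, $\varepsilon>0$, that forces the split-domain continuity and differentiability arguments to close). A secondary point requiring care is that $R_\rho$ is only $C^k$ on $S$, not on all of $X$, so every composition must be justified using that the relevant orbits/paths stay in $S$ — this is why $V_\rho^{-\sigma}$ rather than all of $BC^{-\sigma}$ appears as the domain, and why $\mathrm{supp}(R_\rho)\subset\overline{B_{4\rho}(0)}^X$ is invoked to get global bounds on $D^jR_\rho$ despite the restricted domain of smoothness. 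Once these are set up, the analytic content is entirely parallel to the proof of Lemma~\ref{lemma:compositionBC} in Appendix~IV of \cite{diekmann2012delay}, so I would state the estimates and refer there for the routine modulus-of-continuity manipulations rather than reproducing them in full.
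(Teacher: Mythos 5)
Your proposal is correct and follows essentially the same route as the paper, which simply defers to the corresponding results in Section IX.7 of \cite{diekmann2012delay} (transported to the half-line via the mirroring embedding used in the proof of Lemma~\ref{lemma:compositionBC}); the weight bookkeeping you describe — $\eta\leq\xi$ for mere boundedness of the multilinear map, strict inequalities to manufacture a decaying factor $e^{\varepsilon t}$ for the continuity/differentiability arguments, and the chain-rule cost of one extra derivative and one extra weight $\sigma$ in part \ref{wideR3} — is exactly the content of those estimates.

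Two small corrections to your justification. First, $\overline{B_{4\rho}(0)}^X$ is \emph{not} compact, since $X=C([-h,0];\mathbb{R}^n)$ is infinite dimensional, so you cannot invoke uniform continuity of $D^jR_\rho$ on that ball; the correct compact sets are the orbit segments $y([-T,0])$ (continuous images of compact intervals), which is precisely the second half of your own sentence — the split into $|t|\geq T$ (handled by the slack factor and global boundedness of $D^jR_\rho$) and $|t|\leq T$ (handled by uniform continuity on a neighbourhood of the compact set $y([-T,0])$) is the argument that actually closes. Second, $\mathrm{supp}(R_\rho)\subset\overline{B_{4\rho}(0)}^X$ alone does not give global bounds on $D^jR_\rho$; one also needs $R|_{B_{4\rho}(0)}\in C^k_b$, which is arranged by shrinking $\rho$ exactly as in Lemma~\ref{lemma:Rsmoothness}. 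Neither point changes the structure of your argument.
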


\begin{proof}
    These can be inferred from the corresponding results in Section IX.7, \cite{diekmann2012delay}, as in the proof of Lemma~\ref{lemma:compositionBC}.
    For clarity, we provide a direct proof of
\end{proof}

Note that, by its very definition \eqref{eq:widehatR(j)}, $\widehat{R}_\rho^{(j)}$ is insensitive to the space $V^{-\sigma}_\rho$, in the sense that
\begin{equation}
    \widehat{R}_\rho^{(j)} (y)= \widehat{R}_\rho^{(j)} \circ \jmath_{-\sigma}^{-\tau} (y), \qquad \text{for } y \in  V^{-\sigma}_\rho, \text{ and } \tau<\sigma<0.
    \label{eq:domain_Rhat}
\end{equation}
For future reference, we also extend the definition of the $\jmath$ maps to $M_j$ spaces in the obvious fashion:
If $A \in M_j(X_\Sigma; BC^{-\xi}(\mathbb{R}^{\leq 0};X))$, then
\begin{equation}
    (\jmath_{-\xi}^{-\eta} \circ A)[v_1,\ldots,v_j] : = \jmath_{-\xi}^{-\eta} \big( A  [v_1,\ldots,v_j]\big).
    \label{eq:jmathMj}
\end{equation}
If $\widehat{R}_\rho^{(j)} : V_\rho^{-\sigma} \to M_j(X_{-\xi_1},\ldots,X_{-\xi_j};X^{\odot *}_{\eta})$
and $\zeta < \eta$, then the map
\begin{gather*}
    \jmath_{-\eta}^{-\zeta} \circ \widehat{R}_\rho^{(j)} : V_\rho^{-\sigma} \to M_j(X_{-\xi_1},\ldots,X_{-\xi_j};X^{\odot *}_{\zeta}) \\ 
    \jmath_{-\eta}^{-\zeta} \circ \widehat{R}_\rho^{(j)} (y)[v_1,\ldots,v_j] = \jmath_{-\eta}^{-\zeta} \left( \widehat{R}_\rho^{(j)} (y)[v_1,\ldots,v_j] \right) 
\end{gather*}
can be factored as
\begin{equation}
    \jmath_{-\eta}^{-\zeta} \circ \widehat{R}_\rho^{(j)} (y)[v_1,\ldots,v_j] = \widehat{R}_\rho^{(j)}(y)[ \jmath_{-\xi_1}^{-\zeta_1} \circ v_1,\ldots,  \jmath_{-\xi_j}^{-\zeta_j} \circ v_j]
    \label{eq:factorjmathRhat}
\end{equation}
for some choice of $\zeta_i$, $i = 1,\ldots,j$, with $\sum\zeta_i = \zeta$.

The proof of smoothness will be entirely analogous to Section IX.7 of \cite{diekmann2012delay} for center manifolds. The only difference in our setting is that we consider a different interval of $\eta$ possibly bounded away from $0$, and that we work with negative semiorbits only -- manifest in the spaces $BC(\mathbb{R}^{\leq 0};X)$.
It turns out that these changes barely modify the arguments required.

\begin{proof}[Proof of the smoothness assertion, Theorem~\ref{thm:main1}\ref{thm1st2}]
Let $\eta \in [\tilde{\eta},\overline{\eta})$.
To prove that $\phi_\rho$ is of class $C^1$, we apply Lemma~\ref{lemma:6.7} with
\begin{align*}
    &O_0 = V_\rho^{-\overline{\eta}}, \quad Y_0 = V^{-\overline{\eta}}, \quad Y = BC^{-\overline\eta}(\mathbb{R}^{\leq 0};X), \quad  Y_1 =  BC^{-\eta}(\mathbb{R}^{\leq 0};X), \\
    &\Lambda = X_\Sigma, \quad f(u_1,y) = \mathcal{F}_\rho(u_1,y) =T(\cdot)u_1 + \mathcal{K} \circ \widehat{R}_\rho(y), \quad u_1 \in \Lambda, \; y \in Y,\\
    & f^{(1)}(u_1,y) = \mathcal{K} \circ \widehat{R}^{(1)}_\rho(y) \in \mathcal{L}(Y) , \quad u_1 \in \Lambda, \; y \in J_0(O_0), \\
    & f^{(1)}_1(u_1,y) = \mathcal{K} \circ \widehat{R}^{(1)}_\rho(y) \in \mathcal{L}(Y_1), \quad u_1 \in \Lambda, \; y \in J_0(O_0), 
\end{align*}
with $J_0 $ being the embedding $V^{-\overline{\eta}} \xhookrightarrow{} BC^{-\overline\eta}(\mathbb{R}^{\leq 0};X)$ and $J = \jmath_{-\overline{\eta}}^{-\eta}$.
Note that we have obtained $\Phi$ from \ref{Hf3} as $\mathcal{W}_\rho^{-\overline{\eta}}$ in \eqref{eq:FrhoW}; it satisfies the requirements of \ref{Hf3} by \eqref{eq:image_contained_in_O0} (it is an elementary fact from point-set topology that $\mathcal{W}_\rho^{-\overline{\eta}}$ is continuous $X_\Sigma \to Y_0$, Theorem 18.2, \cite{munkres2000topology}). 
Let us verify the remaining assumptions of Lemma~\ref{lemma:6.7}.
For \ref{Hf1}, we use Lemma~\ref{lemma:wideR}\ref{wideR2} on the term $ \jmath_{-\overline{\eta}}^{-\eta} \circ \mathcal{K} \circ \widehat{R}_\rho \circ J_0 = \mathcal{K} \circ \jmath_{-\overline{\eta}}^{-\eta} \circ \widehat{R}_\rho \circ J_0$, which shows $g$ is $C^1$ -- differentiability in the $\Lambda = X_\Sigma$ direction is a consequence of $X_\Sigma \subset \dom(A)$ and $\eta < \alpha$ (c.f.\ Lemma~\ref{lemma:appendixdichotomy}).
We have
\begin{displaymath}
    D_2g (u_1,y_0)[z_0] =  \jmath_{-\overline{\eta}}^{-\eta} \circ \mathcal{K} \circ \widehat{R}_\rho^{(1)}(J_0(y_0))[ J_0(z_0)] = \jmath_{-\overline{\eta}}^{-\eta} \circ  f^{(1)}(u_1,J_0(y_0))[ J_0(z_0)]
\end{displaymath}
for $(u_1,y_0,z_0) \in \Lambda \times O_0 \times Y_0$; and similarly for the final requirement of \ref{Hf1}.
\ref{Hf2} holds by the choice of $\rho$ in \eqref{eq:choice_of_rho}.
\ref{Hf4} is once more a consequence of $X_\Sigma \subset \dom(A)$ and $\overline{\eta} < \alpha$.
For \ref{Hf5}, we may apply Lemma~\ref{lemma:wideR}\ref{wideR1} directly.

Applying Lemma~\ref{lemma:6.7}, we infer that $\mathcal{W}_\rho^{-{\eta}} = \jmath_{-\overline{\eta}}^{-\eta} \circ \mathcal{W}_\rho^{-\overline{\eta}} $ is of class $C^1$, the Fréchet  derivative of which,
$D\mathcal{W}_\rho^{-{\eta}}(u_1) \in \mathcal{L}(X_\Sigma; BC^{-\eta}(\mathbb{R}^{\leq 0};X))$,
is the unique solution to
\begin{displaymath}
     \mathcal{F}_\rho^1(u_1,A^{(1)} ) = A^{(1)} ,
\end{displaymath}
where $\mathcal{F}_\rho^1 : X_\Sigma \times \mathcal{L}(X_\Sigma; BC^{-\eta}(\mathbb{R}^{\leq 0};X)) \to \mathcal{L}(X_\Sigma; BC^{-\eta}(\mathbb{R}^{\leq 0};X))$ is given by
\begin{equation}
    T(\cdot) + \mathcal{K} \circ \widehat{R}_\rho^{(1)} \big(\mathcal{W}_\rho^{-\overline{\eta}}(u_1) \big) [A^{(1)}] .
    \label{eq:F1}
\end{equation}
Since $\eta $ was arbitrary, we infer this property holds over all $ \eta \in [\tilde{\eta},\overline{\eta})$.

Choose $\sigma_{k-1} <\cdots<\sigma_1 < \overline\eta$ such that $k\sigma_{k-1} > \tilde{\eta}$.
For higher degrees of smoothness, we proceed by induction  on $1 \leq j \leq k-1$.
We assume, as in the statement Theorem~\ref{thm:main1}\ref{thm1st2}, that the spectral gap satisfies $\beta < k \alpha < 0$ (this is no less general than the original statement with $\ell$). 
The induction hypotheses are:
\begin{enumerate}[label =(IH.\arabic*)]
    \item \label{IH1} For all $1 \leq \ell \leq j$, the map $\mathcal{W}_\rho^{-{\eta}}$ is of class $C^\ell$ for $\eta \in [\tilde{\eta},\ell \sigma_\ell]$. 
    \item \label{IH2} For $\eta \in [\tilde{\eta},j \sigma_j]$, $D^j \mathcal{W}_\rho^{-{\eta}}(u_1)$ is the unique solution of an equation of the form
    \begin{displaymath}
        \mathcal{F}_\rho^j(u_1,A^{(j)} ) = A^{(j)} ,
    \end{displaymath}
    where $\mathcal{F}_\rho^j : X_\Sigma \times M_j(X_\Sigma; BC^{- \eta}(\mathbb{R}^{\leq 0};X)) \to M_j(X_\Sigma; BC^{-  \eta}(\mathbb{R}^{\leq 0};X))$ is given by
    \begin{displaymath}
        \mathcal{F}_\rho^j(u_1,A^{(j)} ) =  \mathcal{K} \circ \widehat{R}_\rho^{(1)} \big(\mathcal{W}_\rho^{-\sigma_j}(u_1) \big) [A^{(j)}] + H_j(u_1),  \qquad \text{for } j \geq 1,
    \end{displaymath}
    where $H_j : X_\Sigma \to M_j(X_\Sigma; BC^{-\eta}(\mathbb{R}^{\leq 0};X)) $ is given by  $H_1(u_1) = T(\cdot)$ for all $u_1 \in X_\Sigma$ if $j = 1$;
    if $j \geq 2$, $H_j$ is given by a finite sum of terms of the form
    \begin{equation}
        \jmath_{-j \sigma_j}^{-\eta} \circ\mathcal{K} \circ \widehat{R}_\rho^{(\ell)} \big(\mathcal{W}_\rho^{-\sigma_j}(u_1) \big) \left[  D^{r_1} \mathcal{W}_\rho^{-r_1{\sigma_j}}(u_1) ,\ldots,D^{r_\ell} \mathcal{W}_\rho^{-r_\ell{\sigma_j}}(u_1)  \right]
        \label{eq:Hj}
    \end{equation}
    with $2 \leq \ell \leq j$, $1 \leq r_i < j$ for $1 \leq i \leq \ell$, and $r_1 + \ldots + r_\ell =j$.
    \item \label{IH3} $\mathcal{F}_\rho^j$ is a contraction on the second factor, uniformly in $\eta \in [\tilde{\eta},j \sigma_j]$.
\end{enumerate}

That \ref{IH1}-\ref{IH3} hold for $j = 1$ was shown above; noting that $\mathcal{W}_\rho^{-\overline{\eta}}(u_1) $ in the argument of $\widehat{R}_\rho^{(1)}$ may be replaced by $\mathcal{W}_\rho^{-\sigma_1}(u_1) $, using \eqref{eq:domain_Rhat}.
(Uniformity of the contraction in \ref{IH3} is satisfied by the choice of $\rho$, as specified in \eqref{eq:choice_of_rho}.)
We may hence suppose \ref{IH1}-\ref{IH3} hold for $2 \leq j < k-1$; we show they are then satisfied for $j+1$.
For this, fix $\eta \in [\tilde{\eta},(j+1) \sigma_j)$  and $\xi \in (\eta,(j+1)\sigma_j)$.

We shall apply Lemma~\ref{lemma:6.7} with
\begin{align*}
    &O_0 = Y_0 = M_j(X_\Sigma;BC^{-j \sigma_j}(\mathbb{R}^{\leq 0};X)), \quad Y = M_j(X_\Sigma;BC^{-\xi}(\mathbb{R}^{\leq 0};X)), \\  
    &Y_1 = M_j(X_\Sigma;BC^{-\eta}(\mathbb{R}^{\leq 0};X)), \quad
    \Lambda  = X_\Sigma, \quad f = \mathcal{F}_\rho^{j}, \\ 
    &f^{(1)}(u_1,\cdot) =  \mathcal{K} \circ \widehat{R}_\rho^{(1)} \big(\mathcal{W}_\rho^{- \sigma_j}(u_1) \big) \in \mathcal{L}(Y), \; u_1 \in X_\Sigma, \\
    & f^{(1)}_1(u_1,\cdot) =  \mathcal{K} \circ \widehat{R}_\rho^{(1)} \big(\mathcal{W}_\rho^{- \sigma_j}(u_1) \big) \in \mathcal{L}(Y_1), \; u_1 \in X_\Sigma,
\end{align*}
with $J_0 = \jmath_{-j \sigma_j}^{-\xi}$ and $J = \jmath_{-\xi}^{-\eta}$ defined as in \eqref{eq:jmathMj}.

We have, for $g$ as in \ref{Hf1},
\begin{displaymath}
    g(u_1,A^{(j)}) =\jmath_{-\xi}^{-\eta} \circ \mathcal{K} \circ \widehat{R}_\rho^{(1)} \big(\mathcal{W}_\rho^{- \sigma_j}(u_1) \big) [\jmath_{-j \sigma_j}^{-\xi} \circ  A^{(j)}] + H_j(u_1).
\end{displaymath}
Since $g$ is bounded linear in $A^{(j)}$, it is $C^1$ in its second argument if $u_1 \mapsto \jmath_{-\xi}^{-\eta} \circ \mathcal{K} \circ \widehat{R}_\rho^{(1)} \big(\mathcal{W}_\rho^{-\sigma_j}(u_1) \big) \circ \jmath_{-j \sigma_j}^{-\xi} $ is continuous.
The latter assertion follows from Lemma~\ref{lemma:wideR}\ref{wideR1}, $\eta <j\sigma_j$, and the continuity of $\mathcal{W}_\rho^{- \sigma_j}$, as $\sigma_j < \overline{\eta}$. 
With respect to its first argument, we may apply Lemma~\ref{lemma:wideR}\ref{wideR3} (to both terms) to conclude it is $C^1$, since $\eta <   (j+1)\sigma_j$ and $\mathcal{W}_\rho^{-\sigma_j}:X_\Sigma \to V_\rho^{-\sigma_j}$ is $C^1$.
The rest of \ref{Hf1} is obvious -- noting that $f^{(1)}$ and $f^{(1)}_1$ indeed take values in $\mathcal{L}(Y)$ and $\mathcal{L}(Y_1)$ by Lemma~\ref{lemma:wideR}\ref{wideR1}.
The same argument also shows \ref{Hf4}, given that $\xi < (j+1) \sigma_j$.
\ref{Hf2} follows from induction hypothesis \ref{IH3}.
\ref{Hf3} is a consequence of induction hypotheses \ref{IH1} and \ref{IH2}.
\ref{Hf5} is once more an application of Lemma~\ref{lemma:wideR}\ref{wideR1}, noting that $\eta < \xi$.

We may hence apply Lemma~\ref{lemma:6.7} to obtain that $u_1 \mapsto D^{j} \mathcal{W}^{-\eta}_\rho(u_1)$ is $C^1$; its derivative $D^{j+1}\mathcal{W}^{-\eta}_\rho(u_1) \in \mathcal{L}(X_\Sigma ; M_j(X_\Sigma;BC^{-\eta}(\mathbb{R}^{\leq 0};X))) \cong M_{j+1}(X_\Sigma;BC^{-\eta}(\mathbb{R}^{\leq 0};X)) $ satisfies 
\begin{equation}
     A^{(j+1)}  = \mathcal{K} \circ \widehat{R}_\rho^{(1)} \big(\mathcal{W}_\rho^{- \sigma_j}(u_1) \big)[A^{(j+1)}] + H_{j+1}(u_1),
     \label{eq:Aj+1}
\end{equation}
where, by \eqref{eq:Rhatcompderiv},
\begin{equation}
    H_{j+1}(u_1) = \jmath_{-(j+1) \sigma_j}^{-\eta} \circ  \mathcal{K} \circ \widehat{R}_\rho^{(2)} \big(\mathcal{W}_\rho^{- \sigma_j}(u_1) \big)[D^{j} \mathcal{W}^{-j\sigma_j}_\rho(u_1),D\mathcal{W}_\rho^{- \sigma_j}(u_1)] + DH_j(u_1).
    \label{eq:Hj+1}
\end{equation} 
Note that a sample term of $H_j(u_1)$ from \eqref{eq:Hj} corresponds to a term 
\begin{gather}
       \jmath_{-(j+1) \sigma_j}^{-\eta} \circ\mathcal{K} \circ \widehat{R}_\rho^{(\ell+1)} \big(\mathcal{W}_\rho^{-\sigma_j}(u_1) \big) \left[  D^{r_1} \mathcal{W}_\rho^{-{r_1\sigma_j}}(u_1) ,\ldots,D^{r_\ell} \mathcal{W}_\rho^{-r_\ell{\sigma_j}}(u_1),D \mathcal{W}_\rho^{-\sigma_j}(u_1) \right] \nonumber \\
       +\sum_{l = 1}^\ell \jmath_{-(j+1) \sigma_j}^{-\eta} \circ\mathcal{K} \circ \widehat{R}_\rho^{(\ell)} \big(\mathcal{W}_\rho^{-\sigma_j}(u_1) \big) \left[  D^{r_1} \mathcal{W}_\rho^{-r_1{\sigma_j}}(u_1) ,\ldots,D^{r_l+1} \mathcal{W}_\rho^{-{r_l\sigma_j}}(u_1),\ldots,D^{r_\ell} \mathcal{W}_\rho^{-r_\ell{\sigma_j}}(u_1) \right]
       \label{eq:Hjupdate}
\end{gather}
of $DH_j(u_1)$, using \eqref{eq:Rhatcompderiv} once more.
(The $\jmath$ inclusion terms appear as a consequence of considering \eqref{eq:Aj+1} on $M_{j+1}(X_\Sigma;BC^{-\eta}(\mathbb{R}^{\leq 0};X))$.)
This implies $\mathcal{W}^{-\eta}_\rho$ is of class $C^{j+1}$ for $\eta \in [\tilde{\eta},(j+1)\sigma_j)$.

Thus, in particular, $\mathcal{W}^{-\eta}_\rho$ is of class $C^{j+1}$ for $\eta \in [\tilde{\eta},(j+1)\sigma_{j+1}]$, showing \ref{IH1} for $j+1$.
On this interval of $\eta$, \ref{IH2} will be shown if we can modify all $\sigma_j$'s in \eqref{eq:Aj+1}, \eqref{eq:Hj+1} and \eqref{eq:Hjupdate} to $\sigma_{j+1}$.
For the $\sigma_j$'s appearing in the first argument of $\widehat{R}_\rho^{(j)}$, we may simply use \eqref{eq:domain_Rhat} -- this already treats \eqref{eq:Aj+1}.
For the rest, we separate the $\jmath_{-(j+1) \sigma_j}^{-\eta}$ terms into $\jmath_{-(j+1) \sigma_{j+1}}^{-\eta} \circ \jmath_{-(j+1) \sigma_j}^{-(j+1) \sigma_{j+1}}$ and move the $\jmath_{-(j+1) \sigma_j}^{-(j+1) \sigma_{j+1}}$ term to the terms within square brackets via \eqref{eq:factorjmathRhat}.
Note that for each term, there is a unique choice of $(\zeta_i)$ -- in the notation of \eqref{eq:factorjmathRhat} -- that transforms each term to their $\sigma_{j+1}$ counterpart.
For instance, the last term(s) in \eqref{eq:Hjupdate} have $\zeta = (r_1 \sigma_{j+1}, \ldots,(r_l+1)\sigma_{j+1},\ldots,r_\ell \sigma_{j+1})$.
The rest of the terms can be treated similarly.
Noting also that $\jmath^{-r_l \sigma_{j+1}}_{-r_l\sigma_j} D^{r_l} \mathcal{W}_\rho^{-r_l{\sigma_j}}(u_1) = D^{r_l} \mathcal{W}_\rho^{-r_l{\sigma_{j+1}}}(u_1) $, we arrive at the form \ref{IH2} for $j + 1$.
Finally, \ref{IH3} for $j+1$ follows from its version for $j$, noting the form of \eqref{eq:Aj+1}.
\end{proof}

Tangency at the origin can be deduced in the exact same way as for center manifolds.
(Note that this has no assumptions on the spectral gap, since neither does $C^1$ smoothness.)

\begin{corollary}[Corollary IX.7.10, \cite{diekmann2012delay}] \label{corollary:tangency}
    The map $\phi_\rho$ from \eqref{eq:phirho},
    \begin{displaymath}
        \phi_\rho = \widetilde{P}_{\Sigma'} \circ\mathrm{ev}_0 \circ \mathcal{W}^{-\eta}_\rho:X_\Sigma \to X_{\Sigma'}
    \end{displaymath}
    satisfies $D\phi_\rho (0) = 0$.
\end{corollary}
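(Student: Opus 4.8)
The plan is to read off the tangency from the fixed-point characterization of $D\mathcal{W}^{-\eta}_\rho$ obtained in the proof of Theorem~\ref{thm:main1}\ref{thm1st2}, evaluated at the origin. First I would record that $\mathcal{W}^{-\eta}_\rho(0) = 0$. Since the cutoff factors $\chi_\rho$ are identically $1$ on a neighbourhood of $0 \in X$ (both $|P_\Sigma u|_X$ and $|P_{\Sigma'}u|_X$ are less than $\rho$ when $|u|_X$ is small), we have $R_\rho = R$ near $0$, hence $R_\rho(0) = R(0) = 0$ and $\widehat{R}_\rho(0) = 0$; therefore $\mathcal{F}_\rho(0,0) = T(\cdot)\,0 + \mathcal{K}\widehat{R}_\rho(0) = 0$, and uniqueness of the fixed point in \eqref{eq:FrhoW} forces $\mathcal{W}^{-\eta}_\rho(0) = 0$.

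Next I would specialize the linearized fixed-point equation \eqref{eq:F1} to $u_1 = 0$. Because $R_\rho = R$ near $0$ we also have $DR_\rho(0) = DR(0) = 0$ (recall from Section~\ref{sect:linearization} that $DR(0) = 0$), so by the pointwise formula \eqref{eq:widehatR(j)} the operator $\widehat{R}^{(1)}_\rho(\mathcal{W}^{-\overline{\eta}}_\rho(0)) = \widehat{R}^{(1)}_\rho(0)$ is the zero element of the corresponding space of bounded linear maps between $BC$-spaces. Consequently the map $\mathcal{F}^1_\rho(0,\cdot)$ in \eqref{eq:F1} reduces to the constant $A^{(1)} \mapsto T(\cdot)$, and since $D\mathcal{W}^{-\eta}_\rho(0)$ is by construction its unique fixed point we conclude $\big(D\mathcal{W}^{-\eta}_\rho(0)[v_1]\big)(t) = T(t)v_1$ for all $v_1 \in X_\Sigma$ and $t \leq 0$.

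Finally I would differentiate $\phi_\rho = \widetilde{P}_{\Sigma'} \circ \mathrm{ev}_0 \circ \mathcal{W}^{-\eta}_\rho$ from \eqref{eq:phirho}: by the chain rule $D\phi_\rho(0) = \widetilde{P}_{\Sigma'} \circ \mathrm{ev}_0 \circ D\mathcal{W}^{-\eta}_\rho(0)$, and the previous step gives $\mathrm{ev}_0 \circ D\mathcal{W}^{-\eta}_\rho(0)[v_1] = T(0)v_1 = v_1 \in X_\Sigma$. Since $X_\Sigma = \ker(P_{\Sigma'})$, we get $\widetilde{P}_{\Sigma'}v_1 = 0$, i.e.\ $D\phi_\rho(0) = 0$, which is exactly the tangency of $W^\Sigma_\rho = \gr(\phi_\rho)$ to $X_\Sigma$ at $0$. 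I do not expect a genuine obstacle here; the only point meriting a line of care is that $\widehat{R}^{(1)}_\rho(0)$ really vanishes as a bounded multilinear map between the appropriate $BC$-spaces, which is immediate from $DR_\rho(0) = 0$ together with \eqref{eq:widehatR(j)}, while every remaining step is a direct application of the uniqueness statements already established.
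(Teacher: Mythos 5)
Your proof is correct and follows essentially the same route as the paper: specialize the linearized fixed-point equation \eqref{eq:F1} at $u_1=0$, use $\widehat{R}^{(1)}_\rho(0)=0$ to get $D\mathcal{W}^{-\eta}_\rho(0)=T(\cdot)\imath_\Sigma$, and then apply $\widetilde{P}_{\Sigma'}\circ\mathrm{ev}_0$. The extra care you take in verifying $\mathcal{W}^{-\eta}_\rho(0)=0$ and $DR_\rho(0)=0$ is fine but already implicit in the paper's setup.
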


\begin{proof}
    Fix $\eta \in [\tilde{\eta},\overline{\eta})$.
    Then, $D \mathcal{W}_\rho^{-\eta}(0)$ must satisfy $\mathcal{F}^1_\rho (0,D \mathcal{W}_\rho^{-\eta}(0)) = D \mathcal{W}_\rho^{-\eta}(0)$, with $\mathcal{F}^1_\rho$ as in \eqref{eq:F1}.
    Hence, noting that $\widehat{R}_\rho^{(1)}(0) = 0$,
    \begin{equation}
        D \mathcal{W}_\rho^{-\eta}(0) = T(\cdot) \imath_\Sigma \in \mathcal{L}(X_\Sigma; BC^{-\eta}(\mathbb{R}^{\leq 0}; X)),
        \label{eq:DmathcalW0}
    \end{equation}
    where $\imath_\Sigma : X_\Sigma \xhookrightarrow{} X$ is the inclusion.
    Writing out the definition of $\phi_\rho$, we have
    \begin{displaymath}
        D \phi_\rho (0) = \widetilde{P}_{\Sigma'} \circ\mathrm{ev}_0 \circ D \mathcal{W}^{-\eta}_\rho (0) = \widetilde{P}_{\Sigma'} \circ \imath_\Sigma = 0. \qedhere
    \end{displaymath}
\end{proof}

\section{Proof of Lemma~\ref{lemma:expansions}}
\label{sect:expansionslemmaproof}

Recall that the manifold $W^\Sigma$ was obtained upon restricting $W^\Sigma_\rho$ to a small enough neighbourhood of the origin (Section~\ref{sect:CHT}). 
Since the conclusions of the lemma are local, we may as well consider $W^\Sigma_\rho$.
Recall also that $W^\Sigma_\rho$ can be obtained as the fixed point of the contraction mapping \eqref{eq:Frho}; explicitly (c.f.\ \eqref{eq:FrhoW} -- we dispose of the $\rho$ and $\eta$ sub/superscripts),
\begin{multline}
    [\mathcal{W}(u_1)](t) = T(t) u_1 - \imath^{-1} \int_t^0 T^{\odot *} (t-s) P_{\Sigma}^{\odot *} R(\mathcal{W}(u_1)(s)) \, ds \\
        + \imath^{-1} \int_{-\infty}^t T^{\odot *} (t-s) P_{\Sigma'}^{\odot *} R(\mathcal{W}(u_1)(s)) \, ds, \label{eq:mathcalW4exp}
\end{multline}
where $\mathcal{W}: X_\Sigma \to BC^{-\eta}(\mathbb{R}^{\leq 0};X)$ maps points of $X_\Sigma$ to negative semiorbits in $W^\Sigma_\rho$, and where  $\eta \in (\beta,\alpha)$ is chosen such that $\mathcal{W}$ is $C^\ell$ (as in Appendix~\ref{sect:mfdsmoothness}).

The desired embedding $K$ is simply (a restriction of) $\mathrm{ev}_0 \circ \mathcal{W}$,
\begin{displaymath}
    K(u_1) = u_1 + \imath^{-1}\int_0^\infty T^{\odot *} (s) P_{\Sigma'}^{\odot *} R(\mathcal{W}(u_1)(-s)) \, ds.
\end{displaymath}
Considering $\mathcal{W}(0) = 0$ and $DR(0) = 0$, we obtain $DK(0) = \imath_\Sigma$ (as in Corollary~\ref{corollary:tangency}).
Differentiating once more,
\begin{displaymath}
    D^2K(0) = \imath^{-1}\int_0^\infty T^{\odot *} (s) P_{\Sigma'}^{\odot *} D^2R(0)[D\mathcal{W}(0)(-s),D\mathcal{W}(0)(-s)] \, ds.
\end{displaymath}
Considering that $D\mathcal{W}(0) (s) = T(s) \imath_\Sigma$ (see \eqref{eq:DmathcalW0}), we arrive at the desired formula.

Similarly to Section~\ref{sect:proofmain2}, the reduced dynamics can be written as
\begin{equation}
    \psi_t (u_1) = \widetilde{P}_\Sigma \circ \mathrm{ev}_t \circ \mathcal{W}(u_1)
    \label{eq:psi_in_last_lemma}
\end{equation}
where defined.
To obtain the vector field $H$, we differentiate \eqref{eq:psi_in_last_lemma} in time (we may do this since $\psi$ is jointly $C^\ell$, as per Theorem~\ref{thm:main1}).
Explicitly, via \eqref{eq:mathcalW4exp}, we have
\begin{displaymath}
     \frac{d}{dt} \Big|_{t = 0} \psi_t(u_1) = A_\Sigma u_1  + \widetilde{P}_\Sigma \imath^{-1} P_\Sigma^{\odot *} R (\mathcal{W}(u_1)(0)),
\end{displaymath}
hence
\begin{displaymath}
    H = A_\Sigma +  \widetilde{P}_\Sigma^{\odot *} R \circ K .
\end{displaymath}
The expansion formula follows immediately, considering $DR(0) = 0$.

\section*{Acknowledgments}
We are grateful to Robert Szalai for introducing GB to several references on sun-star calculus and the work of Garay \cite{garay2005brief}.

\printbibliography

\end{document}